\newtheorem{thm}{Theorem}[section]
\newtheorem{cor}[thm]{Corollary}
\newtheorem{lem}[thm]{Lemma}
\newtheorem{prop}[thm]{Proposition}
\newtheorem*{prop*}{Proposition}
\newtheorem*{prob*}{Problem}
\newtheorem*{thm*}{Theorem}
\newtheorem*{quest*}{Question}
\theoremstyle{definition}
\newtheorem{defn}[thm]{Definition}
\newtheorem{example}[thm]{Example}
\newtheorem*{defn*}{Definition}
\newtheorem{rem}[thm]{Remark}
\newtheorem{rem*}[thm]{Remark}
\numberwithin{equation}{section}
\newcommand{\Z}{\mathbb Z}
\DeclareMathOperator{\E}{\mathbb{E}}
\newcommand{\mG}{\mathcal{G}}
\newcommand{\mH}{\mathcal{H}}
\newcommand{\mL}{\mathcal{L}}
\newcommand{\Spec}{\text{Spec}}
\newcommand{\SML}{\text{SML}}
\newcommand{\norm}[1]{\left\lVert#1\right\rVert}
\title[Ergodic averages and Khintchine recurrence]{Multiple ergodic averages in abelian groups and Khintchine type recurrence}
\date{\today}
\author{Or Shalom}
\thanks{The author is supported by ERC grant ErgComNum 682150.}
\begin{document}
	\begin{abstract}
		Let $G$ be a countable abelian group. We study ergodic averages associated with configurations of the form $\{ag,bg,(a+b)g\}$ for some $a,b\in\mathbb{Z}$. Under some assumptions on $G$, we prove that the universal characteristic factor for these averages is a factor (Definition \ref{nsext}) of a $2$-step nilpotent homogeneous space (Theorem \ref{mainresult}). As an application we derive a Khintchine type recurrence result (Theorem \ref{Khintchine}). In particular, we prove that for every countable abelian group $G$, if $a,b\in\mathbb{Z}$ are such that $aG,bG,(b-a)G$ and $(a+b)G$ are of finite index in $G$, then for every $E\subset G$ and $\varepsilon>0$ the set $$\{g\in G : d(E\cap E-ag\cap E-bg\cap E-(a+b)g)\geq d(E)^4-\varepsilon\}$$ is syndetic. This generalizes previous results for $G=\mathbb{Z}$, $G=\mathbb{F}_p^\omega$ and $G=\bigoplus_{p\in P}\mathbb{F}_p$ by Bergelson Host and Kra \cite{BHK}, Bergelson Tao and Ziegler \cite{BTZ} and the author \cite{OS}, respectively.
	\end{abstract}
	\maketitle
	\section{Introduction}
	Multiple ergodic averages play an important role in ergodic Ramsey theory. In the case of $\mathbb{Z}$-actions they were used by Furstenberg \cite{F1} to prove Szemer\'edi's theorem \cite{Sz} about the existence of arbitrary large arithmetic progressions in sets of positive upper Banach density.
	The goal of this paper is to study the convergence and limit of some multiple ergodic averages associated with $4$-term arithmetic progressions and more general configurations in countable abelian groups. As usual, a $G$-system $X=(X,\mathcal{B},\mu,T_g)$ is a probability space $(X,\mathcal{B},\mu)$ which is regular\footnote{meaning that $X$ is a compact metric
		space, $\mathcal{B}$ is the completion of the $\sigma$-algebra of Borel sets, and $\mu$ is a Borel measure.}, together with an action of a countable abelian group $G$ on $X$ by measure preserving transformations $T_g:X\rightarrow X$. Fix $a,b\in \mathbb{Z}$, a F{\o}lner sequence $\Phi_N$ of $G$ and bounded functions $f_1,f_2,f_3\in L^\infty(X)$, we study the multiple ergodic averages
	\begin{equation} \label{average3}\mathbb{E}_{g\in \Phi_N} f_1(T_{ag}x) f_2(T_{bg}x) f_3(T_{(a+b)g}x)
	\end{equation}
	where $\mathbb{E}_{g\in \Phi_N} = \frac{1}{|\Phi_N|}\sum_{g\in\Phi_N}$.
	The $L^2$-convergence of these averages as $N$ goes to infinity is already known for all countable nilpotent groups (see Walsh \cite{Walsh}). In the case of $\mathbb{Z}$-actions, these averages were studied by Conze and Lesigne \cite{CL84}, \cite{CL87}, \cite{CL88} and by Furstenberg and Weiss \cite{F&W} using the theory of characteristic factors (see Definition \ref{charfactor}). This theory were developed further by Host and Kra \cite{HK} and Ziegler \cite{Z} in order to deduce the convergence of some multiple ergodic averages associated with $\mathbb{Z}$-actions and by Bergelson Tao and Ziegler with $\mathbb{F}_p^\omega$ actions \cite{Berg& tao & ziegler}.\\ This paper is focused on one of the many applications for these structure theorems associated with the Khintchine type recurrence. For example, we begin with the following result by Bergelson Host and Kra \cite{BHK}.
	\begin{thm} \label{Zrecurrence}
		Let $(X,\mathcal{B},\mu,T)$ be an invertible ergodic system. Then, for any measurable set $A\in\mathcal{B}$ and $\varepsilon>0$ the set
		$$\{n\in\mathbb{Z} : \mu(A\cap T^{-n}A\cap T^{-2n} A \cap T^{-3n}A)>\mu(A)^4-\varepsilon\}$$ 
		is syndetic\footnote{Recall that a set $A$ in a group $G$ is syndetic if there exists a finite set $C\subseteq G$ such that $A+C=G$.}.
	\end{thm}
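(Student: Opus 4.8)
The plan is to deduce Theorem \ref{Zrecurrence} from the structure theory of characteristic factors, following the approach of Bergelson, Host and Kra. The key observation is that the average
\[
\E_{n\in\Phi_N}\mu(A\cap T^{-n}A\cap T^{-2n}A\cap T^{-3n}A)=\E_{n\in\Phi_N}\int 1_A\cdot T^n 1_A\cdot T^{2n}1_A\cdot T^{3n}1_A\,d\mu
\]
is controlled, in the limit, by the projection of $1_A$ onto the universal characteristic factor for the length-four progression. By the Host--Kra--Ziegler theory, this factor is (an inverse limit of) a $2$-step nilsystem, so the first step is to reduce the problem to the case where $X$ itself is a $2$-step nilsystem $X=G/\Gamma$ with $G$ a $2$-step nilpotent Lie group; one uses a standard approximation argument to transfer the lower bound from the factor to the original system, losing only $\eps$.

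Once on a nilsystem, the second step is to understand the orbit of the point $(x,T^nx,T^{2n}x,T^{3n}x)$ inside $X^4$ as $n$ varies. The crucial structural input is that $4$-term arithmetic progressions satisfy a linear relation: writing $\JUMP$-type differences, one has that the progression lies on the ``Conze--Lesigne'' diagonal, and in a $2$-step nilsystem the closure of $\{(x,T^nx,T^{2n}x,T^{3n}x):n\}$ is a sub-nilmanifold $Y\subseteq X^4$ that is a group extension of the diagonal. I would then integrate $1_A^{\otimes 4}$ against the Haar measure of $Y$ and use the fact that $Y$ projects onto the full diagonal to extract, via Fubini and the Cauchy--Schwarz / positivity of the relevant kernel, the bound $\ge \mu(A)^4$ for the average value. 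In other words, I would show that the limit of the average equals $\int_Y 1_A^{\otimes 4}\,d\mu_Y$ and that this integral is at least $\mu(A)^4$.

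The third step converts the lower bound on the average into syndeticity of the return-time set. Here I would invoke the \emph{almost periodicity} of the function $n\mapsto\mu(A\cap T^{-n}A\cap T^{-2n}A\cap T^{-3n}A)$, which follows because on a nilsystem this quantity is a generalized polynomial / nilsequence evaluated along the orbit, hence a uniformly almost periodic function of $n$. A standard lemma then states that if a nonnegative almost periodic function has average at least $c$, then the set where it exceeds $c-\eps$ is syndetic: one covers $G$ by finitely many translates on which the function is within $\eps$ of its values on a fixed syndetic set, and the average bound forces the function to be close to $c$ on a syndetic set of ``good'' translates.

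The main obstacle I anticipate is the second step, namely establishing that the limiting integral $\int_Y 1_A^{\otimes4}\,d\mu_Y$ genuinely dominates $\mu(A)^4$. The inequality is false for a single generic measure on $X^4$; it holds precisely because the characteristic factor for the progression $\{0,n,2n,3n\}$ is $2$-step and the corresponding Haar measure on $Y$ has the right conditional-independence structure (the so-called Gowers--Host--Kra seminorm positivity). Making this rigorous requires identifying $Y$ explicitly as a group extension and computing the Haar measure, then applying a positivity argument that crucially uses that $1_A$ is an indicator (so that $\int 1_A^{\otimes4}$ factors through squares). Getting the constant to be exactly $\mu(A)^4$ rather than some smaller power is the delicate point, and it is exactly where the $2$-step (rather than higher-step) nature of the factor is used.
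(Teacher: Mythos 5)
The paper does not reprove this statement (it is quoted from Bergelson--Host--Kra), but its proof of the generalization, Theorem \ref{Khintchine} in Section \ref{Khintchine:sec}, follows exactly the architecture you outline: reduce to the characteristic factor $Z_{<3}$ (Proposition \ref{char}), give that factor a $2$-step nilpotent homogeneous structure, prove a limit formula (Theorem \ref{formula}), and then extract syndeticity. Your steps 1 and the limit-formula half of step 2 are therefore on target. However, there are two genuine gaps. The main one is in step 2: the full Haar integral $\int_Y 1_A^{\otimes 4}\,d\mu_Y$ over the orbit closure is \emph{not} bounded below by $\mu(A)^4$. Already on the Kronecker factor, the Ces\`aro average of $\mu(A\cap T^{-n}A\cap T^{-2n}A\cap T^{-3n}A)$ equals $\sum_{k,l}\hat{1}_A(k+2l)\hat{1}_A(-2k-3l)\hat{1}_A(k)\hat{1}_A(l)$, whose off-diagonal terms can be negative and can push the total below $\mu(A)^4$; only the term $k=l=0$ contributes $\mu(A)^4$. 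The Cauchy--Schwarz positivity you invoke becomes available only \emph{after} the linear coordinate $y_1$ is restricted to a small neighbourhood of the identity in the Kronecker factor --- equivalently, after the average over $n$ is weighted by a kernel $\eta(\alpha_n)$ pulled back from a Bohr set, normalized so that $\mathbb{E}_n\,\eta(\alpha_n)\to 1$. This weighting is precisely what the paper does (the functions $\eta$ and the passage to $\frac{1}{\mu(B(\mG(Y)_2,\delta))}1_{B(\mG(Y)_2,\delta)}$ in the proof of Theorem \ref{Khintchine}), and what Bergelson--Host--Kra do via Bohr sets; without it the claimed inequality is simply false.

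The second gap is in step 3. The correlation sequence $n\mapsto\mu(A\cap T^{-n}A\cap T^{-2n}A\cap T^{-3n}A)$ decomposes as a $2$-step nilsequence plus a term tending to zero in uniform density; a $2$-step nilsequence is in general \emph{not} uniformly almost periodic (e.g.\ $e^{2\pi i n^2\alpha}$), so the ``nonnegative almost periodic function with average $\geq c$'' lemma does not apply. The correct deduction of syndeticity is the contradiction argument used in the paper (following Frantzikinakis): if the return set were not syndetic, its complement would contain a F{\o}lner sequence $\Phi_N$; along $\Phi_N$ the weighted averages $\mathbb{E}_{n\in\Phi_N}\eta(\alpha_n)\mu(A\cap T^{-n}A\cap T^{-2n}A\cap T^{-3n}A)$ still converge to a quantity $\geq\mu(A)^4-\varepsilon/2$ while $\mathbb{E}_{n\in\Phi_N}\eta(\alpha_n)\to 1$ and every individual term is $<\mu(A)^4-\varepsilon$, a contradiction. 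This argument needs the lower bound for averages along \emph{every} F{\o}lner sequence, which is exactly what the weighting in step 2 provides; so the two repairs are linked, and your proposal as written would not close.
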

	In \cite{BTZ} Bergelson Tao and Ziegler proved a counterpart to this result for $\mathbb{F}_p^\omega$-systems. This was generalized further by the author in \cite{OS}.
	\begin{thm} \label{Precurrence}
		Let $P$ be a countable multiset of primes with $3<\min_{p\in P} p$ and let $G=\bigoplus_{p\in P}\mathbb{F}_p$. Then for every ergodic $G$-system $(X,\mathcal{B},\mu,\{T_g\}_{g\in G})$, measurable set $A\subseteq X$ and $\varepsilon>0$ the set
		$$\{g\in G : \mu(A\cap T_gA \cap T_{2g} A \cap T_{3g}A)>\mu(A)^4-\varepsilon\} $$
		is syndetic.
	\end{thm}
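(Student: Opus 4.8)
The plan is to adapt the Bergelson--Host--Kra argument behind Theorem \ref{Zrecurrence} to $G=\bigoplus_{p\in P}\mathbb{F}_p$, using the structure theory of Host--Kra factors for such groups from \cite{OS}. Write $f=1_A$, set $c=\mu(A)$, and abbreviate $I(g)=\int_X f\cdot T_gf\cdot T_{2g}f\cdot T_{3g}f\,d\mu$, so that $I(g)=\mu(A\cap T_gA\cap T_{2g}A\cap T_{3g}A)$; the goal is to show that for every $\varepsilon>0$ the set $\{g\in G:I(g)>c^4-\varepsilon\}$ is syndetic. The first step is a reduction to a characteristic factor: the second Host--Kra factor $Z_2=Z_2(X)$ is characteristic for the averages $\mathbb{E}_{g\in\Phi_N}f_0\cdot T_gf_1\cdot T_{2g}f_2\cdot T_{3g}f_3$. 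This is proved by the usual van der Corput and Gowers--Cauchy--Schwarz estimates bounding these averages by the $U^3$ Host--Kra seminorm, and here the hypothesis $\min_{p\in P}p>3$ is essential: it guarantees that the coefficients $1,2,3$ are invertible in every $\mathbb{F}_p$, so the configuration $\{0,g,2g,3g\}$ is non-degenerate and no seminorm control is lost. I may therefore replace $f$ by $\tilde f=\mathbb{E}(f\mid Z_2)$, which still satisfies $0\le\tilde f\le 1$ and $\int\tilde f\,d\mu=c$, and it suffices to prove the lower bound on the system $Z_2$.

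The second step uses the explicit description of $Z_2$ furnished by \cite{OS}: it is an inverse limit of $2$-step nilpotent homogeneous spaces, realized concretely as a compact abelian extension $Z_1\times_\rho U$ of the Kronecker factor $Z_1$ by a compact abelian group $U$, with $\rho$ a cocycle of degree at most $2$. Since every element of $G$ has finite order, $Z_1$ and $U$ are totally disconnected, so orbits equidistribute in finite quotients. This replaces the delicate nilmanifold equidistribution of the $\mathbb{Z}$ case by a cleaner profinite analysis: after passing to a suitable finite-index subgroup $G'\le G$, the linear part of the action is trivial on a finite model, and only the quadratic contribution of $\rho$ remains to be controlled.

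The third step, proving on $Z_2$ that $\{g:I(g)>c^4-\varepsilon\}$ is syndetic, is the heart of the matter and where I expect the real difficulty. Restricted to $Z_2$ the function $g\mapsto I(g)$ is, up to an error of size $\varepsilon$, a $2$-step nilsequence --- in this torsion setting a function factoring through the action on a finite quotient of $G'$ --- so its set of near-return times is automatically syndetic, being the higher-order analog of a Bohr set; what remains is to show that the value it returns to is at least $c^4$. This is precisely the Bergelson--Host--Kra positivity computation: applying the Gowers--Cauchy--Schwarz inequality on the fibers of the extension $Z_1\times_\rho U$, and exploiting the symmetry of the configuration $\{0,1,2,3\}$ so that the quadratic fluctuations of $\rho$ assemble into a sum-of-squares (hence nonnegative) expression, one bounds $I(g)$ below by $c^4-\varepsilon$. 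It is exactly the two-step (degree at most $2$) nature of $Z_2$ that makes this positivity available; for longer progressions a genuine three-step obstruction appears and the bound fails, so any correct proof must use that we are in the four-term case.
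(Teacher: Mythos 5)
Your overall strategy is the same one this paper uses for the general Theorem \ref{Khintchine} (of which Theorem \ref{Precurrence} is literally the case $a=1$, $b=2$: since $2$ and $3$ are units in every $\mathbb{F}_p$ with $p>3$, the subgroups $aG,bG,(b-a)G,(a+b)G$ are all of finite index, indeed equal to $G$): reduce to the Conze--Lesigne factor by van der Corput (Proposition \ref{char}), realize that factor as a $2$-step nilpotent homogeneous space $\mG/\Gamma$, prove a limit formula for suitably weighted averages (Theorem \ref{formula}), and extract the lower bound $\mu(A)^4$ by Cauchy--Schwarz. Your first step matches Section \ref{char:sec}, and your second step is available for these torsion groups by the structure theory of \cite{OS} (here the paper must instead pass to an extension, which your setting lets you avoid).

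The genuine gap is your third step, which you rightly flag as the heart of the matter but then settle by assertion. Three distinct ingredients are missing. (i) The limit formula for $\lim_N\mathbb{E}_{g\in\Phi_N}\eta(\alpha_g)I(g)$ is not a consequence of ``orbits equidistribute in finite quotients'': it requires unique ergodicity of the $G\times G$-action on the auxiliary space $\tilde{\mG}/\tilde{\Gamma}$, and proving that ergodicity is exactly where the paper needs the special structure of the C.L.\ group (Lemmas \ref{structure}, \ref{open} and \ref{ergodic}) because the connected component plus the acting elements need not generate $\tilde{\mG}$, so Green's theorem does not apply off the shelf. (ii) The ``sum of squares by symmetry'' is not automatic: one needs the explicit change of variables of Proposition \ref{computation}, rewriting the parametrization $(g,\,gg_1,\,gg_1^2g_2,\,gg_1^3g_2^3)$ of the relevant orbit as $(u^{3},\,tu,\,tv,\,v^{3})$, which in turn requires divisibility of $\mG_2$ by $2$ and $3$ (Corollary \ref{div}; automatic here, but it must be invoked); only after this substitution does the triple integral become an iterated square and yield $\geq\mu(A)^4$. (iii) The syndeticity cannot be read off from ``near-return times of a nilsequence are syndetic'' applied to $I$ at $g=0$, because the discrepancy between $I$ and its nilsequence approximant is controlled only in density, not pointwise; the paper instead argues by contradiction, assuming a F{\o}lner sequence on which $I(g)<\mu(A)^4-\varepsilon$ and comparing with the weighted lower bound together with $\lim_N\mathbb{E}_{g\in\Phi_N}\eta(\alpha_g)=1$. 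None of these invalidates your plan, but each is a substantial piece of the proof that your sketch replaces with an appeal to symmetry or to finiteness.
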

	In this paper we generalize the above to all countable abelian groups, under the following conditions.
	\begin{thm} [Khintchine type recurrence result for countable abelian groups]
		\label{Khintchine}
		Let $G$ be a countable abelian group and fix $a,b\in\mathbb{Z}$. If $aG$,  $bG$, $(a-b)G$ and $(a+b)G$ are of finite index in $G$, then for every ergodic $G$-system $(X,\mathcal{B},\mu,\{T_g\}_{g\in G})$, measurable set $A\in \mathcal{B}$ and $\varepsilon>0$, the set $$\{g\in G : \mu(A\cap T_{ag}A\cap T_{bg}A\cap T_{(a+b)g}A)\geq \mu(A)^4-\varepsilon\}$$ is syndetic.
	\end{thm}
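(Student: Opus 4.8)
The plan is to follow the strategy of Bergelson--Host--Kra \cite{BHK}, reducing to the universal characteristic factor supplied by Theorem \ref{mainresult} and then analysing the resulting correlation sequence as a nilsequence. Write $f=1_A$ and consider
\[
I(g)=\mu(A\cap T_{ag}A\cap T_{bg}A\cap T_{(a+b)g}A)=\int_X f\cdot T_{ag}f\cdot T_{bg}f\cdot T_{(a+b)g}f\,d\mu .
\]
The four-point configuration $\{0,ag,bg,(a+b)g\}$ is a (possibly degenerate) parallelogram whose edge directions are $ag,bg,(a+b)g$ and $(a-b)g$; this is exactly why the hypothesis requires $aG,bG,(a+b)G$ and $(a-b)G$ to be of finite index. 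Since the average \eqref{average3} is governed by the universal characteristic factor $Z$, and $Z$ is by Theorem \ref{mainresult} a factor (Definition \ref{nsext}) of a $2$-step nilpotent homogeneous space, it suffices to prove the syndeticity statement on such a homogeneous space: a conclusion there descends to any factor, hence to $Z$ and thus to the original system, by lifting $A$ along the factor map, which preserves both $\mu(A)$ and the correlation $I(g)$.

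Next I would record the decomposition $I(g)=\psi(g)+\gamma(g)$. Setting $\tilde f=\mathbb{E}(f\mid Z)$, the term $\psi(g)=\int \tilde f\cdot T_{ag}\tilde f\cdot T_{bg}\tilde f\cdot T_{(a+b)g}\tilde f$ is obtained by evaluating a fixed continuous function along the orbit $g\mapsto(T_{ag},T_{bg},T_{(a+b)g})$ in the product homogeneous space, and is therefore a basic nilsequence; the error $\gamma$ tends to $0$ in uniform density, a consequence of the characteristic factor property in Theorem \ref{mainresult}. It is important that $\gamma$ be null in \emph{uniform} density (and not merely small in Ces\`aro average), since this is what will let us pass from averaged information about $\psi$ to a statement about individual $g$.

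The central step is the lower bound
\[
\liminf_{N\to\infty}\ \mathbb{E}_{g\in\Phi_N} I(g)\ \geq\ \mu(A)^4,\qquad\text{equivalently}\qquad \int\psi\,dm\ \geq\ \mu(A)^4,
\]
where $m$ is the Haar measure of the nilmanifold carrying $\psi$. Following \cite{BHK}, this is established by a positivity computation directly on the $2$-step nilpotent homogeneous space, in which the two-dimensional (parallelogram) shape of $\{0,ag,bg,(a+b)g\}$ is essential: it is this low complexity that allows one to bound the one-parameter diagonal average below by a Cauchy--Schwarz--Gowers expression attached to the cube structure $\mu^{[2]}$, the positivity ultimately reflecting the elementary inequality $\norm{f}_{U^2(X)}^4\geq\mu(A)^4$ for the $4$-point cube. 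The finite-index hypotheses on $aG,bG,(a+b)G$ and $(a-b)G$ enter precisely here, guaranteeing that the relevant quotient actions are by finite-index subgroups so that the averaging genuinely sweeps the cube and the bound does not fall below the fourth power.

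Finally I would convert ``large on average'' into syndeticity. Since the orbit defining $\psi$ equidistributes in a sub-nilmanifold on which the acting element is minimal and uniquely ergodic, and $\int\psi\,dm\geq\mu(A)^4$, the open set $\{\psi>\mu(A)^4-\varepsilon/2\}$ has positive Haar measure; hence its return-time set $S=\{g:\psi(g)>\mu(A)^4-\varepsilon/2\}$ is syndetic and, by uniform unique ergodicity, has uniform positive lower density in every translated F{\o}lner window. As $\gamma$ is null in uniform density, the set $B=\{g:|\gamma(g)|>\varepsilon/2\}$ has uniform density $0$, so $S\setminus B$ still meets every translate of a fixed large F{\o}lner set and is therefore syndetic; on $S\setminus B$ one has $I(g)=\psi(g)+\gamma(g)>\mu(A)^4-\varepsilon$, which completes the proof. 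I expect the main obstacle to be the lower bound $\liminf_N\mathbb{E}_g I(g)\geq\mu(A)^4$: this is exactly the point where the low complexity of the $4$-point configuration is indispensable (the analogue fails for $5$-point progressions, where Khintchine recurrence is known to fail), and where all four finite-index hypotheses must be combined to keep the diagonal average controlled by the cube structure. A secondary technical point is verifying that $\gamma$ is null in uniform density rather than only on average, which is what legitimises the final passage to the syndetic set of individual $g$.
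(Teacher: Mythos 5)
Your outline breaks down at what you yourself identify as the central step: the claimed lower bound $\liminf_{N\to\infty}\mathbb{E}_{g\in\Phi_N}I(g)\geq\mu(A)^4$ for the \emph{unweighted} Ces\`aro average is false in general. The configuration $\{0,ag,bg,(a+b)g\}$ is a one-parameter diagonal slice of the two-dimensional parallelogram, not the parallelogram itself: averaging over the single parameter $g$ does not sweep the cube structure $\mu^{[2]}$, so the inequality $\|f\|_{U^2}^4\geq\mu(A)^4$ (which requires two independent parameters) is not available. Indeed, by the limit formula (Theorem \ref{formula}) the limit of $\mathbb{E}_{g}I(g)$ is $\int_{\mG/\Gamma}\int_{\mG_2}\prod_i f(xy_1^{i}y_2^{\binom{i}{2}})$, with the outer variable $y_1$ ranging over all of $\mG/\Gamma$, and this has no sum-of-squares structure; already for $3$-term progressions in $\mathbb{Z}$ the analogous averaged bound $\geq\mu(A)^3$ fails (this is precisely why Bergelson--Host--Kra prove a syndetic ``popular difference'' statement rather than an averaged one). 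The missing idea is the localization: following Frantzikinakis, the paper inserts a nonnegative weight $\eta(\alpha_g)$, a function of the Kronecker rotation concentrated near $\mG(Y)_2$, with $\lim_N\mathbb{E}_g\eta(\alpha_g)=1$. This forces $y_1$ into $\mG(Y)_2$ in the limit formula, after which Proposition \ref{computation} and Cauchy--Schwarz turn the triple integral over $\mG(Y)_2^3$ into $\bigl(\int f\bigr)^4=\mu(A)^4$. Handling the weight requires the coprimality trick $\eta=\eta^{sa}\cdot\eta^{tb}$ to keep the C.L.\ factor characteristic for the \emph{weighted} average (and a separate reduction of general $a,b$ to the coprime case via $d=\gcd(a,b)$ and the ergodic components of $dG$), none of which appears in your plan.

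Two further points. First, Theorem \ref{mainresult} produces an extension $(Y,H)$ over a \emph{different} acting group $H$, so the limit formula lives on $H$-averages; transferring the conclusion back to $G$-averages is not automatic and is done in the paper via the invariant-mean argument of Lemma \ref{lift} --- your ``descends to any factor'' remark conflates factors with these group-changing extensions. Second, your final passage via ``$\gamma$ null in uniform density'' is not established anywhere in the paper and is stronger than what Proposition \ref{char} gives; the paper instead argues by contradiction, assuming a F{\o}lner sequence on which $I(g)<\mu(A)^4-\varepsilon$ everywhere and contradicting the weighted average bound together with $\mathbb{E}_g\eta(\alpha_g)\to 1$. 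That route avoids any uniform-density claim for the error term.
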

	In a recent paper, Bergelson and Ferr\'e Mortagues \cite[Theorem 2.8]{BFe} proved an ergodic version of the Furstenberg correspondence principle. A direct application of the above is the following density result.
	\begin{thm} [Density result]
		Let $G$ be a countable abelian group and $a,b\in\mathbb{Z}$ such that $aG$, $bG$, $(b-a)G$ and $(a+b)G$ are of finite index. Let $\Phi_N$ be any F{\o}lner sequence for $G$ and $d_{\Phi}$ be the corresponding upper density. i.e. $d_{\Phi}(E) = \limsup_{N\rightarrow\infty} \frac{|E\cap \Phi_N|}{|\Phi_N|}$. Then for any set $E\subseteq G$ and $\varepsilon>0$, the set
		$$\{g\in G : d_{\Phi}(E\cap E-ag\cap E-bg\cap E-(a+b)g)\geq d_{\Phi}(E)^4-\varepsilon\}$$ is syndetic.
	\end{thm}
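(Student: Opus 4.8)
The plan is to deduce this statement directly from the Khintchine type recurrence result (Theorem \ref{Khintchine}) by transporting it from the ergodic category to the combinatorial one through the Furstenberg correspondence principle. Since the hypotheses match exactly---note that $(b-a)G=(a-b)G$ as subgroups, because $g\mapsto -g$ is a bijection of $G$ sending $(a-b)g$ to $(b-a)g$---the only work is to set up the correspondence and to check that the set of ``good'' shifts $g$ is preserved under the transfer.

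First I would apply the ergodic version of the Furstenberg correspondence principle due to Bergelson and Ferr\'e Mortagues \cite[Theorem 2.8]{BFe} to the set $E\subseteq G$ and the F{\o}lner sequence $\Phi_N$. This produces an \emph{ergodic} $G$-system $(X,\mathcal{B},\mu,\{T_g\}_{g\in G})$ together with a measurable set $A\in\mathcal{B}$ such that $\mu(A)=d_\Phi(E)$ and, for every finite collection $g_1,\dots,g_k\in G$,
\[
d_\Phi\bigl(E\cap(E-g_1)\cap\cdots\cap(E-g_k)\bigr)\;\geq\;\mu\bigl(A\cap T_{g_1}A\cap\cdots\cap T_{g_k}A\bigr).
\]
Specializing to $k=3$ and $(g_1,g_2,g_3)=(ag,bg,(a+b)g)$, the right-hand side is exactly the quantity controlled by Theorem \ref{Khintchine}. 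Applying that theorem to $A$ gives a syndetic set $S\subseteq G$ on which
\[
\mu\bigl(A\cap T_{ag}A\cap T_{bg}A\cap T_{(a+b)g}A\bigr)\;\geq\;\mu(A)^4-\varepsilon\;=\;d_\Phi(E)^4-\varepsilon.
\]
Combining the two displays, every $g\in S$ satisfies $d_\Phi\bigl(E\cap(E-ag)\cap(E-bg)\cap(E-(a+b)g)\bigr)\geq d_\Phi(E)^4-\varepsilon$, so the set in the statement contains $S$. A superset of a syndetic set is syndetic, which finishes the argument.

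The step that deserves the most care---and the reason the \emph{ergodic} form of the correspondence principle is essential---is the ergodicity of the system handed to Theorem \ref{Khintchine}. The classical correspondence principle only yields a measure preserving system which need not be ergodic, and one cannot simply pass through its ergodic decomposition $\mu=\int\mu_\omega\,d\omega$: while Jensen's inequality for the convex function $x\mapsto x^4$ does give $\int\mu_\omega(A)^4\,d\omega\geq\mu(A)^4$, the syndetic sets of good shifts produced by Theorem \ref{Khintchine} vary with the component $\omega$, and their intersection over all $\omega$ need not remain syndetic. Invoking \cite[Theorem 2.8]{BFe} sidesteps this difficulty by supplying an ergodic system from the outset, so no decomposition is needed. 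The remaining points to verify are routine: that the inequality furnished by the correspondence runs in the direction displayed above (the weak-$*$ limit defining $\mu$ realizes the intersection measures as limits bounded by the corresponding upper densities), and that $\mu(A)=d_\Phi(E)$, so that the threshold $\mu(A)^4$ is literally $d_\Phi(E)^4$.
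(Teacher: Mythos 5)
Your proposal is correct and is precisely the argument the paper intends: the paper offers no written proof, presenting the theorem as ``a direct application'' of the ergodic correspondence principle of Bergelson and Ferr\'e Moragues \cite[Theorem 2.8]{BFe} together with Theorem \ref{Khintchine}, which is exactly your two-step transfer. The only point worth noting is that the ergodic correspondence principle may only guarantee $\mu(A)\geq d_\Phi(E)$ rather than equality, but that inequality is all your argument actually uses.
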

	\begin{rem}
		The case of double recurrence. Namely that $$\{g\in G : \mu(A\cap T_{ag}A\cap T_{bg}A)\geq \mu(A)^3-\varepsilon\}$$ is syndetic, is not covered in this paper. This and a more general version of double recurrence can be found in a recent paper by Ackelsberg Bergelson and Best \cite{ABB}.
	\end{rem}
	Roughly speaking, we say that a factor of an ergodic system $X$ is characteristic for an ergodic average if the limit behavior of the average can be reduced to this factor. The assumption on the indices of $aG$, $bG$, $(b-a)G$ and $(a+b)G$ in Theorem \ref{Khintchine} is necessary to ensure that the systems we study in this paper are characteristic for average (\ref{average3}). It is an interesting question under which conditions on $a$ and $b$ the finite index assumptions in Theorem \ref{Khintchine} can be removed. 
	\begin{defn} [Characteristic factors] \label{charfactor}
		Let $G$ be a countable abelian group and let $X$ be an ergodic $G$-system. For $k\in\mathbb{N}$ and $0\not=a_1,...,a_k\in\mathbb{Z}$, we say that a factor $Y$ is characteristic for the tuple $(a_1g,a_2g,...,a_kg)$ if for every bounded functions $f_1,...,f_k\in L^\infty(X)$ and every F{\o}lner sequence $\Phi_N$ of $G$ we have that
		$$\lim_{N\rightarrow\infty}\left(\mathbb{E}_{g\in \Phi_N} \prod_{i=1}^k T_{a_ig} f_i -  \mathbb{E}_{g\in \Phi_N} \prod_{i=1}^k T_{a_ig} E(f_i|Y)\right) = 0$$
		in $L^2$, where $E(f_i|Y)$ denotes the conditional expectation with respect to the factor $Y$.
	\end{defn}
	\begin{rem} \text{ }
		\begin{itemize}
			
			\item {$X$ is a characteristic factor for any tuple.}
			\item{The mean ergodic theorem states that the trivial factor is characteristic for $(g)$.}
			\item{It is well known that for any countable abelian group $G$, the Kronecker factor (the maximal group rotation factor) is a characteristic factor for $(g,2g)$.}
			\item{If $G=\mathbb{Z}$, then the Kronecker factor is also characteristic for $(ag,bg)$ for any $0\not= a,b\in\mathbb{Z}$, and in \cite{F&W} Furstenberg and Weiss proved that the Conze-Lesigne factor is characteristic for $(ag,bg,(a+b)g)$. We will discuss this below.}
		\end{itemize}
	\end{rem}
	In the case of $\mathbb{Z}$-actions, Host and Kra \cite{HK} proved that characteristic factors for the tuple $(g,2g,3g,...,kg)$ are closely related to an infinite version of the Gowers norms.
	\begin{defn}
		[Gowers Host Kra seminorms]
		Let $G$ be a countable abelian group, let $X=(X,\mathcal{B},\mu,\{T_g\}_{g\in G})$ be a $G$-system, let $\phi\in L^\infty (X)$, and let $k\geq 1$ be an integer. The Gowers-Host-Kra seminorm $\|\phi\|_{U^k}$ of order $k$ of $\phi$ is defined recursively by the formula
		\[
		\|\phi\|_{U^1}:=\lim_{N\rightarrow\infty}\frac{1}{|\Phi_N^1|}\|\sum_{g\in\Phi_N^1}\phi\circ T_g\|_{L^2}
		\]
		for $k=1$, and
		\[
		\|\phi\|_{U^k}:=\lim_{N\rightarrow\infty}\left(\frac{1}{|\Phi_N^k|}\sum_{g\in\Phi_N^k}\|\Delta_g\phi\|_{U^{k-1}}^{2^{k-1}}\right)^{1/2^k}
		\]
		for $k\geq 1$, where $\Delta_g\phi(x)=\phi(T_gx)\cdot\overline{\phi}(x)$ and $\Phi_N^1,...,\Phi_N^k$ are arbitrary F{\o}lner sequences\footnote{All of the limits exist and are independent on the choice of the F{\o}lner sequences, see \cite[Lemma A.18]{Berg& tao & ziegler}.}.
	\end{defn}
	These seminorms were first introduced in the spacial case where $G=\mathbb{Z}/N\mathbb{Z}$ by Gowers in \cite{G} where he derived quantitative bounds for Szemer\'edi's theorem \cite{Sz}.\\
	The Host-Kra factors are defined by the following proposition.
	\begin{prop}\label{UCF} Let $G$ be a countable abelian group, let $X$ be an ergodic $G$-system, and let $k\geq 1$. Then, there exists a factor $Z_{<k}(X)$ of $X$ with the property that for every $f\in L^\infty (X)$, $\|f\|_{U^{k}(X)}=0$ if and only if  $E(f|Z_{<k}(X))=0$.
	\end{prop}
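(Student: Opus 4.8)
The plan is to follow the Host--Kra construction, carried out for countable abelian groups as in \cite{Berg& tao & ziegler}. Write $X^{[k]}:=X^{2^k}$ and let $\mu^{[k]}$ denote the Host--Kra measures on $X^{[k]}$, defined recursively by $\mu^{[0]}=\mu$ and by letting $\mu^{[k]}$ be the relatively independent self-joining of $\mu^{[k-1]}$ with itself over the $\sigma$-algebra $\mathcal{I}^{[k-1]}$ of sets invariant under the diagonal action $(T_g)^{\times 2^{k-1}}$ on $X^{[k-1]}$. Unwinding the recursion defining $\|\cdot\|_{U^k}$ shows that these measures compute the seminorm:
\begin{equation*}
\|f\|_{U^k}^{2^k}=\int_{X^{[k]}}\prod_{\omega\in\{0,1\}^k}\mathcal{C}^{|\omega|}f(x_\omega)\,d\mu^{[k]}(x),
\end{equation*}
where $\mathcal{C}^{|\omega|}f$ equals $f$ if $|\omega|$ is even and $\overline{f}$ if $|\omega|$ is odd; that the defining limits exist and are independent of the F{\o}lner sequence is \cite[Lemma A.18]{Berg& tao & ziegler}.

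First I would prove the Cauchy--Schwarz--Gowers inequality by induction on $k$: for any family $(f_\omega)_{\omega\in\{0,1\}^k}$ of bounded functions,
\begin{equation*}
\left|\int_{X^{[k]}}\prod_{\omega}\mathcal{C}^{|\omega|}f_\omega(x_\omega)\,d\mu^{[k]}\right|\leq\prod_{\omega}\|f_\omega\|_{U^k}.
\end{equation*}
Taking all $f_\omega$ equal to one $f$ shows $\|\cdot\|_{U^k}$ satisfies the triangle inequality and is a genuine seminorm. I then introduce the dual function $\mathcal{D}f$, obtained by integrating the product above over every coordinate except the one indexed by $\vec{0}$, so that $\langle f,\mathcal{D}f\rangle=\|f\|_{U^k}^{2^k}$; it commutes with the action, $T_g\mathcal{D}f=\mathcal{D}(T_gf)$, satisfies $\overline{\mathcal{D}f}=\mathcal{D}\overline{f}$, and the Cauchy--Schwarz--Gowers inequality gives $|\langle f,\mathcal{D}g\rangle|\leq\|f\|_{U^k}\|g\|_{U^k}^{2^k-1}$. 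Specializing $g=f$ then yields the crucial equivalence
\begin{equation*}
\|f\|_{U^k}=0\quad\Longleftrightarrow\quad\langle f,\mathcal{D}g\rangle=0\ \text{ for every }g\in L^\infty(X).
\end{equation*}

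Next I would let $Z_{<k}(X)$ be the factor of $X$ corresponding to the smallest $G$-invariant sub-$\sigma$-algebra $\mathcal{Z}$ for which every dual function $\mathcal{D}g$ is measurable; the relation $T_g\mathcal{D}f=\mathcal{D}(T_gf)$ guarantees that $\mathcal{Z}$ is $G$-invariant, so this is a genuine factor. Since $\mathcal{D}g\in L^2(Z_{<k}(X))$ for every $g$, the equivalence above immediately gives $E(f|Z_{<k}(X))=0\Rightarrow\langle f,\mathcal{D}f\rangle=0\Rightarrow\|f\|_{U^k}=0$. For the converse, write $f=E(f|Z_{<k}(X))+f'$ with $f'$ orthogonal to $L^2(Z_{<k}(X))$; then $f'\perp\mathcal{D}g$ for all $g$, so $\|f'\|_{U^k}=0$, and the triangle inequality gives $\|f\|_{U^k}=\|E(f|Z_{<k}(X))\|_{U^k}$. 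It therefore remains to prove that $\|\cdot\|_{U^k}$ is nondegenerate on $L^2(Z_{<k}(X))$.

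This nondegeneracy is the main obstacle. Letting $V$ be the closed linear span of the dual functions, the characterization above gives $V^{\perp}=\{f:\|f\|_{U^k}=0\}$, and since $V\subseteq L^2(Z_{<k}(X))$, nondegeneracy is equivalent to the identity $V=L^2(Z_{<k}(X))$. As $V$ is automatically closed, $G$-invariant, conjugation-invariant, and contains the constants (because $\mathcal{D}1=1$), this identity reduces to showing that $V$ is stable under multiplication, that is, that a product $\mathcal{D}g_1\cdots\mathcal{D}g_n$ of dual functions again lies in $V$; granting this, $V$ is the $L^2$-space of the sub-$\sigma$-algebra generated by its own elements, which is exactly $\mathcal{Z}$. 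Establishing the product stability is the technical heart of the construction: one expresses such a product as an integral against a Host--Kra measure and approximates it in $L^2$ by finite linear combinations of single dual functions, exploiting the self-joining structure of $\mu^{[k]}$ together with a further application of the Cauchy--Schwarz--Gowers inequality to bound the error. The delicate combinatorics of this estimate---keeping track of which of the $2^k$ coordinates of $X^{[k]}$ carry $f$ and which carry the auxiliary functions---is where the real work lies, and it is also the step most sensitive to the passage from $\mathbb{Z}$ to a general countable abelian group $G$.
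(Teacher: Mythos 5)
The paper does not actually prove Proposition \ref{UCF}: it is quoted from \cite[Lemma 4.3]{HK} and \cite[Lemma A.32]{Berg& tao & ziegler}, so your proposal has to be measured against those arguments, and it does follow the same route -- cubic measures, the Cauchy--Schwarz--Gowers inequality, dual functions. Everything you write up to the last paragraph is correct: the integral formula for $\|\cdot\|_{U^k}$, the equivalence $\|f\|_{U^k}=0\iff\langle f,\mathcal{D}g\rangle=0$ for all $g$, the decomposition $f=E(f|Z_{<k}(X))+f'$, and the reduction of the whole proposition to the identity $V=L^2(Z_{<k}(X))$, i.e.\ to the multiplicative stability of the closed span of the dual functions.

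That last step, however, is not a technical coda to be ``approximated away''; it is the entire content of the lemma, and as written your proof has a genuine gap there. What has to be proved is that, under $\mu^{[k]}$, the coordinate $\sigma$-algebra at $\vec{0}$ and the $\sigma$-algebra $\tilde{\mathcal{B}}$ generated by the other $2^k-1$ coordinates are relatively independent over a factor of $X$ -- equivalently, that the closed range of the Markov operator $H\mapsto E\bigl(H(\tilde{x})\mid x_{\vec{0}}\bigr)$ is an $L^2$-space of a $\sigma$-algebra. Two $\sigma$-algebras in a joining are in general \emph{not} relatively independent over their intersection, and a further application of Cauchy--Schwarz--Gowers to a single copy of $\mu^{[k]}$ cannot produce this: the product $\mathcal{D}g_1\cdot\mathcal{D}g_2$ is naturally an integral against the relatively independent self-joining of $\mu^{[k]}$ over the $\vec{0}$ coordinate, which is a different measure that must be related back to the cubic measures. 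The way the cited proofs do this is an induction on $k$ exploiting the recursion $\mu^{[k]}=\mu^{[k-1]}\times_{\mathcal{I}^{[k-1]}}\mu^{[k-1]}$: relative independence over $\mathcal{I}^{[k-1]}$ is built into the definition of that joining, and one propagates it to the statement about $x_{\vec{0}}$ versus $\tilde{\mathcal{B}}$. Your sketch names none of this, and the phrase ``the delicate combinatorics of this estimate is where the real work lies'' is an accurate self-diagnosis rather than a proof. A smaller point: you span $V$ only by the diagonal dual functions $\mathcal{D}g$; the cube integral is not symmetric enough in its $2^k-1$ free slots for a polarization identity to recover the generalized dual functions $E\bigl(\prod_{\omega\neq\vec{0}}b_\omega(x_\omega)\mid x_{\vec{0}}\bigr)$ from the diagonal ones, and it is the generalized ones whose span one actually controls in the product-stability argument. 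Since your characterization of $\{f:\|f\|_{U^k}=0\}$ holds verbatim with the generalized dual functions (the backward implication still only uses $g=f$), the remedy is simply to enlarge $V$ from the start -- but the relative-independence input is still needed and is not supplied.
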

	Proposition \ref{UCF} is proved in \cite[Lemma 4.3]{HK} for $G=\mathbb{Z}$ (see  \cite[Lemma A.32]{Berg& tao & ziegler} for general countable abelian groups). In the case of $\mathbb{Z}$-actions, Leibman \cite{L} showed that the $k$-th Host-Kra factor coincides with the $k$-th Ziegler factor \cite{Z}.\ The latter is the universal (minimal) characteristic factor for all the tuples $(a_1g,a_2g,...,a_kg)$, where $0\not=a_1,...,a_k\in\mathbb{Z}$ are distinct. Leibman's proof can be generalized to arbitrary countable abelian groups, assuming that the following subgroups: $(a_i G)_{i=1}^k$, $((a_i-a_j)G)_{1\leq i\not = j\leq k}$ are of finite index in $G$. Otherwise, $Z_{<k}(X)$ may not be a characteristic factor for the tuple $(a_1g,a_2g,...,a_kg)$.\\
	
	Proposition \ref{UCF} leads to the following definition.
	\begin{defn}
		Let $k\geq 1$ be an integer. Let $G$ be a countable abelian group and $X$ be an ergodic $G$-system. We say that $X$ is a system of order $<k$ if it is isomorphic as a $G$-system to its factor $Z_{<k}(X)$.
	\end{defn}
	
	\begin{rem}
		\text{ }
		\begin{itemize}
			\item {The trivial system is the only system of order $<1$.}
			\item{Any ergodic group rotation is of order $<2$ (see \cite{FurRoth}). The converse is also true (see \cite{HK}).}
			\item{Anti-example: No non-trivial weakly mixing system is of finite order.}
		\end{itemize}
	\end{rem}
	\textbf{Convention.} For an ergodic $G$-system $X$, we call $Z_{<2}(X)$ the Kronecker factor and $Z_{<3}(X)$ the C.L.\ factor (named after Conze and Lesigne \cite{CL84}, \cite{CL87}, \cite{CL88}) and we identify $Z_{<2}(X)$ with a group rotation (see Definition \ref{grouprotation:def}). Similarly, if $X=Z_{<2}(X)$ or $X=Z_{<3}(X)$, we say that $X$ is a Kronecker system or a C.L.\ system, respectively.\\
	
	It is well known that the Conze-Lesigne factor is an abelian extension of the Kronecker factor by an abelian group and a C.L.\ cocycle. We define these notions below.
	\begin{defn} [Abelian cohomology]
		Let $G$ be a countable abelian group, let $X$ be a $G$-system and let $(U,\cdot)$ be a compact abelian group. A measurable function $\rho:G\times X\rightarrow U$ is called a cocycle if $\rho(g+g',x)=\rho(g,x)\cdot \rho(g',T_gx)$ for every $g,g'\in G$ and $\mu$-almost every $x\in X$. The abelian extension of $X$ by the cocycle $\rho$ is defined to be the product space
		$$X\times_{\rho} U = (X\times U, \mathcal{B}_X\otimes\mathcal{B}_U,\mu_X\otimes \mu_U,S_g)$$ together with the action $S_g(x,u) = (T_gx,\rho(g,x)u)$. We denote this system by $X\times_\rho U$.
	\end{defn}
	We say that two cocycles $\rho,\rho':G\times X\rightarrow U$ are $(G,X,U)$-cohomologous (or just cohomologous), if there exists a measurable map $F:X\rightarrow U$ such that $\rho(g,x)/\rho'(g,x)=\Delta_g F(x)$ for all $g\in G$ and $\mu$-almost every $x\in X$. It is well known that cohomologous cocycles define isomorphic group extensions\footnote{The isomorphism is given by $(x,u)\mapsto(x,F(x)u)$.}. We let $B(G,X,U)$ denote the group of all coboundaries, these are functions $G\times X\rightarrow U$ of the form $(g,x)\mapsto \Delta_g F(x)$, where $F:X\rightarrow U$ is a measurable map.\\
	
	Observe that the group $U$ acts on the extension $X\times_\rho U$ by measure preserving transformations $V_u(x,v)=(x,uv)$. More generally, given an action of a compact abelian group $A$ on a system $X$ and $f:X\rightarrow U$ is a measurable map, we define $V_af(x)=f(ax)$ and $\Delta_a f(x)= V_af(x)\cdot f(x)^{-1}$.\\
	Below we define the notion of a C.L.\ cocycle with respect to a group $A$. 
	\begin{defn} [Conze-Lesigne cocycles] \label{CLcocycle}
		Let $G$ be a countable abelian group and let $X$ be an ergodic $G$-system. Let $U$ and $A$ be compact abelian groups and suppose that $A$ acts on $X$ by measure preserving transformations. We say that the cocycle $\rho:G\times X\rightarrow U$ is a C.L.\ cocycle with respect to $A$ if for every $a\in A$ there exist a homomorphism $c_a:G\rightarrow U$ and a measurable map $F_a:X\rightarrow U$ such that $$\Delta_a \rho(g,x) = c_a(g)\cdot \Delta_g F(x)$$
		for $\mu$-almost every $x\in X$ and all $g\in G$.
	\end{defn}
	In \cite{F&W} Furstenberg and Weiss proved the following result.\footnote{Furstenberg and Weiss proved this result under the assumption that $X$ is normal. Host and Kra \cite[Lemma 6.2]{HK} gave another proof without this assumption. We also note that the same proof holds for $G$-systems where $G$ is a countable abelian group.}
	\begin{thm} [$Z_{<3}(X)$ is an extension of the Kronecker by a C.L.\ cocycle] \label{CL} 
		Let $(X,\mathcal{B},\mu,T)$ be an ergodic invertible measure preserving system. Then there exist a compact abelian group $U$ and a cocycle $\rho:Z_{<2}(X)\rightarrow U$ such that $Z_{<3}(X)=Z_{<2}(X)\times_\rho U$ and for every $\chi\in\hat U$, $\chi\circ \rho$ is a C.L.\ cocycle with respect to $Z_{<2}(X)$.
	\end{thm}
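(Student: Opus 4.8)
The plan is to adapt the classical Conze--Lesigne structure theorem, in the form proved by Host and Kra \cite[Lemma 6.2]{HK}, to actions of a countable abelian group $G$, keeping the F{\o}lner averages and seminorms as in Proposition \ref{UCF}. The argument splits into two parts: first, producing the abelian group extension $Z_{<3}(X)=Z_{<2}(X)\times_\rho U$, and second, and more substantially, showing that each $\chi\circ\rho$ satisfies the Conze--Lesigne equation of Definition \ref{CLcocycle}.

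For the first part I would invoke the general structure theory underlying Proposition \ref{UCF}: a system of order $<3$ is an isometric extension of its Kronecker factor $K:=Z_{<2}(X)$, and the compactness and commutativity inherent in the cube measures $\mu^{[2]},\mu^{[3]}$ attached to the seminorms $\|\cdot\|_{U^2},\|\cdot\|_{U^3}$ allow the structure group to be taken abelian; this yields a compact abelian group $U$ and a cocycle $\rho:G\times K\to U$ with $X=K\times_\rho U$. It then suffices to fix $\chi\in\hat U$, write $\sigma:=\chi\circ\rho:G\times K\to\mathbb{T}$ (where $\mathbb{T}=\mathbb{R}/\mathbb{Z}$), and prove that for every $s\in K$ the cocycle $\Delta_s\sigma(g,y)=\sigma(g,y+s)\overline{\sigma(g,y)}$ is cohomologous to a homomorphism $c_s:G\to\mathbb{T}$, which is exactly the assertion that $\sigma$ is a C.L.\ cocycle with respect to the rotation action of $K$ on itself.

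The core of the proof is this quasi-coboundary statement, and I would establish it first for almost every $s\in K$. The guiding idea is that differentiating in the base direction lowers the Host--Kra order by one. Let $f(y,u)=\chi(u)$ on $X$, so that $\Delta_g f$ is the pullback of $\sigma(g,\cdot)$ from $K$; since $X=Z_{<3}(X)$ and $E(f\mid K)=0$, Proposition \ref{UCF} gives $\|f\|_{U^3(X)}>0$, so $f$ is genuinely seen at the third level. Realizing the extension $K\times_{\Delta_s\sigma}\mathbb{T}$ as the factor of the self-joining of $X$ with itself over $K$ in which the two $K$-coordinates are shifted by $s$ --- the factor generated by the function $(y,u,u')\mapsto\chi(u)\overline{\chi(u')}$ --- the order $<3$ hypothesis forces this factor to be of order $<2$, i.e.\ a group rotation. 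The standard fact that an order-$<2$ extension $K\times_\tau\mathbb{T}$ of the Kronecker arises from a cocycle $\tau$ cohomologous to a homomorphism then gives the C.L.\ equation for a.e.\ $s$. To pass from almost every $s$ to every $s$, I would use that the assignment $s\mapsto\Delta_s\sigma$ is itself a cocycle over $K$, that the quasi-coboundaries form a closed subgroup in the relevant Polish topology, and a Mackey-type measurable-selection argument choosing $c_s$ and $F_s$ measurably in $s$, whence ergodicity of the rotation on $K$ upgrades the identity to all $s$.

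The main obstacle is precisely this order-dropping step: making rigorous that the self-joining factor attached to $\Delta_s\sigma$ is of order $<2$, and then controlling the dependence on $s$ so that the cocycle identity holds for \emph{all} $s$ with a genuine homomorphism $c_s:G\to\mathbb{T}$. The passage to a general countable abelian $G$ is not expected to change the underlying algebra: one replaces the single transformation by the action $\{T_g\}_{g\in G}$ and Ces\`aro averages by F{\o}lner averages, and the needed properties of the seminorms and of the factors $Z_{<k}$ hold in this generality by Proposition \ref{UCF} and \cite[Appendix A]{Berg& tao & ziegler}. Ergodicity of $X$ is used throughout to ensure that $K$ is a genuine group rotation and that the cohomological arguments apply on the nose.
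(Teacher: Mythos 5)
Your outline is correct and would prove the theorem, but it follows the classical Furstenberg--Weiss/Host--Kra joining argument rather than the route the paper takes. The paper's working version of this statement is Proposition \ref{everything}, whose proof is a short deduction from the ``type'' calculus: $Z_{<3}(X)=Z_{<2}(X)\times_\rho U$ with $\rho$ of type $<2$ (Theorem \ref{HKextension} together with Theorem \ref{HKtype}); differentiation of $\rho$ by translation by $s\in Z_{<2}(X)$ lowers the type by one (Lemma \ref{diff}, quoted from Bergelson--Tao--Ziegler), so $\Delta_s(\chi\circ\rho)$ is of type $<1$ for \emph{every} $s$; and Theorem \ref{type0} (Moore--Schmidt) says a type $<1$ cocycle into $S^1$ is a character times a coboundary, which is exactly the C.L.\ equation. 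You instead re-derive the order-dropping step from scratch via the relative self-joining of $X$ over the Kronecker factor shifted by $s$, obtain the quasi-coboundary statement only for almost every $s$, and then need a separate measurable-selection and Steinhaus-type upgrade to all $s$; this is essentially the proof of \cite[Lemma 6.2]{HK} that the paper cites in its footnote to the theorem, so it is a legitimate path. What the paper's packaging buys is that the conclusion holds for every $s$ at once, with no a.e.-to-everywhere step and no selection argument, and that the hardest point you flag (making rigorous that the self-joining factor attached to $\Delta_s\sigma$ is of order $<2$) is precisely what Lemma \ref{diff} encapsulates, so it never has to be reproved. Note also that the step you invoke at the end --- an order $<2$ abelian extension of the Kronecker factor by a circle cocycle forces the cocycle to be cohomologous to a homomorphism --- is itself the conjunction of Theorem \ref{HKtype} and Theorem \ref{type0}, so if you adopt the type language there you may as well use it throughout and collapse your argument to the paper's.
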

	\subsection{The Conze-Lesigne factor as a factor of a nilpotent system}
	We briefly and informally explain how the methods we use in the proof of Theorem \ref{Khintchine} differ from the previous cases for $\mathbb{Z}$ and $\bigoplus_{p\in P}\mathbb{F}_p$ (Theorem \ref{Zrecurrence} and Theorem \ref{Precurrence}).\\
	The main difficulty in the proof of these theorems is to show that the Conze-Lesgine factor admits some nilpotent structure. This nilpotent structure leads to a convenient formula for the limit of average (\ref{average3}), which can be used to derive the recurrence result. In the generality of countable abelian groups we only managed to give partial results in this direction. More specifically, we show that for any ergodic system $(X,G)$ there exists an extension $(Y,H)$ (Definition \ref{nsext}) such that the C.L.\ factor of $Y$ has the structure of a $2$-step nilpotent homogeneous space\footnote{We also give a structure theorem for the Conze-Lesgine factor as a double co-set (see Theorem \ref{double coset}), but we do not use this result in the proof of the Khintchine recurrence.}. As usual, we reduce the study of the limit of average (\ref{average3}) to the case where the functions are measurable with respect to the Conze-Lesigne factor $Z_{<3}(X)$ (Theorem \ref{char}). The main difference is that now we have to pull everything up to the extension $Z_{<3}(Y)$. Using the nilpotent structure of $Z_{<3}(Y)$ we derive a formula for the limit of some multiple ergodic averages (Theorem \ref{formula}). This formula is used to deduce the Khintchine type recurrence result in Theorem \ref{Khintchine}.\\
	
	We begin by introducing a notion of an extension outside of the category of $G$-systems. Observe, that for a $G$-system $X=(X,\mathcal{B},\mu,T_g)$ and a countable abelian group $H$ with a surjective homomorphism $\varphi:H\rightarrow G$ there exists a natural $H$-action on $X$ by $S_h = T_{\varphi(h)}$. This leads to the following definition:
	\begin{comment}
	Let $Z_{<3}(X)$ be the Conze-Lesigne factor. We prove that we can find an extension of $Z_{<3}(X)$ in the category of $H$-systems (see Definition \ref{nsext} below) that is also a $2$ step nilpotent system. Roughly speaking, a $2$-step nilpotent system is the same as a nilsystem but the homogeneous group is not assumed to be a Lie group (formal definition below). Formally we define,
	\end{comment}
	\begin{defn} [Extensions] \label{nsext}
		Let $G$ and $H$ be countable abelian groups. We say that the system $Y=(Y,(S_h)_{h\in H})$ is an extension of $(X,(T_g)_{g\in G})$ if there exists a surjective homomorphism $\varphi:H\rightarrow G$ and a factor map $\pi:Y\rightarrow X$ such that $\pi\circ S_h = T_{\varphi(h)}\circ \pi$ for all $h\in H$.
	\end{defn}
	\begin{example} \label{example}
		Let $G=\Z/2\Z$ act on the space $X=\{-1,1\}$ by $T_gx = x^g$ and let $H=\Z/4\Z$ act on $Y=\{-1,-i,i,1\}$ by $S_h y=y^h$. Then, the system $(Y,H)$ defines an extension of $(X,G)$ with respect to the homomorphism
		\[
		\begin{aligned}
		&\varphi:H\rightarrow G \\ \varphi(h)&=h\mod 2
		\end{aligned}
		\]
		and the factor map $\pi:Y\rightarrow X$,  $\pi(y)=y^2$.
	\end{example}
	In particular, we see from this example that the family of ergodic $H$-extensions can be larger than the family of ergodic $G$-extensions (there is no ergodic $G$-action on $Y$). In example \ref{example2} below we see another advantage of these extensions.\\
	
	The following group was studied by Conze and Lesigne \cite{CL84}, \cite{CL87}, \cite{CL88} and generalized by Host and Kra \cite{HK} for systems of order $<k$, for any $k\in\mathbb{N}$ (see Definition \ref{HKgroup}).
	\begin{defn} [The homogeneous group] \label{CLgroup}
		Let $G$ be a countable abelian group, let $X$ be a C.L.\ $G$-system and write $X=Z_{<2}(X)\times_\rho U$ for some compact abelian group $U$. For every $s\in Z_{<2}(X)$ and $F:Z_{<2}(X)\rightarrow U$, let $S_{s,F}\in Z_{<2}(X)\ltimes \mathcal{M}(Z_{<2}(X),U)$ be the measure preserving transformation $S_{s,F}(z,u) = (sz,F(z)u)$. The C.L.\ group is given by
		$$\mG(X) = \{S_{s,F}\in  Z_{<2}(X)\ltimes \mathcal{M}(Z_{<2}(X),U) : \exists c:G\rightarrow U\text{ such that } \Delta_s \rho = c\cdot \Delta F\}$$ with the natural multiplication $S_{s,f}\circ S_{t,h} = S_{st,hV_tf}$.
	\end{defn}
	Equipped with the topology of convergence in measure $\mG(X)$ is a $2$-step nilpotent locally compact polish group.\\
	\begin{comment}
	\begin{defn} [$2$-step nilpotent system]
	Let $G$ be a countable abelian group and let $X$ be an ergodic $G$-system of order $<3$. We say that $X$ is a $2$-step nilpotent system if the action of the Host-Kra group on $X$ is essentially transitive\footnote{Namely, transitive on the complement of a null set.}.
	\end{defn}
	\end{comment}
	
	Our main result is the following structure theorem.
	\begin{thm} [Structure Theorem] \label{mainresult}
		Let $G$ be a countable abelian group and let $X$ be an ergodic $G$-system. Then, there exist an extension $(Y,H)$ and a $2$-step nilpotent locally compact polish group $\mG$ which acts transitively on $Z_{<3}(Y)$ by measure preserving transformations. Moreover, we can take $\mG=\mG(Z_{<3}(Y))$ as in Definition \ref{CLgroup}.
	\end{thm}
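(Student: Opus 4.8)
The plan is to first reduce to a Conze--Lesigne system and then to pin down exactly when the C.L.\ group is transitive. Applying Theorem \ref{CL} to $Z_{<3}(X)$, we may write $Z_{<3}(X)=Z\times_\rho U$ with $Z:=Z_{<2}(X)$ a compact abelian group, $\alpha\colon G\to Z$ the (dense) rotation homomorphism, and $\chi\circ\rho$ a C.L.\ cocycle with respect to $Z$ for every $\chi\in\hat U$. A direct computation with the cocycle identity shows that each $T_g$ equals $S_{\alpha(g),\rho(g,\cdot)}\in\mG(Z_{<3}(X))$, so the image of the projection $p\colon\mG(Z_{<3}(X))\to Z$, $S_{s,F}\mapsto s$, contains the dense subgroup $\alpha(G)$. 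Since replacing $F$ by $u_0F$ preserves membership in $\mG$ and lets one move freely inside a fibre, the C.L.\ group acts transitively on $Z_{<3}(X)$ if and only if $p$ is onto; equivalently, if and only if for every $s\in Z$ the Conze--Lesigne equation $\Delta_s\rho=c_s\cdot\Delta F_s$ admits a solution with a genuine homomorphism $c_s\colon G\to U$ and a measurable $F_s\colon Z\to U$.

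The scalar version of this equation is already solvable: by Theorem \ref{CL}, for each $\chi\in\hat U$ and each $s\in Z$ there are a homomorphism $\tilde c_{s,\chi}\colon G\to\mathbb{T}$ and $F_{s,\chi}\colon Z\to\mathbb{T}$ with $\Delta_s(\chi\circ\rho)=\tilde c_{s,\chi}\cdot\Delta F_{s,\chi}$. Multiplying these identities and invoking the cocycle relation shows that $\chi\mapsto\tilde c_{s,\chi}$ is a homomorphism $\hat U\to\hat G$ once one quotients out the gauge freedom $F_{s,\chi}\mapsto\eta F_{s,\chi}$, $\eta\in\hat Z$, i.e.\ modulo the subgroup $\alpha^{*}\hat Z\subseteq\hat G$. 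Producing a single $U$-valued homomorphism $c_s$ (equivalently, by Pontryagin duality $\mathrm{Hom}(G,U)\cong\mathrm{Hom}(\hat U,\hat G)$, lifting the induced homomorphism $\hat U\to\hat G/\alpha^{*}\hat Z$ through the quotient map $\hat G\to\hat G/\alpha^{*}\hat Z$) is exactly the obstruction to transitivity, and its class lives in $\mathrm{Ext}^1(\hat U,\alpha^{*}\hat Z)$. Since $\alpha^{*}\hat Z\cong\hat Z$ and $\hat Z$ fails to be divisible precisely when $Z$ has torsion (which typically happens when $G$ does), this group can be genuinely nonzero, so transitivity can fail over $G$.

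This is where the extension enters and is, I expect, the crux of the argument. I would remove the obstruction by enlarging the Kronecker factor so that its dual becomes divisible: letting $D$ be the divisible hull of the discrete group $\hat Z$ and $\widetilde Z:=\hat D$ the corresponding compact (torsion-free) group, the inclusion $\hat Z\hookrightarrow D$ dualizes to a surjection $\widetilde Z\twoheadrightarrow Z$. I would then choose a countable abelian group $H$ with a surjection $\varphi\colon H\twoheadrightarrow G$ together with a homomorphism $\alpha_H\colon H\to\widetilde Z$ of dense image making the square over $Z$ commute, lift $\rho$ to a cocycle $\wt\rho$ over $\widetilde Z$, and take $Y:=\widetilde Z\times_{\wt\rho}U$ as an $H$-system. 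By construction $Y$ is an extension of $(X,G)$ in the sense of Definition \ref{nsext}, and one checks that $Z_{<2}(Y)=\widetilde Z$ and $Z_{<3}(Y)=\widetilde Z\times_{\wt\rho}U$. The point of the enlargement is that $\alpha_H^{*}\widehat{\widetilde Z}$ is now a quotient of the divisible group $D$, hence divisible and injective, so $\mathrm{Ext}^1(\hat U,\alpha_H^{*}\widehat{\widetilde Z})=0$ and the lifts $c_s\colon H\to U$ exist; the passage $\Z/2\Z\rightsquigarrow\Z/4\Z$ of Example \ref{example}, which supplies the missing square root, is one step of exactly this ``divisibilization'' process.

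With homomorphisms $c_s$ available for all $s$ (chosen consistently, hence measurably, in $s$), the Conze--Lesigne equation over $H$ is solvable for every element of the Kronecker factor, the projection $p\colon\mG(Z_{<3}(Y))\to\widetilde Z$ is onto, and together with the vertical fibre rotations this makes $\mG(Z_{<3}(Y))$ act transitively on $Z_{<3}(Y)$. That $\mG(Z_{<3}(Y))$, with the topology of convergence in measure, is a $2$-step nilpotent locally compact Polish group acting by measure preserving transformations is the general fact recorded after Definition \ref{CLgroup}. The steps I expect to require the most care are the construction of the pair $(H,\widetilde Z)$ and the verification that the lifted system $Y$ really has $Z_{<3}(Y)=\widetilde Z\times_{\wt\rho}U$ (so that the enlarged Kronecker factor is genuinely of order $<2$ and the obstruction computed above is the relevant one), while keeping $Y$ ergodic and the resulting group locally compact and Polish.
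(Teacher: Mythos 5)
Your overall architecture coincides with the paper's: reduce to the presentation $Z_{<3}(X)=Z\times_\rho U$, observe that transitivity of $\mG(Z_{<3}(X))$ amounts to solving the Conze--Lesigne equation $\Delta_s\rho=c_s\cdot\Delta F_s$ with a genuine $U$-valued homomorphism $c_s$ for every $s$ in the Kronecker group, identify the obstruction to assembling the character-by-character solutions of Theorem \ref{CL} as a splitting problem for an extension of discrete abelian groups whose kernel is (a copy of) the eigenvalue group $\Spec_1\cong\hat Z$, and kill it by passing to an extension $(Y,H)$ in which this group becomes divisible, hence injective. This is exactly the content of the paper's Proposition \ref{typecircle:prop} (whose group $\mathcal{K}$ of pairs $(\chi,F)$ packages both the lifting of $\chi\mapsto c_{s,\chi}$ and the homomorphic dependence of $F_{s,\chi}$ on $\chi$ needed to invoke Pontryagin duality --- your $\mathrm{Ext}^1$ computation addresses the first explicitly and the second only implicitly) combined with Theorem \ref{roots:thm}, Theorem \ref{main:thm} and the reduction via Theorem \ref{divisible_extension1}.

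Where you genuinely diverge is in how the divisible extension is built, and this is where your proposal has a real gap. You propose a one-shot construction: dualize the inclusion of $\hat Z$ into its divisible hull $D$ to get $\widetilde Z\twoheadrightarrow Z$, choose $H\twoheadrightarrow G$ with a dense lift $\alpha_H:H\to\widetilde Z$, and set $Y=\widetilde Z\times_{\widetilde\rho}U$ with $\widetilde\rho$ the pullback of $\rho$. The paper instead proceeds iteratively (Proposition \ref{Integration V1} with $m=1$ and Theorem \ref{divisible_extension1}): it chooses $n$-th roots of eigenvalues in $\hat H$ (possible since $H$ is taken torsion free, Proposition \ref{torsionfree}), replaces the resulting homomorphism into $(S^1)^{\mathbb{N}}$ by a cohomologous \emph{minimal} cocycle via Zimmer's lemma (Lemma \ref{minimal}) so that each group extension stays ergodic, and takes an inverse limit. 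The step you flag as ``needing care'' --- that $Z_{<2}(Y)=\widetilde Z$ and $Z_{<3}(Y)=\widetilde Z\times_{\widetilde\rho}U$ --- is not mere bookkeeping: after enlarging the Kronecker factor, a character $\chi\circ\widetilde\rho$ of the pulled-back cocycle may become a coboundary over $\widetilde Z$ precisely because new eigenvalues were adjoined, so $Y$ as you define it need not be ergodic, and $U$ must first be replaced by the corresponding Mackey group before the transitivity argument can be run. This is repairable by the same minimality device the paper uses, and divisibility of $\alpha_H^*\widehat{\widetilde Z}$ survives the repair because quotients of divisible groups are divisible; likewise the existence of a dense lift $\alpha_H$ needs a short argument (adjoin to a free cover of $G$ extra generators mapping onto a dense subgroup of $\ker(\widetilde Z\to Z)$). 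With those two points supplied, your route gives a somewhat more direct construction of the extension than the paper's inverse-limit procedure, at the cost of handling the Mackey reduction explicitly.
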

	The moreover part in Theorem \ref{mainresult} plays an important role in the proof of the Khintchine type recurrence (Theorem \ref{Khintchine}). More specifically, it is used in the proof of the limit formula for some multiple ergodic averages (Theorem \ref{formula}), see Remark \ref{property H} for more details.
	\begin{rem}
	    In \cite{Rud}, Rudolph gave an example of an invertible measure preserving system $(X,T)$ of order $<3$ (i.e. $X=Z_{<3}(X)$) which is not isomorphic to a $2$-step nilpotent homogeneous space. In this paper we show that one can avoid such examples by assuming that the group of eigenfunctions of $X$ is divisible (see Theorem \ref{main:thm}). In Theorem \ref{divisible_extension1} we show that every ergodic $G$-system $X$ admits an extension with that property.
	\end{rem}
	The remark below contains important facts about the structure of $Z_{<3}(Y)$ as a homogeneous space. All of the properties in this remark are proved in the proof of Theorem \ref{main:thm}.
	\begin{rem}
		\label{properties} 
		In the settings of Theorem \ref{mainresult}, the system $Z_{<3}(Y)$ is isomorphic to the $G$-system $(\mG(Z_{<3}(Y))/\Gamma,\mathcal{B},\mu, R_g)$ where $\Gamma$ is the stabilizer of some $x_0\in Z_{<3}(Y_0)$, $\mathcal{B}$ is the Borel $\sigma$-algebra and $\mu$ the Haar measure\footnote{This measure exists because locally compact nilpotent groups are uni-modular.}. Moreover, $\Gamma$ is a totally disconnected closed co-compact subgroup of $\mG(Z_{<3}(Y))$ and there exists a homomorphism $\phi:G\rightarrow \mG(Z_{<3}(Y))$ such that the action $R_g$ is given by left multiplication by $\phi(g)$. 
	\end{rem}
	The factor map $\pi:Y\rightarrow X$, induces a factor $\tilde{\pi}: Z_{<3}(Y)\rightarrow Z_{<3}(X)$ and the following diagram commutes.
	
	\begin{figure}[H]
		\includegraphics{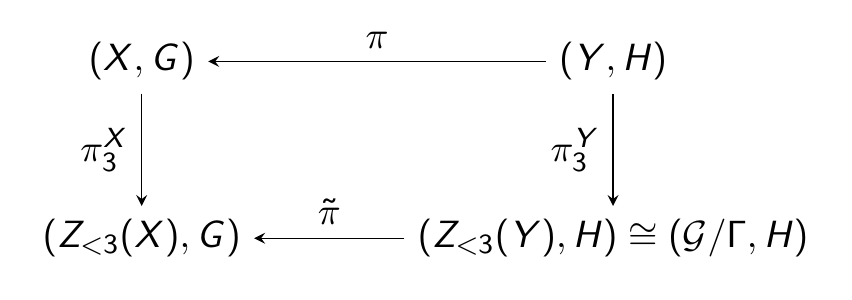}
	\end{figure}
	
	Theorem \ref{mainresult} shows that every ergodic C.L.\ system admits an extension with nilpotent structure. Below we prove a structure theorem for the C.L.\ factor itself (without passing to an extension).\footnote{I thank Yonatan Gutman who informed me that double cosets may be relevant in this work.}
	\begin{thm}[C.L.\ systems are double co-sets] \label{double coset}
	    Let $G$ be a countable abelian group and let $X$ be an ergodic $G$-system. There exists a $2$-step nilpotent locally compact polish group $\mathcal{G}$, a compact totally disconnected subgroup $K\leq \mathcal{G}$ and a closed totally disconnected subgroup $\Gamma\leq \mathcal{G}$ such that $Z_{<3}(X)\cong K\backslash \mathcal{G}/\Gamma$ where $G$ acts on $K\backslash \mathcal{G}/\Gamma$ through a homomorphism $\varphi:G\rightarrow \mathcal{G}$ whose image commutes with $K$.
	\end{thm}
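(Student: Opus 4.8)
The plan is to build everything on the structure theorem (Theorem \ref{mainresult}) and its refinement in Remark \ref{properties}, and then to realize $Z_{<3}(X)$ as a left quotient of the homogeneous space attached to an extension. Concretely, applying Theorem \ref{mainresult} to $X$ produces an extension $(Y,H)$ together with the $2$-step nilpotent locally compact polish group $\mathcal{G}=\mathcal{G}(Z_{<3}(Y))$, and by Remark \ref{properties} we obtain an isomorphism $Z_{<3}(Y)\cong \mathcal{G}/\Gamma$ with $\Gamma$ a totally disconnected closed co-compact subgroup, together with a homomorphism $\phi$ from $H$ into $\mathcal{G}$ that implements, by left translations, the action of $H$ on $\mathcal{G}/\Gamma\cong Z_{<3}(Y)$. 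Writing $\psi\colon H\to G$ for the surjection of Definition \ref{nsext} and $\pi\colon Y\to X$ for the associated factor map, pullback of functions shows that $Z_{<3}(X)$ is a factor of $Z_{<3}(Y)$, giving a factor map $\tilde\pi\colon \mathcal{G}/\Gamma\to Z_{<3}(X)$. My goal is to identify $\tilde\pi$ with the quotient by a left action of a compact totally disconnected subgroup $K\leq\mathcal{G}$.

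To produce $K$, set $N:=\ker\psi\leq H$. Since $\pi\circ S_h=T_{\psi(h)}\circ\pi$, the subgroup $N$ acts trivially on $X$, so $\tilde\pi$ is invariant under the $N$-action on $\mathcal{G}/\Gamma$, which by construction is left translation by $\phi(N)$. First I would argue that the extension can be arranged so that $X$ is precisely the quotient of $Y$ by the $N$-invariant $\sigma$-algebra; passing to third Host--Kra factors, this says that the fibres of $\tilde\pi$ are exactly the orbits of the closure $K:=\overline{\phi(N)}$ acting on $\mathcal{G}/\Gamma$ by left translations. This yields the measure isomorphism $Z_{<3}(X)\cong K\backslash(\mathcal{G}/\Gamma)=K\backslash\mathcal{G}/\Gamma$.

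It remains to verify the algebraic properties of $K$ and to describe the $G$-action. The commutation is essentially free: since $H$ is abelian, $\phi(H)$ is an abelian subgroup of $\mathcal{G}$, hence commutes with $\phi(N)$ and, by continuity of the multiplication, with $K=\overline{\phi(N)}$. Choosing a set-theoretic section of $\psi$ and composing with $\phi$ gives a map $\varphi\colon G\to\mathcal{G}$ which is well defined modulo $K$ (two lifts of $g\in G$ differ by an element of $N$, hence by an element of $\phi(N)\subseteq K$) and whose image lies in $\phi(H)$, so it commutes with $K$; since $\phi$ is a homomorphism and $\phi(N)\subseteq K$, left translation by $\varphi(g)$ descends to a genuine $G$-action on $K\backslash\mathcal{G}/\Gamma$ which matches the action transported from $X$ through $\tilde\pi$. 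For compactness and total disconnectedness of $K$ I would invoke the concrete form of the divisibility extension (Theorem \ref{divisible_extension1}): the passage from $X$ to $Y$ only enlarges the group of eigenfunctions to a divisible one, and the corresponding new directions in the Kronecker factor $Z_{<2}(Y)$, and hence in $\mathcal{G}$, are of profinite ($p$-adic) type, into which $N$ is mapped; this forces $\overline{\phi(N)}$ to be compact and totally disconnected, consistent with the total disconnectedness of $\Gamma$ recorded in Remark \ref{properties}.

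The \emph{hard part} will be the geometric identification in the second step, namely showing that the $G$-equivariant factor $\tilde\pi$ of the homogeneous space $\mathcal{G}/\Gamma$ is realized \emph{exactly} as the quotient by the left $K$-action, together with the verification that $K$ is compact and totally disconnected. The first of these requires arranging the extension of Theorem \ref{mainresult} so that $X$ coincides with the $N$-invariant factor of $Y$, guaranteeing that $\tilde\pi$ generates precisely the $\phi(N)$-invariant sets on $Z_{<3}(Y)$, so that no structure beyond the $\phi(N)$-orbits is lost or added in passing to the third Host--Kra factor. The second ties the abstract homogeneous-space picture to the profinite nature of the directions added by the divisibility extension, and I expect this to be the main technical obstacle.
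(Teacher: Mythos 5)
Your proposal founders on the identification of the compact group $K$. You set $N:=\ker(\psi\colon H\to G)$ and take $K=\overline{\phi(N)}$, claiming that the extension of Theorem \ref{mainresult} ``can be arranged so that $X$ is precisely the quotient of $Y$ by the $N$-invariant $\sigma$-algebra.'' This is not so, and the failure is not a technicality: the passage from $(X,G)$ to $(Y,H)$ consists of two separate steps, a change of acting group (replace $G$ by a torsion-free $H$ surjecting onto it) followed by a tower of genuine abelian extensions of the underlying measure space. Only the first step is seen by $N$; the second enlarges $Y$ beyond $X$ even when $N$ is trivial. Concretely, if $G=\mathbb{Z}$ (already torsion free) one takes $H=G$ and $N=\{0\}$, yet $Y$ is, say, the solenoid over the circle rotation as in Example \ref{rootexample}, so $\overline{\phi(N)}=\{1\}$ while the fibres of $Z_{<3}(Y)\to Z_{<3}(X)$ are cosets of the nontrivial $2$-adic group $\mathcal{Z}_2$. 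Your fallback of ``arranging'' the extension so that $N$ accounts for the new directions would force $N$ to map onto a dense subgroup of a profinite group such as $\prod_n C_n$, which is incompatible with $H$ being torsion free (a hypothesis the divisibility construction of Theorem \ref{divisible_extension} genuinely needs); so the gap cannot be patched within your framework. Relatedly, your justification of compactness and total disconnectedness of $\overline{\phi(N)}$ by appeal to the ``profinite nature of the new directions'' is not an argument: the elements $\phi(h)=S_{\beta_h,\rho(h,\cdot)}$ have a second coordinate in the non-compact Polish group $\mathcal{M}(Z_{<2}(Y),U)$, and precompactness there has to be proved, not asserted.

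The paper takes the group that your counterexample points to: $K:=\ker\bigl(\pi\colon \tilde Z\to Z\bigr)$, the kernel of the induced map between Kronecker factors, which is compact for free (a closed subgroup of the compact group $\tilde Z$) and is shown to be totally disconnected by tracking the construction of Theorem \ref{divisible_extension1} --- each stage extends by the image of a minimal cocycle cohomologous to one with values in $\prod_n C_n$, so $K$ is an inverse limit of totally disconnected groups. It then does not work inside $\mathcal{G}(Z_{<3}(Y))$ at all, but builds a bespoke $2$-step nilpotent Polish group of triples $(s,k,F)\in Z\times K\times\mathcal{M}(\tilde Z,U)$ in which $K$ sits as the visibly compact subgroup $\{1\}\times K\times\{1\}$, takes $\Gamma=\{1\}\times\{1\}\times\mathrm{Hom}(\tilde Z,U)$, and checks that $\varphi(g)=(T_g1,1,\rho(g,\pi(\cdot)))$ commutes with $K$ because the lifted cocycle $\rho\circ\pi$ is invariant under translation by $K$ --- not because $\phi(H)$ is abelian. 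Your outline of the commutation and of the descent of the $G$-action is fine in spirit, but it is attached to the wrong $K$; with the correct $K$ the double-coset identification $Z_{<3}(X)\cong K\backslash\mathcal{G}/\Gamma$ is essentially immediate from $\tilde Z\cong Z\times_{\tau'}K$ and the transitivity established in the proof of Theorem \ref{mainresult}, and the only remaining work is the total disconnectedness of $K$.
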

	A system $(X,G)$ is called a $k$-step nilsystem if it is isomorphic to a homogeneous space $\mG/\Gamma$ where $\mG$ is a $k$-step nilpotent Lie group, $\Gamma$ is a discrete co-compact subgroup and there exists a homomorphism $\phi:G\rightarrow\mathcal{G}$ such that $g\in G$ acts on $X$ by a left multiplication by $\phi(g)$. In \cite{CL84}, \cite{CL87}, \cite{CL88} Conze and Lesigne proved that the C.L.\ factor of an ergodic $\mathbb{Z}$-system is isomorphic to an inverse limit of $2$-step nilsystems. Host and Kra \cite{HK} and Ziegler \cite{Z} generalized this result by showing that for every $k\in\mathbb{N}$, a $\mathbb{Z}$-system of order $<k+1$ is an inverse limit of $k$-step nilsystems. Let $(\mG/\Gamma,R_a)$ be a $k$-step $\mathbb{Z}$-nilsystem where $R_a$ is a left translation by some $a\in \mG$. We denote by $\mu_\mG$ the Haar measure on $\mG$. For every $1\leq r\leq k+1$ let $\mG_r$ be the closed subgroup generated by the commutators of length $r$ in $\mG$ and let $m_r$ denote the Haar-measure on the quotient space $\mG_r/\Gamma_r$ where $\Gamma_r = \Gamma\cap \mG_r$. In \cite{Z1} Ziegler proved the following limit formula.
	\begin{thm} \label{Zformula}
		Let $X=(\mG/\Gamma,T)$ be a connected simply connected $k$-step nilsystem and let $f_1,...,f_{k+1}\in L^\infty(X)$. Then for $\mu_\mG$-almost every $x\in \mG$ we have
		\[
		\begin{aligned}
		\lim_{N\rightarrow\infty}\sum_{n=0}^{N-1} \prod_{i=1}^{k+1} T^{in} f_i(x\Gamma) &=\\ 
		\int_{\mG/\Gamma}\int_{\mG_2/\Gamma_2}...\int_{\mG_k/\Gamma_k}& \prod_{i=1}^{k+1} f_i (x\cdot \prod_{j=1}^i y_j^{\binom{i}{j}}\Gamma) \prod_{i=1}^{k+1}dm_i(y_i\Gamma_i). 
		\end{aligned}
		\]
	\end{thm}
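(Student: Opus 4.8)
The plan is to recast the multiple average as a single orbit average on a product nilsystem and then apply the equidistribution theory of nilsystems. Write $T$ as left translation by a fixed $a\in\mG$, so $T^{in}f_i(x\Gamma)=f_i(a^{in}x\Gamma)$, and assume (as is implicit) that $X$ is ergodic, i.e.\ its base rotation is. It suffices to treat continuous $f_i$: for these the unique ergodicity established below gives convergence for every $x$, and the passage to bounded measurable $f_i$ and $\mu_\mG$-a.e.\ $x$ follows by $L^1(\mu_\mG)$-approximation. Set $F=f_1\otimes\cdots\otimes f_{k+1}$ on $(\mG/\Gamma)^{k+1}$, let $\mathbf{x}=(x\Gamma,\dots,x\Gamma)$, and let $g(n)=(a^{n},a^{2n},\dots,a^{(k+1)n})$. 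Then $\tfrac1N\sum_{n=0}^{N-1}\prod_i T^{in}f_i(x\Gamma)=\tfrac1N\sum_{n=0}^{N-1}F\bigl(g(n)\,\mathbf{x}\bigr)$, and since a product of nilsystems is a nilsystem, the problem reduces to understanding the closure and equidistribution of the orbit $\{g(n)\mathbf{x}\}$ in $(\mG/\Gamma)^{k+1}$.

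The heart of the argument is to identify this orbit closure as a sub-nilmanifold. Consider the map
\[
\psi:\mG\times\mG_2\times\cdots\times\mG_{k+1}\to\mG^{k+1},\qquad
\psi(y_1,\dots,y_{k+1})=\left(\prod_{j=1}^{i}y_j^{\binom{i}{j}}\right)_{i=1}^{k+1},
\]
and let $\mG^{\diamond}$ be its image. I would first prove that $\mG^{\diamond}$ is a closed subgroup of $\mG^{k+1}$; this is the key algebraic step, where the binomial coefficients are forced by the Pascal identity together with the nilpotent commutator calculus, since a product of two generators differs from a point of $\mG^{\diamond}$ only by commutators $[y_j,y_{j'}]\in\mG_{j+j'}$, which are absorbed into the later coordinates $y_{l}$ with $l=j+j'$. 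Because $\binom{i}{1}=i$, one has $g(n)=\psi(a^{n},e,\dots,e)\in\mG^{\diamond}$, so the entire orbit lies in the coset $\mG^{\diamond}\mathbf{x}$, and $(\mG^{\diamond}\mathbf{x}\Gamma^{k+1}/\Gamma^{k+1},S)$ with $S$ left translation by $g(1)$ is a sub-nilsystem.

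It then remains to show that $\{g(n)\mathbf{x}\}$ equidistributes in this sub-nilmanifold with respect to its homogeneous measure. By the structure theory of nilsystems, a nilsystem is ergodic precisely when its Kronecker (abelianized) factor is, and an ergodic nilsystem is uniquely ergodic with every orbit equidistributing (Parry/Leibman). Modulo the commutator subgroup only the $y_1$ coordinate survives and $\psi_i\equiv y_1^{\,i}$, so the abelianization of $\mG^{\diamond}$ is the ``scaled diagonal'' torus and $g(1)$ projects to $(\bar a,2\bar a,\dots,(k+1)\bar a)$; equidistribution there is exactly density of $\{n\bar a\}$ in the base torus, i.e.\ the assumed ergodicity of $X$. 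Hence unique ergodicity gives $\tfrac1N\sum_n F(g(n)\mathbf{x})\to\int F$ against the homogeneous measure on the orbit closure. Finally, using that for a connected simply connected $k$-step group $\exp$ is a diffeomorphism, the $\mG_r$ are connected with $\Gamma_r=\Gamma\cap\mG_r$ lattices, and $\psi$ carries $\prod_r m_r$ to this measure, one rewrites the limit for $\mu_\mG$-a.e.\ $x$ as $\int_{\mG/\Gamma}\cdots\int_{\mG_{k+1}/\Gamma_{k+1}}\prod_i f_i\bigl(x\prod_j y_j^{\binom{i}{j}}\Gamma\bigr)\prod_i dm_i(y_i\Gamma_i)$.

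The main obstacle I anticipate is the identification and ergodicity of the sub-nilmanifold $\mG^{\diamond}\mathbf{x}$: establishing that $\mG^{\diamond}$ is genuinely a closed subgroup (the binomial/Pascal bookkeeping inside the lower central series) and that the single orbit direction $g(\Z)$, together with the commutators it generates, exhausts $\mG^{\diamond}$ modulo the lattice. The reduction of this ergodicity to the abelianized base rotation via the nilpotent structure theorem is what makes it tractable, and the connected simply connected hypothesis is used throughout to guarantee that the $\Gamma_r$ are lattices in $\mG_r$ and that the measures $m_r$ assemble correctly under $\psi$ to produce the iterated integral.
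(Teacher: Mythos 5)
A preliminary point: the paper does not prove Theorem \ref{Zformula} at all --- it is quoted from Ziegler \cite{Z1} as motivating background --- so there is no in-paper proof of this exact statement to compare against. The closest thing is the paper's own limit formula, Theorem \ref{formula}, proved in Section \ref{formula:sec} following Bergelson--Host--Kra \cite{BHK}, and your proposal is essentially that template: form the product nilsystem, identify the orbit closure of the diagonal point as a sub-nilmanifold cut out by the Hall--Petresco/Leibman group (your $\mG^{\diamond}$ is the paper's $\tilde{\mG}^\star$; that it is a closed subgroup is Leibman's theorem \cite{Leib}, cited for exactly this purpose), prove unique ergodicity by reducing to the abelianization via Green/Parry/Leibman \cite{Parryb}, \cite{Leib2}, and unwind the homogeneous measure into the iterated integral. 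The two places where your route genuinely differs from the paper's analogue are instructive. First, the paper proves ergodicity of the joint $G\times G$ action generated by $g^\triangle$ and $g^\star$ on all of $\tilde{\mG}/\tilde{\Gamma}$ and then disintegrates $\tilde{\mu}=\int\delta_x\otimes\tilde{\mu}_x\,d\mu(x)$ (Lemma \ref{measure}) to identify the fiberwise limit, whereas you work directly on the fiber over a fixed $x$; this is legitimate in your setting precisely because $\mG$ connected and simply connected forces $\mG^{\diamond}$ to be connected, so Green's criterion applies verbatim to the fiber nilsystem and its ergodicity reduces to that of the base rotation independently of $x$ --- this is exactly the step that fails in the paper's non-Lie setting and forces the detour through Lemma \ref{structure}, Remark \ref{property H} and Lemma \ref{open}. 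Second, your identification of $(\mG^{\diamond})_2$ with $\psi(\{e\}\times\mG_2\times\cdots)$, which you rightly flag as the crux, is the computation of Lemma \ref{open}: the commutators one can actually form produce exponents $i$ and $i^2$, hence $i^2-i=2\binom{i}{2}$, and one needs $2$-divisibility of $\mG_2$ (automatic here by connectedness) to extract the exponent $\binom{i}{2}$. Two small points to tighten: the ergodicity hypothesis you inserted is indeed needed (the statement is false without it), and the passage from continuous $f_i$ to $L^\infty$ functions for $\mu_{\mG}$-a.e.\ $x$ requires the standard maximal-inequality/Birkhoff argument applied to each transformation $T^i$ separately, not merely $L^1$-approximation of the limiting functional. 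With those caveats the outline is correct.
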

    Then, in \cite{BHK} Bergelson Host and Kra generalized this result for non-connected nilsystems which satisfy that $\mG$ is generated by its connected component and $a$.\\
	In \cite{Berg& tao & ziegler} Bergelson Tao and Ziegler studied the structure of the universal characteristic factors associated with $\mathbb{F}_p^\omega$-actions. They showed that any ergodic $\mathbb{F}_p^\omega$-system has the structure of a Weyl system\footnote{A Weyl system is a tower of abelian extensions of the trivial system where the cocycles are phase polynomials (see Definition \ref{phasepoly:def}).} and proved a similar limit formula for multiple ergodic averages associated with this group \cite{BTZ}. Any Weyl system of order $<k+1$ has the structure of a $k$-step nilpotent homogeneous space. This structure theorem was generalized by the author \cite{OS} for $\bigoplus_{p\in P}\mathbb{F}_p$-systems in the special case $k=2$ and in \cite{OS2} for general $k\in\mathbb{N}$.\\
	
	A key component in the proof of Theorem \ref{mainresult} is a result about the (point) spectrum of the $G$ action as a unitary operator on $L^2(X)$. Let $k\geq 1$. We say that a measurable function $P:X\rightarrow S^1$ is a phase polynomial of degree $<k$ if for any $g_1,...,g_{k}\in G$ we have $\Delta_{g_1}...\Delta_{g_k}P=1$ and write $P_{<k}(X,S^1)$ for the group of all phase polynomials of degree $<k$. The $k$-th spectrum of $X$ is defined to be the group
	$$\Spec_k(X)=\{\lambda:G^k\rightarrow S^1 : \exists P\in P_{<k+1}(X,S^1) \text{ s.t. } \forall g_1,...,g_k, \text { } \lambda(g_1,...,g_k)=\Delta_{g_1}...\Delta_{g_k} P \}.$$
	In Theorem \ref{divisible_extension} below we show that for any ergodic system $(X,G)$ there exists an extension $(Y,H)$ such that for every element in $\lambda\in \Spec_k(X)$ and $n\in\mathbb{N}$ there is an $n$-th root for the corresponding element in $\Spec_k(Y)$. In the special case where $k=1$, we show (Theorem \ref{divisible_extension1}) that $P_{<2}(Y,S^1)$ is a divisible group. The following example illustrates this phenomenon in a simple case.
	\begin{example} \label{example2}
		Let $(X,G)$ and $(Y,H)$ be as in Example \ref{example}.\\ The vector space of eigenfunctions of $X$ is spanned by the constant $1$ and the embedding $\chi:X\rightarrow S^1$, $\chi(x)=x$. This finite group $\{1,\chi\}$, under pointwise multiplication, is not divisible. For instance, because there is no square root for $\chi$. On the other hand, let $\chi\circ \pi$ be the lift of $\chi$ to $Y$. We see that the eigenfunction $\tau:Y\rightarrow S^1$, $\tau(x)=x$ is a square root of $\chi\circ\pi$.\\
		This process can be iterated infinitely many times using inverse limits. The result is an extension of $X$ with a divisible group of eigenfunctions into $S^1$. We do this in detail in section \ref{ext:sec} (see also Example \ref{rootexample}).
	\end{example}
	If $(Y,H)$ is a Conze-Lesigne system with a divisible $1$-spectrum, then $\mG(Y)$ acts transitively on $Y$ (Theorem \ref{main:thm}). It is natural to ask whether the same holds for systems of higher order.
	\begin{quest*}
		Let $k\geq 3$, let $G$ be a countable abelian group, and let $(X,G)$ be an ergodic system such that $\Spec_1(X),...,\Spec_{k-1}(X)$ are divisible. Is it true that $Z_{<k+1}(X)$ has the structure of a $k$-step nilpotent homogeneous space? More specifically, is it true that the Host-Kra group $\mathcal{G}(X)$ (Definition \ref{HKgroup}) acts transitively on $X$?
	\end{quest*}
\textbf{Acknowledgment} I would like to thank my adviser Prof. Tamar Ziegler for many valuable discussions and suggestions.

	\section{The Conze-Lesigne factor is characteristic} \label{char:sec}
	In this section we prove that under the assumptions in Theorem \ref{Khintchine}, the C.L.\ factor is a characteristic factor for the tuple $(ag,bg,(a+b)g)$. Our main tool is the van der Corput lemma, (see e.g. \cite{Be}).
	\begin{lem} [van der Corput lemma]
		Let $\mathcal{H}$ be a Hilbert space and $G$ be an amenable group. Then, for every F{\o}lner sequence $\Phi_N$ and any bounded sequence $\{x_g\}_{g\in G}\subseteq \mathcal{H}$ we have:  If $\lim_{N\rightarrow\infty}\mathbb{E}_{g\in\Phi_N}\left<x_{g+h},x_g\right>$ exists for every $h\in G$ and there exists $M\in\mathbb{R}$ such that for any F{\o}lner sequence $\Psi_H$, \begin{equation}\label{cont}\limsup_{H\rightarrow\infty} |\mathbb{E}_{g\in\Psi_H} \lim_{N\rightarrow\infty}\mathbb{E}_{g\in\Phi_N}\left<x_{g+h},x_g\right>|\leq M.
		\end{equation} Then,
		$$\limsup_{N\rightarrow\infty} \|\mathbb{E}_{g\in\Phi_N}x_g\|^2 \leq M.$$
		In particular, if $\lim_{H\rightarrow\infty}\mathbb{E}_{g\in\Psi_H}\lim_{N\rightarrow\infty}\mathbb{E}_{g\in\Phi_N}\left<x_{g+h},x_g\right> =0$, then $\lim_{N\rightarrow\infty}\mathbb{E}_{g\in\Phi_N} x_g=0$.
	\end{lem}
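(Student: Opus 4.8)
The plan is to run the standard ``smoothing $+$ Cauchy--Schwarz'' argument for the van der Corput lemma, adapted to an amenable group, and then to bridge the usual gap between a quadratic and a linear average by exploiting that the hypothesis is assumed for \emph{every} F{\o}lner sequence. Throughout write $\gamma(h):=\lim_{N\to\infty}\mathbb{E}_{g\in\Phi_N}\langle x_{g+h},x_g\rangle$, which exists by assumption and satisfies $|\gamma(h)|\le \sup_g\|x_g\|^2<\infty$.

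First I would fix an auxiliary F{\o}lner sequence $\Psi_H$ and smooth the average against it. For each fixed $h$, the F{\o}lner property gives $\|\mathbb{E}_{g\in\Phi_N}x_{g+h}-\mathbb{E}_{g\in\Phi_N}x_g\|\to 0$ as $N\to\infty$; averaging this over the finite set $h\in\Psi_H$ shows that $\mathbb{E}_{g\in\Phi_N}x_g$ and $\mathbb{E}_{g\in\Phi_N}\mathbb{E}_{h\in\Psi_H}x_{g+h}$ differ by a quantity tending to $0$, so they share the same $\limsup_N$ of the norm. By Cauchy--Schwarz (convexity of $\|\cdot\|^2$ under the average over $g$), $\|\mathbb{E}_{g\in\Phi_N}\mathbb{E}_{h\in\Psi_H}x_{g+h}\|^2\le \mathbb{E}_{g\in\Phi_N}\|\mathbb{E}_{h\in\Psi_H}x_{g+h}\|^2=\mathbb{E}_{h,h'\in\Psi_H}\mathbb{E}_{g\in\Phi_N}\langle x_{g+h},x_{g+h'}\rangle$. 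For each fixed pair $(h,h')$, substituting $u=g+h'$ and using the F{\o}lner property once more gives $\mathbb{E}_{g\in\Phi_N}\langle x_{g+h},x_{g+h'}\rangle\to \gamma(h-h')$; since $\Psi_H$ is finite I may interchange this limit with the average over $(h,h')$. Setting $Q_H:=\mathbb{E}_{h,h'\in\Psi_H}\gamma(h-h')$ I obtain, for every $H$,
\[
\limsup_{N\to\infty}\|\mathbb{E}_{g\in\Phi_N}x_g\|^2\le Q_H .
\]
Note that $Q_H=\lim_{N\to\infty}\mathbb{E}_{g\in\Phi_N}\|\mathbb{E}_{h\in\Psi_H}x_{g+h}\|^2\ge 0$ is real and nonnegative.

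The main obstacle is the final step: $Q_H$ is a \emph{quadratic} (difference-set) average of $\gamma$, whereas the hypothesis only controls the \emph{linear} averages $\mathbb{E}_{h\in\Psi_H}\gamma(h)$. To close the gap I would show $\limsup_H Q_H\le M$. Writing $Q_H=\mathbb{E}_{h'\in\Psi_H}\big[\mathbb{E}_{k\in\Psi_H-h'}\gamma(k)\big]$ and passing to a subsequence $H_j$ with $Q_{H_j}\to\limsup_H Q_H$, the fact that the nonnegative real number $Q_{H_j}$ equals the average over $h'\in\Psi_{H_j}$ of the real parts of $\mathbb{E}_{k\in\Psi_{H_j}-h'}\gamma(k)$ lets me choose, for each $j$, an element $h'_j\in\Psi_{H_j}$ with $|\mathbb{E}_{k\in\Psi_{H_j}-h'_j}\gamma(k)|\ge Q_{H_j}$. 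The translated sets $\Psi'_j:=\Psi_{H_j}-h'_j$ again form a F{\o}lner sequence, since the F{\o}lner condition is invariant under translation and under passing to a subsequence. Applying the hypothesis to $\Psi'_j$ gives $\limsup_j|\mathbb{E}_{k\in\Psi'_j}\gamma(k)|\le M$, whence $\limsup_H Q_H\le M$. Combined with the displayed inequality, which holds for every $H$, this yields $\limsup_{N\to\infty}\|\mathbb{E}_{g\in\Phi_N}x_g\|^2\le \liminf_H Q_H\le M$, as required. The ``in particular'' statement then follows by taking $M=0$, forcing $\lim_{N\to\infty}\mathbb{E}_{g\in\Phi_N}x_g=0$ in norm.
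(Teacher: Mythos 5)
Your proposal is correct and follows essentially the same route as the paper: smooth against an auxiliary F{\o}lner average, apply Cauchy--Schwarz to reach the quadratic average $\mathbb{E}_{h,h'\in\Psi_H}\gamma(h-h')$, rewrite it as an average of linear averages over translated F{\o}lner sets, and then pigeonhole a good translate $h'$ so that the ``for every F{\o}lner sequence'' hypothesis applies to the shifted sequence. The only difference is cosmetic: the paper closes the pigeonhole step by contradiction along a subsequence where the quantity exceeds $M+\varepsilon_1$, whereas you argue directly by extracting a subsequence realizing $\limsup_H Q_H$ and using that $Q_H\ge 0$ is real.
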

	
	\begin{proof}
		Let $\varepsilon>0$ be arbitrary. By the properties of a F{\o}lner sequence, we have that for sufficiently large $N$ and $H$, 
		$$\|\mathbb{E}_{g\in\Phi_N} x_g - \mathbb{E}_{h\in\Phi_H} \mathbb{E}_{g\in\Phi_N} x_{g+h}\|<\varepsilon.$$
		We use $o(\varepsilon)$ to denote a positive quantity that goes to $0$ as $\varepsilon\rightarrow 0$. Since $x_g$ is bounded, the triangle inequality gives 
		$$\|\mathbb{E}_{g\in\Phi_N} x_g\|^2 \leq \mathbb{E}_{g\in\Phi_N} \|\mathbb{E}_{h\in \Phi_H}x_{g+h}\|^2+o(\varepsilon).$$
		Then, the right hand side becomes
		$$\mathbb{E}_{g\in \Phi_N}  \mathbb{E}_{h\in\Phi_H}\mathbb{E}_{h'\in\Phi_H} \left<x_{g+h},x_{g+h'}\right> + o(\varepsilon).$$
		We make a change of variables and change the order of summation.
		$$ \label{quantity} \mathbb{E}_{h'\in\Phi_H}\mathbb{E}_{h\in\Phi_H} \mathbb{E}_{g\in \Phi_N+h'} \left<x_{g+h-h'},x_{g}\right> + o(\varepsilon).$$
		As $\Phi_N$ is a F{\o}lner sequence, taking a limit as $N\rightarrow\infty$ we get that for sufficiently large $H$, the above equals to
		$$\mathbb{E}_{h'\in\Phi_H} \mathbb{E}_{h\in\Phi_H} \gamma_{h-h'} + o(\varepsilon).$$
		Making a change of variables again this becomes
		\begin{equation}\label{final}\mathbb{E}_{h'\in \Phi_H}\mathbb{E}_{h\in \Phi_H+h'}  \gamma_h + o(\varepsilon).
		\end{equation}
		Let $\varepsilon_1>0$, and suppose by contradiction that there exists a subsequence $H_k\underset{k\rightarrow\infty}{\longrightarrow}\infty$ such that for every $k$, $$\left|\mathbb{E}_{h'\in \Phi_{H_k}} \mathbb{E}_{h\in \Phi_{{H_k}+h'}}\gamma_h\right| > M+\varepsilon_1.$$
		Then we can find $h'_k\in \Phi_{H_K}$ such that $$\left|\mathbb{E}_{h\in \Phi_{{H_k}+h_k'}}\gamma_h\right| > M+\varepsilon_1.$$
		However, $\Psi_k = \Phi_{{H_k}+h'_k}$ is a F{\o}lner sequence and we have a contradiction to (\ref{cont}). Therefore the $\limsup_{H\rightarrow\infty}$ of (\ref{final}) is bounded above by
		$M + o(\varepsilon)$. As $\varepsilon>0$ is arbitrary the claim follows.
	\end{proof}
	
	The first application of this lemma is the following result of Furstenberg and Weiss \cite{F&W}.
	\begin{lem} [The Kronecker factor is characteristic for double averages] \label{Kronecker}
		Let $G$ be a countable abelian group and let $X$ be an ergodic $G$-system. Suppose that $a,b\in\mathbb{Z}$ are such that $aG,bG$ and $(b-a)G$ are of index $d_a,d_b$ and $d_{b-a}$ in $G$, respectively. Fix $f_1,f_2\in L^\infty(X)$ with $\|f_1\|_\infty,\|f_2\|_\infty\leq 1$ and let $x_g = T_{ag}f_1\cdot T_{bg}f_2$. Then $\lim_{N\rightarrow\infty}\mathbb{E}_{g\in\Phi_N} x_g$ exists and
		$$\norm{\lim_{N\rightarrow\infty}\mathbb{E}_{g\in\Phi_N} x_g}_{L^2(X)}^2\leq d_{b-a}\cdot\min\{ d_a\cdot \|f_1\|_{U^2(X)},  d_b\cdot \|f_2\|_{U^2(X)}\}$$
		in $L^2$ for every F{\o}lner sequence $\Phi_N$ of $G$.
	\end{lem}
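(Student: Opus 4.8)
The plan is to combine a single application of the van der Corput lemma with the mean ergodic theorem for the finite-index subgroup $(b-a)G$, and then to extract the $U^2$-control through the eigenfunction (Fourier) description of the seminorm. I will establish $\|\lim_N\mathbb{E}_{g\in\Phi_N}x_g\|_{L^2}^2\le d_{b-a}d_a\|f_1\|_{U^2(X)}$; the bound with $d_b\|f_2\|_{U^2(X)}$ follows from the identical argument after pulling out $T_{-bg}$ instead of $T_{-ag}$ below (here $(a-b)G=(b-a)G$, so the same index $d_{b-a}$ occurs). Existence of $\lim_N\mathbb{E}_{g\in\Phi_N}x_g$ in $L^2$ is guaranteed by Walsh's theorem \cite{Walsh}.

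Applying the van der Corput lemma to $x_g=T_{ag}f_1\cdot T_{bg}f_2$ and shifting each correlation by the measure preserving transformation $T_{-ag}$ yields $\langle x_{g+h},x_g\rangle=\int_X\Delta_{ah}f_1\cdot T_{(b-a)g}(\Delta_{bh}f_2)\,d\mu$, where $\Delta_{ch}f=T_{ch}f\cdot\overline f$. Averaging over $g$ and letting $N\to\infty$, the mean ergodic theorem for the action of $(b-a)G$ shows that $\gamma_h:=\lim_N\mathbb{E}_{g\in\Phi_N}\langle x_{g+h},x_g\rangle$ exists and equals $\int_X\Delta_{ah}f_1\cdot\phi_h\,d\mu$, where $\phi_h:=E(\Delta_{bh}f_2\mid\mathcal{I})$ and $\mathcal{I}$ is the $\sigma$-algebra of $(b-a)G$-invariant sets; since conditional expectation contracts the $L^\infty$ norm and $\|\Delta_{bh}f_2\|_\infty\le\|f_2\|_\infty^2\le1$, we have $\|\phi_h\|_\infty\le1$. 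By the lemma it now suffices to bound $\limsup_H|\mathbb{E}_{h\in\Psi_H}\gamma_h|$ for every F{\o}lner sequence $\Psi_H$.

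As $X$ is $G$-ergodic and $(b-a)G$ has index $d_{b-a}$, the $\sigma$-algebra $\mathcal{I}$ is generated by $m\le d_{b-a}$ ergodic components $X_1,\dots,X_m$ of equal mass $1/m$ that $G$ permutes transitively. Writing $\phi_h=\sum_j c_{j,h}\mathbf 1_{X_j}$ with $|c_{j,h}|\le1$ and using the Cauchy-Schwarz inequality in $h$ on each component, $|\mathbb{E}_{h\in\Psi_H}\gamma_h|\le\sum_{j=1}^m\big(\mathbb{E}_{h\in\Psi_H}|\langle\Delta_{ah}f_1,\mathbf 1_{X_j}\rangle|^2\big)^{1/2}$. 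The variable $h$ enters only through $ah\in aG$, and the function $k\mapsto|\langle\Delta_kf_1,\mathbf 1_{X_j}\rangle|^2$ is nonnegative; since the mean of a nonnegative function over the index-$d_a$ subgroup $aG$ is at most $d_a$ times its mean over $G$, I may bound each inner average by $d_a\lim_H\mathbb{E}_{k\in\Psi_H}|\langle\Delta_kf_1,\mathbf 1_{X_j}\rangle|^2$, which supplies the factor $d_a$.

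The crux is the resulting full $G$-average. Let $(e_\lambda)_\lambda$ be an orthonormal basis of eigenfunctions of the Kronecker factor $Z_{<2}(X)$ and $c_\lambda:=\langle f_1,e_\lambda\rangle$, so that $\|f_1\|_{U^2(X)}^4=\sum_\lambda|c_\lambda|^4$ and, by the standard spectral identity, $\lim_H\mathbb{E}_{k\in\Psi_H}|\langle\Delta_kf_1,\mathbf 1_{X_j}\rangle|^2=\sum_\lambda|c_\lambda|^2|\langle f_1\mathbf 1_{X_j},e_\lambda\rangle|^2$. The decisive point is that each indicator $\mathbf 1_{X_j}$, being $(b-a)G$-invariant, lies in the span of the eigenfunctions $e_\psi$ whose eigenvalues $\psi$ annihilate $(b-a)G$, a group of at most $d_{b-a}$ characters. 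Expanding $\mathbf 1_{X_j}$ in these $e_\psi$ and using $e_\lambda e_\psi=e_{\lambda\psi}$ rewrites $\langle f_1\mathbf 1_{X_j},e_\lambda\rangle$ through the coefficients $c_{\lambda\psi}$; Cauchy-Schwarz over the $\le m$ components together with Parseval over the component group (the $m$-factors cancelling) reduces the quantity to $\sum_\psi\sum_\lambda|c_\lambda|^2|c_{\lambda\psi}|^2$. By the inequality $2|c_\lambda|^2|c_{\lambda\psi}|^2\le|c_\lambda|^4+|c_{\lambda\psi}|^4$ and reindexing, $\sum_\lambda|c_\lambda|^2|c_{\lambda\psi}|^2\le\sum_\lambda|c_\lambda|^4=\|f_1\|_{U^2(X)}^4$ for each of the $\le d_{b-a}$ characters $\psi$, which supplies the factor $d_{b-a}$. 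Combining the two reductions gives $\limsup_H|\mathbb{E}_{h\in\Psi_H}\gamma_h|\le\sqrt{d_ad_{b-a}}\,\|f_1\|_{U^2(X)}^2$, and since $\sqrt{d_ad_{b-a}}\le d_ad_{b-a}$ and $\|f_1\|_{U^2(X)}\le\|f_1\|_\infty\le1$ this is at most $d_{b-a}d_a\|f_1\|_{U^2(X)}$, as required. I expect this final reduction to be the main obstacle: the non-ergodicity of the subaction $(b-a)G$ forces the component indicators into the estimate, and only their identification as finite-order $G$-eigenfunctions, together with the $\ell^4$ (Fourier) description of the $U^2$-seminorm, allows one to recover the global seminorm $\|f_1\|_{U^2(X)}$ while keeping the constants under control.
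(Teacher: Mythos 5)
Your proposal is correct and follows the paper's skeleton for the first half exactly: the van der Corput lemma, the identity $\langle x_{g+h},x_g\rangle=\int_X\Delta_{ah}f_1\cdot T_{(b-a)g}\Delta_{bh}f_2\,d\mu$, the mean ergodic theorem for $(b-a)G$, and the decomposition of $X$ into at most $d_{b-a}$ ergodic components for that subaction. Where you diverge is the endgame. The paper keeps both functions in play: it averages over $h$, applies the mean ergodic theorem to the product action $T_{ah}\times T_{bh}$ on $X\times X$, describes the resulting invariant function $H(x,y)$ as a sum of products of eigenfunctions, and closes with the bound $\max_{\chi}|\langle f_i,\chi\rangle|\le\|f_i\|_{U^2}$; this yields both the $d_a\|f_1\|_{U^2}$ and $d_b\|f_2\|_{U^2}$ bounds from a single computation. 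You instead discard $f_2$ immediately via $\|E(\Delta_{bh}f_2\mid\mathcal I)\|_\infty\le1$ and Cauchy--Schwarz in $h$, which confines the spectral analysis to the diagonal action on $X\times X$ and lets you close with the $\ell^4$ identity $\|f_1\|_{U^2}^4=\sum_\lambda|c_\lambda|^4$; the price is having to rerun the argument with the roles of $a$ and $b$ exchanged, the gain is a cleaner single-variable computation and a quantitatively stronger intermediate bound in $\|f_1\|_{U^2}^2$ (which you then relax to $\|f_1\|_{U^2}$ using $\|f_1\|_\infty\le1$, exactly matching the stated constant). Two steps are stated a bit loosely but are repairable and no worse than the paper's own bookkeeping: the comparison $\limsup_H\mathbb E_{h\in\Psi_H}F(ah)\le d_a\lim_H\mathbb E_{k}F(k)$ for nonnegative $F$ requires either the F{\o}lner counting argument (comparing $a\Psi_H$ with $a\Psi_H+C$ for a transversal $C$ of $aG$) or, if multiplication by $a$ has infinite kernel, the spectral description of the $aG$-invariant functions on $X\times X$ as spanned by $e_\lambda\otimes\overline{e_\mu}$ with $\lambda\overline\mu$ trivial on $aG$ (at most $d_a$ such quotients), which gives the same factor $d_a$; and citing Walsh for existence of the limit is legitimate (the paper does so for the triple averages) though the paper instead derives existence directly from the explicit Kronecker-factor formula it has just earned.
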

	\begin{proof}
		We follow the argument in \cite{F&W}. Set $x_g = T_{ag}f_1\cdot T_{bg} f_2$ then,
		$$\left< x_{g+h}, x_{g}\right> = \int_X T_{ag+ah}f_1 \cdot T_{bg+bh} f_2 \cdot T_{ag} \overline{f}_1 \cdot T_{bg} \overline{f}_2 d\mu.$$
		Since $T_{ag}$ is measure preserving we have,
		$$\lim_{N\rightarrow\infty} \mathbb{E}_{g\in \Phi_N} \left< x_{g+h}, x_{g}\right> = \lim_{N\rightarrow\infty} \mathbb{E}_{g\in \Phi_N} \int_X \Delta_{ah} f_1 \cdot T_{(b-a)g} \Delta_{bh} f_2 d\mu.$$
		By the mean ergodic theorem the limit exists and equals to
		\begin{equation} \label{proj}
		\int_X \Delta_{ah} f_1 P_{b-a}(\Delta_{bh} f_2) d\mu
		\end{equation} where $P_{b-a}$ is the projection to the $(b-a)G$-invariant functions. 
		If $(b-a)G$ is ergodic, then this equals to $$\int_X \Delta_{ah} f_1 d\mu \cdot \int_X \Delta_{bh} f_2 d\mu.
		$$
		The limit of the average of this in absolute value
		$$\limsup_{H\rightarrow\infty} \mathbb{E}_{h\in \Phi_N} \left|\int_X \Delta_{ah} f_1 d\mu \cdot \int_X \Delta_{bh} f_2 d\mu\right|$$ is bounded by $\min\{d_a\cdot\|f_1\|_{U^2}\cdot ,d_b\cdot \|f_2\|_{U^2}\}$ and the claim follows by the van der corput lemma. If $(b-a)G$ is not ergodic, then since $(b-a)G$ is of index $d_{b-a}$ in $G$ there are at most $d_{b-a}$ ergodic components. In particular, we can find a partition of $X$ to $(b-a)G$-invariant sets, $X=\bigcup_{i=1}^{d_{b-a}} A_i$ such that $P_{b-a}$ is an integral operator with kernel $\sum_{i=1}^{d_{b-a}}1_{A_i}(x)1_{A_i}(y)$. We conclude that (\ref{proj}) equals to
		$$ \int_X \int_X \overline{f}_1(x)\cdot \overline{f}_2 (y) \cdot T_{ah} f_1(x)\cdot T_{bh} f_2(y) \sum_{i=1}^{d_{b-a}} 1_{A_i}(x) 1_{A_i}(y) d\mu(x) d\mu(y).$$
		Taking another average on $h$ over any F{\o}lner sequence $\Psi_H$ and applying the mean ergodic theorem for the action of $T_{ah}\times T_{bh}$, the limit of the above becomes
		\begin{equation} \label{averageproj} \int_X \int_X \overline{f}_1(x)\cdot \overline{f}_2 (y)  \sum_{i=1}^{d_{b-a}} H(x,y) 1_{A_i}(x) 1_{A_i}(y) d\mu(x) d\mu(y)
		\end{equation}
		for some bounded $T_{ah}\times T_{bh}$-invariant function $H(x,y)$. It is classical that every $T_{ah}\times T_{bh}$-invariant function can be written by sums of all products of $d_a$ eigenfunctions in $x$ and $d_b$ eigenfunctions in $y$. Since $1_{A_i}(x)$ is $T_{(b-a)h}$-invariant, it is a sum of $d_{b-a}$ eigenfunctions. Let $Z$ be the Kronecker factor, we conclude that the term in equation (\ref{averageproj}) is bounded by the minimum between $d_{b-a}\cdot d_a\cdot \max_{\chi\in \hat Z} |\left<f_1,\chi\right>|$ and $d_{b-a}\cdot d_b\cdot \max_{\chi\in \hat Z} |\left<f_2,\chi\right>|$.\\ Since the $U^2$-norm bounds the maximal Fourier coefficient the claim follows. To see this let $f\in L^2(X)$ be any function. We can decompose $f$ with respect to the orthogonal projection $E(\cdot|Z)$ and write $f=\sum_{\chi\in \hat Z} \left<f,\chi\right>\cdot \chi + f'$, then $$\|f\|_{U^2}^4=\|E(f|Z)\|_{U^2}^4 =  \sum_{\chi\in \hat Z} |\left<f,\chi\right>|^4 \geq \max_{\chi\in\hat Z} |\left<f,\chi\right>|^4.$$
		This clearly implies that $\|f\|_{U^2}\geq\max_{\chi\in \hat Z} |\left<f,\chi\right>|$, and therefore, the van der Corput lemma gives the promised inequality.\\
		It is left to show that the limit exists. By linearity we can reduce matters to the Kronecker factor. For $i=1,2$ let $\tilde{f_i}=E(f_i|Z)$. Then, by approximating $\tilde{f}_1,\tilde{f}_2$ by linear combinations of eigenfunctions direct computation gives,
		$$\lim_{N\rightarrow\infty}\mathbb{E}_{g\in\Phi_N} T_{ag} \tilde{f}_1(x)\cdot T_{bg} \tilde{f}_2(x) = \int_Z \tilde{f}_1(xy^a)\tilde{f}_2(xy^b) d\mu_Z(y)$$
		in $L^2$, where here we abuse notation and view $\tilde{f}_1$ and $\tilde{f}_2$ as functions on $Z$. This completes the proof.
	\end{proof}
	Now, we generalize this for the tuple $(ag,bg,(a+b)g)$.
	\begin{prop} [$Z_{<3}(X)$ is characteristic for triple averages] \label{char}
		Let $a, b\in\mathbb{Z}$ and $G$ be as in Theorem \ref{Khintchine} and let $X$ be an ergodic $G$-system. Let $f_1,f_2,f_3\in L^\infty(X)$ and for every $i=1,2,3$ let $\tilde{f}_i=E(f_i|Z_{<3}(X))$. Then, assuming that the following limits exist in $L^2$, we have $$\lim_{N\rightarrow\infty}\mathbb{E}_{g\in\Phi_N} T_{ag}f_1 T_{bg}f_2 T_{(a+b)g}f_3=\lim_{N\rightarrow\infty}\mathbb{E}_{g\in\Phi_N} T_{ag}\tilde{f}_1 T_{bg}\tilde{f}_2 T_{(a+b)g}\tilde{f}_3.$$ 
	\end{prop}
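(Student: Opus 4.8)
The plan is to show that the Conze-Lesigne factor $Z_{<3}(X)$ is characteristic for the tuple $(ag,bg,(a+b)g)$, which is exactly the assertion that replacing each $f_i$ by $\tilde f_i = E(f_i|Z_{<3}(X))$ does not change the limit. By linearity and a telescoping argument, it suffices to prove that if $\|f_3\|_{U^3(X)}=0$ (equivalently, by Proposition \ref{UCF}, $E(f_3|Z_{<3}(X))=0$), then $\lim_{N\to\infty}\mathbb{E}_{g\in\Phi_N} T_{ag}f_1\, T_{bg}f_2\, T_{(a+b)g}f_3 = 0$ in $L^2$, and symmetrically for the other two coordinates. The telescoping is standard: write the difference of the two products as a sum of three terms, in each of which one function is $f_i-\tilde f_i$ (which has vanishing $U^3$-seminorm) and the remaining functions are bounded, so controlling a single ``bad'' coordinate suffices.

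First I would set $x_g = T_{ag}f_1\cdot T_{bg}f_2\cdot T_{(a+b)g}f_3$ and compute the inner products $\langle x_{g+h},x_g\rangle$ needed for the van der Corput lemma. Using that the $T_g$ are measure preserving, I apply $T_{-ag}$ (or the analogous shift) to normalize one coordinate, producing an integrand of the form $\Delta_{ah}f_1\cdot T_{(b-a)g}(\cdots)\cdot T_{(a+b-a)g}(\cdots)$; after averaging in $g$ over $\Phi_N$ this becomes a double ergodic average in the two remaining shifts. The key point is that, after this reduction, the limit in $g$ is governed by a double average of the form treated in Lemma \ref{Kronecker}, whose $L^2$-norm is bounded by a constant multiple of the $U^2$-seminorm of the derivative $\Delta_{bh}f_3$ (or the relevant derivative of whichever coordinate is being controlled). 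Here the finite-index hypotheses on $aG$, $bG$, $(b-a)G$, $(a+b)G$ enter: they guarantee the relevant subgroup actions have finitely many ergodic components, so Lemma \ref{Kronecker} applies with finite constants $d_a,d_b,d_{b-a},d_{a+b}$.

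The crucial step is then to average the resulting bound over $h$ and recognize the outcome as a $U^3$-seminorm. Concretely, after applying Lemma \ref{Kronecker} to the inner double average, the quantity $\gamma_h$ in the van der Corput lemma is bounded by $C\cdot\|\Delta_{bh}f_3\|_{U^2(X)}^{1/2}$ (up to the finite-index constants), and by the recursive definition of the Gowers-Host-Kra seminorms together with Cauchy-Schwarz, the $h$-average of $\|\Delta_{bh}f_3\|_{U^2}^{2^2}$ is essentially $\|f_3\|_{U^3}^{2^3}$. Since $bG$ is of finite index, averaging over $h$ along a F{\o}lner sequence of $G$ detects the $U^3$-seminorm up to a finite constant. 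Thus if $\|f_3\|_{U^3(X)}=0$ then $\lim_H \mathbb{E}_{h\in\Psi_H}\gamma_h = 0$, and the van der Corput lemma yields $\lim_N \mathbb{E}_{g\in\Phi_N} x_g = 0$ in $L^2$, as desired.

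The main obstacle I anticipate is bookkeeping the non-ergodicity of the subgroup actions $T_{(b-a)g}$ and $T_{(a+b)g}$. Lemma \ref{Kronecker} already handles this for double averages by partitioning $X$ into the finitely many ergodic components of the relevant subgroup and expressing the projection as an integral operator; the triple case requires carefully tracking which subgroup's ergodic decomposition is relevant at each reduction and ensuring the finite-index constants multiply rather than blow up. A secondary technical point is verifying that the $h$-average of $\|\Delta_{bh}f_3\|_{U^2}^{2^{k}}$ genuinely recovers $\|f_3\|_{U^3}$ when $bG$ has finite index: one must pass from averaging over $bG$ to averaging over $G$, which is where the finite-index hypothesis on $bG$ is used, and check that this substitution only introduces the constant $d_b$. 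Once these index-constant estimates are in place, the conclusion follows cleanly from the van der Corput lemma as stated, invoking Proposition \ref{UCF} to translate $\|f_i-\tilde f_i\|_{U^3}=0$ into the vanishing of the limit.
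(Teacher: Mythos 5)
Your proposal is correct and follows essentially the same route as the paper: van der Corput applied to $x_g = T_{ag}f_1\,T_{bg}f_2\,T_{(a+b)g}f_3$, Cauchy--Schwarz to drop the $\Delta_{ah}f_1$ factor and reduce to the double average handled by Lemma \ref{Kronecker}, then averaging the resulting $U^2$-bound over $h$ to recover the $U^3$-seminorm, with the finite-index constants $d_a,d_{a+b}$ tracked exactly as you describe. The only slip is that the derivative controlled at the last step is $\Delta_{(a+b)h}f_3$ rather than $\Delta_{bh}f_3$ (the shift by $T_{-ag}$ leaves $f_3$ with increment $(a+b)h$), but as you note parenthetically this is just the ``relevant derivative'' and does not affect the argument.
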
 
	
	\begin{proof}
		Let $d_a,d_b,d_{b-a}$ and $d_{a+b}$ denote the indices of $aG,bG,(b-a)G$ and $(a+b)G$ in $G$, respectively and let $f_1,f_2,f_3\in L^\infty(X)$. By linearity it is enough to show that if either $\tilde{f}_1,\tilde{f}_2$ or $\tilde{f}_3$ is zero, then $$\lim_{N\rightarrow\infty}\mathbb{E}_{g\in\Phi_N} T_{ag}f_1 T_{bg}f_2 T_{(a+b)g}f_3=0.$$
		By the symmetry of the equation we can assume without loss of generality that $\tilde{f}_3=0$. Moreover, if we divide each function by a constant we can also assume that $\|f_1\|_{\infty}$, $\|f_2\|_{\infty}$ and $\|f_3\|_{\infty}$ are bounded by $1$. Set $x_g = T_{ag}f_1\cdot T_{bg}f_2\cdot T_{(a+b)g}f_3$, then for every $g,h\in G$ and $N\in\mathbb{N}$ we have,
		$$\mathbb{E}_{g\in\Phi_N}\left<x_{g+h},x_g\right> = \mathbb{E}_{g\in\Phi_N} \int_X T_{ag+ah}f_1 \cdot T_{bg+bh}f_2 
		\cdot T_{(a+b)(g+h)}f_3 \cdot T_{ag}\overline{f}_1 \cdot T_{bg}\overline{f}_2 \cdot T_{(a+b)g}\overline{f}_3d\mu.$$
		Since $T_{ag}$ is measure preserving the above equals to
		$$ \mathbb{E}_{g\in\Phi_N} \int_X \Delta_{ah}f_1 \cdot T_{(b-a)g} \Delta_{bh}f_2 \cdot T_{bg} \Delta_{(a+b)h}f_3 d\mu.$$
		By the previous lemma this average converges in $L^2$. Observe that by the Cauchy-Schwartz inequality and since $\|f_1\|_\infty\leq 1$, the absolute value of the above is smaller or equal to
		$$\norm{\mathbb{E}_{g\in\Phi_N} T_{(b-a)g} \Delta_{bh}f_2 \cdot T_{bg} \Delta_{(a+b)h}f_3}_{L^2}.$$
		By the previous lemma, the limit as $N\rightarrow\infty$ is bounded by the square root of
		$d_a^2\cdot \norm{\Delta_{(a+b)h}f_3}_{U^2(X)}^2$. Since $\|\cdot\|_{U^3(X)}$ is a seminorm, we conclude that for every F{\o}lner sequence $\Psi_H$,
		$$\lim_{H\rightarrow\infty} \mathbb{E}_{h\in \Psi_H} \norm{\Delta_{(a+b)h}f_3}_{U^2(X)}^2 \leq d_{a+b}\cdot \|f\|_{U^3(X)}^4.$$ Therefore,
		$$\left|\lim_{H\rightarrow\infty}\mathbb{E}_{h\in\Psi_H}\lim_{N\rightarrow\infty} \mathbb{E}_{g\in\Phi_N} \left<x_{g+h},x_g\right> \right| \leq d_a\cdot d_{a+b}\cdot \|f_3\|_{U^3}^2 = 0$$
		and by the van der Corput lemma the claim follows.
	\end{proof}
	
	\section{Generalized spectrum} \label{ext:sec}
	Let $G$ be a countable abelian group, $(X,G)$ an ergodic $G$-system and $k\geq 1$. In this section we construct an extension $(Y,H)$ with the property that every phase polynomial $p:X\rightarrow S^1$ of degree $<k$ admits a phase polynomial $n$-th root $q:Y\rightarrow S^1$ such that $q^n=p\circ \pi$ for every $n\in\mathbb{N}$ where $\pi:Y\rightarrow X$ is the factor map.	We begin with some definitions. First, we generalize the definition of a phase polynomial to functions taking values in an arbitrary compact abelian group.
	\begin{defn}[Phase polynomials] \label{phasepoly:def}
		Let $X$ be an ergodic $G$-system, let $k\geq 0$ and let $U$ be a compact abelian group. We say that a function $P:X\rightarrow U$ is a phase polynomial of degree $<k$ if for every $g_1,...,g_{k}\in G$ we have that $\Delta_{g_1}...\Delta_{g_{k}}P=1_U$. We let $P_{<k}(X,U)$ denote the group of phase polynomials $P:X\rightarrow U$ of degree $<k$. 
	\end{defn}
	Bergelson Tao and Ziegler proved that up to constant multiplication, there are at most countably many phase polynomials in $P_{<k}(X,S^1)$. In other words, the quotient $P_{<k}(X,S^1)/P_{<1}(X,S^1)$ is a countable (discrete) group.\footnote{This assertion follows from the lemma below and the fact that $L^2(X)$ is separable.}
	\begin{lem}  [Separation Lemma] \cite[Lemma C.1]{Berg& tao & ziegler}\label{sep:lem} Let $X$ be an ergodic $G$-system, let $k\geq 1$, and let $\phi,\psi\in P_{<k}(X,S^1)$ be such that $\phi/\psi$ is non-constant. Then $\|\phi-\psi\|_{L^2(X)} \geq \sqrt{2}/2^{k-2}$.
	\end{lem}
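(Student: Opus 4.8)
The plan is to first reduce the statement to one about a single phase polynomial, and then induct on $k$. Set $\theta=\phi/\psi=\phi\cdot\overline{\psi}$. Since $P_{<k}(X,S^1)$ is a group under pointwise multiplication, $\theta\in P_{<k}(X,S^1)$, and by hypothesis $\theta$ is non-constant. Because $\phi,\psi$ take values in $S^1$ we have $\phi-\psi=\psi(\theta-1)$ with $|\psi|=1$, so $|\phi-\psi|=|\theta-1|$ pointwise and hence $\norm{\phi-\psi}_{L^2(X)}=\norm{\theta-1}_{L^2(X)}$. Thus it suffices to prove: if $\theta\in P_{<k}(X,S^1)$ is non-constant, then $\norm{\theta-1}_{L^2(X)}\geq \sqrt{2}/2^{k-2}$.

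I would prove this by induction on $k$. For $k=1$ a phase polynomial of degree $<1$ satisfies $\Delta_g\theta=1$ for all $g$, i.e.\ $\theta$ is $G$-invariant, so by ergodicity it is constant and the statement is vacuous. For $k=2$, the identity $\Delta_{g_1}\Delta_{g_2}\theta=1$ forces each $\Delta_{g_2}\theta$ to be $G$-invariant, hence constant by ergodicity; thus $\theta$ is an eigenfunction, $\theta\circ T_g=c(g)\,\theta$ with $c(g)\in S^1$. Since $\theta$ is non-constant, $c(g)\neq 1$ for some $g$, and integrating $\int_X\theta\,d\mu=\int_X\theta\circ T_g\,d\mu=c(g)\int_X\theta\,d\mu$ gives $\int_X\theta\,d\mu=0$. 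Therefore
\[
\norm{\theta-1}_{L^2(X)}^2=\int_X\left(2-2\re\theta\right)d\mu=2-2\re\int_X\theta\,d\mu=2=\left(\sqrt{2}/2^{0}\right)^2,
\]
which is the required bound.

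For the inductive step, fix $k\geq 3$ and assume the claim for $k-1$. Let $\theta\in P_{<k}(X,S^1)$ be non-constant. Each derivative $\Delta_g\theta$ lies in $P_{<k-1}(X,S^1)$, since applying $k-1$ further derivatives to $\Delta_g\theta$ amounts to applying $k$ derivatives to $\theta$. If $\Delta_g\theta$ is constant for every $g$, then $\theta$ is an eigenfunction and the $k=2$ computation gives $\norm{\theta-1}_{L^2(X)}=\sqrt{2}\geq \sqrt{2}/2^{k-2}$. Otherwise choose $g$ with $\Delta_g\theta$ non-constant; the inductive hypothesis yields $\norm{\Delta_g\theta-1}_{L^2(X)}\geq \sqrt{2}/2^{k-3}$. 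Writing $\Delta_g\theta-1=\overline{\theta}\,(\theta\circ T_g-\theta)$ and using $|\theta|=1$ together with the fact that $T_g$ is measure preserving, the triangle inequality gives
\[
\norm{\Delta_g\theta-1}_{L^2(X)}=\norm{\theta\circ T_g-\theta}_{L^2(X)}\leq \norm{\theta\circ T_g-1}_{L^2(X)}+\norm{1-\theta}_{L^2(X)}=2\,\norm{\theta-1}_{L^2(X)}.
\]
Combining the two displays, $\norm{\theta-1}_{L^2(X)}\geq \tfrac12\norm{\Delta_g\theta-1}_{L^2(X)}\geq \sqrt{2}/2^{k-2}$, completing the induction.

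The step I would watch most carefully is the passage from $\Delta_g\theta$ back to $\theta$: the triangle inequality costs exactly a factor of $2$ and no more, which is precisely what makes the recursion $c_k\geq c_{k-1}/2$ close up to the claimed constant $c_k=\sqrt{2}/2^{k-2}$ (indeed $c_{k-1}/2=c_k$). The remaining ingredients are the two elementary facts that degree $<2$ phase polynomials are eigenfunctions via ergodicity and that non-constant eigenfunctions integrate to zero; beyond the bookkeeping of the constant I expect no genuine obstacle.
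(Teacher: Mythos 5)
Your proof is correct. The paper does not prove this lemma itself but cites it from \cite[Lemma C.1]{Berg& tao & ziegler}, and your argument (reduce to $\psi=1$, induct on $k$ via $\norm{\Delta_g\theta-1}_{L^2}=\norm{\theta\circ T_g-\theta}_{L^2}\leq 2\norm{\theta-1}_{L^2}$, with the eigenfunction case handled by $\int_X\theta\,d\mu=0$) is essentially the same induction used there, with the constants closing up exactly as you note.
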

	We also recall the following proposition from Appendix \ref{poly:appendix}.
	\begin{prop}\label{orderext}
		Let $G$ be a countable abelian group and $k,m\geq 1$. Let $X$ be an ergodic $G$-system of order $<k$ and let $P:X\rightarrow S^1$ a phase polynomial of degree $<m$. Then \begin{itemize}
			\item {$X$ is an abelian extension of $Z_{<k-1}(X)$ by a compact abelian group $U$.}
			\item{For every $u\in U$, $\Delta_u P$ is a phase polynomial of degree $<\max\{0,m-k+1\}$. In particular, $P$ is measurable with respect to $Z_{<m}(X)$.}
			\item{If $p:G\times X\rightarrow U$ is a phase polynomial cocycle of degree $<k$, then $X\times_p U$ is a system of order $<k$.}
		\end{itemize}
	\end{prop}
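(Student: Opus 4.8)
The plan is to establish the three assertions in the order (1), (3), (2), since the degree-lowering in the second bullet will rest on the third. For the first bullet I would invoke the structure theorem for the Host--Kra factors: because $X=Z_{<k}(X)$, the generalization to countable abelian groups of the Host--Kra theory (developed in \cite{HK} and \cite{Berg& tao & ziegler}) shows that $Z_{<k}(X)$ is an abelian group extension $Z_{<k-1}(X)\times_\rho U$ of its $(k-1)$-st factor by a compact abelian group $U$; the case $k=3$ is precisely Theorem \ref{CL}. The only features I will use are that the vertical rotations $V_u(z,v)=(z,uv)$, $u\in U$, are measure preserving and commute with every $T_g$, so that the derivative operators satisfy $\Delta_g\Delta_u P=\Delta_u\Delta_g P$, and that a function on $X$ is $Z_{<k-1}(X)$-measurable exactly when it is $V_u$-invariant for all $u\in U$.

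Next I would prove the third bullet independently of the other two. Writing $Y=X\times_p U$ and Fourier-expanding $F\in L^\infty(Y)$ in the fibre variable as $F(x,u)=\sum_{\chi\in\hat U}F_\chi(x)\chi(u)$, the invariance of the Gowers--Host--Kra seminorm under the vertical $U$-action decouples the characters, so it suffices to treat $F=F_\chi(x)\chi(u)$. Here $\Delta_g^Y F(x,u)=\Delta_g F_\chi(x)\cdot\chi(p(g,x))$ is independent of $u$, and iterating shows that the order-$k$ Gowers inner product of $F$ factors as the corresponding inner product of $F_\chi$ on $X$ times a contribution coming from $\chi\circ p$. The crucial point is that, since $p$ is a phase polynomial cocycle of degree $<k$, the $k$-fold cube-derivative of $\chi\circ p$ is trivial, so this contribution equals $1$ and $\|F_\chi\chi\|_{U^k(Y)}=\|F_\chi\|_{U^k(X)}$. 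As $X$ has order $<k$, the right side vanishes only when $F_\chi=0$; hence $\|F\|_{U^k(Y)}=0$ forces $F=0$, which by Proposition \ref{UCF} means $Z_{<k}(Y)=Y$, i.e. $Y$ has order $<k$.

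With the third bullet in hand I would establish the second. First the measurability claim, by induction on $m$: for $m=1$ ergodicity makes $P$ constant, while for the step each $\Delta_g P$ is a phase polynomial of degree $<m-1$, hence measurable with respect to $Z_{<m-1}(X)$, so the factor generated by $Z_{<m-1}(X)$ and $P$ is an abelian extension $Z_{<m-1}(X)\times_\sigma V$ with $\sigma(g,\cdot)=\Delta_g P$ a phase polynomial cocycle of degree $<m$; the third bullet (with $k=m$, the base $Z_{<m-1}(X)$ having order $<m-1<m$) gives this factor order $<m$, so it lies in $Z_{<m}(X)$ and $P$ is $Z_{<m}(X)$-measurable. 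The degree-lowering then follows by a second induction on $m$ with $k$ fixed: when $m\le k-1$ one has $\max\{0,m-k+1\}=0$ and the measurability gives $P\in Z_{<m}(X)\subseteq Z_{<k-1}(X)$, so $P$ is $V_u$-invariant and $\Delta_u P=1$, a phase polynomial of degree $<0$; when $m\ge k$, each $\Delta_g P$ has degree $<m-1$, so by the inductive hypothesis $\Delta_g(\Delta_u P)=\Delta_u(\Delta_g P)$ has degree $<\max\{0,m-k\}=m-k$ for every $g$, whence $\Delta_u P$ has degree $<m-k+1$.

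The main obstacle I anticipate is the seminorm identity in the third bullet: making precise that the fibre contribution of a degree-$<k$ phase polynomial cocycle to the order-$k$ Gowers inner product is exactly trivial (an instance of the standard fact that such cocycles add no order), and then keeping the degree bookkeeping and the $\max\{0,\cdot\}$ truncations consistent across the two inductions. The other essential ingredient is the input of the first bullet, namely that a system of order $<k$ is genuinely an abelian group extension and not merely an isometric one; for this I would cite the generalized Host--Kra structure theorem rather than reprove it.
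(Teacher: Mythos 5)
Your first bullet matches the paper (it is Theorem \ref{HKextension}), and your degree-lowering double induction for the second bullet is a correct re-derivation of what the paper simply quotes as Lemma \ref{diffpoly}; the paper instead obtains measurability by a downward induction on the factors $Z_{<l}(X)$. The genuine problem is your proof of the third bullet, on which your second bullet then leans. The identity $\|F_\chi\chi\|_{U^k(Y)}=\|F_\chi\|_{U^k(X)}$ does not follow from triviality of the $k$-fold cube derivative of $\chi\circ p$, and it is false in general. In the recursive definition one has $\Delta_g^Y(F_\chi\cdot\chi)=\Delta_gF_\chi\cdot\chi(p(g,\cdot))$, so that
$$\|F_\chi\chi\|_{U^k(Y)}^{2^k}=\lim_{N\rightarrow\infty}\mathbb{E}_{g\in\Phi_N}\bigl\|\Delta_gF_\chi\cdot\chi(p(g,\cdot))\bigr\|_{U^{k-1}(X)}^{2^{k-1}},$$
and only $k-1$ further derivatives ever hit $\chi\circ p(g,\cdot)$; a phase polynomial of degree $<k$ survives $k-1$ derivatives, so its contribution does not cancel. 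Concretely, take $G=\mathbb{Z}$, $X=(\mathbb{R}/\mathbb{Z},\,x\mapsto x+\alpha)$ (order $<2$), $U=S^1$ and $p(n,x)=e(2nx+n^2\alpha)$, a cocycle which is fibrewise a phase polynomial of degree $<2$; then $Y=X\times_pU$ is the Anzai skew product, and for $F(x,u)=u$ (so $F_\chi=1$) one has $\|F\|_{U^2(Y)}=0$ while $\|F_\chi\|_{U^2(X)}=1$. The same example shows that triviality of $d^{[k]}(\chi\circ p)$ only makes $p$ a cocycle of type $<k$, which by Theorem \ref{HKtype} gives $X\times_pU$ order $<k+1$, not $<k$; to land in order $<k$ one needs type $<k-1$, i.e.\ (via Lemma \ref{polytype}) fibrewise degree $<k-1$. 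This type argument is exactly the paper's one-line proof of the third bullet, and it is where your anticipated ``main obstacle'' is not merely technical.

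The instances you actually invoke in the second bullet are nevertheless safe: there the cocycle is $\sigma(g,\cdot)=\Delta_gP$ with $P$ of degree $<m$, so $\sigma(g,\cdot)$ has degree $<m-1$, hence $d^{[m-1]}\sigma\equiv1$ by Lemma \ref{polytype}, hence $\sigma$ is of type $<m-1$ and Theorem \ref{HKtype} yields that $Z_{<m-1}(X)\times_\sigma V$ has order $<m$. So your measurability induction goes through once you replace the seminorm computation by the type argument (you should also justify that the factor generated by $Z_{<m-1}(X)$ and $P$ really is the group extension by the closure of the range of $\sigma$, which is the standard Mackey-type argument). As written, however, the third bullet is not established, and the naive factorization of the order-$k$ Gowers inner product cannot substitute for the Host--Kra type machinery.
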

	\textit{Spectrum}: The (point) spectrum of a $G$-system $X$ is the group of eigenvalues of the $G$ action on $L^2(X)$. We generalize this notion below.
	\begin{defn} [Generalized spectrum] \label{spec:def}
	Let $X$ be an ergodic $G$-system and $1\leq k\in\mathbb{N}$. We define the $k$-th spectrum of $X$ by
		$$\Spec_k(X)=\{\lambda:G^k\rightarrow S^1 : \exists P\in P_{<k+1}(X,S^1) \text{ s.t. } \forall g_1,...,g_k, \text { } \lambda(g_1,...,g_k)=\Delta_{g_1}...\Delta_{g_k} P \}.$$
	\end{defn}
	We are particularly interested in the case where this group is divisible.
	\begin{prop} [Definition and properties of divisible groups]\label{divisible} A group $(H,\cdot)$ is said to be divisible if for every $h\in H$ and $1\leq n\in\mathbb{N}$ there exists $g\in H$ with $g^n=h$. Divisible groups are injective in the category of discrete abelian groups. Namely, if $H\leq G$ are discrete abelian groups and $H$ is divisible, then $G\cong H\oplus G/H$.
	\end{prop}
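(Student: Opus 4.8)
The plan is to establish the two assertions of Proposition \ref{divisible} separately, since the first is just the definition being recalled and the second is the genuine mathematical content (the injectivity of divisible groups in the category of discrete abelian groups). For the structural statement, I would first prove the standard fact that a divisible abelian group $H$ is an injective object, i.e.\ that $H$ has the extension property: given any inclusion $A\hookrightarrow B$ of discrete abelian groups and any homomorphism $f:A\rightarrow H$, there exists a homomorphism $\tilde f:B\rightarrow H$ extending $f$. The splitting statement $G\cong H\oplus G/H$ then follows formally by applying this extension property to the inclusion $H\hookrightarrow G$ and the identity map $\mathrm{id}:H\rightarrow H$, producing a retraction $r:G\rightarrow H$ with $r|_H=\mathrm{id}$; such a retraction yields the direct sum decomposition by the usual splitting lemma.

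First I would prove the extension property by a Zorn's lemma argument. Consider the poset of pairs $(C,g)$ where $A\leq C\leq B$ and $g:C\rightarrow H$ extends $f$, ordered by extension. Chains have upper bounds (take the union), so Zorn's lemma yields a maximal element $(C,g)$. The heart of the argument is to show $C=B$: if not, pick $b\in B\setminus C$ and consider $C'=C+\langle b\rangle$. I would distinguish two cases. If $\langle b\rangle \cap C=\{0\}$, then $C'=C\oplus\langle b\rangle$ and I extend $g$ freely by sending $b$ to any element, say $0\in H$. Otherwise there is a least positive integer $n$ with $nb\in C$; then $g(nb)\in H$, and using divisibility of $H$ I choose $h\in H$ with $h^n=g(nb)$ (writing $H$ multiplicatively as in the statement) — or rather $nh=g(nb)$ if one prefers additive notation — and define $g'(c+mb)=g(c)+mh$. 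The key check is that $g'$ is well defined: any relation $c+mb=0$ in $C'$ forces $n\mid m$, and the divisibility-chosen value of $h$ guarantees consistency. This contradicts maximality, so $C=B$.

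The main obstacle is precisely the well-definedness verification in the second case of the Zorn step: one must confirm that the choice of an $n$-th root $h$ of $g(nb)$ makes $g'$ a genuine homomorphism, which is exactly where divisibility is used and where a careless setup would fail. This is a standard but slightly delicate point because it requires identifying the minimal $n$ with $nb\in C$ and checking that the induced map respects the relation $nb\in C$. Once the extension property is in hand, deriving $G\cong H\oplus G/H$ is routine: the retraction $r$ gives a splitting of the short exact sequence $0\rightarrow H\rightarrow G\rightarrow G/H\rightarrow 0$, and I would simply invoke the splitting lemma to conclude. I would keep the write-up brief, as the result is classical; the point of stating it here is to record the exact form $G\cong H\oplus G/H$ that will be applied later to spectrum groups.
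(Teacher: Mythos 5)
Your proposal is correct: it is the standard Zorn's-lemma proof that divisible abelian groups are injective, followed by the routine derivation of the splitting $G\cong H\oplus G/H$ from the resulting retraction $r:G\rightarrow H$. The paper itself offers no proof of this proposition --- it is recorded as a classical fact (it is essentially Baer's criterion specialized to divisible groups) and used as a black box later, e.g.\ in Proposition \ref{typecircle:prop} --- so there is no argument in the paper to compare against. The one step worth double-checking in your write-up, the well-definedness of $g'(c+mb)=g(c)+mh$ in the case where $nb\in C$ for a minimal $n>0$, does go through: if $c+mb=c'+m'b$ then $(m-m')b\in C$, so $n\mid(m-m')$, and writing $m-m'=kn$ one gets $(m-m')h=k\,g(nb)=g((m-m')b)$, which forces the two candidate values to agree. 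Your argument is complete and can be left at the level of detail you propose.
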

	Given two abelian groups $H$ and $G$ and an inclusion $\imath:H\hookrightarrow G$, we say that $H$ is divisible in $G$ if for every $n\in\mathbb{N}$ and $h\in H$ there exists $g\in G$ with $\imath(h)=n\cdot g$. This gives rise to the following definition of divisible systems.
	\begin{defn} [Divisible systems] \label{divisiblesystem:def}
		Let $G$ be a countable abelian group, let $X$ be an ergodic $G$-system, and let $k\geq 2$. We say that $X$ is $k$-divisible if $\Spec_1(X),...,\Spec_{k-1}(X)$ are divisible. Similarly, if $(Y,H)$ is an extension of $X$, then $X$ is $k$-divisible in $Y$ if for every $1\leq i \leq k-1$, $\Spec_i(X)$ is divisible in $\Spec_i(Y)$ with respect to the natural inclusion.\footnote{Let $\lambda\in \Spec_i(X)$, then there exists a phase polynomial $P:X\rightarrow S^1$ such that $\Delta_{g_1}...\Delta_{g_i} P = \lambda(g_1,...,g_i)$. The natural inclusion is the map which sends $\lambda$ to the element $(h_1,...,h_i)\mapsto \Delta_{h_1}...\Delta_{h_i}P\circ \pi$ where $\pi:Y\rightarrow X$ is the factor map.}
	\end{defn}
	If $X$ is $k$-divisible, then the group of phase polynomials of degree $<k$ is divisible. In fact, we prove the following stronger result.
	\begin{thm} [$k$-Divisible implies that $P_{<k}(X,S^1)$ is divisible] \label{roots:thm}
		Let $G$ be a countable abelian group and $k\geq 2$. If $X$ is a $k$-divisible and ergodic $G$-system, then for every $d\leq k$ the group $P_{<d}(X,S^1)$ is divisible.
	\end{thm}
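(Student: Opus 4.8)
The plan is to induct on the degree $d$, peeling off one derivative at a time. The base case $d=1$ is immediate: by ergodicity every $G$-invariant $S^1$-valued function is constant, so $P_{<1}(X,S^1)$ is a copy of $S^1$, which is divisible.

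For the inductive step, suppose $2\le d\le k$ and that $P_{<d-1}(X,S^1)$ is already known to be divisible. Fix $P\in P_{<d}(X,S^1)$ and $n\in\mathbb{N}$; the goal is to produce an $n$-th root of $P$ inside $P_{<d}(X,S^1)$. The strategy is to first correct the top-degree behaviour of $P$ using divisibility of $\Spec_{d-1}(X)$, and then to fix the remaining lower-order error using the inductive hypothesis.

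Concretely, I would first extract the top symbol of $P$. Since $P$ has degree $<d$, the iterated derivative $\lambda(g_1,\dots,g_{d-1}):=\Delta_{g_1}\cdots\Delta_{g_{d-1}}P$ satisfies $\Delta_{g_d}\lambda=1$ for all $g_d$, so by ergodicity it is constant in $x$ and defines an element $\lambda\in\Spec_{d-1}(X)$, realized by $P$ itself. Because $d-1\le k-1$ and $X$ is $k$-divisible, $\Spec_{d-1}(X)$ is divisible, so there is $\mu\in\Spec_{d-1}(X)$ with $\mu^n=\lambda$; let $R\in P_{<d}(X,S^1)$ be a phase polynomial realizing $\mu$, i.e.\ $\Delta_{g_1}\cdots\Delta_{g_{d-1}}R=\mu$. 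Since $\Delta_g$ is multiplicative on $S^1$-valued functions, the $(d-1)$-fold derivative of $R^n$ equals $\mu^n=\lambda$, hence $\Delta_{g_1}\cdots\Delta_{g_{d-1}}(P\,R^{-n})=1$ for all $g_1,\dots,g_{d-1}$, so that $P R^{-n}\in P_{<d-1}(X,S^1)$.

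Now I would invoke the inductive hypothesis: as $P_{<d-1}(X,S^1)$ is divisible, there is $Q_0\in P_{<d-1}(X,S^1)$ with $Q_0^n=P R^{-n}$. Setting $Q:=R\,Q_0$, which lies in $P_{<d}(X,S^1)$ since $P_{<d-1}(X,S^1)\subseteq P_{<d}(X,S^1)$, commutativity of pointwise multiplication gives $Q^n=R^n Q_0^n=R^n\cdot P R^{-n}=P$, as desired. The only points needing care are the bookkeeping of degrees (checking $\lambda\in\Spec_{d-1}(X)$ and $P R^{-n}\in P_{<d-1}(X,S^1)$) and the verification that $\Delta_g$ is a homomorphism on $S^1$-valued functions so that $n$-th powers and iterated derivatives interact correctly. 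There is no genuine analytic obstacle here: the entire content is the reduction of divisibility of $P_{<d}(X,S^1)$ to divisibility of $\Spec_{d-1}(X)$ together with that of $P_{<d-1}(X,S^1)$.
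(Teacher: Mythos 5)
Your proposal is correct and follows essentially the same route as the paper's proof: extract the top symbol $\lambda\in\Spec_{d-1}(X)$ of $P$, use divisibility of $\Spec_{d-1}(X)$ to find an $n$-th root realized by a phase polynomial $R\in P_{<d}(X,S^1)$, observe that $P R^{-n}$ then has degree $<d-1$, and close the argument with the inductive hypothesis. The only (cosmetic) difference is that you spell out why $\lambda$ is constant in $x$ via ergodicity, a step the paper leaves implicit.
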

	\begin{proof}
		We prove the claim by induction on $d$. For $d=1$, $P_{<1}(X,S^1)\cong S^1$ and the claim follows. Let $2\leq d\leq k$ and suppose that the claim has already been proven for smaller values of $d$. Fix $P\in P_{<d}(X,S^1)$ and a natural number $n\in\mathbb{N}$ and let $\lambda (g_1,...,g_{d-1})= \Delta_{g_1}...\Delta_{g_{d-1}} P$. Then, by assumption there exists $\gamma\in \Spec_{d-1}(X)$ with $\gamma^n = \lambda$. Let $Q\in P_{<d}(X,S^1)$ be such that $\gamma(g_1,...,g_{d-1}) = \Delta_{g_1}...\Delta_{g_{d-1}} Q$. Then $$\Delta_{g_1}...\Delta_{g_{d-1}} Q^n = \Delta_{g_1}...\Delta_{g_{d-1}}P.$$ We see that $P/Q^n$ is a phase polynomial of degree $<d-1$. By induction hypothesis there exists $Q'\in P_{<d-1}(X,S^1)$ with $Q'^n = P/Q^n$ and therefore $P=(QQ')^n$, as required.
	\end{proof}
	The following proposition will play an important role in the proof of Theorem \ref{mainresult}.
	\begin{prop}[Reducing C.L.\ equations to the circle] \label{typecircle:prop}
		Let $k\geq 2$ and let $X$ be a $k$-divisible and ergodic  $G$-system. Let $\rho:G\times X\rightarrow U$ be a cocycle into a compact abelian group $U$ and suppose that for every $\chi\in \hat U$ there exists a phase polynomial $q_\chi:G\times X\rightarrow U$ of degree $<k-1$ and a measurable map $F_\chi:X\rightarrow U$ such that $\chi\circ\rho = q_\chi\cdot \Delta F_\chi$. Then, there exists a phase polynomial $q:G\times X\rightarrow U$ and a measurable map $F:X\rightarrow U$ such that $\rho = q\cdot \Delta F$.
	\end{prop}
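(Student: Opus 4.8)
The plan is to reconstruct $q$ and $F$ from their characters via Pontryagin duality. Note first that since $\chi\circ\rho$ is $S^1$-valued, the maps $q_\chi$ and $F_\chi$ in the hypothesis are necessarily circle-valued, so for each $\chi\in\hat U$ we have a decomposition $\chi\circ\rho=q_\chi\cdot\Delta F_\chi$ with $q_\chi:G\times X\to S^1$ a phase polynomial cocycle of degree $<k-1$ and $F_\chi:X\to S^1$ measurable. Since $\hat U$ separates the points of $U$, it suffices to produce such $\tilde q_\chi$ and $\tilde F_\chi$ for which the assignments $\chi\mapsto\tilde q_\chi$ and $\chi\mapsto\tilde F_\chi$ are \emph{group homomorphisms}. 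Granting this, for fixed $(g,x)$ the map $\chi\mapsto\tilde q_\chi(g,x)$ is a homomorphism $\hat U\to S^1$, hence equals $\chi\mapsto\chi(q(g,x))$ for a unique $q(g,x)\in\hat{\hat U}=U$; likewise one obtains $F:X\to U$. One then checks, character by character, that $q$ is a phase polynomial cocycle of degree $<k-1$, that $F$ is measurable, and that $\rho=q\cdot\Delta F$ (applying each $\chi$ recovers $\tilde q_\chi\cdot\Delta\tilde F_\chi=\chi\circ\rho$, and characters separate points).

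The difficulty is thus entirely one of \textbf{multiplicative consistency in $\chi$}, and this is where $k$-divisibility enters. Let $\mathcal P$ be the group of phase polynomial cocycles $G\times X\to S^1$ of degree $<k-1$, and let $B_0\leq\mathcal P$ be the subgroup of those that are coboundaries. Because the derivatives $\Delta_g$ commute, a short computation shows that a coboundary $\Delta F$ lies in $\mathcal P$ if and only if $F\in P_{<k}(X,S^1)$, and that $\Delta F$ is trivial exactly when $F$ is constant (by ergodicity); hence $B_0\cong P_{<k}(X,S^1)/P_{<1}(X,S^1)$. Since $X$ is $k$-divisible, Theorem \ref{roots:thm} says $P_{<k}(X,S^1)$ is divisible, so its quotient $B_0$ is divisible as well. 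By Proposition \ref{divisible} a divisible group is injective, so the inclusion $B_0\hookrightarrow\mathcal P$ splits and furnishes a homomorphic section $s:\mathcal P/B_0\to\mathcal P$.

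I would then use $s$ to make the $q_\chi$ multiplicative. The hypothesis gives $q_\chi\in\mathcal P$ (it is the quotient of two cocycles), and any two admissible choices of $q_\chi$ differ by an element of $B_0$; hence $\Phi(\chi):=[q_\chi]\in\mathcal P/B_0$ is well defined, and it is a homomorphism because $q_{\chi_1\chi_2}$ and $q_{\chi_1}q_{\chi_2}$ both decompose $(\chi_1\chi_2)\circ\rho$ and so differ by a phase polynomial coboundary. Setting $\tilde q_\chi:=s(\Phi(\chi))$ produces a genuine homomorphism $\chi\mapsto\tilde q_\chi$ into $\mathcal P$ with $[\tilde q_\chi]=[q_\chi]$; consequently $\chi\circ\rho/\tilde q_\chi$ is a coboundary, which I write as $\Delta\tilde F_\chi$ for some measurable $\tilde F_\chi:X\to S^1$ (unique up to a multiplicative constant by ergodicity).

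The last and most delicate point is to upgrade $\chi\mapsto\tilde F_\chi$ to a homomorphism. Since $\tilde q$ and $\chi\circ\rho$ are already multiplicative, the quantity $c(\chi_1,\chi_2):=\tilde F_{\chi_1}\tilde F_{\chi_2}/\tilde F_{\chi_1\chi_2}$ has trivial derivative, hence is a constant in $S^1$, and it is a \emph{symmetric} $2$-cocycle on the discrete abelian group $\hat U$. Because $S^1$ is divisible, it is injective, so $\mathrm{Ext}^1(\hat U,S^1)=0$; equivalently, every symmetric $S^1$-valued $2$-cocycle on $\hat U$ is a coboundary, i.e.\ $c(\chi_1,\chi_2)=d_{\chi_1}d_{\chi_2}/d_{\chi_1\chi_2}$ for some $d:\hat U\to S^1$. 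Replacing $\tilde F_\chi$ by $\tilde F_\chi/d_\chi$ (which leaves $\Delta\tilde F_\chi$ unchanged) makes $\chi\mapsto\tilde F_\chi$ a homomorphism, supplying exactly the data the Pontryagin reconstruction needs. I expect this vanishing of the $2$-cocycle obstruction, together with the splitting furnished by divisibility, to be the crux of the argument; the remaining measurability of $q$ and $F$ is routine once one uses, as is standard in this setting, that $U$ may be taken second countable, so that $\hat U$ is countable and the characters embed $U$ as a closed subgroup of $(S^1)^{\mathbb N}$.
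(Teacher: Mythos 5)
Your proof is correct and is essentially the paper's argument: both reduce the problem to making $\chi\mapsto q_\chi$ and $\chi\mapsto F_\chi$ into homomorphisms and then reassembling $q$ and $F$ via Pontryagin duality, with the divisibility of $P_{<k}(X,S^1)/P_{<1}(X,S^1)$ (Theorem \ref{roots:thm}) killing the obstruction for the polynomial part and the injectivity of $S^1$ killing the obstruction for the transfer functions. The only difference is packaging: the paper splits the single exact sequence $1\to P_{<k}(X,S^1)\to\mathcal{K}\to\hat U\to 1$ for the group $\mathcal{K}$ of pairs $(\chi,F)$ (first modulo $S^1$, where discreteness and divisibility give the splitting, then lifting through the injective $S^1$), whereas you decouple the two steps, the second appearing as the vanishing of a constant symmetric $2$-cocycle on $\hat U$ with values in $S^1$ --- which is exactly the paper's Proposition \ref{split} applied with the target a torus.
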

	We note that the proposition above fails if the system is not $k$-divisible. We give an example: Let $X=(\mathbb{R}/\mathbb{Z},\alpha)$ be an irrational rotation on the torus and let $\rho:\mathbb{R}/\mathbb{Z}\rightarrow C_2$ be the cocycle $\rho(x) = e\left(-\frac{\alpha}{2}+\frac{\{x+\alpha\}}{2}-\frac{\{x\}}{2}\right)$ where $\{x\}$ is the fractional part of $x$ and $e(y):=e^{2\pi i y}$. Observe, that as a cocycle into $S^1$, $\rho$ is cohomologous to the constant $e(-\frac{\alpha}{2})$, but not as a cocycle into $C_2$. To see that, let assume by contradiction that $\rho = c\cdot \Delta F$ where $c\in C_2$ and $F:X\rightarrow C_2$. Then, $c\cdot e\left({\frac{\alpha}{2}}\right)$ is an eigenvalue for the eigenfunction $e\left(\frac{\{x\}}{2}\right)\cdot \overline{F(x)}$. This is a contradiction, because the eigenvalues of $X$ are $\{e(n \alpha) : n\in\mathbb{Z}\}$.
	
	\begin{proof}[Proof of Proposition \ref{typecircle:prop}]
		Let $\rho:G\times X\rightarrow U$ be as in the proposition and let $\mathcal{K}$ be the group of all pairs $(\chi,F)$ for which the equation in the claim holds. Namely,
		$$\mathcal{K} = \{(\chi,F)\in \hat U \times \mathcal{M}(X,S^1): \exists q\in P_{<k-1}(G,X,S^1) \text{ s.t. } \chi\circ\rho = c\cdot \Delta F\}.$$ $\mathcal{K}$ is a closed subgroup of the abelian group $\hat U\times \mathcal{M}(X,S^1)$. Moreover, it is easy to see that $\ker p\cong P_{<k}(X,S^1)$ and by the assumptions in the proposition, it follows that the projection $p:\mathcal{K}\rightarrow \hat U$ is onto. Therefore, by Theorem \ref{locallycomp}, $\mathcal{K}$ is a locally compact abelian group and we have a short exact sequence 
		\begin{equation} \label{seq1} 1\rightarrow P_{<k}(X,S^1)\rightarrow \mathcal{K}\rightarrow \hat U \rightarrow 1.
		\end{equation}
		By ergodicity $P_{<1}(X,S^1)\cong S^1$. Let $A=P_{<k}(X,S^1)/P_{<1}(X,S^1)$. Then, by quotienting out $P_{<1}(X,S^1)$ in (\ref{seq1}) we conclude that
		\begin{equation} \label{seq2} 1\rightarrow A\rightarrow \mathcal{K}/S^1\rightarrow \hat U \rightarrow 1
		\end{equation}
		is a short exact sequence. Since  $\hat U$ and $A$ are discrete (by Lemma \ref{sep:lem}), we deduce that so is $\mathcal{K}/S^1$. Moreover, by Theorem \ref{roots:thm} the group $A$ is divisible. Therefore, Proposition \ref{divisible} implies that
		$$\mathcal{K}/S^1\cong A\times \hat U.$$ Since the circle $S^1$ is injective in the category of locally compact abelian groups, the above implies that $\mathcal{K}\cong P_{<k}(X,S^1)\times \hat U$. Thus, we can find a Borel cross section (see Definition \ref{cross section}) $\chi\mapsto (\chi,F_\chi)$ such that $\chi\mapsto F_\chi$ is a homomorphism and for every $\chi\in\hat U$, $\chi\circ \rho = q_\chi \cdot F_\chi$ for some phase polynomial $q_\chi:G\times X\rightarrow S^1$ of degree $<k-1$. It follows that $\chi\mapsto q_\chi$ is also a homomorphism and so, by the Pontryagin duality theorem there exists a measurable map $F:X\rightarrow U$ and a phase polynomial of degree $<k-1$, $q:G\times X\rightarrow U$ such that $F_\chi = \chi\circ F$ and $\chi\circ q = q_\chi$. Since the characters separate points, we conclude that $\rho = q\cdot \Delta F$, as required.
	\end{proof}
	Observe that every countable abelian group is a factor of a group with divisible dual (say $\mathbb{Z}^\omega$). Therefore for the sake of the proof of Theorem \ref{mainresult}, it is enough to assume that the group $G$ has a divisible dual (equivalently, that $G$ is torsion free, see Proposition \ref{torsionfree}).\\
		Let $k\geq 1$, then every element $\lambda\in \Spec_k(X)$ is a multilinear map (i.e. a homomorphism in every coordinate) from $G^k$ to $S^1$. More formally we have the following definition.
	\begin{defn}[Multilinear maps]
		Let $G$ be a countable abelian group, let $X$ be a $G$-system, and let $m\geq 1$. We say that $\lambda:G^m\rightarrow S^1$ is a \textit{multilinear} map if for every $1\leq i \leq m$, $g_1,...,g_m\in G$ and $g_i'\in G$ we have $$\lambda(g_1,...,g_i\cdot g_i',...,g_m) = \lambda(g_1,...,g_i,...,g_m)\cdot \lambda(g_1,...,g_i',...,g_m).$$ We denote by $\text{ML}_{m}(G,S^1)$ the group of multilinear maps $G^m\rightarrow S^1$. We say that a multilinear map $\lambda$ is \textit{symmetric} if it is invariant to the permutations of coordinates and let $\SML_{m}(G,S^1)$ denote the group of symmetric multilinear maps.
	\end{defn}
	
	The groups $\text{ML}_m(G,S^1)$ and $\text{SML}_{m}(G,S^1)$ are the Pontryagin dual of the tensor product and symmetric tensor product of $m$ copies of $G$, respectively. 
	\begin{defn}[Tensor products]
	Let $G$ be a countable abelian group. The $m$-tensor product of $G$ is a group $G^{\otimes m}$ satisfying the following universal property: There exists a multilinear map\footnote{It is common to denote the element $\imath(g_1,...,g_m)$ by $g_1\otimes...\otimes g_m$.} $\imath:G^m\rightarrow G^{\otimes m}$ such that for every multilinear map $\lambda\in \text{ML}_m(G,S^1)$ there exists a homomorphism $\varphi_\lambda : G^{\otimes m}\rightarrow S^1$ such that $\lambda=\varphi_\lambda\circ\imath$. Similarly one can define the symmetric tensor product $G^{\otimes_{sym} m}$.
	\end{defn}
	Note that the tensor product and symmetric tensor product always exist and unique up to isomorphism.
    We recall some basic results about topological groups.
    \begin{prop} \cite[Corollary 8.5, page 377]{HM} \label{torsionfree}
        Let $G$ be a countable (discrete) abelian group. Then $\hat G$ is divisible if and only if $G$ is torsion free. 
    \end{prop}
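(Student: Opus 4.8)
The plan is to prove Proposition \ref{torsionfree}, the statement that for a countable discrete abelian group $G$, the Pontryagin dual $\hat{G}$ is divisible if and only if $G$ is torsion free. I would approach this through Pontryagin duality, exploiting the functorial correspondence between algebraic properties of $G$ and topological/algebraic properties of $\hat{G}$. The key structural fact I would invoke is that divisibility of a group is Pontryagin-dual to torsion-freeness: more precisely, a character group $\hat{G}$ is divisible exactly when $G$ has no nontrivial torsion.

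First I would recall the general duality dictionary. For a discrete abelian group $G$, the dual $\hat{G}=\mathrm{Hom}(G,S^1)$ is a compact abelian group. The central observation is that divisibility and torsion-freeness are dual notions under Pontryagin duality. To make this concrete in the forward direction, suppose $G$ has a nontrivial torsion element, say $g\in G$ with $ng = 0$ for some $n\geq 2$ and $g\neq 0$. I would show $\hat{G}$ fails to be divisible by producing a character that has no $n$-th root. Since $g$ generates a finite cyclic subgroup $\langle g\rangle\cong \mathbb{Z}/m\mathbb{Z}$ for some $m\mid n$, there is a character $\chi$ on $\langle g\rangle$ taking a primitive $m$-th root of unity value at $g$; by injectivity of $S^1$ (divisibility of the circle), $\chi$ extends to a character on all of $G$. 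Any $n$-th root $\psi$ of this $\chi$ would force $\psi(g)^n = \chi(g)$, but restricted to the finite subgroup this leads to a contradiction with the order of the value $\chi(g)$ when the divisibility obstruction is analyzed carefully.

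For the converse, suppose $G$ is torsion free; I would show $\hat{G}$ is divisible, i.e. for every $\chi\in\hat{G}$ and every $n\geq 1$ there exists $\psi\in\hat{G}$ with $\psi^n = \chi$. The cleanest route is again injectivity of the circle group $S^1$ in the category of abelian groups (the circle is divisible, hence injective). Given $\chi: G\to S^1$ and $n\geq 1$, I consider the multiplication-by-$n$ map $[n]: G\to G$. Since $G$ is torsion free, $[n]$ is injective, so $G$ embeds into $G$ via $g\mapsto ng$, identifying $G$ with the subgroup $nG$. Then $\chi$ can be regarded as a homomorphism on $nG$ (via the inverse of this isomorphism onto its image), and by injectivity of $S^1$ this homomorphism extends to a character $\psi$ on all of $G$; by construction $\psi(ng)=\chi(g)$, which says precisely $\psi^n=\chi$ after unwinding the identification.

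The main obstacle I anticipate is handling the bookkeeping in the converse cleanly: one must be careful that torsion-freeness is exactly what makes $[n]$ injective (so that $G\cong nG$ and $\chi$ pulls back to a well-defined partial character), and that the injectivity/divisibility of $S^1$ supplies the required extension. Since the whole argument is standard Pontryagin duality, I would in practice simply cite \cite[Corollary 8.5, page 377]{HM} as the paper already does, and only sketch the above as the conceptual content; the nontrivial analytic input (existence and extension of characters) is entirely packaged in the divisibility of $S^1$ together with the order-theoretic duality between divisibility and torsion-freeness.
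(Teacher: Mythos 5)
Your proposal is correct, but note that the paper itself gives no proof of Proposition \ref{torsionfree} at all: it is stated with a citation to \cite[Corollary 8.5, page 377]{HM} and used as a black box. So the honest comparison is that you have supplied a self-contained argument where the paper defers entirely to the literature. Both directions of your sketch are sound. In the converse direction (torsion free implies $\hat G$ divisible), the argument is exactly the standard one: injectivity of $[n]:G\to G$ lets you define $\phi(ng):=\chi(g)$ on the subgroup $nG$, and injectivity of $S^1$ as an abelian group (equivalently, its divisibility, via Baer's criterion) extends $\phi$ to a character $\psi$ with $\psi^n=\chi$; since $G$ is discrete, every homomorphism to $S^1$ is automatically continuous, so no topological issue arises. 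In the forward direction your phrasing (``leads to a contradiction \ldots when the divisibility obstruction is analyzed carefully'') undersells how immediate the contradiction is: if $ng=0$ with $g\neq 0$ and $\psi^n=\chi$, then $\psi(g)^n=\psi(ng)=1$, whereas $\chi(g)$ was chosen to be a nontrivial root of unity, so no $n$-th root exists and $\hat G$ is not divisible; there is no need to track the exact order $m$ of $g$ versus $n$. With that tightening the proof is complete and entirely standard, and it is a legitimate alternative to simply citing \cite{HM} as the paper does.
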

   The following result will play a significant role in our argument.
	\begin{prop} \label{SML}
		Let $G$ be a countable torsion free abelian group. Then for every $m\geq 1$, $\SML_m(G,S^1)$ is a divisible group. 
	\end{prop}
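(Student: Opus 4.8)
The plan is to exploit Pontryagin duality, which reduces everything to a purely algebraic statement about the symmetric tensor product. Since, as recalled just above the statement, $\SML_m(G,S^1)$ is the dual of the symmetric tensor product $G^{\otimes_{sym} m}$, and since $G$ countable forces $G^{\otimes_{sym} m}$ to be countable (it is a quotient of the free abelian group on the countable set $G^m$), Proposition \ref{torsionfree} applies: the dual of a countable abelian group is divisible if and only if the group itself is torsion free. Thus the entire claim is equivalent to showing that $G^{\otimes_{sym} m}$ is torsion free whenever $G$ is.

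To prove the latter, I would write $G$ as the filtered union (directed colimit) of its finitely generated subgroups $G_\alpha$. Each $G_\alpha$ is a finitely generated subgroup of a torsion free group, hence free abelian of finite rank. The key structural input is that the symmetric tensor power commutes with filtered colimits — being the composite of the tensor power $(-)^{\otimes m}$ (which commutes with filtered colimits) and the coinvariants functor for the permutation action of the symmetric group (a quotient, hence a colimit) — so that $G^{\otimes_{sym} m} \cong \varinjlim_\alpha G_\alpha^{\otimes_{sym} m}$. For a free abelian group $F$ of finite rank with basis $e_1,\dots,e_r$, the symmetric tensor power $F^{\otimes_{sym} m}$ is itself free, with basis the symmetric monomials $e_{i_1}\cdots e_{i_m}$ for $i_1\leq\dots\leq i_m$ (it is the degree-$m$ part of the polynomial ring $\mathbb{Z}[e_1,\dots,e_r]$); in particular each $G_\alpha^{\otimes_{sym} m}$ is torsion free. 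Finally, a filtered colimit of torsion free abelian groups is torsion free: if an element $x$ of the colimit satisfies $nx=0$, lift it to some $x_\alpha\in G_\alpha^{\otimes_{sym} m}$; then $nx_\alpha$ dies at some later stage $\beta$, so $n$ times the image of $x_\alpha$ in the torsion free group $G_\beta^{\otimes_{sym} m}$ vanishes, forcing that image to be $0$ and hence $x=0$. This completes the reduction and the proof.

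I expect the main obstacle to be the colimit step rather than the algebra of free modules. The tempting shortcut — embed $G$ into the $\mathbb{Q}$-vector space $G\otimes\mathbb{Z}\mathbb{Q}$ (an injection precisely because $G$ is torsion free) and then tensor — does not work directly, because the $m$-fold tensor power of an injection need not be injective without a flatness hypothesis, and flatness of $G^{\otimes(m-1)}$ is essentially the torsion-freeness we are trying to establish. The filtered-colimit route avoids this circularity entirely, at the cost of having to justify carefully that $G^{\otimes_{sym} m}$ is the colimit of the $G_\alpha^{\otimes_{sym} m}$ and that torsion-freeness passes to such colimits; these are the two points I would state explicitly. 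The base case $m=1$ is automatic, since $G^{\otimes_{sym} 1}=G$ and $\SML_1(G,S^1)=\hat G$ is divisible directly by Proposition \ref{torsionfree}.
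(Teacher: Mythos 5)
Your proposal is correct and follows essentially the same route as the paper: reduce via Proposition \ref{torsionfree} to showing $G^{\otimes_{sym} m}$ is torsion free, verify this for finitely generated (hence free) $G$ by noting the symmetric tensor power is again free, and pass to general countable $G$ by expressing it as a union of finitely generated subgroups. Your filtered-colimit phrasing is in fact slightly more careful than the paper's at the one delicate point (the paper locates a torsion element inside some $H^{\otimes_{sym} m}$ and invokes the finitely generated case directly, whereas you explicitly push the relation $nx=0$ forward to a later stage of the colimit before concluding), but the underlying argument is the same.
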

	\begin{proof}
	By Proposition \ref{torsionfree}, it is enough to show that $G^{\otimes_{sym} m}$ is torsion free. We start with the case where $G$ is finitely generated. Since $G$ is torsion free, it is isomorphic to $\mathbb{Z}^d$ for some $d\in\mathbb{N}$. It is easy to see that $G^{\otimes_{sym} m}$ is a free quotient of $G^{\otimes m}\cong \mathbb{Z}^{d^m}$ and the claim follows. Now, let $G$ be an arbitrary countable torsion free abelian group and assume by contradiction that there exists $0\not =g\in G^{\otimes_{sym} m}$ of finite order. It is well known that the image of the map $\imath:G^m\rightarrow G^{\otimes_{sym} m}$ generates the group $G^{\otimes_{sym} m}$. Therefore, there exists $g_1,...,g_k\in G^m$ such that $g=\varphi(g_1)\cdot...\cdot \varphi(g_k)$. The coordinates of $g_1,...,g_k$ generates a finitely generated subgroup $H$ of $G$ and $g\in H^{\otimes_{sym} m}$. The finitely generated case provides a contradiction.
	\end{proof}
	We need the following result by Zimmer \cite[Corollary 3.8]{Zim}.
	\begin{defn} [Image and minimal cocycles]
		Let $G$ be a countable abelian group, let $X$ be a $G$-system, and let $\rho:G\times X\rightarrow U$ be a cocycle into a compact abelian group $U$. The image of $\rho$ is defined to be the closed subgroup $U_\rho\leq U$ generated by $\{\rho(g,x):g\in G,x\in X\}$. We say that $\rho$ is minimal if it is not $(G,X,U)$-cohomologous to a cocycle $\sigma$ with $U_\sigma\lneqq U_\rho$.
	\end{defn}
	\begin{lem} \label{minimal}
		Let $X$ be an ergodic $G$-system and $\rho:G\times X\rightarrow U$ be a cocycle into a compact abelian group $U$. Then,
		\begin{itemize}
			\item{$\rho$ is $(G,X,U)$-cohomologous to a minimal cocycle.}
			\item{$X\times_\rho U$ is ergodic if and only if $X$ is ergodic and $\rho$ is minimal with image $U_\rho = U$.}
		\end{itemize}
	\end{lem}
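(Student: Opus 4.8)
The plan is to translate everything into Fourier analysis along the fibre $U$, using Pontryagin duality to convert statements about the compact group $U$ and its closed subgroups into statements about the countable discrete dual $\widehat U$. The central object is the subgroup
$$\Gamma = \{\chi\in\widehat U : \chi\circ\rho \text{ is an } S^1\text{-coboundary}\} \leq \widehat U,$$
which is an invariant of the cohomology class of $\rho$ (products of coboundaries are coboundaries, and replacing $\rho$ by a cohomologous cocycle alters $\chi\circ\rho$ only by a coboundary). I claim that the minimal image attainable in the class of $\rho$ is exactly the annihilator $\Gamma^{\perp}=\{u\in U:\chi(u)=1\ \forall \chi\in\Gamma\}$, and that the ergodicity criterion can be read directly off $\Gamma$.

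For the first bullet I would first establish the lower bound: if $\sigma=\rho\cdot\Delta F$ is cohomologous to $\rho$ with image $U_\sigma=V$, then for every $\chi\in V^{\perp}$ we have $\chi\circ\sigma\equiv 1$, whence $\chi\circ\rho=\Delta\overline{(\chi\circ F)}$ is a coboundary and $\chi\in\Gamma$; thus $V^{\perp}\subseteq\Gamma$, i.e.\ $V\supseteq\Gamma^{\perp}$. Taking $\sigma=\rho$ shows $U_\rho\supseteq\Gamma^{\perp}$ for every cocycle in the class. It then remains to realize $\Gamma^\perp$. For each $\chi\in\Gamma$ choose $F_\chi\in\mathcal{M}(X,S^1)$ with $\chi\circ\rho=\Delta F_\chi$; by ergodicity of $X$ the function $F_\chi$ is unique up to a multiplicative constant, so $\chi\mapsto F_\chi$ descends to a homomorphism $\Gamma\to \mathcal{M}(X,S^1)/S^1$. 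Because $S^1$ is divisible, hence injective (Proposition \ref{divisible}), $\mathrm{Ext}(\Gamma,S^1)=0$, so the pullback of the extension $1\to S^1\to\mathcal{M}(X,S^1)\to\mathcal{M}(X,S^1)/S^1\to1$ along this homomorphism splits, yielding an honest homomorphic lift $\chi\mapsto F_\chi\in\mathcal{M}(X,S^1)$. For a.e.\ fixed $x$ the assignment $\chi\mapsto F_\chi(x)$ is then a character of the discrete group $\Gamma$, i.e.\ an element of $\widehat{\Gamma}\cong U/\Gamma^{\perp}$; selecting a measurable cross section $U/\Gamma^\perp\to U$ produces a measurable $F:X\to U$ with $\chi\circ F=F_\chi$ for all $\chi\in\Gamma$. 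A direct check gives $\chi\bigl(\rho/\Delta F\bigr)=\Delta F_\chi\cdot\overline{\Delta F_\chi}=1$ for every $\chi\in\Gamma$, so $\rho/\Delta F$ takes values in $\Gamma^{\perp}$; combined with the lower bound, this cocycle is minimal.

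For the second bullet I would decompose an $L^2$ function on $X\times U$ along the fibres as $\Phi(x,v)=\sum_{\chi\in\widehat U}f_\chi(x)\chi(v)$. Invariance under $S_g(x,v)=(T_gx,\rho(g,x)v)$ is equivalent, coefficient by coefficient, to $f_\chi\circ T_g\cdot(\chi\circ\rho)=f_\chi$ for all $g$. For $\chi=1$ this says $f_1$ is $G$-invariant on $X$, hence constant iff $X$ is ergodic; for $\chi\neq1$, ergodicity forces $|f_\chi|$ to be constant, and a nonzero solution exists precisely when $\chi\circ\rho=\Delta\overline{f_\chi}$ is a coboundary, i.e.\ when $\chi\in\Gamma$. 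Hence $X\times_\rho U$ is ergodic iff $X$ is ergodic and $\Gamma=\{1\}$. By the first bullet, $\Gamma=\{1\}$ is equivalent to $\Gamma^{\perp}=U$, which (as $\Gamma^\perp$ is the minimal attainable image) holds exactly when $\rho$ is minimal with $U_\rho=U$, completing the equivalence.

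The main obstacle is the realization step in the first bullet: producing a single measurable transfer function $F:X\to U$ reducing $\rho$ into $\Gamma^{\perp}$, rather than merely one character at a time. This is where the argument genuinely uses that $S^1$ is injective (to lift the homomorphism $\Gamma\to\mathcal{M}(X,S^1)/S^1$ coherently) together with a Borel cross section for $U\to\widehat\Gamma\cong U/\Gamma^{\perp}$; countability of $\widehat U$, and hence of $\Gamma$, keeps the measurability bookkeeping routine. Everything else is bookkeeping with Pontryagin duality and the cocycle identity.
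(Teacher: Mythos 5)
Your argument is correct. Note first that the paper does not prove Lemma \ref{minimal} at all: it is quoted from Zimmer \cite[Corollary 3.8]{Zim}, so there is no in-paper proof to match. What you have written is a legitimate self-contained derivation, and it is close in spirit both to Zimmer's original Mackey-range analysis and, notably, to the paper's own Proposition \ref{typecircle:prop}: your key step --- splitting the extension $1\to S^1\to\{(\chi,F):\chi\circ\rho=\Delta F\}\to\Gamma\to 1$ using injectivity of $S^1$ to get a homomorphic choice $\chi\mapsto F_\chi$, then reassembling via Pontryagin duality and a Borel cross section --- is exactly the mechanism the author uses there to pass from character-by-character statements to a single $U$-valued transfer function. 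Your identification of the minimal attainable image with $\Gamma^\perp$, the lower bound $U_\sigma\supseteq\Gamma^\perp$ for every cohomologous $\sigma$, and the fibrewise Fourier analysis $\Phi=\sum_\chi f_\chi\otimes\chi$ for the ergodicity criterion are all sound; the equivalence ``$\rho$ minimal $\iff U_\rho=\Gamma^\perp$'' then closes the loop between the two bullets. Two small points worth making explicit: the countability of $\Gamma$ (hence the a.e.\ validity of all multiplicativity relations simultaneously, and the measurability of $x\mapsto(\chi\mapsto F_\chi(x))$ into $\widehat\Gamma$) rests on $U$ being metrizable, which is implicit in the paper's standing regularity assumption on systems; and the surjectivity of the restriction $U\to\widehat\Gamma$ with kernel $\Gamma^\perp$ uses injectivity of $S^1$ once more to extend characters from $\Gamma$ to $\widehat U$. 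Neither is a gap, just bookkeeping you have correctly flagged as routine.
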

	The following proposition is the main step in our argument. We show that for every ergodic $G$-system $X$, where $G$ is a torsion free countable abelian group and any symmetric multilinear map $\lambda:G^m\rightarrow S^1$ there exists an extension $Y$ such that $\lambda\in \Spec_m(Y)$.
	\begin{prop}\label{Integration V1}
	    Let $G$ be a torsion free countable abelian group and let $X$ be an ergodic $G$-system. Let $m\in\mathbb{N}$ and suppose that $(\lambda_n)_{n\in\mathbb{N}}\in \text{SML}_m(G,S^1)$ are countably many symmetric multilinear maps. Then, there exists an extension $\pi:(Y,G)\rightarrow (X,G)$ and phase polynomials $Q_n:Y\rightarrow S^1$ of degree $<m+1$ such that $\lambda_n(g_1,...,g_m) = \Delta_{g_1}...\Delta_{g_m} Q_n$. In other words, $\lambda_n\in \Spec_{m}(Y)$ for every $n\in\mathbb{N}$.
	\end{prop}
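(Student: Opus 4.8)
The plan is to induct on $m$, carrying the whole countable family along, since the inductive step only ever produces a countable collection of lower-degree maps. Throughout I will freely enlarge the base system and pass to inverse limits, which is permissible because phase polynomials and their top symbols lift through factor maps. In the base case $m=1$ each $\lambda_n\colon G\to S^1$ is just a character: if $\lambda_n$ is already an eigenvalue of $X$ there is nothing to do, and otherwise I form the abelian extension $X\times_{\lambda_n}S^1$ with $S_g(x,t)=(T_gx,\lambda_n(g)t)$, which is ergodic precisely because $\lambda_n\notin\Spec_1(X)$, and on which $Q_n(x,t)=t$ is an eigenfunction with $\Delta_g Q_n=\lambda_n(g)$. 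Taking the inverse limit over $n$ realizes the whole family.

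For the inductive step assume the statement for $m-1$. Given symmetric multilinear $\lambda_n\colon G^m\to S^1$, for each $g\in G$ the slice $\lambda_n(g,\cdot)\colon G^{m-1}\to S^1$ is again symmetric multilinear, and the family $\{\lambda_n(g,\cdot):n\in\mathbb{N},\,g\in G\}$ is countable. By the induction hypothesis there is an ergodic extension $\pi_0\colon Y_0\to X$ and phase polynomials $R_{n,g}\colon Y_0\to S^1$ of degree $<m$ with $\Delta_{h_1}\cdots\Delta_{h_{m-1}}R_{n,g}=\lambda_n(g,h_1,\dots,h_{m-1})$. I would then try to assemble, for fixed $n$, the family $g\mapsto R_{n,g}$ into a single cocycle $\rho_n(g,\cdot):=R_{n,g}$ on $Y_0$. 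A direct symbol computation, using multilinearity of $\lambda_n$ together with the identity $\Delta_h(R\circ S_g)=(\Delta_h R)\circ S_g$ and the fact that constants are fixed by $S_g$, shows that the cocycle defect $\rho_n(g+g',\cdot)\big/\bigl(\rho_n(g,\cdot)\,\rho_n(g',\cdot)\circ S_g\bigr)$ is a phase polynomial of degree $<m-1$. Once $\rho_n$ is a genuine phase polynomial cocycle of degree $<m$, the extension $Y=Y_0\times_\rho (S^1)^{\mathbb{N}}$ with $\rho=(\rho_n)_n$ — replaced by a cohomologous minimal cocycle and ergodic by Lemma \ref{minimal}, and of the correct polynomial type by Proposition \ref{orderext} — carries the coordinate functions $Q_n(y,\vec t)=t_n$, which satisfy $\Delta_g Q_n=\rho_n(g,\cdot)=R_{n,g}$; hence $Q_n$ has degree $<m+1$ and symbol $\Delta_{g_1}\cdots\Delta_{g_m}Q_n=\lambda_n$, as required.

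The main obstacle is exactly the correction turning the quasi-cocycle $g\mapsto R_{n,g}$ into an honest cocycle: the defect is a $2$-cocycle on $G$ valued in the module of degree $<m-1$ phase polynomials, and I must show it can be trivialized after adjusting the $R_{n,g}$ by lower-degree phase polynomials and, if necessary, enlarging $Y_0$. The key leverage is that symmetry of $\lambda_n$ forces the defect to be symmetric — its antisymmetric part $c(g,g')/c(g',g)$ equals $\Delta_{g'}R_{n,g}\cdot\overline{\Delta_g R_{n,g'}}$, whose top symbol is $\lambda_n(g,g',\dots)\,\overline{\lambda_n(g',g,\dots)}=1$ — while the relevant coefficient groups are divisible. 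Concretely I would peel off the obstruction one degree at a time, using Theorem \ref{roots:thm} and Proposition \ref{SML} to supply roots, Proposition \ref{divisible} (injectivity of divisible groups, and of $S^1$) to split the resulting short exact sequences, and Proposition \ref{typecircle:prop} to descend from an abstract coefficient group back to $S^1$-valued corrections. This is the same homological mechanism as in the proof of Proposition \ref{typecircle:prop}, and it is where torsion-freeness of $G$ — equivalently divisibility of $\hat G$ by Proposition \ref{torsionfree} — is essential, since it forces the vanishing of the symmetric ($\mathrm{Ext}$-type) obstruction groups that would otherwise block the assembly.
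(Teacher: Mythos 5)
Your proposal follows essentially the same route as the paper: the base case is handled by an abelian extension along the (minimalized) character cocycle, and the inductive step slices $\lambda_n$ to get degree-$<m$ polynomials $R_{n,g}$ on an extension and then trivializes the quasi-cocycle defect one degree at a time, using ergodicity to see that the successive defect symbols are constants in $\SML_{m-j}(G,S^1)$, divisibility of these groups (from torsion-freeness of $G$) to split the resulting symmetric $2$-cocycle extensions, and the induction hypothesis to realize the correcting homomorphisms as symbols of phase polynomials on a further extension. The only imprecision is the base-case claim that $X\times_{\lambda_n}S^1$ is ergodic whenever $\lambda_n\notin\Spec_1(X)$ (one must extend by the image of a minimal cohomologous cocycle, as in Lemma \ref{minimal}), but you invoke exactly that mechanism later, so the argument matches the paper's.
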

	\begin{proof}
	    Let $\lambda:G^m\rightarrow (S^1)^\mathbb{N}$ be the multilinear map whose $n$-th coordinate is $\lambda_n$. We prove the claim by induction on $m$. If $m=1$, then $\lambda:G\rightarrow (S^1)^\mathbb{N}$ is a homomorphism. Let $\tau : G\times X\rightarrow (S^1)^\mathbb{N}$ be a minimal cocycle which is cohomologous to $\lambda$ and let $F:X\rightarrow (S^1)^\mathbb{N}$ be such that $\lambda = \tau\cdot \Delta F$. Let $V\leq (S^1)^\mathbb{N}$ denote the image of $\tau$ and consider the extension $Y=X\times_\tau V$. By Lemma \ref{minimal} this extension is ergodic. Let $\imath:V\rightarrow (S^1)^\mathbb{N}$ be the embedding of $V$ in $(S^1)^\mathbb{N}$ and let $Q(x,v)=\imath(v)\cdot F(x)$. Then $\Delta_g Q(x,v) = \tau\cdot \Delta F = \lambda(g)$, which clearly implies that $\Delta_g Q_n = \lambda_n(g)$ where $Q_n$ is the $n$-th coordinate of $Q$, as required. Let $m\geq 2$ and assume inductively that the claim has already been proven for smaller values of $m$. For every $g_m\in G$, the map $(g_1,...,g_{m-1})\mapsto \lambda(g_1,...,g_{m-1},g_m)$ is an element in $\SML_{m-1}(G,S^1)$. By the induction hypothesis, there exists an extension $X_1$ of $X$ and phase polynomials $Q_{g_m}$ of degree $<m$ on $X_1$ such that 
	    \begin{equation}\label{integration}\lambda(g_1,...,g_m) = \Delta_{g_1}...\Delta_{g_{m-1}} Q_{g_m}.
	    \end{equation} In particular, for every $g,g'\in G$ we have \begin{equation}\label{quasicocycle} \frac{Q_{g+g'}}{Q_{g} T_{g} Q_{g'}}\in P_{<m-1}(X_1,S^1).\end{equation}
	    In this case we say that $g\mapsto Q_g$ is quasi-cocycle of order $<m-1$. We claim by induction on $1\leq j \leq m$, that there exist an extension $X_j$ and phase polynomials $Q_g^{(j)}:X_j\rightarrow S^1$ of degree $<m$ such that \begin{equation}\label{integrationj}\lambda(g_1,...,g_m)=\Delta_{g_1}...\Delta_{g_{m-1}} Q_{g_m}^{(j)}
	    \end{equation} and $g\mapsto Q_g^{(j)}:X_j\rightarrow S^1$ is a quasi-cocycle of order $<m-j$. Set $Q_g^{(1)}=Q_g$, the case $j=1$ follows immediately by (\ref{quasicocycle}). Fix $j\geq 2$ and assume inductively that there exist an extension $X_{j-1}$ and phase polynomials $Q_g^{(j-1)}:X_{j-1}\rightarrow S^1$ such that $\lambda(g_1,...,g_m)=\Delta_{g_1}...\Delta_{g_{m-1}} Q_{g_m}^{(j-1)}$ and $g\mapsto Q_g^{(j-1)}$ is a quasi-cocycle of degree $<m-j+1$. For every $g_1,...,g_{m-j+1}\in G$ and every $g,g'\in G$ we have $$\Delta_{g_1}...\Delta_{g_{m-j+1}} \frac{Q^{(j-1)}_{g+g'}}{Q^{(j-1)}_g T_g Q^{(j-1)}_{g'}}=1.$$
	    Therefore, by ergodicity
	    $$k^{(j-1)}_{g,g'}(g_1,...,g_{m-j}):=\Delta_{g_1}...\Delta_{g_{m-j}} \frac{Q^{(j-1)}_{g+g'}}{Q^{(j-1)}_g T_g Q^{(j-1)}_{g'}}$$ is a constant. The map $k:G\times G\rightarrow \SML_{m-j}(G,S^1)$ which sends $(g,g')$ to the symmetric multilinear map $k_{g,g'}$ is a symmetric cocycle (as in Definition \ref{cocycle}). Therefore, it defines an abelian multiplication on the set $B=G\times \SML_{m-j}(G,S^1)$ by $(g,\mu)\cdot (g',\mu') = (g+g', k(g,g')\cdot \mu\cdot \mu')$. We consider the following short exact sequence $$1\rightarrow \SML_{m-j}(G,S^1)\rightarrow B\rightarrow G\rightarrow 1.$$ By Proposition \ref{SML} the group $\SML_{m-j}(G,S^1)$ is divisible and so by Proposition \ref{split} we can find a map $c:G\rightarrow \SML_{m-j}(G,S^1)$ such that $\frac{c(g+g')}{c(g)c(g')} = k(g,g')$. By the induction hypothesis, we can pass to an extension $(X_j,G)$ of $(X_{j-1},G)$ where we can find phase polynomials $Q'_g:X_j\rightarrow S^1$ of degree $<m$ such that $c(g)(g_1,...,g_{m-j}) = \Delta_{g_1}...\Delta_{g_{m-j}} Q'_g$. Now let $Q_g^{(j)}:= Q_g^{(j-1)}\circ \pi_j/Q_g'$ where $\pi_j:X_j\rightarrow X_{j-1}$ is the factor map. Then $g\mapsto Q_g^{(j)}$ is a quasi-cocycle of order $<m-j$. Moreover, since $Q_g'$ are phase polynomials of degree $<m-1$, equation (\ref{integrationj}) holds. This completes the proof by induction. The case $j=m$ implies that we can choose $g\mapsto Q_g$ to be a cocycle, where $Q_g:X_m\rightarrow (S^1)^\mathbb{N}$ are phase polynomial of degree $<m$, $X_m$ is an ergodic extension of $X$ and equation (\ref{integration}) holds. The rest of the proof is the same as in the case where $m=1$. Namely, we can find a minimal cocycle $\tau:G\times X_m\rightarrow V$ which is cohomologous to $(g,x)\mapsto Q_g(x)$. By Lemma \ref{minimal}, the extension $Y=X_m\times_{\tau} V$ is ergodic and the map $Q(x,v) = v\cdot Q(x)$ satisfies that $\Delta_g Q = Q_g$. This implies that that $\lambda(g_1,...,g_m) = \Delta_{g_1}...\Delta_{g_m} Q$ and the proof is complete.
	\end{proof}
	We can finally prove the promised result.
	\begin{thm} \label{divisible_extension}
		Let $G$ be a torsion free countable abelian group and $k\geq 2$. Then for every ergodic system $(X,G)$ there exists an extension $(Y,G)$, such that $X$ is $k$-divisible in $Y$.
	\end{thm}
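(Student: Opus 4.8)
The plan is to reduce the theorem to the single-arity realization result of Proposition \ref{Integration V1}, applied once for each of the finitely many arities $1,\dots,k-1$, feeding it roots that exist abstractly by the divisibility of the symmetric multilinear groups. Before building anything I would record three structural facts. First, for each $1\le i\le k-1$ the group $\Spec_i(X)$ is countable: the assignment $P\mapsto\bigl((g_1,\dots,g_i)\mapsto\Delta_{g_1}\cdots\Delta_{g_i}P\bigr)$ is a surjection of $P_{<i+1}(X,S^1)$ onto $\Spec_i(X)$ that kills the constants $P_{<1}(X,S^1)$, and $P_{<i+1}(X,S^1)/P_{<1}(X,S^1)$ is countable by the Separation Lemma \ref{sep:lem} together with the separability of $L^2(X)$. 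Second, every $\lambda\in\Spec_i(X)$ is a \emph{symmetric} multilinear map, since the operators $\Delta_g$ commute ($G$ being abelian); hence $\Spec_i(X)\subseteq\SML_i(G,S^1)$. Third, the natural inclusion $\Spec_i(X)\hookrightarrow\Spec_i(Y)$ of Definition \ref{divisiblesystem:def} is, at the level of maps $G^i\to S^1$, literally the set inclusion inside $\SML_i(G,S^1)$: because $\Delta_{g_1}\cdots\Delta_{g_i}(P\circ\pi)=\Delta_{g_1}\cdots\Delta_{g_i}P$ is already a constant, realizing $\lambda$ by $P$ on $X$ realizes the \emph{same} map $\lambda$ by $P\circ\pi$ on $Y$. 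Thus the assertion ``$\Spec_i(X)$ is divisible in $\Spec_i(Y)$'' means exactly that every $\lambda\in\Spec_i(X)$ has, for every $n$, an $n$-th root lying in $\Spec_i(Y)$.

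With these in hand the construction runs as follows. For each $i$, each $\lambda\in\Spec_i(X)$, and each $n\in\mathbb{N}$, the divisibility of $\SML_i(G,S^1)$ (Proposition \ref{SML}, which uses that $G$ is torsion free) supplies a symmetric multilinear map $\mu_{i,\lambda,n}\in\SML_i(G,S^1)$ with $\mu_{i,\lambda,n}^n=\lambda$. For fixed $i$ this is a countable family of elements of $\SML_i(G,S^1)$. I would then build a finite tower of ergodic extensions $X=Y_0\to Y_1\to\cdots\to Y_{k-1}=:Y$, where $Y_i\to Y_{i-1}$ is obtained by applying Proposition \ref{Integration V1} with $m=i$ to the countable family $\{\mu_{i,\lambda,n}\}_{\lambda,n}$, so that each $\mu_{i,\lambda,n}\in\Spec_i(Y_i)$. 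Each application yields an ergodic extension, so $Y$ is a genuine ergodic extension of $X$.

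It remains to check that the tower does not undo earlier work and that the conclusion follows. Once a symmetric multilinear map lies in $\Spec_i(Y_i)$, realized by a phase polynomial $Q$ of degree $<i+1$, it stays in $\Spec_i$ of every further extension, since $Q$ composed with the factor map is again a phase polynomial of the same degree realizing the same map; hence all the roots $\mu_{i,\lambda,n}$ survive into $Y$. Consequently, for every $1\le i\le k-1$, every $\lambda\in\Spec_i(X)$, and every $n$, the element $\mu_{i,\lambda,n}$ lies in $\Spec_i(Y)$ and is an $n$-th root of $\lambda$, which is precisely the statement that $\Spec_i(X)$ is divisible in $\Spec_i(Y)$. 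By Definition \ref{divisiblesystem:def} this says $X$ is $k$-divisible in $Y$.

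I expect no real obstacle here: the substantive content has already been absorbed into Propositions \ref{SML} and \ref{Integration V1}, and the argument is the organization of their combination. The only point requiring care is the bookkeeping across arities—verifying that realizing the arity-$i$ roots in $Y_i$ leaves the arity-$j$ roots ($j<i$) realized in $Y_j$ intact—which is exactly the lifting-of-phase-polynomials observation in the previous paragraph.
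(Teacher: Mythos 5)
Your proposal is correct and follows essentially the same route as the paper: choose, for each $1\le i\le k-1$, each $\lambda\in\Spec_i(X)$ and each $n$, an $n$-th root in $\SML_i(G,S^1)$ via Proposition \ref{SML}, and then realize this countable family in the spectrum of an extension via Proposition \ref{Integration V1}. Your additional bookkeeping (countability of $\Spec_i(X)$ via the Separation Lemma, symmetry of the spectral maps, and persistence of realized roots up the tower of extensions) only makes explicit what the paper's terser proof leaves implicit.
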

	\begin{proof}
		Let $X$ be as in the theorem. Fix $k\in\mathbb{N}$, and let $\Spec(X) = \bigcup_{i=1}^{k-1} \Spec_i(X)$. For every $1\leq i \leq k-1$, every $\lambda\in \Spec_i(X_l)$, and every $n\in\mathbb{N}$ choose an $n$-th root $\lambda_n\in SML_i(G,S^1)$ for $\lambda$ (which exists, by Proposition \ref{SML}). Then, by Proposition \ref{Integration V1}, we can find an extension $Y$ such that $\{\lambda_n:\lambda\in \Spec(X),n\in\mathbb{N}\}$ belongs to $\Spec(Y)$. This completes the proof.
		\begin{comment}
		Let $Y$ be the inverse limit of the sequence $X_0,X_1,X_2,...$. Then, by Proposition \ref{inverse limit} $Y$ is $k$-divisible. Since phase polynomials $P:Y\rightarrow S^1$ of degree $<k$ are measurable with respect to $Z_{<k}(Y)$ (Proposition \ref{orderext}) we get that $Z_{<k}(Y)$ is also $k$-divisible as required.
		\end{comment}
	\end{proof}
	As a corollary we conclude the following stronger result for $k=2$.
	\begin{thm} \label{divisible_extension1}
		Let $G$ be a torsion free countable abelian group. Then every ergodic $G$-system $X$ is a factor of a $2$-divisible system.
	\end{thm}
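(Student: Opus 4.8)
The plan is to iterate Theorem \ref{divisible_extension} and pass to an inverse limit. Starting from $Y_0 := X$, I would apply Theorem \ref{divisible_extension} with $k=2$ repeatedly: having constructed the ergodic $G$-system $Y_n$, the theorem furnishes an ergodic extension $\pi_{n+1}\colon Y_{n+1}\to Y_n$ in which $Y_n$ is $2$-divisible, that is, $\Spec_1(Y_n)$ is divisible inside $\Spec_1(Y_{n+1})$ with respect to the natural inclusion coming from lifting eigenfunctions along $\pi_{n+1}$. Let $Y := \varprojlim_n Y_n$ be the inverse limit. This is again an ergodic $G$-system, the factor maps realize each $Y_n$ as a factor of $Y$, and in particular $X=Y_0$ is a factor of $Y$; moreover $\bigcup_n L^2(Y_n)$ is dense in $L^2(Y)$. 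It then remains to prove that $\Spec_1(Y)$ is divisible, which is exactly the statement that $Y$ is $2$-divisible.

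So I would fix $\lambda\in\Spec_1(Y)$ and $m\in\mathbb{N}$ and produce an $m$-th root of $\lambda$ in $\Spec_1(Y)$. By ergodicity $\lambda$ is realized by a modulus-one eigenfunction $P\colon Y\to S^1$ with $T_g P=\lambda(g)P$. The key observation is that $E(P\mid Y_n)$, whenever it is nonzero, is again an eigenfunction on $Y_n$ with the same eigenvalue $\lambda$: since conditional expectation commutes with the $G$-action, $T_g E(P\mid Y_n)=E(T_g P\mid Y_n)=\lambda(g)\,E(P\mid Y_n)$, and by ergodicity of $Y_n$ the modulus $|E(P\mid Y_n)|$ is $G$-invariant, hence constant, so after normalizing we obtain a genuine eigenfunction witnessing $\lambda\in\Spec_1(Y_n)$. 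Because $P=\lim_n E(P\mid Y_n)$ in $L^2$ (density of $\bigcup_n L^2(Y_n)$) and $\|P\|_{L^2}=1$, the conditional expectation $E(P\mid Y_n)$ is nonzero for all large $n$; I fix one such $n$.

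Now $\lambda\in\Spec_1(Y_n)$, and by construction $Y_n$ is $2$-divisible in $Y_{n+1}$, so there exists $\gamma\in\Spec_1(Y_{n+1})$ whose $m$-th power equals the image of $\lambda$ under the inclusion $\Spec_1(Y_n)\hookrightarrow\Spec_1(Y_{n+1})$. Lifting the corresponding eigenfunction from $Y_{n+1}$ to $Y$ exhibits $\gamma$ as an element of $\Spec_1(Y)$ with $\gamma^m=\lambda$. Since $\lambda$ and $m$ were arbitrary, $\Spec_1(Y)$ is divisible, completing the argument.

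I expect the only genuinely delicate point to be the claim that every eigenvalue of the inverse limit is already detected at a finite level $Y_n$; this is where separability of $L^2(Y)$ and the invariance of eigenfunctions under conditional expectation do the work, and one must be slightly careful to pass from ``$E(P\mid Y_n)$ is $G$-invariant up to the scalar $\lambda(g)$'' to ``$E(P\mid Y_n)$ is, after normalization, an honest eigenfunction,'' which is where the ergodicity of each $Y_n$ enters. The remaining bookkeeping—that inverse limits of ergodic $G$-systems are ergodic and that $\Spec_1$ behaves functorially under the maps $\pi_{n+1}$—is routine.
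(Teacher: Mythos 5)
Your proposal is correct and follows essentially the same route as the paper: iterate Theorem \ref{divisible_extension}, pass to the inverse limit, use the fact that conditional expectation commutes with the $G$-action to show every eigenvalue of the limit system is already detected at some finite stage, and then invoke divisibility of $\Spec_1(Y_n)$ in $\Spec_1(Y_{n+1})$. The only cosmetic difference is that the paper conditions on the Kronecker factors $Z_{<2}(X_n)$ rather than on the systems $X_n$ themselves, which changes nothing essential.
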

	\begin{proof}
		Let $X$ be as in the theorem. Applying theorem \ref{divisible_extension} iteratively we obtain an increasing sequence of extensions $(X_n,G)$ with the property that $\Spec_1(X_n)$ is divisible in $\Spec_1(X_{n+1})$. Let $Y$ be the inverse limit of $X_n$ and recall that the factor map $\pi:Y\rightarrow X_n$ induces factors $\pi_n:Z_{<2}(Y)\rightarrow Z_{<2}(X_n)$ for every $n\in\mathbb{N}$. It is classical (see \cite[Lemma 8.1]{F&W}) that $Z_{<2}(Y)$ is an inverse limit of the sequence
		$$...\rightarrow Z_{<2}(X_n)\rightarrow Z_{<2}(X_{n-1})\rightarrow...\rightarrow Z_{<2}(X_1)\rightarrow Z_{<2}(X).$$ Let $f$ be an eigenfunction of $Y$, then for every $n\in\mathbb{N}$ and $g\in G$ we have
		$$T_gE(f|Z_{<2}(X_n))=E(T_gf|Z_{<2}(X_n)) =\lambda_g E(f|Z_{<2}(X_n)).$$
		In particular, if $E(f|Z_{<2}(X_n))\not=0$, then $f$ is measurable with respect to $Z_{<2}(X_n)$. Therefore, for sufficiently large $n$, $\Delta_g f\in \Spec_1(X_n)$. Since $\Spec_1(X_n)$ is divisible in $\Spec_1(Y)$ this completes the proof.
	\end{proof}
	We give an example of the theorem above in a simple case. For the sake of simplicity, we will not construct a divisible extension of our initial system $X$, but instead we will define an extension $Y$ where $P_{<2}(Y,S^1)$ is divisible by $2$ (i.e. it contains all of its square roots.).
	\begin{example} \label{rootexample}
	Let $X=(\mathbb{R}/\mathbb{Z},\alpha)$ be an irrational rotation $Tx = x+\alpha$, $\alpha\in\mathbb{R}\backslash\mathbb{Q}$. The maps $\{x\mapsto e(nx):n\in\mathbb{N}\}$ form an orthonormal basis of eigenfunctions for $T:L^2(X)\rightarrow L^2(X)$ (recall that $e(y):=e^{2\pi i y}$). It follows that $P_{<2}(X,S^1) = \{ x\mapsto c\cdot e(nx) : c\in S^1, n\in\mathbb{Z}\}\cong S^1\times\mathbb{Z}$ is not a divisible group. Let $\alpha_1 = \frac{\alpha}{2}$ and consider the new irrational rotation $X_1=(\mathbb{R}/\mathbb{Z},\alpha_1)$. We note that $X_1$ is isomorphic to a group extension of $X$ by $C_2$ and the cocycle $\tau(x) = \alpha_1\cdot F(x+\alpha)\cdot F(x)^{-1}$ where $F$ is any measurable map with $F^2(x)=x$ and the isomorphism $X\times_\tau U\rightarrow X_1$ is given by $(x,u)\mapsto u\cdot \overline{F}(x)$. We follow this procedure and construct a system of extensions $X_n$. Namely, for every $n\geq 2$, let $\alpha_n=\alpha/2^n$ and $X_n=(\mathbb{R}/\mathbb{Z},\alpha_n)$ be the irrational rotation by $\alpha_n$. The map $\pi_n:X_n\rightarrow X_{n-1}$, $\pi_n(x)=x^2$ is a factor map and the sequence $(X_n,\pi_n)$ has an inverse limit which we denote by $Y$. As topological groups, the inverse limit of $X_n$ is isomorphic to the solenoid $(\mathbb{R}\times\mathcal{Z}_2)/\mathbb{Z}$ where $\mathcal{Z}_2 = \{(z_1,z_2,...)\in (\mathbb{R}/\mathbb{Z})^{\mathbb{N}} : 2\cdot z_i=z_{i-1} \forall_{i\geq 2} \}$ are the $2$-adic integers and the group $\mathbb{Z}$ is embedded in $\mathbb{R}\times \mathcal{Z}_2$ by sending $1$ to $(1,(\omega_n))$ where $\omega_n=\frac{1}{2^n}$. Under this identification, the action on $Y$ is given by the rotation $R_{(\alpha,\vec {0})}$ where $\vec{0}$ is the zero element in $(\mathcal{Z}_2,+)$. The factor map $\pi'_n:Y\rightarrow X_n$ is given by $(x,\vec z)\mapsto \frac{x}{2^n}-z_n$. The Pontryagin dual of the solenoid $Y$ is isomorphic to the group $\mathbb{Z}[\frac{1}{2}] = \{\frac{a}{2^n} : a\in \mathbb{Z},n\in\mathbb{N}\}$ and the group $P_{<2}(Y,S^1)\cong S^1\oplus \mathbb{Z}[\frac{1}{2}]$ is divisible by $2$. In other words, every element in $P_{<2}(Y,S^1)$ has a square root in that group.
	\end{example}
	
	\section{Divisible C.L.\ systems are homogeneous} \label{proof}
	We prove Theorem \ref{mainresult}. By Theorem \ref{divisible_extension1} it is enough to show the following result.
	\begin{thm} [Divisible C.L.\ systems are homogeneous] \label{main:thm} Let $G$ be a countable group and let $X$ be an ergodic $2$-divisible $G$-system. Then the action of $\mG(Z_{<3}(X))$ on $Z_{<3}(X)$ is transitive.
	\end{thm}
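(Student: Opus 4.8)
The plan is to exhibit, for the action of $\mG:=\mG(Z_{<3}(X))$ on $Z_{<3}(X)=Z\times_\rho U$ (where I abbreviate $Z=Z_{<2}(X)$), enough group elements to connect almost any two points. Since the first spectrum is carried by the Kronecker factor, $\Spec_1(Z_{<3}(X))=\Spec_1(X)$, so $Z_{<3}(X)$ is again $2$-divisible and I may as well assume from the start that $X$ is a $2$-divisible C.L.\ system, written $X=Z\times_\rho U$ with $\rho$ minimal of image $U$ (the latter by Lemma \ref{minimal}, as $X$ is ergodic). To join $(z_1,u_1)$ to $(z_2,u_2)$ I would first move the base coordinate and then correct the fibre, so it suffices to prove two claims: (i) (vertical) every fibre translation $S_{e,u}$ lies in $\mG$; and (ii) (horizontal) the base projection $S_{s,F}\mapsto s$ from $\mG$ to $Z$ is onto, i.e.\ for every $s\in Z$ the C.L.\ equation $\Delta_s\rho=c_s\cdot\Delta F_s$ is solvable with $c_s:G\to U$ a homomorphism and $F_s:Z\to U$ measurable.

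Claim (i) is immediate: taking $F\equiv u$ constant gives $\Delta F=1$, and $\Delta_e\rho=1$, so $S_{e,u}\in\mG$ with $c=1$; since $\rho$ has image $U$, these translations already act transitively on each fibre. The heart of the matter is claim (ii). Fix $s\in Z$ and set $\sigma_s:=\Delta_s\rho$. Because $\rho$ is a cocycle and the rotation by $s$ commutes with the $G$-action on the abelian group $Z$, $\sigma_s$ is again a cocycle $G\times Z\to U$. Furstenberg--Weiss (Theorem \ref{CL}) tells me that for every $\chi\in\hat U$ the cocycle $\chi\circ\rho$ is a C.L.\ cocycle with respect to $Z$; applied to our fixed $s$ this produces a homomorphism $c_{s,\chi}:G\to S^1$ and a map $F_{s,\chi}:Z\to S^1$ with $\chi\circ\sigma_s=c_{s,\chi}\cdot\Delta F_{s,\chi}$. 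A homomorphism $G\to S^1$ is constant in the space variable, hence a phase polynomial cocycle of degree $<1$, so this is exactly the hypothesis of Proposition \ref{typecircle:prop} with $k=2$, applied to the cocycle $\sigma_s$ over the base system $Z$ (which is $2$-divisible since $\Spec_1(Z)=\Spec_1(X)$). The proposition then yields $\sigma_s=q_s\cdot\Delta F_s$ with $F_s:Z\to U$ measurable and $q_s$ a degree $<1$ phase polynomial cocycle; being constant in the space variable, the cocycle identity forces $q_s(g+g')=q_s(g)q_s(g')$, so $c_s:=q_s$ is a genuine homomorphism $G\to U$. This is precisely the C.L.\ equation witnessing $S_{s,F_s}\in\mG$, proving (ii).

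Combining the two claims completes the argument: for almost every $(z_1,u_1),(z_2,u_2)$, set $s=z_2z_1^{-1}$ and take $F_s$ from (ii), so $S_{s,F_s}(z_1,u_1)=(z_2,F_s(z_1)u_1)$; then the fibre translation $S_{e,w}$ from (i) with $w=u_2(F_s(z_1)u_1)^{-1}$ sends this to $(z_2,u_2)$, and the composite $S_{e,w}\circ S_{s,F_s}$ lies in $\mG$ because $\mG$ is a group. This gives transitivity off a null set, which is the intended (essential) sense of the statement.

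The hard part is concentrated in the gluing step inside (ii): upgrading the per-character C.L.\ solutions to a single $U$-valued solution. This is exactly where $2$-divisibility is indispensable, since it is the input that splits the relevant short exact sequence of discrete abelian groups in the proof of Proposition \ref{typecircle:prop}; Rudolph's example (referenced after Theorem \ref{mainresult}) shows the conclusion genuinely fails without such a hypothesis. A secondary technical point to watch is that the maps $F_s$ are defined only almost everywhere, so one should check they still assemble into honest elements of $\mG$ and yield transitivity on a conull set.
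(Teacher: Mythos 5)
Your proposal is correct and takes essentially the same route as the paper: write $Z_{<3}(X)=Z\times_\rho U$, obtain the per-character Conze--Lesigne equation $\Delta_s(\chi\circ\rho)=c_{s,\chi}\cdot\Delta F_{s,\chi}$ from the type $<1$ theory, glue these into a single $U$-valued solution via Proposition \ref{typecircle:prop} using $2$-divisibility of the Kronecker factor, and combine the resulting elements $S_{s,F_s}$ with the vertical translations $S_{1,u}$ to get transitivity. Your added observations (that $\Spec_1$ passes to $Z_{<3}(X)$, and that the degree $<1$ polynomial part is forced to be a homomorphism by the cocycle identity) are correct and merely make explicit what the paper leaves implicit.
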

	\begin{comment}
	\begin{proof} [Proof of Theorem \ref{mainresult} assuming Theorem \ref{main:thm}]
	Let $(X,G)$ be as in Theorem \ref{mainresult}. Find a countable abelian torsion free group $H$ and a surjective homomorphism $\varphi:H\rightarrow G$. We view $X$ as an $H$-system with the action $S_hx = T_{\varphi(h)}x$. Apply Theorem \ref{divisible_extension} infinitely many times for $k=1$. This leads to a sequence of extensions 
	\begin{equation}\label{limit}...\rightarrow X_n\rightarrow X_{n-1}\rightarrow...\rightarrow X_1\rightarrow X
	\end{equation}
	where in each system we have roots for the eigenfunctions of the previous system. Let $Y$ denote the inverse limit of (\ref{limit}). Recall, that a factor map $\pi:Y\rightarrow X_n$ defines a factor map $\pi_k:Z_{<k}(Y)\rightarrow Z_{<k}(X_n)$ and it is classical that $Z_{<2}(Y)$ is an inverse limit of the sequence
	$$...\rightarrow Z_{<2}(X_n)\rightarrow Z_{<2}(X_{n-1})\rightarrow...\rightarrow Z_{<2}(X_1)\rightarrow Z_{<2}(X)$$
	We claim that $Y$ is $2$-divisible. Observe that if $f$ is an eigenfunction of $Y$ then $f$ is measurable with respect to $Z_{<2}(Y)$. Moreover, for every $n\in\mathbb{N}$ and $h\in H$ we have
	$$S_hE(f|Z_{<2}(X_n))=E(S_hf|Z_{<2}(X_n)) =\lambda_h E(f|Z_{<2}(X_n))$$ and for sufficiently large $n$, $E(f|Z_{<2}(X_n))=f$. Therefore $f$ has eigenfunctions roots in $Z_{<2}(X_{n+1})$ and so in $Z_{<2}(Y)$. By Theorem \ref{main:thm} $\mG(Z_{<3}(Y))$ acts transitively on $Z_{<3}(Y)$  and this completes the proof.
	\end{proof}
	\end{comment}
	We prove Theorem \ref{main:thm} and the properties mentioned in Remark \ref{properties}.
	\begin{proof}
		Let $X$ be as in the Theorem. By Proposition \ref{everything}, we can write $Z_{<3}(X)=Z_{<2}(X)\times_\rho U$ for some compact abelian group $U$ and a cocycle $\rho:G\times Z_{<2}(X)\rightarrow U$. As usual we identify $Z_{<2}(X)$ with a compact abelian group $Z$. Let $\chi\in\hat U$ be a character and $s\in Z$, then by Proposition \ref{everything} again, we can find a character $c_s(\chi):G\rightarrow S^1$ and a measurable map $F_s(\chi):Z\rightarrow S^1$ such that $\Delta_s \chi\circ \rho = c_s(\chi)\cdot \Delta F_s(\chi)$. Since $X$ is $2$-divisible, Proposition \ref{typecircle:prop} implies that for every $s\in Z$ there exists a measurable map $F_s:Z\rightarrow U$ such that $S_{s,F_s}\in \mG(Z_{<3}(X))$ (see Definition \ref{CLgroup}). Since the transformations $S_{1,u}$ for $u\in S^1$ are also in $\mG(Z_{<3}(X))$ the action of this group on $X$ is transitive. This completes the proof of Theorem \ref{mainresult}. Now, let $x_0=(1,1)\in Z\times U$ and $\Gamma$ be the stabilizer of $x_0$ under the action of $\mG(X)$. Then, $$\Gamma = \{S_{1,F} : F\in \text{Hom}(Z,U)\}$$ is a totally disconnected closed subgroup of $\mG(Z_{<3}(X))$. By Theorem \ref{openmap}, the projection map $p:\mG(Z_{<3}(X))\rightarrow \mG(Z_{<3}(X))/\Gamma$ is open and by Theorem \ref{quotient}, $Z_{<3}(X)$ is homeomorphic to $\mG(Z_{<3}(X))/\Gamma$. It follows that $Z_{<3}(X)$ is  isomorphic to $\mG(Z_{<3}(X))/\Gamma$ as $G$-systems, where the action of $g\in G$ on $\mG(Z_{<3}(X))/\Gamma$ is given by left multiplication by $S_{g,\rho(g,\cdot)}$.
	\end{proof}
	We now prove Theorem \ref{double coset}.
	\begin{proof}
	    Let $X$ be as in the theorem and write $Z_{<3}(X) = Z\times_\rho U$ where $Z$ is the Kronecker factor and $\rho:G\times Z\rightarrow U$ is a cocycle into a compact abelian group $U$. Let $(\tilde{Z},H)$ be an extension of $(Z,G)$ with divisible dual (as in Theorem \ref{divisible_extension1}). Let $\pi:\tilde{Z}\rightarrow Z$ be the quotient map and $K:=\ker \pi$. Let $q:Z\rightarrow \tilde{Z}$ be a Borel cross section. The map $\varphi:\tilde{Z}\rightarrow Z\times K$, $z\mapsto (\pi(z), z\cdot q\circ\pi(z)^{-1})$ is a measure-theoretical bijection. Let $\tau:H\times \tilde{Z}\rightarrow K$ be the cocycle $$\tau(h,z) = \frac{T_h z\cdot q\circ \pi(T_hz) }{z\cdot q\circ\pi(z)^{-1}} $$ where $T_h$ denotes the action of $h\in H$ on $\tilde{Z}$. Observe that $\tau$ is invariant to translations by $K$ and so it induces a cocycle $\tau':H\times Z\rightarrow K$ and $\tilde{Z}\cong Z\times_{\tau'} K$. It will be convenient to modify the group $\mathcal{G}$ from Theorem \ref{mainresult}. Let $$\mathcal{G} = \{(s,k,F)\in Z\times K\times \mathcal{M}(\tilde{Z},U) :  \exists c_s:H\rightarrow U \text{ such that } \Delta_s \tau'(h,\pi(z)) = \Delta_h F(z) \}$$ and equip $\mathcal{G}$ with the multiplication $(s,k,F)\cdot (s',k',F') = (ss',kk',FV_{q(s)\cdot k} F')$. We define a topology on $\mathcal{G}$ by letting a sequence $(s_n,k_n,F_n)$ converge to $(s,k,F)$ if $s_n\rightarrow s$ in $Z$, $k_n\rightarrow k$ in $K$ and $F_n\rightarrow F$ in measure. With this topology and multiplication $\mathcal{G}$ is a $2$-step nilpotent polish group. Recall that in the proof of Theorem \ref{mainresult} we show that the projection $\mathcal{G}\rightarrow \tilde{Z}$ is onto. In particular, it follows that $\mathcal{G}$ acts transitively on $X\times K$. Now, let $\Gamma = \{1\}\times \{1\}\times \text{Hom}(\tilde{Z}\rightarrow U)$ and let $\varphi:G\rightarrow \mathcal{G}$ be the homomorphism $\varphi(g) = (T_g1,1,\rho(g,\pi(\cdot))$ where $T_g:Z\rightarrow Z$ denote the action of $G$ on $Z$. Since $\rho\circ\pi$ is invariant to translations by $K$, we have that $\varphi(G)$ commutes with $K$. Moreover, the action of $g\in G$ on $Z_{<3}(X)$ corresponds to multiplication by $\varphi(g)$ under the homeomorphism $Z_{<3}(X)\cong K\backslash \mathcal{G}/\Gamma$.\\
	    
	    It is left to show that $K$ is totally disconnected. We recall the relevant part in the proof of Theorem \ref{divisible_extension1}. For every $\chi\in\hat Z$ and $n\in\mathbb{N}$ we find a homomorphism $\lambda_n\in\hat G$ such that $\lambda_n^n(g)=\Delta_g \chi$. Then, we let $\lambda:G\rightarrow(S^1)^\mathbb{N}$ be the homomorphism whose $n$-th coordinate is $\lambda_n$. Let $\tau$ be a minimal cocycle which is cohomologous to $\lambda$ and $V_1$ be its image. Then, as in the proof of Proposition \ref{Integration V1}, we let $Z_1 = Z\times_\tau V_1$. We first prove that $V_1$ is totally disconnected. Let $F_n:Z\rightarrow S^1$ be any measurable map with $F_n^n = \chi$, then $\lambda_n\cdot \Delta \overline{F_n}$ takes values in $C_n$. Let $F:Z\rightarrow (S^1)^\mathbb{N}$ be the map whose $n$-th coordinate is $F_n$ then $\lambda\cdot\Delta F$ takes values in $\prod_{n} C_n$, which is totally disconnected. Since $\tau$ is minimal, $V_1$ is a closed subgroup of $\prod_n C_n$ and therefore totally disconnected. Now, we continue this process. In each step we construct a Kronecker system $Z_m$ as an extension of $Z_{m-1}$ by a totally disconnected group $V_{m-1}$. The group $\tilde{Z}$ is the inverse limit of the sequence $Z_m$. It follows that  $K$ is the inverse limit of $V_m$. Since $V_m$ is totally disconnected for every $m\in\mathbb{N}$, we conclude that so is $K$.
	\end{proof}
	\subsection{Simple homogeneous spaces}
	For completeness we show that any system with a nilpotent homogeneous structure as in Theorem \ref{mainresult} is an inverse limit of simpler homogeneous spaces in which the stabilizer $\Gamma$ is a discrete subgroup. We will not use this result.
	\begin{defn}
		Let $G$ be a countable abelian group and let $(X,G)$ be a C.L.\ system. We say that $X$ is a simple homogeneous space if the C.L.\ group (Definition \ref{CLgroup}) acts transitively on $X$ and the stabilizer of any $x_0\in X$ is a discrete subgroup.
	\end{defn}
	
	\begin{prop}
		Let $G$ be a countable abelian group and let $(X,G)$ be a C.L.\ system. If $\mG(X)$ acts transitively on $X$ then $X$ is an inverse limit of simple homogeneous spaces.
	\end{prop}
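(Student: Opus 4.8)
The plan is to realize $X$ as $\mG(X)/\Gamma$ via Theorem \ref{main:thm} and Remark \ref{properties}, and then to approximate it by quotients in which the totally disconnected, but possibly non-discrete, stabilizer $\Gamma\cong\mathrm{Hom}(Z,U)$ becomes discrete. The mechanism is to decompose the structure group $U$ into compact abelian Lie groups. Write $X=Z\times_\rho U$ with $Z=Z_{<2}(X)$ its Kronecker factor, where $\mG(X)$ acts transitively. Since $X$ is a regular system, $U$ is a second countable compact abelian group, so $\hat U$ is a countable discrete group, namely the increasing union of its finitely generated subgroups $\Lambda_1\subseteq\Lambda_2\subseteq\cdots$. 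Setting $U_i=\widehat{\Lambda_i}$ yields quotient maps $q_i:U\to U_i$ onto compact abelian Lie groups with $\bigcap_i\ker q_i=\{1\}$ (see \cite{HM}).

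Next I would build the approximating factors. Each $N_i:=\ker q_i$ is a compact subgroup of the central copy of $U$ inside $\mG(X)$, and it acts on $X$ by $V_n(z,u)=(z,nu)$; a direct check shows this action commutes with $S_g(z,u)=(T_gz,\rho(g,z)u)$, so $X_i:=X/N_i$ is a factor $G$-system, concretely $X_i=Z\times_{\rho_i}U_i$ with $\rho_i=q_i\circ\rho$. Because $\bigcap_iN_i=\{1\}$, the $\sigma$-algebras of the $X_i$ increase to that of $X$, whence $X=\varprojlim_i X_i$. Each $X_i$, being a factor of the order-$<3$ system $X$, is again a C.L.\ system, so $\mG(X_i)$ is defined.

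The heart of the argument is to verify that each $X_i$ is a \emph{simple} homogeneous space. First, the Kronecker factor of $X_i$ is exactly $Z$: any eigenfunction of $X_i$ lifts to an eigenfunction of $X$, hence is $Z$-measurable, and $Z\subseteq X_i$, so $Z_{<2}(X_i)=Z$ and $X_i=Z\times_{\rho_i}U_i$ is a genuine Kronecker decomposition. Transitivity of $\mG(X_i)$ then follows by pushing forward: if $S_{s,F}\in\mG(X)$ satisfies $\Delta_s\rho=c\cdot\Delta F$, applying $q_i$ gives $\Delta_s\rho_i=(q_i\circ c)\cdot\Delta(q_i\circ F)$, so $S_{s,q_i\circ F}\in\mG(X_i)$. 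Since $\mG(X)$ is transitive its projection to $Z$ is onto, hence so is the projection $\mG(X_i)\to Z$, and together with the central constants $U_i\subseteq\mG(X_i)$ this produces a transitive action on $X_i=Z\times U_i$. Finally, by the computation in Remark \ref{properties} the stabilizer of $(1,1)$ in $\mG(X_i)$ is $\{S_{1,\chi}:\chi\in\mathrm{Hom}(Z,U_i)\}\cong\mathrm{Hom}(Z,U_i)$.

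The one step needing genuine care is the discreteness of this stabilizer, and it is precisely here that the Lie reduction pays off. Because $U_i\cong\mathbb T^{n}\times F$ with $F$ finite, continuous homomorphisms $Z\to U_i$ are separated in the topology of convergence in measure: the $\mathbb T^{n}$-part is a tuple of characters of $Z$, which are orthonormal in $L^2$ and hence at bounded distance apart, while two distinct homomorphisms into $F$ disagree on a set of positive measure. Thus $\mathrm{Hom}(Z,U_i)$ is discrete, so $X_i$ is a simple homogeneous space and $X=\varprojlim_i X_i$ is the desired inverse limit. I expect the main obstacle to be the bookkeeping around the identification $Z_{<2}(X_i)=Z$ together with checking that the pushed-forward transformations genuinely lie in $\mG(X_i)$, rather than in some abstract transitive supergroup; once these are pinned down, the Lie structure of $U_i$ delivers discreteness immediately.
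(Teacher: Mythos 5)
Your proof is correct and follows essentially the same route as the paper: approximate $U$ by compact Lie group quotients $U_i$ with $\bigcap_i \ker q_i=\{1\}$, push the Conze--Lesigne transformations forward to $X_i=Z\times_{q_i\circ\rho}U_i$ to obtain transitivity of $\mG(X_i)$, and observe that the resulting stabilizer $\mathrm{Hom}(Z,U_i)$ is discrete, so that $X=\varprojlim_i X_i$ is an inverse limit of simple homogeneous spaces. The only (harmless) differences are that you produce the Lie quotients via Pontryagin duality and the countability of $\hat U$ rather than by citing the Gleason--Yamabe theorem, and that you make explicit the identification $Z_{<2}(X_i)=Z$ and the separation argument giving discreteness of $\mathrm{Hom}(Z,U_i)$, both of which the paper leaves implicit.
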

	\begin{proof}
		Let $X$ as in the proposition and write $X=Z\times_\rho U$ where $Z=Z_{<2}(X)$ is the Kronecker factor. By Gleason-Yamabe theorem we can find a decreasing sequence of closed subgroups $K_n\leq U$ such that $\bigcap_{n\in\mathbb{N}}K_n = \{1\}$ and the quotients $L_n=U/K_n$ are Lie groups. Let $\pi_n:U\rightarrow L_n$ be the projection map and let $X_n = Z\times_{\pi_n\circ\rho} L_n$. Since $\mG(X)$ acts transitively on $X$, we have that for every $s\in Z$, there exists a measurable map $F:Z\rightarrow U$ such that $\Delta_s \rho = c\cdot \Delta F$. Observe that if $S_{s,F}\in \mG(X)$ and $\Delta_s \rho = c \cdot \Delta F$ for some $c:G\rightarrow U$, then $\Delta_s \pi_n\circ\rho = \pi_n\circ c \cdot \Delta \pi_n\circ F$ and $S_{s,\pi_n\circ F}\in \mG(X_n)$. As $S_{1,uK_n}$ belongs to $\mG(X_n)$, we conclude that the action of $\mG(X_n)$ on $X_n$ is transitive. Fix any $x_0\in X_n$, then the stabilizer $\Gamma_n$ of $x_0$ is homeomorphic to the discrete group $\hom (Z,U_n)$. Since $X$ is an inverse limit of $X_n$, the claim follows.
	\end{proof}
	
	\section{The structure of a nilpotent system} \label{structure:sec}
	Let $G$ be a countable abelian group and $X$ be a C.L.\ ergodic $G$-system such that the action of $\mG(X)$ on $X$ is transitive. Write $X=\mG(X)/\Gamma(X)$ where $\Gamma(X)$ the stabilizer of some $x_0\in X$. We recall the definition of a group rotation.
	\begin{defn} \label{grouprotation:def}
		Let $G$ be a countable abelian group. We say that a $G$-system $X$ is a group rotation if it is isomorphic to a compact abelian group $K$ and there exists a homomorphism $\varphi:G\rightarrow K$ such $T_g k = \varphi(g)k$ for every $g\in G$ and $k\in K$. 
	\end{defn}
	It is well known (see \cite[Theorem 6.1]{EW}) that the Kronecker factor is the maximal group rotation. 
	\begin{thm} [Maximal property of the Kronecker factor] \label{maximal}
		Let $G$ be a countable abelian group and $X$ be a $G$-system. Then any group rotation factor $Y$ of $X$ is a factor of $Z_{<2}(X)$.
	\end{thm}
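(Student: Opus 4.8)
The plan is to show that the pullback $\sigma$-algebra of any group rotation factor $Y$ is contained in that of $Z_{<2}(X)$, which immediately exhibits $Y$ as a factor of $Z_{<2}(X)$. Since $Y$ is isomorphic to a compact abelian group $K$ acted on by $T_g k=\varphi(g)k$, its Borel $\sigma$-algebra is generated by the characters $\chi\in\hat K$, as these separate points and span $L^2(Y)$. Pulling back along the factor map $\pi\colon X\rightarrow Y$, each function $\chi\circ\pi$ satisfies $T_g(\chi\circ\pi)=(\chi\circ\varphi)(g)\cdot(\chi\circ\pi)$, so it is an eigenfunction of $X$. Hence the whole statement reduces to proving that every eigenfunction of $X$ is measurable with respect to $Z_{<2}(X)$.

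To this end, fix an eigenfunction $f$ with $T_g f=\lambda(g)f$ for a character $\lambda\colon G\rightarrow S^1$, and decompose $f=E(f|Z_{<2}(X))+f'$. By Proposition \ref{UCF} the remainder satisfies $\norm{f'}_{U^2}=0$. Because conditional expectation onto a factor commutes with the $G$-action, both $E(f|Z_{<2}(X))$ and $f'$ are again eigenfunctions with the same eigenvalue $\lambda$; in particular $T_g f'=\lambda(g)f'$. The crux is then to show that the $U^2$-seminorm is non-degenerate on eigenfunctions, so that $\norm{f'}_{U^2}=0$ forces $f'=0$. Using ergodicity, $|f'|$ is $G$-invariant and hence equal to a constant $c$ almost everywhere, so $\Delta_h f'=(f'\circ T_h)\cdot\overline{f'}=\lambda(h)\,|f'|^2=\lambda(h)c^2$ is a constant of modulus $c^2$, whose $U^1$-seminorm equals $c^2$; feeding this into the recursive definition gives $\norm{f'}_{U^2}=c=\norm{f'}_{L^2}$. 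Therefore $\norm{f'}_{U^2}=0$ implies $f'=0$, i.e.\ $f=E(f|Z_{<2}(X))$ is $Z_{<2}(X)$-measurable. (Equivalently, each such $f$ is a phase polynomial of degree $<2$, and one may instead invoke the measurability statement in Proposition \ref{orderext}.)

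To conclude, every generator $\chi\circ\pi$ of $\pi^{-1}(\mathcal B_Y)$ is $Z_{<2}(X)$-measurable, so $\pi^{-1}(\mathcal B_Y)\subseteq\mathcal B_{Z_{<2}(X)}$ modulo null sets. This inclusion of $G$-invariant sub-$\sigma$-algebras of $\mathcal B_X$ yields a factor map $Z_{<2}(X)\rightarrow Y$ intertwining the two actions, which is exactly the assertion that $Y$ is a factor of $Z_{<2}(X)$.

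The only genuinely delicate point is the non-degeneracy computation of the second paragraph together with the observation that $f'$ remains an eigenfunction; once these are in place, the rest is routine bookkeeping with factors, characters, and the correspondence between factors and $G$-invariant sub-$\sigma$-algebras.
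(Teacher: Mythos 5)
Your proof is correct. Note that the paper itself gives no proof of this theorem: it is stated as well known and attributed to \cite[Theorem 6.1]{EW}, so there is no in-paper argument to compare against. What you have written is the standard self-contained justification, and every step checks out: characters of the rotation group pull back to eigenfunctions; conditional expectation onto a factor intertwines the action, so the orthogonal complement $f'=f-E(f|Z_{<2}(X))$ is again an eigenfunction with $\|f'\|_{U^2}=0$ by Proposition \ref{UCF}; the computation $\Delta_h f'=\lambda(h)|f'|^2$ together with ergodicity gives $\|f'\|_{U^2}=\|f'\|_{L^2}$, forcing $f'=0$; and the inclusion of invariant sub-$\sigma$-algebras yields the factor map. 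Two small remarks. First, you invoke ergodicity (to make $|f'|$ constant), which the theorem as stated does not hypothesize; this is harmless here because $Z_{<2}(X)$ is only defined in the paper for ergodic systems (Proposition \ref{UCF}), but it is worth flagging that your argument uses it --- alternatively, without ergodicity one gets $\|f'\|_{U^2}=\|f'\|_{L^4}\geq\|f'\|_{L^2}$ and the same conclusion. Second, the claim that $\hat K$ spans $L^2(Y)$ and generates $\mathcal{B}_Y$ relies on $K$ being a compact \emph{metrizable} group, which holds because $Y$ is a factor of the regular system $X$ and so $L^2(Y)$ is separable. With those caveats noted, the argument is complete.
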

	Recall that any C.L.\ system can be written as $X=Z\times_\rho U$ where $Z$ is the Kronecker factor, $U$ is a compact abelian group and $\rho:G\times Z\rightarrow U$ is a cocycle (Proposition \ref{everything}). The following lemma plays an important role in the proof of the limit formula (Theorem \ref{formula} below). We show that if $\mG(X)$ acts transitively on $X$ is transitive, then it is possible to express the groups $Z$ and $U$ in terms of the homogeneous group $\mG(X)$, its commutator $\mG(X)_2$ and the stabilizer $\Gamma(X)$.
	\begin{lem} \label{structure}
		Let $G$ be a countable abelian group and $X=Z\times_\rho U$ be an ergodic C.L.\ $G$-system where $Z$ is the Kronecker factor and suppose that the action of $\mG(X)$ on $X$ is transitive. If $\mG$ is an open subgroup of $\mG(X)$ which contains the embedding of $G$ in $\mG(X)$, then $Z\cong \mG/\mG_2\Gamma$ and $U\cong \mG_2$ where $\Gamma:= \Gamma(X)\cap \mG$ and $\mG_2$ is the closed subgroup generated by the commutators $\{[a,b]:a,b\in \mG\}$\footnote{where $[a,b]=a^{-1}b^{-1}ab$ as usual.}.
	\end{lem}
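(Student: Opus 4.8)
The plan is to read both isomorphisms off the projection homomorphism $\pi\colon\mG(X)\to Z$, $\pi(S_{s,F})=s$, after pinning down its kernel and the commutator subgroup $\mG_2$. Write $\alpha\colon G\to Z$ for the rotation ($T_gz=\alpha(g)z$) and recall that $g\in G$ embeds as $\phi(g)=S_{\alpha(g),\rho(g,\cdot)}$. First I would record that $\pi$ is a continuous, open (by Theorem \ref{openmap}), surjective homomorphism, and that for the open subgroup $\mG\supseteq\phi(G)$ the image $\pi(\mG)$ is an open subgroup of $Z$ containing the dense set $\alpha(G)$, hence $\pi(\mG)=Z$. Consequently, for every $s\in Z$ one may pick $F_s$ with $S_{s,F_s}\in\mG$; membership gives a character $c_s\colon G\to U$ with $\Delta_s\rho=c_s\cdot\Delta F_s$.

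The next step is the commutator computation. Writing $[S_{s,F},S_{t,H}]$ out on $Z\times U$ and using that $Z$ is abelian, one finds it acts by $(z,u)\mapsto(z,c(z)u)$ with $c(z)=F(s^{-1}z)H(s^{-1}t^{-1}z)F(s^{-1}t^{-1}z)^{-1}H(t^{-1}z)^{-1}$. I would then compute $\Delta_g c$ and substitute the defining relations $\Delta_s\rho=c_s\Delta F$ and $\Delta_t\rho=c_t\Delta H$: each translate of $F$ or $H$ becomes a ratio of values of $\rho$, the characters $c_s,c_t$ cancel, and the remaining $\rho$-factors telescope to $1$. Hence $\Delta_g c\equiv 1$, so $c$ is $G$-invariant and therefore constant by ergodicity of $Z$. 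Thus every commutator is a constant vertical rotation, and since $\{S_{1,u}:u\in U\}$ is central this gives $\mG_2\subseteq U$.

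The heart of the argument is the reverse inclusion $U\subseteq\mG_2$, which I would prove by contradiction through the maximality of the Kronecker factor. Suppose $W:=\mG_2\subsetneq U$, set $\bar U=U/W$ with quotient map $q\colon U\to\bar U$, and form the factor $\bar X=Z\times_{q\circ\rho}\bar U$; it is ergodic (a factor of $X$) and $\bar U\neq 1$. Pushing the relations of Step 1 through $q$ shows $S_{s,q\circ F_s}\in\mG(\bar X)$, so I can form $\mathcal A=\overline{\langle S_{s,q\circ F_s}\ (s\in Z),\ \bar U\rangle}\leq\mG(\bar X)$. Its generators commute, since $[S_{s,F_s},S_{t,F_t}]\in[\mG,\mG]\subseteq W=\ker q$ maps to the identity; hence $\mathcal A$ is abelian, and it acts transitively on $\bar X$ because the sections cover the base $Z$ while $\bar U$ covers the fibres. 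Therefore $\bar X\cong\mathcal A/\mathrm{Stab}$ is an ergodic group rotation. By Theorem \ref{maximal} it is a factor of $Z$, so $\Spec_1(\bar X)\subseteq\Spec_1(Z)$, while the projection $\bar X\to Z$ gives $\Spec_1(Z)\subseteq\Spec_1(\bar X)$; equality of eigenvalue groups forces the eigenfunctions of $\bar X$ to come from $Z$, so the projection $\bar X\to Z$ is an isomorphism and $\bar U=1$, a contradiction. Hence $\mG_2=U$, and in particular $U\subseteq\mG$.

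Finally I would identify the kernel. Any $S_{1,F}\in\mG(X)$ has $\Delta_g F$ constant in $z$ (because $\Delta_1\rho=1$); setting $u=F(1)$ and $\chi=F/F(1)$, every $\psi\circ\chi$ ($\psi\in\hat U$) is an eigenfunction of $Z$ taking value $1$ at the identity, hence a character, so $\chi\in\mathrm{Hom}(Z,U)$ and $S_{1,F}=S_{1,u}S_{1,\chi}$. Thus $\ker\pi=U\cdot\Gamma(X)$, where $\Gamma(X)=\{S_{1,\chi}:\chi\in\mathrm{Hom}(Z,U)\}$ and $U\cap\Gamma(X)=\{1\}$. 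Intersecting with $\mG$ and using $U=\mG_2\subseteq\mG$ yields $\ker(\pi|_\mG)=U\cdot(\Gamma(X)\cap\mG)=\mG_2\Gamma$, so $\mG/\mG_2\Gamma\cong\pi(\mG)=Z$; together with $U\cong\mG_2$ this is the lemma. The main obstacle is precisely the inclusion $U\subseteq\mG_2$, which expresses that the extension is genuinely two-step: the only available leverage is that $Z$ is the \emph{maximal} group rotation factor, and this is exactly what the group-rotation factor $\bar X$ contradicts.
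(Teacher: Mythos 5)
Your skeleton matches the paper's at the key points (transitivity of the open subgroup from openness plus ergodicity, commutators are constant vertical rotations, and $U=\mG_2$ by playing a putative proper quotient against the maximality of the Kronecker factor), but there is one genuine gap in your third step. To invoke Theorem \ref{maximal} you need $\bar X=Z\times_{q\circ\rho}\bar U$ to be a group rotation \emph{as a $G$-system} in the sense of Definition \ref{grouprotation:def}, i.e.\ the $G$-action must be left translation on the compact abelian group $\mathcal A/\mathrm{Stab}$. The $G$-action on $\bar X$ is $S_{\alpha(g),\,q\circ\rho(g,\cdot)}$, and nothing you have said places this element in $\mathcal A=\overline{\langle S_{s,q\circ F_s},\,\bar U\rangle}$: it differs from the generator $S_{\alpha(g),\,q\circ F_{\alpha(g)}}$ by a factor $S_{1,q\circ H}$ with $H=\rho(g,\cdot)/F_{\alpha(g)}$, which by your own fourth step has the form $u\cdot\chi$ with $\chi\in\mathrm{Hom}(Z,U)$ possibly nontrivial; $S_{1,q\circ\chi}$ need not lie in $\mathcal A$ and does not even commute with the other generators (its commutator with $S_{s,q\circ F_s}$ is the constant $q(\chi(s))$). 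Transitivity of \emph{some} abelian group on the space does not by itself make the $G$-system a rotation. The repair is short: take $\mathcal A$ to be the closure of the full image of $\mG$ in $\mG(\bar X)$, or equivalently adjoin the elements $S_{\alpha(g),\,q\circ\rho(g,\cdot)}$ to your generating set. Since every commutator of $\mG$ lies in $W=\ker q$, this larger group is still abelian, still acts transitively, and now contains the $G$-action, so $\bar X$ is genuinely a group rotation and the contradiction with Theorem \ref{maximal} goes through.

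With that fixed, your argument is correct and diverges from the paper's in two ways worth noting. For $U=\mG_2$, the paper takes a nontrivial character $\chi$ of $U$ annihilating $\mG_2$, observes that $\Delta_s(\chi\circ\rho)=\Delta(\chi\circ F_s)$ for all $s\in Z$, and quotes Theorem \ref{HKtype} to conclude that $Z\times_{\chi\circ\rho}\chi(U)$ has order $<2$; your quotient-group construction reaches the same contradiction without the type-$<1$ machinery, at the cost of the bookkeeping above. For $Z\cong\mG/\mG_2\Gamma$, the paper appeals to maximality a second time ($\mG/\mG_2\Gamma$ is a group rotation factor of $X$ mapping onto $Z$, hence equal to it), whereas you compute $\ker(\pi|_{\mG})=\mG_2\Gamma$ directly from the decomposition $F=u\cdot\chi$ of vertical elements together with $U=\mG_2\subseteq\mG$; your version is more concrete and also makes explicit the description $\Gamma(X)=\{S_{1,\chi}:\chi\in\mathrm{Hom}(Z,U)\}$ that the paper only states in the proof of Theorem \ref{main:thm}.
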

	\begin{proof}
		First we prove that $\mG/\Gamma\cong \mG(X)/\Gamma(X)$ as measure spaces. To see this observe that the projection $p:\mG(X)\rightarrow \mG(X)/\Gamma(X)$ is an open map (Theorem \ref{openmap}). Therefore, $p(\mG)$ is a $G$ invariant open (and closed) subset of $\mG(X)/\Gamma(X)$, hence by ergodicity $p(\mG)=X$. We conclude that the map $g\Gamma \mapsto g\Gamma(X)$ from $\mG/\Gamma$ to $\mG(X)/\Gamma(X)$ is an isomorphism. In particular, there exists a factor map $\pi:\mG/\Gamma \rightarrow Z$. Direct computation shows that $\mG_2$ acts trivially on $Z$ and $\pi$ factors through $\mG_2$. By Lemma \ref{maximal}, $\pi:\mG/\mG_2\Gamma\rightarrow Z$ is an isomorphism, hence $Z\cong \mG/\mG_2\Gamma$.\\ Let $p:\mG(X)\rightarrow Z$ be the projection map $S_{s,F}\mapsto s$. The group $p(\mG)$ is an open and closed $G$-invariant subgroup of $Z$ and so by ergodicity $p(\mG)=Z$. Choose a Borel cross section $s\mapsto S_{s,F_s}$ as in Theorem \ref{openmap}. We have, 
		$$[S_{g,\sigma(g)},S_{s,F_s}] = S_{1,\frac{\Delta_s \sigma}{\Delta F_s}}$$ and $\frac{\Delta_s \sigma}{\Delta_g F_s}$ is a constant in $U$. We identify $\mG_2$ with the closed subgroup generated by these constants. Suppose by contradiction that $\mG_2\lneqq U$, then there exists a non-trivial character $\chi:U\rightarrow S^1$ such that $\Delta_s \chi\circ \sigma = \Delta \chi\circ F_s$. Theorem \ref{HKtype} implies that factor $Y=Z\times_{\chi\circ\sigma} \chi(U)$ is isomorphic to a group rotation and Theorem \ref{maximal} provides a contradiction.
	\end{proof}
	We need the following weaker notion of divisibility.
	\begin{defn}
		Let $U$ be an abelian group and $n\in\mathbb{N}$. We denote by $U^n:=\{u^n:u\in U\}$ and say that $U$ is $n$-divisible if $U^n=U$. 
	\end{defn}
	As a corollary of the previous lemma we conclude:
	\begin{cor} \label{div}
		Let $G$ be a countable abelian group and $a,b\in\mathbb{Z}$ as in Theorem \ref{Khintchine}. Let $(X,G)$ be an ergodic C.L.\ system and suppose that $\mG(X)$ acts transitively on $X$. Then the commutator subgroup $\mG(X)_2$ is $a$, $b$ and $(b\pm a)$-divisible.
	\end{cor}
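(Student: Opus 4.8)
The plan is to reduce the statement to a non-existence result about torsion characters of $U$ and then exploit the finite index hypothesis through the Conze--Lesigne structure. Write $n$ for any one of the four integers $a,b,a+b,a-b$; by hypothesis $nG$ has finite index in $G$, and the four cases are identical, so I fix such an $n$ and show $\mG(X)_2^n=\mG(X)_2$. First I would invoke Lemma \ref{structure} with $\mG=\mG(X)$ to identify $\mG(X)_2\cong U$, where $X=Z\times_\rho U$ and $Z=Z_{<2}(X)$. Since $G$ is abelian, the embedding $\phi:G\to\mG(X)$ has abelian image, so $[\phi(g),\phi(g')]=1$, and (as in the proof of Lemma \ref{structure}) $\mG(X)_2$ is topologically generated by the commutators $[\phi(g),c(s)]$, where $s\mapsto c(s)=S_{s,F_s}$ is a Borel cross section. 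These commutators are bilinear and central, so $[\phi(ng),c(s)]=[\phi(g),c(s)]^n$; as $U$ is compact, $U^n$ is a closed subgroup and $U^n=\overline{\langle [\phi(ng),c(s)]:g\in G,\ s\in Z\rangle}$. Thus, by Pontryagin duality, $U^n=U$ is equivalent to the assertion that no nontrivial $\chi\in\hat U$ satisfies $\chi^n=1$.

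Now suppose toward a contradiction that $\chi\in\hat U$ is nontrivial with $\chi^n=1$, so $\chi(U)\subseteq C_n$ and $\chi\circ\rho:G\times Z\to C_n$. By minimality of $\rho$ (Lemma \ref{minimal}) the cocycle $\chi\circ\rho$ is not a coboundary, so $Y:=Z\times_{\chi\circ\rho}\chi(U)$ is a nontrivial ergodic extension of $Z$ and a factor of $Z_{<3}(X)$; in particular $Z_{<2}(Y)\subseteq Z_{<2}(X)=Z$. Because $X$ is a Conze--Lesigne system, for each $s\in Z$ there are a homomorphism $c^{\chi}_s:G\to S^1$ and $F_s:Z\to S^1$ with $\Delta_s(\chi\circ\rho)(g,\cdot)=c^{\chi}_s(g)\cdot\Delta_g F_s$; since every term lies in $C_n$, the character $c^{\chi}_s$ takes values in $C_n$, whence $c^{\chi}_s(ng)=c^{\chi}_s(g)^n=1$ for all $g$. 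This is the one place the finite index of $nG$ enters: it guarantees that $nG$ is a genuine finite-index subgroup on which the type of $\chi\circ\rho$ is trivial. Viewing $Y$ as an $nG$-system, the cocycle $\chi\circ\rho|_{nG}$ therefore has trivial type over the group rotation $Z$, so by the Furstenberg--Weiss/Host--Kra criterion (Theorem \ref{HKtype}), used already in the proof of Lemma \ref{structure}, $Y$ is a group rotation as an $nG$-system.

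Finally I would descend from $nG$ back to $G$. Since $nG$ has finite index, the order-$<2$ property passes from the $nG$-action to the $G$-action: the relevant $U^2$-seminorms for $G$ and for $nG$ control one another up to the index (exactly as the constants $d_a,d_b,d_{b-a}$ enter Lemma \ref{Kronecker}), and the finite group $G/nG$ permutes the $nG$-eigenspaces, so if $L^2(Y)$ is spanned by $nG$-eigenfunctions it is spanned by $G$-eigenfunctions. Hence $Y$ is a group rotation as a $G$-system, and by the maximality of the Kronecker factor (Theorem \ref{maximal}) it is a factor of $Z=Z_{<2}(X)$ --- contradicting that $Y$ is a nontrivial extension of $Z$. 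Therefore no such $\chi$ exists and $U^n=U$; applying this to each $n\in\{a,b,a+b,a-b\}$ yields the corollary. The main obstacle is the middle step: making precise the relative statement that a trivial type over the finite-index subgroup $nG$ forces $Y$ to be a rotation, together with the descent of the Kronecker (order $<2$) property from $nG$ to $G$. Everything else is the bilinearity and centrality of the commutator combined with duality.
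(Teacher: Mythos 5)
Your overall skeleton matches the paper's: identify $\mG(X)_2$ with $U$ via Lemma \ref{structure}, reduce $n$-divisibility ($n\in\{a,b,b\pm a\}$) to the non-existence of a nontrivial character $\chi\in\hat U$ with $\chi^n=1$, and derive a contradiction with Theorem \ref{maximal} by showing that the extension $Z\times_{\chi\circ\rho}\chi(U)$ is a group rotation. But your pivotal step is wrong. From $\Delta_s(\chi\circ\rho)=c_s^{\chi}\cdot\Delta_g F_s$ with a $C_n$-valued left-hand side you cannot conclude that the character $c_s^{\chi}$ takes values in $C_n$: the transfer function $F_s$ is only $S^1$-valued, and the decomposition into (character)$\times$(coboundary) is not compatible with restriction to the subgroup $C_n$. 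The paper exhibits a counterexample to exactly this inference in the discussion following Proposition \ref{typecircle:prop}: the $C_2$-valued cocycle $\rho(x)=e\left(-\tfrac{\alpha}{2}+\tfrac{\{x+\alpha\}}{2}-\tfrac{\{x\}}{2}\right)$ over an irrational rotation is cohomologous in $S^1$ to the constant $e(-\alpha/2)\notin C_2$, and to no constant in $C_2$. What actually follows from $C_n$-valuedness is only that $(c_s^{\chi})^n=\Delta_g\overline{F_s^{\,n}}$ is an eigenvalue of $Z$, and in general this eigenvalue is nontrivial; so your conclusion $c_s^{\chi}(ng)=1$, and with it the claim that $\chi\circ\rho$ has trivial type over $nG$, fails.

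This is also where the finite-index hypothesis must enter, and you have placed it in the wrong spot. The paper's route: since $c\mapsto c^n$ on $\hat G$ has finite kernel $\widehat{G/nG}$ (this is the use of finite index) and the eigenvalue group of the ergodic system $Z$ is countable, the characters $c_s^{\chi}$ range over a countable set modulo eigenvalues; hence $Z'=\{s\in Z:\Delta_s(\chi\circ\rho)\in B^1(G,X,S^1)\}$ is a measurable subgroup of countable index, therefore open, and it contains the dense image of $G$ in $Z$, so $Z'=Z$ by ergodicity. Then Theorem \ref{HKtype} makes $Z\times_{\chi\circ\rho}\chi(U)$ a group rotation and Theorem \ref{maximal} gives the contradiction --- all within the $G$-action. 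Your detour through the subaction of $nG$ would in addition have to confront the fact that $Z$ need not be ergodic under $nG$ and then justify the descent of the order-$<2$ property from $nG$ back to $G$ (which you flag as the ``main obstacle''); but the argument never gets that far, because the triviality of $c_s^{\chi}$ on $nG$ is simply unavailable.
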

	\begin{proof}
		By the previous lemma, we can write $X=Z\times_\sigma U$ where $U=\mG(X)_2$. Fix a number $m\in\{a,b,b\pm a\}$ and suppose by contradiction that $U$ is not $m$-divisible. Then, $U/U^m$ is non-trivial and so, since the characters separate points, there exists a non-trivial character $\chi:U\rightarrow C_m$. Let $s\mapsto S_{s,F_s}$ be a Borel cross section from $Z$ to $\mG(X)$ and let $c_s:G\rightarrow S^1$ such that $\Delta_s \chi\circ \rho = c_s\cdot \Delta F_s$.\\
		Observe that $c_s^m\in B^1(G,X,S^1)$ is an eigenvalue. Since $mG$ is of finite index in $G$, the set $\{c_s:s\in Z\}$ is at most countable. Thus, the group  $Z'=\{s\in Z : \Delta_s \chi\circ\rho\in B^1(G,X,S^1)\}$ is a $G$-invariant open subgroup of $Z$ and by ergodicity, $Z'=Z$. As before, Theorem \ref{HKtype} implies that the extension by $\chi\circ\rho$ is a group rotation and Theorem \ref{maximal} provides a contradiction.
	\end{proof}
	\begin{rem}
		In the previous corollary, since at least one of $a,b,a+b$ is even, the group $\mG(X)_2$ is automatically $2$-divisible.
	\end{rem}
	\section{Limit formula and pointwise convergence} \label{formula:sec}
	We prove the following pointwise convergence for some multiple ergodic averages on a $2$-step homogeneous space where the homogeneous group is the C.L.\ group (Definition \ref{CLgroup}).
	\begin{thm} [Limit formula] \label{formula} 
		Let $X=\mG/\Gamma$ be an ergodic C.L.\ $G$-system, where $\mG$ is the C.L.\ group and suppose that $\mG_2$ is $2$-divisible. Let $\mu_{\mG_2}$ denote the Haar measure on $\mG_2$. Then for every $k\in\mathbb{N}$, $f_1,f_2,...,f_k\in L^\infty(X)$ and $\mu$-almost every $x\in X$ we have,
		\begin{equation} \label{average}\begin{split}\lim_{N\rightarrow\infty} \E_{g\in\Phi_N} \prod_{i=1}^k T_{ig}f_i(x) & =\\ \int_{\mG/\Gamma} \int_{\mG_2} \prod_{i=1}^k f_i(xy_1^i y_2^{\binom{i}{2}}) &d\mu_{\mG_2}(y_2)d\mu(y_1)
		\end{split}
		\end{equation}
		with the abuse of notation that $f(x)=f(x\Gamma)$.
	\end{thm}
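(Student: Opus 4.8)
The plan is to interpret the average as an equidistribution statement in the product system $X^{k}=(\mathcal{G}/\Gamma)^{k}$ and to identify the limiting measure as the Haar measure on a homogeneous subspace. Write $\phi(g)=a_{g}\in\mathcal{G}$ for the element implementing the action (so $T_{ig}$ is left multiplication by $a_{g}^{\,i}$), put $F=f_{1}\otimes\cdots\otimes f_{k}$ and let $\Delta(x)=(x,\dots,x)$ be the diagonal. Then the left-hand side is $\mathbb{E}_{g\in\Phi_{N}}F\bigl(\vec a_{g}\,\Delta(x)\bigr)$ with $\vec a_{g}=(a_{g},a_{g}^{2},\dots,a_{g}^{k})\in\mathcal{G}^{k}$. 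First I would introduce the closed subgroup
\[
H=\bigl\{\bigl(y_{1}^{\,i}y_{2}^{\binom{i}{2}}\bigr)_{i=1}^{k}\ :\ y_{1}\in\mathcal{G},\ y_{2}\in\mathcal{G}_{2}\bigr\}\leq\mathcal{G}^{k},
\]
and verify, using that $\mathcal{G}$ is $2$-step nilpotent so that $(uv)^{n}=u^{n}v^{n}[v,u]^{\binom{n}{2}}$ with $[v,u]\in\mathcal{G}_{2}$ central, that $H$ is a subgroup and that every $\vec a_{g}$ lies in $H$ (take $y_{1}=a_{g}$, $y_{2}=1$).

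Next I would check that the right-hand side is exactly $\int_{H\Delta(x)}F\,dm$, where $m$ is the Haar measure. The parametrization $(y_{1},y_{2})\mapsto(y_{1}^{\,i}y_{2}^{\binom{i}{2}})_{i}$ is an isomorphism $\mathcal{G}\times\mathcal{G}_{2}\to H$, and since $\Gamma\subseteq\{S_{1,F}:F\in\mathrm{Hom}(Z,U)\}$ meets the constants $\mathcal{G}_{2}=\{S_{1,c}:c\in U\}$ only in the identity, one gets $\mathrm{Stab}_{H}(\Delta(x))\cong\Gamma$ (reducing to $x=\Gamma$ by transitivity) and hence $H\Delta(x)\cong\mathcal{G}/\Gamma\times\mathcal{G}_{2}$ with Haar measure $d\mu(y_{1})\,d\mu_{\mathcal{G}_{2}}(y_{2})$. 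The double integral is well defined on $\mathcal{G}/\Gamma$ because replacing $y_{1}$ by $y_{1}\gamma$ with $\gamma\in\Gamma$ only shifts $y_{2}$ by the central element $[\gamma,y_{1}]\in\mathcal{G}_{2}$, under which the inner Haar integral is invariant; this is precisely the identification of the double integral with integration against $m$.

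With the target measure in place, the crux is the equidistribution claim: for $\mu$-almost every $x$, the orbit $\{\vec a_{g}\Delta(x)\}_{g\in G}$ equidistributes in $H\Delta(x)$ along $\Phi_{N}$. I would argue on two levels. On the Kronecker level $\mathcal{G}/\mathcal{G}_{2}\Gamma\cong Z$ the projected orbit is $(i\,\pi_{Z}(a_{g}))_{i}$, whose equidistribution inside the relevant diagonal-closure in $Z^{k}$ is classical and already underlies Lemma \ref{Kronecker}. The genuine content is the vertical $\mathcal{G}_{2}$-direction, which I would reach by a van der Corput / differencing argument that lowers the degree and brings in the commutators $[a_{g},a_{h}]\in\mathcal{G}_{2}$. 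The standing hypothesis that $\mathcal{G}_{2}$ is $2$-divisible (cf. Corollary \ref{div}) enters exactly here: it absorbs the even exponents produced both by the weights $\binom{i}{2}$ and by the quadratic differencing, guaranteeing that the vertical part of the orbit closure is all of $\mathcal{G}_{2}$ and not a proper subgroup.

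Finally I would upgrade to the pointwise statement by reducing to finite-dimensional quotients. By Gleason--Yamabe, $U=\mathcal{G}_{2}$ is an inverse limit of Lie groups $U/K_{n}$, so $X$ is an inverse limit of the simple homogeneous spaces of Section \ref{proof}; on each of these nilmanifolds the orbit $g\mapsto\vec a_{g}\Delta(x)$ is a polynomial orbit, and the classical pointwise equidistribution theory for such orbits (Leibman; Bergelson--Host--Kra \cite{BHK}, generalizing the connected formula of Ziegler \cite{Z1} recalled in Theorem \ref{Zformula}) yields the formula on the quotient. I would then pass to the inverse limit using the separability of $L^{2}(X)$ together with an approximation and maximal-inequality argument so that the almost-everywhere convergence survives. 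I expect the main obstacle to be this last step combined with the vertical equidistribution: proving that the diagonal orbit fills the $\mathcal{G}_{2}$-fibers in the genuinely non-Lie setting and controlling the inverse-limit passage so that one obtains pointwise a.e.\ convergence rather than merely $L^{2}$ convergence.
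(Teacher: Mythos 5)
Your overall strategy -- read the average as equidistribution of the diagonal orbit $\{\vec a_g\Delta(x)\}$ inside the orbit $H\Delta(x)$ of the group $H=\{(y_1^iy_2^{\binom{i}{2}})_i\}$, and identify the limit with Haar measure on $\mG/\Gamma\times\mG_2$ -- is the same Bergelson--Host--Kra scheme the paper follows (your $H$ is the paper's $\tilde{\mG}^\star$ and your $H\Delta(x)$ is its $\tilde X_x$). But the two points you flag as "the crux" and "the main obstacle" are exactly where your sketch diverges from what actually works, and in both places there is a genuine gap. For the vertical equidistribution, a van der Corput/differencing argument by itself only gives an upper bound by some seminorm; it does not show that the orbit closure fills the $\mG_2$-fibers. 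The real difficulty, spelled out in Remark \ref{property H}, is that Green's criterion (ergodic on the nilmanifold iff ergodic on its maximal torus factor) requires the ambient group to be generated by its identity component together with the acting elements, and $\tilde\mG$ need not satisfy this. The paper's workaround is structural, not analytic: Lemma \ref{structure} shows that for any \emph{open} subgroup $\mL\leq\mG$ containing the image of $G$ one still has $\mL_2=\mG_2$ (this is where the C.L.\ property of $\mG$ and Theorem \ref{maximal} enter), Lemma \ref{open} upgrades this to the product group (this is where $2$-divisibility of $\mG_2$ is used, to kill the exponents $i^2-i$), and then a Parry-type argument (Lemma \ref{ergodic}: decompose an invariant function into $\tilde\mG_2$-eigenfunctions, show the set of $u$ with $\Delta_uf_\lambda$ constant is an open subgroup, and use that characters vanish on commutators) yields ergodicity of the $G\times G$ action on $\tilde\mG/\tilde\Gamma$. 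None of this is replaced by your differencing sketch.

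The second gap is the passage to pointwise convergence. You propose to reduce to Lie quotients by Gleason--Yamabe and then recover a.e.\ convergence on the inverse limit via "an approximation and maximal-inequality argument," but no maximal inequality for these multiple averages along arbitrary F{\o}lner sequences of a countable abelian group is available, and without one the a.e.\ statement does not survive an inverse limit. The paper avoids this entirely: since the $G\times G$ action on the compact space $\tilde\mG/\tilde\Gamma$ is ergodic, it is \emph{uniquely} ergodic (Parry, Leibman), so for continuous $f_i$ the averages converge everywhere, and the limit is identified by testing against continuous $f$, averaging over a second variable $h$, and invoking the disintegration $\tilde\mu=\int_X\delta_x\otimes\tilde\mu_x\,d\mu(x)$ of Lemma \ref{measure}. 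If you want to salvage your write-up, replace both the van der Corput step and the inverse-limit step with this unique-ergodicity mechanism and supply the open-subgroup commutator computation that makes the ergodicity proof go through.
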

	As a corollary we conclude the following result.
	\begin{cor} \label{formulaab}
		Let $a,b\in\mathbb{Z}$. In the settings of Theorem \ref{formula}, choose $k=a+b$, let $h_1,h_2,h_3\in L^\infty(X)$ be any bounded functions and set $f_a=h_1$, $f_b=h_2$,  $f_{a+b}=h_3$ and $f_i=1$ for all $i\not= a,b,a+b$. Then, for $\mu$-almost every $x\in X$ we have,
		\begin{equation} \label{averageab}\begin{split}\lim_{N\rightarrow\infty} \E_{g\in\Phi_N}  T_{ag}h_1(x) T_{bg}h_2(x) T_{(a+b)g}h_3(x)& =\\ \int_{\mG/\Gamma} \int_{\mG_2} h_1(xy_1^ay_2^{\binom{a}{2}})h_2(xy_1^by_2^{\binom{b}{2}}) h_3(xy_1^{a+b}y_2^{\binom{a+b}{2}}) &d\mu_{\mG_2}(y_2)d\mu(y_1).
		\end{split}
		\end{equation}
	\end{cor}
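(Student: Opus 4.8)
The plan is to deduce Corollary \ref{formulaab} directly from Theorem \ref{formula} by a single specialization of the input functions, so that all the analytic content is already contained in the limit formula and only index bookkeeping remains. First I would set $k=a+b$ and apply Theorem \ref{formula} to the $k$-tuple $f_1,\dots,f_{a+b}\in L^\infty(X)$ defined by $f_a=h_1$, $f_b=h_2$, $f_{a+b}=h_3$, and $f_i=1$ (the constant function) for every $i\in\{1,\dots,a+b\}$ with $i\notin\{a,b,a+b\}$. This assignment is unambiguous precisely when $a,b,a+b$ are three distinct elements of $\{1,\dots,a+b\}$, so I would first record that this holds under the natural reading of the statement: since $a,b\geq 1$ we have $a+b>a$ and $a+b>b$, so the three indices all lie in $\{1,\dots,a+b\}$, and they are pairwise distinct provided $a\neq b$. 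All hypotheses of Theorem \ref{formula}, in particular that $\mG_2$ is $2$-divisible, are inherited verbatim from its setting and require nothing further.

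Next I would observe that the constant factors collapse on both sides of (\ref{average}). On the left-hand side, $T_{ig}1=1$ for each $i$ with $f_i=1$, so the product $\prod_{i=1}^{a+b}T_{ig}f_i(x)$ reduces to $T_{ag}h_1(x)\,T_{bg}h_2(x)\,T_{(a+b)g}h_3(x)$, which is exactly the integrand of the average on the left-hand side of (\ref{averageab}). On the right-hand side, for each such $i$ the factor $f_i(xy_1^iy_2^{\binom{i}{2}})=1$ independently of $y_1$ and $y_2$, so the integrand $\prod_{i=1}^{a+b}f_i(xy_1^iy_2^{\binom{i}{2}})$ reduces to $h_1(xy_1^ay_2^{\binom{a}{2}})\,h_2(xy_1^by_2^{\binom{b}{2}})\,h_3(xy_1^{a+b}y_2^{\binom{a+b}{2}})$, matching the integrand in (\ref{averageab}). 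Since the measures $d\mu_{\mG_2}(y_2)\,d\mu(y_1)$ are untouched by this specialization, the right-hand sides of the two formulas coincide.

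Finally, because Theorem \ref{formula} guarantees that the limit exists and equals the stated integral for $\mu$-almost every $x$, this conclusion transfers directly to the specialized functions, giving (\ref{averageab}). There is no genuine obstacle here: the whole argument is a substitution, and the one point that demands care — the closest thing to an obstacle — is the index bookkeeping described above, namely checking that $a$, $b$, and $a+b$ occupy three distinct valid positions in $\{1,\dots,a+b\}$ so that the reduction to the three distinguished functions is legitimate. I would state the operative conditions ($a,b\geq 1$, $a\neq b$) explicitly to make the specialization rigorous.
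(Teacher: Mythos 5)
Your proposal is correct and is exactly the argument the paper intends: the corollary is stated as an immediate specialization of Theorem \ref{formula} with $k=a+b$, $f_a=h_1$, $f_b=h_2$, $f_{a+b}=h_3$ and all other $f_i=1$, and the paper gives no further proof. Your explicit check that $a$, $b$, $a+b$ are distinct valid indices (requiring $a,b\geq 1$ and $a\neq b$) is a reasonable point of care that the paper leaves implicit.
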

	Note that the assumption that $\mG_2$ is $2$-divisible in Theorem \ref{formula} is necessary. We give a counterexample in the case where this assumption is removed.
	\begin{example}
	Let $G=\mathbb{F}_2^\omega$ be the countable direct sum of the field $\mathbb{F}_2=\{0,1\}$. The map $g\mapsto e^{\pi i g}$ defines an embedding of $G$ in the infinite direct product $Z=\prod_{n=1}^\omega C_2$, where $C_2=\{-1,1\}$ is a discrete group under multiplication. This embedding gives rise to an action of $G$ on $Z$ by $T_g z = e^{\pi i g}\cdot z$ and the system $(Z,G)$ is an ergodic Kronecker system. Let $\sigma:\mathbb{F}_p^\omega \times \prod_{n=1}^\omega C_2\rightarrow C_2$ be the cocycle $\sigma(g,x)=\prod_{i=1}^\infty x_i^{g_i}\cdot (-1)^{\binom{g_i}{2}}$. The system $X=Z\times_{\sigma} C_2$ is an ergodic $G$-system.\footnote{To prove ergodicity one can express any measurable bounded function as $f(x,y)=\sum_{\chi\in \hat Z,\tau\in \widehat{C_2}} a_{\chi,\tau} \chi(x)\tau(y)$. If $f$ is invariant one can use the uniqueness of the Fourier series to deduce that $a_{\chi,\tau}=0$ unless $\chi$ and $\tau$ are the trivial characters.} Since $\sigma$ is a phase polynomial of degree $<2$, it is not hard to show that $X=\mG(X)/\Gamma$ where $\mG(X)$ is the Host-Kra group of $X$ and $\Gamma=\{S_{1,p} : p\in\hat Z\}$. Moreover $\mG(X)_2 = C_2$ is not a $2$-divisible group. Now let $k=2$, $f_1=1$ and $f_2(x,y)=y$. Since, every element of $\mG$ is of order $2$, the integral in equation (\ref{average}) equals to  $\int_{C_2} u^{\binom{2}{2}}y d\mu_{C_2}(u) = 0$. On the other hand $\lim_{N\rightarrow\infty} E_{g\in \Phi_N} T_gf_1(x,y)\cdot T_{2g} f_2(x,y) = y$. Thus, equation (\ref{average}) fails. Note that a similar example where $2$ is replaced by any odd integer $n$, will not give a counterexample for (\ref{average}), because odd $n$'s divide $\binom{n}{2}$.
	\end{example}
	
	In order to prove Theorem \ref{formula}, we follow an argument by Bergelson Host and Kra \cite{BHK}. Let,
	\begin{align*}\imath:\mG\times \mG \times \mG_2 \rightarrow \mG^{k+1},&\\\imath(g,g_1,g_2) = (g,gg_1,gg_1^2g_2,...&,gg_1^kg_2^{\binom{k}{2}}).
	\end{align*}
	We denote by $\tilde{\mG}$ the image of $\imath$. In \cite{Leib} Leibman proved that $\tilde{\mG}$ is a $2$-step nilpotent group. The subgroup $\tilde{\Gamma} = \imath(\Gamma\times \Gamma\times \{e\})$ is a closed subgroup of $\tilde{\mG}$ and the quotient space $\tilde{X}= \tilde{\mG}/\tilde{\Gamma}$ is compact. Let $\tilde{\mu}$ be the Haar measure on this space. We define an action of $G\times G$ on $(\tilde{X},\tilde{\mu})$ by left multiplication with $g^\triangle:=(g,g,...,g)$ and $g^\star = (1,g,g^2,...,g^k)$, where $g$ is identified with the measure-preserving transformation $T_g:X\rightarrow X$ in $\mG(X)$. In Lemma \ref{ergodic} below we prove that this action is uniquely ergodic. Assuming this for now, we fix $x\in X$ and consider the compact polish space $$\tilde{X}_x:=\{(x_1,x_2,..,x_k)\in X^k : (x,x_1,x_2,...,x_k)\in \tilde{X}\}.$$
	Bergelson Host and Kra showed that the group $\tilde{\mG}^\star =\{(g_1,g_1^2g_2,...,g_1^kg_2^{\binom{k}{2}}) : g_1\in \mG, g_2\in \mG_2\}$ acts transitively on this space and $\tilde{X}_x\cong \tilde{\mG}^\star/\tilde{\Gamma}^\star$ where $\tilde{\Gamma}^\star = \{(\gamma,\gamma^2 ,...,\gamma^k): \gamma\in \Gamma\}$. Observe that since $\imath$ is injective, it induces an isomorphism of $G$-systems, $\tilde{\imath} :\mG/\Gamma\times \mG_2\rightarrow \tilde{X}_x$ where the action of $G$ on $\mG\times \mG_2$ is given by $T_g(y_1,y_2) = (g[g,x]y_1,[g,y_1]y_2)$.\\
	We continue assuming that the action of $G\times G$ on $\tilde{X}$ is uniquely ergodic. Let $\tilde{\mu}_x$ be the Haar measure on $\tilde{X}_x$, Bergelson Host and Kra \cite{BHK} proved:
	\begin{lem} \label{measure}
		$$\tilde{\mu} = \int_X \delta_x\otimes \tilde{\mu}_x d\mu(x).$$
	\end{lem}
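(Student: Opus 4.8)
The plan is to use the unique ergodicity of the $G\times G$-action on $(\tilde{X},\tilde{\mu})$ from Lemma \ref{ergodic}. Note first that $g^\triangle=\imath(g,e,e)$ and $g^\star=\imath(1,g,e)$ both lie in $\tilde{\mG}$, so $\tilde{\mu}$ (being the Haar measure, invariant under all left translations) is a $G\times G$-invariant probability measure on $\tilde{X}$, and by unique ergodicity it is the only one. It therefore suffices to prove that $\nu:=\int_X \delta_x\otimes\tilde{\mu}_x\,d\mu(x)$ is also $G\times G$-invariant, for then $\nu=\tilde{\mu}$. (The integrand is measurable in $x$ through the isomorphisms $\tilde{\imath}$ identifying each fiber with $\mG/\Gamma\times\mG_2$.) Since the action is generated by the translations $g^\star$ and $g^\triangle$, I would check invariance under each separately.

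For the star translation, $g^\star=(1,g,g^2,\dots,g^k)$ fixes the first coordinate and acts on the fiber $\tilde{X}_x$ by left multiplication by $(g,g^2,\dots,g^k)\in\tilde{\mG}^\star$. As $\tilde{\mu}_x$ is the Haar measure on $\tilde{X}_x\cong\tilde{\mG}^\star/\tilde{\Gamma}^\star$, it is invariant under this translation; since $g^\star$ preserves the base point $x$ we obtain $(g^\star)_*\nu=\nu$ at once.

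For the diagonal translation, a coordinatewise computation gives $g^\triangle\cdot\imath(g_0,g_1,g_2)=\imath(gg_0,g_1,g_2)$, so $g^\triangle$ sends the fiber $\tilde{X}_x$ homeomorphically onto $\tilde{X}_{gx}$ by left multiplication by the diagonal $d_g=(g,\dots,g)$. The crucial observation is that $d_g$ normalizes $\tilde{\mG}^\star$: for $w=(g_1,g_1^2g_2,\dots,g_1^kg_2^{\binom{k}{2}})\in\tilde{\mG}^\star$ one finds $d_g^{-1}wd_g=(h_1,h_1^2h_2,\dots,h_1^kh_2^{\binom{k}{2}})$ with $h_1=g^{-1}g_1g$ and $h_2=g^{-1}g_2g=g_2$, the last equality holding because $g_2\in\mG_2$ is central in the $2$-step nilpotent group $\mG$. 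Hence $(g^\triangle)_*\tilde{\mu}_x$ is a $\tilde{\mG}^\star$-invariant probability measure carried by $\tilde{X}_{gx}\cong\tilde{\mG}^\star/\tilde{\Gamma}^\star$, so it must equal $\tilde{\mu}_{gx}$. Combining this with the $T_g$-invariance of $\mu$ and the substitution $y=gx$ gives
$$(g^\triangle)_*\nu=\int_X \delta_{gx}\otimes\tilde{\mu}_{gx}\,d\mu(x)=\int_X \delta_{y}\otimes\tilde{\mu}_{y}\,d\mu(y)=\nu.$$

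The step I expect to be the main obstacle is precisely the identification $(g^\triangle)_*\tilde{\mu}_x=\tilde{\mu}_{gx}$: one must confirm that $g^\triangle$ carries the fiber over $x$ bijectively onto the fiber over $gx$ and that the conjugation computation really lands back inside $\tilde{\mG}^\star$, where the centrality of $\mG_2$ enters in an essential way. With both invariances in hand, unique ergodicity closes the argument. As a sanity check, the same identity can also be read off from Weil's integration formula applied to the projection $\tilde{\mG}/\tilde{\Gamma}\to\tilde{\mG}/(\ker(p_0)\,\tilde{\Gamma})\cong\mG/\Gamma$ induced by the first-coordinate homomorphism $p_0:\tilde{\mG}\to\mG$, whose fibers are translates of $\ker(p_0)/(\ker(p_0)\cap\tilde{\Gamma})\cong\tilde{\mG}^\star/\tilde{\Gamma}^\star$ carrying Haar measure.
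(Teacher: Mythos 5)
Your proof is correct. Note that the paper itself gives no proof of this lemma --- it is quoted directly from Bergelson--Host--Kra \cite{BHK} --- so there is no internal argument to compare against; what you have written is a legitimate self-contained derivation. Your route goes through unique ergodicity: you verify that $\nu=\int_X\delta_x\otimes\tilde{\mu}_x\,d\mu(x)$ is invariant under $g^\star$ (immediate, since $g^\star$ fixes the base point and translates each fibre by an element of $\tilde{\mG}^\star$) and under $g^\triangle$ (the substantive step, where you use that the diagonal normalizes $\tilde{\mG}^\star$ --- with centrality of $\mG_2$ in the $2$-step nilpotent group doing the work --- together with uniqueness of the $\tilde{\mG}^\star$-invariant probability measure on the compact homogeneous space $\tilde{X}_{gx}\cong\tilde{\mG}^\star/\tilde{\Gamma}^\star$), and then invoke Lemma \ref{ergodic} plus Parry's theorem. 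This is not circular even though Lemma \ref{ergodic} appears after Lemma \ref{measure} in the text, since its proof does not use the disintegration. The original BHK argument is essentially the disintegration you relegate to a sanity check: Weil's integration formula for the fibration $\tilde{\mG}/\tilde{\Gamma}\to\mG/\Gamma$ given by the first coordinate, whose fibres are translates of $\ker(p_0)/(\ker(p_0)\cap\tilde{\Gamma})\cong\tilde{\mG}^\star/\tilde{\Gamma}^\star$. That route has the advantage of not needing ergodicity of the $G\times G$-action at all, only invariance of $\tilde{\mu}$ and uniqueness of Haar measure on the fibres, whereas yours front-loads the unique ergodicity that the section establishes anyway for Theorem \ref{formula}. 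The one measure-theoretic point to keep visible --- measurability of $x\mapsto\tilde{\mu}_x$ --- you address adequately via the parametrizations $\tilde{\imath}$.
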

	We can now prove Theorem \ref{formula}.
	\begin{proof}
		Since continuous functions are dense in $L^\infty(X)$, it is enough to prove the theorem for continuous $f_1,f_2,...,f_k$. Let
		$F:\mG/\Gamma^k\rightarrow \mathbb{C}$, $F(x_1,x_2,...,x_k)=f_1(x_1)\cdot f_2(x_2)\cdot...\cdot f_k(x_k)$, we can write average  (\ref{average}) as
		$$\mathbb{E}_{g\in \Phi_N} (T_{g}\times T_{2g}\times...\times T_{kg})F(x,x,...,x).$$ Recall that every element in the orbit of $(x,x,...,x)$ with respect to the transformation $T_{g}\times T_{2g}\times...\times T_{kg}$ belongs to $\tilde{X}_x$. Thus, by the pointwise ergodic theorem average (\ref{average}) converges pointwise everywhere to a function $\phi(x)$ on $X$. Let $f$ be any continuous function on $X$. Then,	
		$$\int f(x)\phi(x)d\mu(x) =\lim_{N\rightarrow\infty} \int \mathbb{E}_{g\in \Phi_N} f(x)\cdot \prod_{i=1}^k f_i(T_{ig}x)d\mu(x).$$
		Since $\mu$ is $G$-invariant, the above equals to
		\begin{equation}\label{ave}\lim_{N\rightarrow\infty}\int \mathbb{E}_{g,h\in \Phi_N} f(T_hx)\prod_{i=1}^kf_i(T_{ig+h}x)d\mu(x).
		\end{equation}
		Recall that we assume that the action of $G\times G$ by $T_{g^\star}$ and $T_{h^\triangle}$ is uniquely ergodic. Since $(x,x,...,x)$ belongs to $\tilde{\mG}/\tilde{\Gamma}$, we conclude by the pointwise ergodic theorem that (\ref{ave}) converges everywhere to 
		$$\int_{\tilde{X}}f(x_0)\prod_{i=1}^k f(x_i)d\tilde{\mu}(x_0,x_1,...,x_k)$$ which by Lemma \ref{measure} equals to $$\int_X f(x) \left(\int_{\tilde{X}_x} \prod_{i=1}^k f_i(x_i)d\tilde{\mu}_x(x_1,...,x_k)\right)d\mu(x).$$
		As this holds for every continuous function $f$, we conclude that $$\phi(x)=\int_{\tilde{X}_x} \prod_{i=1}^k f_i(x_i)d\tilde{\mu}_x(x_1,...,x_k) = \int_{\mG/\Gamma}\int_{\mG_2} f_1(xy_1)f_2(xy_1^{2}y_2)\cdot...\cdot f_3(xy_1^ky_2^{\binom{k}{2}})d\mu_{\mG_2}(y_2)d\mu(y_1)$$ for $\mu_\mG$-a.e. $x\in \mG$, as required.
	\end{proof}
	By Parry \cite{Parryb} an ergodic action on $\tilde{G}/\tilde{\Gamma}$ is uniquely ergodic (see also a proof by Leibman \cite[Theorem 2.19]{Leib2} that holds in this generality). Therefore, in order to complete the proof of Theorem \ref{formula} it is left to prove that the action of $G\times G$ is ergodic.
	\begin{rem} \label{property H}
	A key component in the argument of Bergelson Host and Kra \cite{BHK} is a result of Green \cite{G} which was generalized by Leibman \cite{Leib2} to nilsystems $(\mG/\Gamma, R_a)$ satisfying the property that $\mG$ is generated by its connected component and $a$. This result asserts that the action of $R_a$ on $\mG/\Gamma$ is ergodic if and only if the induced action of $R_a$ on $\mG/\mG_2\Gamma$ is ergodic. Unfortunately, the connected component of $\tilde{\mG}$ and the transformations $(g^{\triangle})_{g\in G}$ and $(g^\star)_{g\in G}$ may not generate $\tilde{\mG}$. The main observation in our proof is that one can still apply Green's theorem for the nilsystem $\tilde{\mG}/\tilde{\Gamma}$ if the group $\mG$ is the C.L.\ group (Definition \ref{CLgroup}). Lemma \ref{structure} plays an important role in the proof of this observation.
	\end{rem}
	The following lemma is a corollary of Lemma \ref{structure}.
	\begin{lem} \label{open}
		Let $\tilde{\mG}$ as in the proof of Theorem \ref{formula}. If $V\leq \tilde{\mG}$ is an open subgroup which contains $g^\triangle$  and $g^\star$ for all $g\in G$, then
		$$V_2 = \{(g,gg_1,gg_1^2g_2,...,gg_1^kg_2^{\binom{k}{2}}):g,g_1,g_2\in \mG_2\}.$$
	\end{lem}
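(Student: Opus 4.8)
The target subgroup is exactly $\imath(\mG_2 \times \mG_2 \times \mG_2)$, and since $\mG$ is $2$-step nilpotent the group $\mG_2$ is central, so $\imath$ restricts to a topological isomorphism of $\mG_2^3$ onto a compact central subgroup of $\tilde\mG$. The plan is to prove the two inclusions separately. Throughout I would use the commutator identity in $\tilde\mG$: writing $Z=\mG/\mG_2$ for the Kronecker factor, the commutator $[\cdot,\cdot]:\mG\times\mG\to\mG_2$ is bilinear and factors through a map $\bar\beta:Z\times Z\to\mG_2$ with $\overline{\langle\bar\beta(Z,Z)\rangle}=\mG_2$ (this is just the definition of the commutator subgroup). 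A direct expansion using $x^iy^i=(xy)^i[x,y]^{\binom i2}$ then gives
\[
[\imath(a,a_1,a_2),\imath(b,b_1,b_2)]=\imath\big([a,b],\,[a,b_1][a_1,b][a_1,b_1],\,[a_1,b_1]^2\big).
\]

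For the inclusion $V_2\subseteq\imath(\mG_2^3)$ I would introduce the two continuous homomorphisms $\bar\rho_1,\bar\rho_2:\tilde\mG\to Z$ sending $\imath(a,a_1,a_2)$ to $\bar a$ and to $\bar a_1$ respectively; their common kernel is precisely $\imath(\mG_2^3)$, and since $Z$ is abelian they annihilate every commutator. Hence $V_2\subseteq\tilde\mG_2\subseteq\imath(\mG_2^3)$, the latter being a compact, hence closed, subgroup.

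For the reverse inclusion I would produce each of the three coordinate axes $\imath(\mG_2,1,1)$, $\imath(1,\mG_2,1)$, $\imath(1,1,\mG_2)$ inside $V_2$; their product is all of $\imath(\mG_2^3)$. The crucial input, which is where Lemma \ref{structure} (equivalently, ergodicity of the Kronecker factor) enters, is that the image of $G$ is dense in $Z$, so every open subgroup of $Z$ containing it equals $Z$. Applying this to $H_1:=\{a\in\mG:\imath(a,1,1)\in V\}$ (open as the preimage of $V$ under the continuous diagonal homomorphism $a\mapsto\imath(a,1,1)$, and containing the image of $G$ since $g^{\triangle}\in V$) and to the open homomorphic images $\bar\rho_1(V),\bar\rho_2(V)$ (which contain the image of $G$ since $g^{\triangle},g^{\star}\in V$), I get $\bar H_1=\bar\rho_1(V)=\bar\rho_2(V)=Z$. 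Then for $a,b\in H_1$ the formula gives $[\imath(a,1,1),\imath(b,1,1)]=\imath([a,b],1,1)\in V_2$, and as $\bar a,\bar b$ range over $\bar H_1=Z$ the elements $[a,b]=\bar\beta(\bar a,\bar b)$ generate $\mG_2$, so $\imath(\mG_2,1,1)\subseteq V_2$. Bracketing $\imath(a,1,1)$ ($a\in H_1$) against a general $\beta=\imath(b,b_1,b_2)\in V$ yields $\imath([a,b],[a,b_1],1)$, and multiplying by the already-available $\imath([a,b]^{-1},1,1)$ leaves $\imath(1,[a,b_1],1)\in V_2$; since $\bar b_1$ ranges over $\bar\rho_2(V)=Z$ this gives $\imath(1,\mG_2,1)\subseteq V_2$.

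The remaining and hardest axis is the purely quadratic one $\imath(1,1,\mG_2)$, and this is where the hypothesis that $\mG_2$ is $2$-divisible is essential, the example after Theorem \ref{formula} showing the statement fails without it. Bracketing two general elements $\imath(a,a_1,\cdot),\imath(b,b_1,\cdot)\in V$ and then multiplying by suitable members of the two axes already placed in $V_2$ to cancel the first two coordinates leaves $\imath(1,1,[a_1,b_1]^2)\in V_2$; the square is forced on us by the identity above. As $\bar a_1,\bar b_1$ range over $\bar\rho_2(V)=Z$ the elements $[a_1,b_1]^2=\bar\beta(\bar a_1,\bar b_1)^2$ generate $\mG_2^2$, and $2$-divisibility (surjectivity of the squaring endomorphism of the compact group $\mG_2$) upgrades this to all of $\mG_2$, so $\imath(1,1,\mG_2)\subseteq V_2$. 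Multiplying the three axes completes the reverse inclusion. I expect this quadratic step to be the main obstacle: the bracket only ever outputs $[a_1,b_1]^2$ in the top coordinate, so without $2$-divisibility one obtains merely $\imath(1,1,\mG_2^2)$ and the conclusion genuinely breaks down.
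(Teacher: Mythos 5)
Your overall strategy is the paper's: the commutator identity $[\imath(a,a_1,a_2),\imath(b,b_1,b_2)]=\imath\bigl([a,b],\,[a,b_1][a_1,b][a_1,b_1],\,[a_1,b_1]^2\bigr)$ is exactly right, the easy inclusion via projections onto abelian quotients is fine, and producing the three axes $\imath(\mG_2,1,1)$, $\imath(1,\mG_2,1)$, $\imath(1,1,\mG_2)$ (which are the paper's $(g,\dots,g)$, $(e,g_1,\dots,g_1^k)$, $(e,e,g_2,\dots,g_2^{\binom{k}{2}})$) with $2$-divisibility entering only through the exponent $[a_1,b_1]^2$ on the last axis is precisely what the paper does.

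The one step that would fail as written is your justification of the ``crucial input''. You assert that the image of $G$ is dense in $Z=\mG/\mG_2$ and conclude $\bar H_1=\bar\rho_1(V)=\bar\rho_2(V)=Z$. But $\mG/\mG_2$ is not the Kronecker factor: by Lemma \ref{structure} the Kronecker factor is $\mG/\mG_2\Gamma$, and $\Gamma$ (the nonconstant homomorphisms into $U$) is not contained in $\mG_2$. Ergodicity only gives density of the $G$-image in $\mG/\mG_2\Gamma$; density in $\mG/\mG_2$ would say that $\mG$ is topologically generated by $\mG_2$ together with the $G$-image, which is exactly the property that Remark \ref{property H} warns may fail and which forces the detour through Lemma \ref{structure} in the first place. (Passing to the other quotient does not help: the commutator does not descend to $\mG/\mG_2\Gamma\times\mG/\mG_2\Gamma$, since $\Gamma$ is not central.) Fortunately you never need $\bar\rho_i(V)=\mG/\mG_2$: each of your three steps only requires that the commutators drawn from the relevant open subgroups of $\mG$ containing the $G$-image already generate a dense subgroup of $\mG_2$, and that is verbatim the conclusion of Lemma \ref{structure} ($\mL_2=\mG_2$ for any open $\mL\leq\mG$ containing the embedding of $G$; apply it to $H_1$, to $H_1\cap\rho_2(V)\mG_2$, and so on). Its proof is a maximality-of-the-Kronecker argument via Theorems \ref{maximal} and \ref{HKtype}, not a density argument. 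With that substitution your proof coincides with the paper's.
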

	\begin{proof}
		Let $\imath:\mG\times \mG\times \mG_2\rightarrow \tilde{\mG}$ be as in the proof of Theorem \ref{formula}. Let $\mL,\mL'\leq \mG$ be open subgroups such that $\mL\times \mL'\times \{e\}\leq i^{-1}(V)$. Since $g^\triangle\in V$ we can assume that $g\in \mL$ and since $g^\star\in V$ that $g\in \mL'$. By shrinking $\mathcal{L},\mathcal{L}'$, we may assume that $g\in \mL=\mL'$. By Lemma \ref{structure} we have that $\mL_2 = \mG_2$. Let $g,g_1,g_2\in \mG_2$:\\
		For every $s_1,s_2\in\mL$, $(s_1,s_1,...,s_1)$ and $(s_2,s_2,...,s_2)$ belong to $V$ and therefore $$(g,g,...,g)\in V_2.$$
		For every $t_1,t_2\in\mL$, we have that $(t_1,t_1,...,t_1)$ and $(e,t_2,t_2^2,...,t_2^k)$ belong to $V$. Since the commutator is a bilinear map, we conclude that $$(e,g_1,g_1^2,...,g_1^k)\in V_2.$$
		Finally, for every $r_1,r_2\in \mL$, $(e,r_1,r_1^2,...,r_1^k)$ and $(e,r_2^1,r_2^2,...,r_2^k)$ belong to $V$ and $$(e,[r_1,r_2]^{1^2},[r_1,r_2]^{2^2},...,[r_1,r_2]^{k^2})\in V_2.$$ Since $(r_2,r_2,....,r_2)$ also belongs to $V$, $(e,[r_1,r_2],[r_1,r_2]^2,...,[r_1,r_2]^k)\in V_2$. We conclude that $$(e,[r_1,r_2]^{1^2-1},[r_1,r_2]^{2^2-2},....,[r_1,r_2]^{k^2-k})\in V_2$$ and since $\mG_2$ is $2$-divisible, $$(e,e,g_2,g_2^{\binom{2}{1}},...,g_2^{\binom{k}{2}})\in V_2.$$
		Combining everything we see that $V_2=\{(g,gg_1,gg_1^2g_2,...,gg_1^kg_2^{\binom{k}{2}}):g,g_1,g_2\in \mG_2\}$ as required.
	\end{proof}
	\begin{cor}\label{kron_ergodic}
		The induced action of $g^\triangle$ and $g^\star$ on $\tilde{\mG}/\tilde{\mG}_2\Gamma$ is ergodic.
	\end{cor}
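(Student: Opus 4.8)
The plan is to identify the factor $\tilde{\mG}/\tilde{\mG}_2\tilde{\Gamma}$ with a product of two copies of the Kronecker factor $Z$, on which the two commuting families $(g^\triangle)$ and $(g^\star)$ act as rotations in independent coordinates; ergodicity is then immediate from the ergodicity of $Z$. The first step is to compute the commutator $\tilde{\mG}_2$. Since $\tilde{\mG}$ is open in itself and contains $g^\triangle$ and $g^\star$ for every $g\in G$, Lemma \ref{open} applies with $V=\tilde{\mG}$ and gives $\tilde{\mG}_2 = \imath(\mG_2\times\mG_2\times\mG_2) = \{\imath(g,g_1,g_2) : g,g_1,g_2\in\mG_2\}$.

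Next I would push everything through $\imath$. Because $\tilde{\mG}$ is $2$-step nilpotent, $\tilde{\mG}_2$ is central and $\tilde{\mG}/\tilde{\mG}_2$ is abelian; the assignment $\imath(g,g_1,g_2)\tilde{\mG}_2 \mapsto (g\mG_2,g_1\mG_2)$ is a well-defined continuous isomorphism $\tilde{\mG}/\tilde{\mG}_2 \cong (\mG/\mG_2)^2$, whose kernel is precisely the group computed in the first step. Under this isomorphism $g^\triangle = \imath(g,e,e)$ maps to $(\bar g,0)$ and $g^\star = \imath(e,g,e)$ maps to $(0,\bar g)$, so the two families rotate genuinely independent coordinates. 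Quotienting further by the image of $\tilde{\Gamma} = \imath(\Gamma\times\Gamma\times\{e\})$, namely $(\Gamma\mG_2/\mG_2)^2$, yields $\tilde{\mG}/\tilde{\mG}_2\tilde{\Gamma} \cong (\mG/\mG_2\Gamma)^2$. By Lemma \ref{structure}, applied with the open subgroup taken to be $\mG$ itself, one has $\mG/\mG_2\Gamma \cong Z$. Hence the factor is isomorphic to $Z\times Z$ and the $G\times G$-action is the product rotation $(g,h)\cdot(z_1,z_2) = (\bar g z_1,\bar h z_2)$, where $\bar g$ denotes the image of $T_g$ in $Z$.

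Finally, ergodicity follows from the ergodicity of each coordinate. Since $Z$ is the Kronecker factor of the ergodic system $X$, the rotation action of $G$ on $Z$ is ergodic, and a standard Fubini argument shows the product action of $G\times G$ on $Z\times Z$ is ergodic: an invariant $L^2$-function is $(g,e)$-invariant, hence independent of $z_1$ by ergodicity in the first factor, and then $(e,h)$-invariant, hence constant. I expect the only delicate point to be the group-theoretic bookkeeping in the middle paragraph --- verifying that $\imath$ descends to the stated isomorphisms and that the images of $g^\triangle$ and $g^\star$ land in independent factors --- since the substantive input, the identification $\tilde{\mG}_2 = \imath(\mG_2\times\mG_2\times\mG_2)$ which ultimately rests on the $2$-divisibility of $\mG_2$, is already supplied by Lemma \ref{open}, and the concluding ergodicity of the product action is routine.
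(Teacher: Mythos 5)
Your proposal is correct and follows essentially the same route as the paper: both pass through the identification of $\tilde{\mG}/\tilde{\mG}_2\tilde{\Gamma}$ with $(\mG/\mG_2\Gamma)^2\cong Z\times Z$ via $\imath$ and conclude from ergodicity of the Kronecker factor. The only cosmetic differences are that the paper contents itself with a factor map from the product rather than an isomorphism (which suffices for ergodicity of the quotient) and reads off the action in the first two tuple coordinates, getting $T_g\times T_g$ and $\mathrm{Id}\times T_g$ instead of your independent rotations --- these generate the same $G\times G$-action, so the final step is identical.
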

	\begin{proof}
		The map $\imath$ induces a factor map $\mG/\mG_2\Gamma \times \mG/\mG_2\Gamma\rightarrow \tilde{\mG}/\tilde{\mG}_2\Gamma$. The lift of $g^\triangle$ and $g^\star$ in $\mG/\mG_2\Gamma \times \mG/\mG_2$ corresponds to $T_g\times T_g$ and $Id\times T_g$ respectively. Since $\mG/\Gamma$ is ergodic the claim follows.
	\end{proof}
	We can finally prove the ergodicity of the action.
	\begin{lem} \label{ergodic}
		The action of $G\times G$ on $\tilde{\mG}/\tilde{\Gamma}$ by $g^\triangle$ and $g^\star$ is ergodic.
	\end{lem}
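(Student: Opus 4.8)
The plan is to reduce the ergodicity of the full action to the ergodicity of the induced action on the maximal torus (Kronecker) factor $\tilde{\mG}/\tilde{\mG}_2\tilde{\Gamma}$, which is already established in Corollary \ref{kron_ergodic}, via the Green--Leibman criterion for ergodicity of nilrotations (Leibman \cite{Leib2}). That criterion says a nilrotation on $\mathcal{N}/\Lambda$ is ergodic if and only if the induced rotation on $\mathcal{N}/\mathcal{N}_2\Lambda$ is, \emph{provided} $\mathcal{N}$ is generated by its identity component together with the acting elements. The obstruction flagged in Remark \ref{property H} is precisely that $\tilde{\mG}$ need not be generated by $\tilde{\mG}^\circ$ and $\{g^\triangle,g^\star\}$, so the criterion cannot be applied to $\tilde{\mG}$ directly. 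I would circumvent this by passing to the open subgroup $V\leq\tilde{\mG}$ generated by $\tilde{\mG}^\circ$ and $\{g^\triangle,g^\star:g\in G\}$; by construction $V=\langle V^\circ,g^\triangle,g^\star\rangle$, so the criterion \emph{does} apply to the $V$-nilsystem.

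The first technical step is to show that restricting to $V$ loses nothing. Since $V$ is open, its image in the compact abelian Kronecker factor $\tilde{\mG}/\tilde{\mG}_2\tilde{\Gamma}$ is a clopen $G\times G$-invariant subgroup; ergodicity of the rotation there (Corollary \ref{kron_ergodic}) forces this subgroup to be everything, giving $V\tilde{\mG}_2\tilde{\Gamma}=\tilde{\mG}$. Combined with the inclusion $\tilde{\mG}_2\subseteq V$ proved in the next step, this yields $V\tilde{\Gamma}=\tilde{\mG}$, so the inclusion $V\hookrightarrow\tilde{\mG}$ induces a $G\times G$-equivariant homeomorphism $V/(V\cap\tilde{\Gamma})\cong\tilde{\mG}/\tilde{\Gamma}=\tilde{X}$. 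Thus ergodicity of $\tilde{X}$ is equivalent to ergodicity of the $V$-system.

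The key algebraic input is that the commutator subgroups coincide, $V_2=\tilde{\mG}_2$. The inclusion $V_2\subseteq\tilde{\mG}_2$ is automatic. For the reverse, a direct $2$-step commutator computation in $\mG^{k+1}$, using bilinearity of the commutator and the $2$-divisibility of $\mG_2$, shows that every commutator in $\tilde{\mG}$ lies in $\imath(\mG_2\times\mG_2\times\mG_2)$ and that such commutators generate all of it; on the other hand Lemma \ref{open} (which rests on Lemma \ref{structure}, i.e.\ on $\mG$ being the C.L.\ group) identifies $V_2=\imath(\mG_2\times\mG_2\times\mG_2)$. Hence $\tilde{\mG}_2=\imath(\mG_2\times\mG_2\times\mG_2)=V_2\subseteq V$, which also supplies the inclusion used above.

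With $V_2=\tilde{\mG}_2$ and $V\tilde{\Gamma}=\tilde{\mG}$ in hand, the maximal torus factor of the $V$-system, namely $V/\!\left(V_2\,(V\cap\tilde{\Gamma})\right)$, is $G\times G$-equivariantly isomorphic to the Kronecker factor $\tilde{\mG}/\tilde{\mG}_2\tilde{\Gamma}$ (surjectivity from $V\tilde{\Gamma}=\tilde{\mG}$, injectivity from $V_2=\tilde{\mG}_2$), on which the action is ergodic by Corollary \ref{kron_ergodic}. Applying the Green--Leibman criterion to $V$ then gives ergodicity of $V/(V\cap\tilde{\Gamma})$, and transporting this along the equivariant homeomorphism of the first step yields ergodicity of the $G\times G$-action on $\tilde{X}=\tilde{\mG}/\tilde{\Gamma}$, as required. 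The main obstacle is exactly the one flagged in Remark \ref{property H}: legitimizing the reduction despite the failure of the generation hypothesis for $\tilde{\mG}$, and this is precisely what the passage to the open subgroup $V$ together with the commutator identity $V_2=\tilde{\mG}_2$ of Lemma \ref{open} resolves.
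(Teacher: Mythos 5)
There is a genuine gap. Your plan hinges on the subgroup $V=\langle \tilde{\mG}^\circ,\ g^\triangle,\ g^\star : g\in G\rangle$ being \emph{open} in $\tilde{\mG}$: openness is what you use both to conclude $V\tilde{\mG}_2\tilde{\Gamma}=\tilde{\mG}$ (clopen invariant image in the Kronecker factor) and to invoke Lemma \ref{open}, which is stated only for open subgroups. But $\tilde{\mG}$ is merely a locally compact polish group; its identity component need not be open (it can even be trivial, e.g.\ when the Kronecker factor is totally disconnected), in which case your $V$ is a small, far-from-open subgroup and both steps collapse. Moreover, even granting a reduction to some subgroup generated by its identity component and the acting elements, the Green--Leibman criterion you want to cite is a theorem about nilmanifolds $\mathcal{N}/\Lambda$ with $\mathcal{N}$ a Lie group and $\Lambda$ a discrete cocompact subgroup; here $\tilde{\mG}$ is not a Lie group and $\tilde{\Gamma}$ is a non-discrete (totally disconnected, cocompact) closed subgroup, so there is no off-the-shelf version of the criterion to apply. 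Supplying the analogue of that criterion in this generality is exactly the content of the lemma, not something that can be assumed.

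The paper closes the gap by adapting Parry's eigenfunction argument rather than by restricting the group. Given an invariant function $f$, one decomposes it into eigenfunctions $f_\lambda$ for the central action of the compact abelian group $\tilde{\mG}_2$; each $f_\lambda$ is then also an eigenfunction for $g^\triangle$ and $g^\star$. For each $\lambda$ one forms $V_\lambda=\{u\in\tilde{\mG}:\Delta_u f_\lambda \text{ is a constant}\}$. This subgroup contains all $g^\triangle,g^\star$, and --- crucially --- it \emph{is} open, because the family $\{\Delta_u f_\lambda : u\in\tilde{\mG}\}$ consists of eigenfunctions and is therefore countable modulo constants by Corollary \ref{kron_ergodic} and the separation Lemma \ref{sep:lem}. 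Lemma \ref{open} now applies to $V_\lambda$ and yields $(V_\lambda)_2=\tilde{\mG}_2$; since $u\mapsto\Delta_u f_\lambda$ is a homomorphism into the constants, it is trivial on $(V_\lambda)_2$, so $\lambda$ is trivial, $f$ is $\tilde{\mG}_2$-invariant, and hence constant by Corollary \ref{kron_ergodic}. You correctly identified Lemma \ref{open} and Corollary \ref{kron_ergodic} as the key inputs, but the open subgroup to which Lemma \ref{open} must be applied has to be manufactured from the invariant function itself, not from the identity component of $\tilde{\mG}$.
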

	\begin{proof}
		We follow an argument of Parry \cite{Parryc}. Let $f:\tilde{\mG}/\tilde{\Gamma}\rightarrow S^1$ be an invariant function. The compact abelian group $\tilde{\mG}_2$ acts on $L^2(\tilde{\mG}/\tilde{\Gamma})$. Therefore, we can find eigenfunctions $f_\lambda$, such that $f=\sum_\lambda a_\lambda f_\lambda$ where $a_\lambda\in\mathbb{C}$ and $\lambda$ is a character of $\tilde{\mG}_2$. By the uniqueness of the decomposition, it follows that $f_\lambda$ is also an eigenfunction with respect to the action of $g^\triangle$ and $g^\star$. By Corollary \ref{kron_ergodic} we can assume that $f_\lambda$ takes values in $S^1$. Fix $u\in \tilde{\mG}$, and let $h=g^\triangle$ or $h=g^\star$. Then,
		$$f_\lambda(uhx) = f_\lambda([u^{-1},h^{-1}]hux) = \lambda([u^{-1},h^{-1}])f_\lambda(hux)=\lambda([u^{-1},h^{-1}])c_h f_\lambda(ux)$$
		for some constant $c_h\in S^1$. Therefore, the function $\Delta_u f_\lambda(x)$ is an eigenfunction with respect to the action of $G\times G$ and is invariant under the action of $\tilde{\mG_2}$. By Corollary \ref{kron_ergodic} and Lemma \ref{sep:lem} the set $\{\Delta_u f_\lambda : u\in\tilde{\mG}\}$ is countable modulo constants. It follows that $V_\lambda:=\{u\in \tilde{\mG}: \Delta_u f_\lambda \text{ is a constant}\}$ is an open subgroup. Observe that $u\mapsto \Delta_u f_\lambda$ is a homomorphism from $V_\lambda\rightarrow S^1$ and is therefore trivial on the commutator subgroup $(V_{\lambda})_2$ which by Lemma \ref{open}, equals to $\tilde{\mG}_2$. We conclude that $f$ is invariant under the action of $\tilde{\mG}_2$, and by Corollary \ref{kron_ergodic} is a constant.
	\end{proof}
	\begin{comment}
	
	Define $\tilde{\mG} = \{(g,gg_1^ag_2^{\binom{a}{2}},gg_1^bg_2^{\binom{b}{2}},gg_1^{(a+b)}g_2^{\binom{a+b}{2}}):g,g_1\in \mG, g_2\in \mG_2\}$ Leibman \cite{Leib} proved that this is a $2$-step nilpotent group. Let $\tilde{\Gamma}$ be defined similarly. We identify every $g\in G$ with an element in $x_g\in \mG$, moreover for every $g\in G$ let $g^\triangle = (x_g,x_g,x_g)$ and $g^\star = (x_g^a,x_g^b,x_g^{a+b})$. We consider the action generated by $G^\triangle$ and $G^\star$ on $\tilde{\mG}/\tilde{\Gamma}$. First observe that $\tilde{\mG}/\tilde{\mG}_2\tilde{\Gamma} \cong \mG/\Gamma \times \mG/\Gamma$ and the induced action of $g^\triangle$ is by $g\times g$ and of $g^\star$ by $g^a \times g^b$ which is ergodic (see Remark \ref{a-b}).\\
	\textbf{Claim}:The action generated by $G^\star$ and $G^\triangle$ is ergodic on $\tilde{G}/\Gamma$. 
	\begin{proof}
	
	\end{proof}
	\end{comment}
	\section{Proof of the Khintchine type recurrence} \label{Khintchine:sec}
	In this section we finish the proof of the Khintchine type recurrence (Theorem \ref{Khintchine}). First, we prove a lifting lemma which allows us to replace any system $(X,G)$ with an extension $(Y,H)$.  
	\begin{lem} \label{lift}
		Let $G$ be a countable abelian group and $(X,T_g)$ be a $G$-system. Let $\varphi:H\rightarrow G$ be a surjective homomorphism and $(Y,S_h)$ be an $H$-extension of $X$ with a factor map $\pi:Y\rightarrow X$. Let $\psi:G\times X\rightarrow\mathbb{C}$ be a measurable and suppose that for every F{\o}lner sequence $\Psi_N$ of $H$ we have that $$ \mathbb{E}_{h\in \Psi_N} \psi(\varphi(h),\pi(y))$$ converges in $L^2(Y)$ as $N\rightarrow\infty$. Then the limit equals to $\phi\circ\pi$, where $\phi:X\rightarrow \mathbb{C}$ satisfies
		$$\phi =  \lim_{N\rightarrow\infty}\mathbb{E}_{g\in \Phi_N} \psi(g,x)$$ for every F{\o}lner sequence $\Phi_N$ of $G$.
		In particular, this limit exists in $L^2(X)$.
	\end{lem}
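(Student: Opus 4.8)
The plan is to exploit that $\psi(\varphi(h),\pi(y))$ depends on $y$ only through $\pi(y)$, so that everything happens inside the closed subspace of $L^2(Y)$ consisting of lifts from $X$, and then to realise each $G$-average $\mathbb{E}_{g\in\Phi_M}\psi(g,\cdot)$ as an honest $H$-Følner average to which the hypothesis applies. First I would record the reduction. Since $\pi$ is measure preserving, $f\mapsto f\circ\pi$ is a linear isometry of $L^2(X)$ onto a closed subspace of $L^2(Y)$. For every Følner sequence $\Psi_N$ of $H$ the average $A_N:=\mathbb{E}_{h\in\Psi_N}\psi(\varphi(h),\pi(\cdot))$ equals $a_N\circ\pi$ with $a_N:=\mathbb{E}_{h\in\Psi_N}\psi(\varphi(h),\cdot)$, hence lies in this subspace; as $A_N$ converges in $L^2(Y)$ by hypothesis, its limit again lies in the subspace and is therefore of the form $\phi\circ\pi$, and $a_N\to\phi$ in $L^2(X)$. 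Thus the whole content of the lemma is to identify $\phi$ with $\lim_M\mathbb{E}_{g\in\Phi_M}\psi(g,\cdot)$ for an arbitrary Følner sequence $\Phi_M$ of $G$.

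Next I would manufacture a convenient Følner sequence of $H$. Set $K:=\ker\varphi$, a countable (hence amenable) abelian group, fix a Følner sequence $\Theta_L$ of $K$ and a set-theoretic section $s:G\to H$ of $\varphi$. Given $\Phi_M$ I would put $\Psi_M:=s(\Phi_M)+\Theta_{L(M)}$ for a rapidly increasing $L(M)$ chosen below. Since $\varphi(s(g)+k)=g$, the union is disjoint, $|\Psi_M|=|\Phi_M|\,|\Theta_{L(M)}|$, and each fibre $\Psi_M\cap\varphi^{-1}(g)$ has exactly $|\Theta_{L(M)}|$ elements for $g\in\Phi_M$ and is empty otherwise. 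Because $\psi(\varphi(h),\cdot)$ is constant on these fibres, this yields the exact identity
\[
a_{\Psi_M}=\frac{1}{|\Phi_M|\,|\Theta_{L(M)}|}\sum_{g\in\Phi_M}\sum_{k\in\Theta_{L(M)}}\psi(g,\cdot)=\mathbb{E}_{g\in\Phi_M}\psi(g,\cdot).
\]
Consequently, once $\Psi_M$ is shown to be Følner, the hypothesis forces $A_{\Psi_M}$ to converge, and by the isometry of the first paragraph $\mathbb{E}_{g\in\Phi_M}\psi(g,\cdot)=a_{\Psi_M}\to\phi$ in $L^2(X)$, which is exactly the asserted convergence.

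The main obstacle is verifying that $\Psi_M$ is a Følner sequence of $H$; this is where the choice of $L(M)$ enters, and it is essentially the standard fact that an extension of amenable groups carries a constructible Følner sequence. For $h\in H$ with $g_0=\varphi(h)$ and $g\in\Phi_M$ one has $s(g)+\Theta_L+h=s(g+g_0)+\bigl(\beta(g,h)+\Theta_L\bigr)$ with $\beta(g,h):=s(g)+h-s(g+g_0)\in K$, so the fibres of $\Psi_M+h$ are translates of $\Theta_L$ by fixed elements of $K$. Hence $(\Psi_M+h)\triangle\Psi_M$ splits into boundary fibres indexed by $(\Phi_M+g_0)\triangle\Phi_M$, contributing $o(|\Phi_M|)\,|\Theta_L|=o(|\Psi_M|)$ since $\Phi_M$ is Følner in $G$, and interior fibres, each contributing $|(\beta(g,h)+\Theta_L)\triangle\Theta_L|$. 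Enumerating $H=\{h_1,h_2,\dots\}$ and using that $\Theta_L$ is Følner in $K$, I would choose $L(M)$ so large that $|(\beta(g,h_i)+\Theta_{L(M)})\triangle\Theta_{L(M)}|\le|\Theta_{L(M)}|/M$ for all $i\le M$ and all (finitely many) $g\in\Phi_M$; then $|(\Psi_M+h_i)\triangle\Psi_M|/|\Psi_M|\to0$ for each fixed $i$, so $\Psi_M$ is Følner. Finally, to see that $\phi$ is independent of $\Phi_M$, given two Følner sequences of $G$ I would build the associated $\Psi_M,\Psi'_M$ and interleave them into a single Følner sequence of $H$ (an interleaving of two Følner sequences is again Følner); the hypothesis makes the interleaved averages converge, forcing the two sublimits to agree. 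This simultaneously shows that every $H$-limit equals $\phi\circ\pi$, completing the proof.
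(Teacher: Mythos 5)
Your proof is correct, but it follows a genuinely different route from the paper's. The paper argues through invariant means: it fixes an invariant mean $\mu_G$ on $G$ compatible with the given F{\o}lner sequence $\Phi_N$ (so that any convergent $\Phi_N$-average of a scalar sequence equals its $\mu_G$-integral), produces an invariant mean $\mu_H$ on $H$ whose push-forward under $\varphi$ is $\mu_G$, and then uses F{\o}lner's theorem once more to obtain a F{\o}lner sequence $\Psi_N$ of $H$ whose convergent averages compute $\mu_H$-integrals; the identity $\int_H\xi(\varphi(h))\,d\mu_H(h)=\int_G\xi(g)\,d\mu_G(g)$ then transfers the limit from $H$ to $G$. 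You instead construct, for each $\Phi_M$, an explicit F{\o}lner sequence $\Psi_M=s(\Phi_M)+\Theta_{L(M)}$ of $H$ fibred over $\Phi_M$ via a section $s$ and a F{\o}lner sequence $\Theta_L$ of $\ker\varphi$, for which the $H$-average is \emph{identically} equal to the $G$-average because $\psi(\varphi(\cdot),\pi(\cdot))$ is constant on fibres; this is the standard construction showing that an extension of amenable groups is amenable, and your verification of the F{\o}lner property (boundary fibres controlled by the F{\o}lner property of $\Phi_M$ in $G$, interior fibres by that of $\Theta_L$ in $\ker\varphi$, with $L(M)$ chosen to handle the finitely many relevant translates $\beta(g,h_i)$) is complete. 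What your route buys is that existence of the $L^2(X)$-limit along $\Phi_M$ falls out immediately from the hypothesis applied to $\Psi_M$ together with the isometry $f\mapsto f\circ\pi$, and you never need to convert the vector-valued $L^2$-convergence into a statement about the averaged scalars $\|\psi(g,\cdot)-\phi\|_{L^2(X)}$, which is the more delicate step in the mean-based argument; the price is the explicit (if routine) F{\o}lner verification. Your interleaving argument for independence of the F{\o}lner sequence, on both the $G$ and $H$ sides, is standard and closes the proof.
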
 
	Note that we will apply this lemma with $$\psi(g,x) = T_{ag} f_1(x)\cdot T_{bg} f_2(x)\cdot T_{(a+b)g} f_3(x)$$ where $f_1,f_2,f_3\in L^\infty(X)$ in order to deduce the converges of average (\ref{average3}), but it is necessary to prove the result in this generality.\\
	Let $G$ be a countable abelian group. An invariant mean on $G$ is an additive measure $\mu$ on $G$ which is invariant to translations by every $g\in G$.
	\begin{proof}
    Let $\Phi_N$ be a F\o lner sequence for $G$. It is well known (see \cite{Folner}) that there exists an invariant mean $\mu_G$ on $G$ with the property that: For every sequence $\xi:G\rightarrow\mathbb{C}$, if  $\lim_{N\rightarrow\infty} \mathbb{E}_{g\in\Phi_N} \xi(g)$ exists, then it equals to $\int_G \xi(g) d\mu_G(g)$. Since every (discrete) abelian group is amenable, we can find an invariant mean on $H$ with the property that $\mu_H(A) = \mu_G (\varphi(A))$ for every $A\leq H$. Again by a theorem of F{\o}lner, there exists a F{\o}lner sequence $\Psi_N$ on $H$ such that if  $\lim_{N\rightarrow\infty} \mathbb{E}_{h\in\Psi_N} \xi(\varphi(h))$ exists, then it equals to 
    \begin{equation}\label{reduction}\int_H \xi(\varphi(h))d\mu_H(h)=\int_G \xi(g)f\mu_G(g).
    \end{equation}
    We now prove the claim in the lemma: Let $\psi:G\times X\rightarrow\mathbb{C}$ and suppose that $\mathbb{E}_{h\in \Psi_N} \psi(\varphi(h),\pi(y))$ converges in $L^2(Y)$. Since $y\mapsto \psi(\varphi(h),\pi(y))$ are measurable with respect to the factor $X$, we can find $\phi:G\times X\rightarrow\mathbb{C}$ such that  $$\lim_{N\rightarrow\infty}\mathbb{E}_{h\in \Psi_N} \psi(\varphi(h),\pi(y))= \phi\circ \pi.$$ 
    Now let $\xi(g)=\|\psi(g,x)-\phi(x)\|_{L^2(X)}$. By assumption, $\mathbb{E}_{h\in \Psi_N}\xi(\varphi(h))$ converges to zero as $N\rightarrow\infty$. From this and equation (\ref{reduction}) we conclude that for every F{\o}lner sequence $\Phi_N$ of $G$, $\mathbb{E}_{g\in \Phi_N}\xi(g)$ also converges to zero. This completes the proof.
	\end{proof}
	The rest of the proof follows an argument of Frantzikinakis \cite{Fran}.
	\begin{proof}[Proof of Theorem \ref{Khintchine}]
		Let $(X,\mathcal{B},\mu,G)$ be an ergodic $G$-system and let $0\not=a,b\in\mathbb{Z}$ be as in Theorem \ref{Khintchine}. We first prove the theorem in the case where $a$ and $b$ are coprime.\\
		For every $f\in L^\infty(X)$ let $\tilde{f}=E(f|Z_{<3}(X))$. Recall that the Kronecker factor is a group rotation, and denote by $\alpha_g\in Z_{<2}(X)$ the rotation defined by $g\in G$. Then,\\
		\textbf{Claim:} For every continuous function $\eta:X\rightarrow \mathbb{R}^+$ which is measurable with respect to the Kronecker factor (i.e. $\eta=\tilde{\eta}$) and $f_1,f_2,f_3\in L^\infty (X)$ we have
		$$\lim_{N\rightarrow\infty}\mathbb{E}_{g\in \Phi_N} \eta(\alpha_g) T_{ag}f_1 \cdot T_{bg}f_2 \cdot T_{(a+b)g} f_3 = \lim_{N\rightarrow\infty} \mathbb{E}_{g\in \Phi_N} \eta(\alpha_g) T_{ag}\tilde{f}_1 \cdot T_{bg}\tilde{f}_2 \cdot T_{(a+b)g} \tilde{f}_3.$$
		\begin{proof}
			By approximating $\eta$ by linear combinations of eigenfunctions, we see that it is enough to prove the claim in the case where $\eta$ is a character of $\hat Z$. Since $a$ and $b$ are coprime, we can choose $s,t\in\mathbb{Z}$ such that $\eta^{sa}\cdot \eta^{tb}=\eta$. Since $\eta$ is an eigenfunction, it is measurable with respect to $Z_{<3}(X)$ and $E(\eta^s f_1|Z_{<3}(X))=\eta^sE(f_1|Z_{<3}(X))$, $E(\eta^t\cdot f_2|Z_{<3}(X))=\eta^tE(f_2|Z_{<3}(X))$. Thus, by applying Proposition \ref{char} for $\eta^s\cdot f_1, \eta^t\cdot f_2$ and $f_3$ the claim follows.
		\end{proof}
		Assume by contradiction that Theorem \ref{Khintchine} fails. Then one can find $\varepsilon>0$ and a F{\o }lner sequence $\Phi_N$ for $G$ such that \begin{equation} \label{fail}\mu(A\cap T_{ag}A\cap T_{bg}A\cap T_{(a+b)g}A)<\mu(A)^4 -\varepsilon
		\end{equation} for every $g\in\bigcup_N \Phi_N$.\\
		By Theorem \ref{mainresult}, we can find a surjective homomorphism $\varphi:H\rightarrow G$ and an $H$-extension $(\tilde{X},H)$ of $(X,G)$  such that the factor $Y=Z_{<3}(\tilde{X})$ is a C.L.\ system and $Y=\mG(Y)/\Gamma$. Note that since every extension in the proof of Theorem \ref{mainresult} only extends the Kronecker factor of $X$ we have by Lemma \ref{structure} that $\mG(Y)_2 = \mG(Z_{<3}(X))_2$.
		
		Let $f\in L^\infty(X)$, we can push-forward $\tilde{f}$ to a function on $Z_{<3}(X)$ and then let $f^\star$ denote the pullback of this function to $Y$. Let $\Phi^H_N$ be any F{\o}lner sequence for $H$.\\
		\textbf{Claim:} The average
		$$\mathbb{E}_{h\in \Phi^H_N} \eta^\star(\beta_h) S_{ah}f^\star_1(y) S_{bh}f^\star_2(y) T_{(a+b)h}f^\star_3(y)$$ converges to $$\int_Y\int_{G_2(Y)}\eta^\star(y_1)f_1^\star(yy_1^a y_1^{\binom{a}{2}})f_2^\star(yy_1^by_2^{\binom{b}{2}})f_3^\star (yy_1^{a+b}y_2^{\binom{a+b}{2}})d\mu_{G_2(Y)}(y_2)d\mu_{Y}(y_1)$$ 
		where $\beta_h\in Z_{<2}(Y)$ denotes the rotation defined by $h\in H$ on the Kronecker factor of $Y$.
		\begin{proof}
		   Since $\eta$ is measurable with respect to the Kronecker factor, it is enough to prove the claim in the case where $\eta$ is a character of $Z_{<2}(X)$. As in the proof of the previous claim we can find $s$ and $t$ such that $\eta^{sa}\cdot \eta^{tb}=\eta$. Now, we can apply Corollary \ref{formulaab} with $(\eta^\star)^s\cdot f_1^\star , (\eta^\star)^t \cdot f_2^\star$ and $f_3^\star$. This completes the proof.
		\end{proof} Set $f_1=f_2=f_3=1$ in the claim above and apply lemma \ref{lift}. We conclude that
		\begin{equation} \label{avg1}
		\lim_{N\rightarrow\infty}\mathbb{E}_{g\in\Phi_N}\eta(\alpha_g) = \lim_{N\rightarrow\infty} \mathbb{E}_{h\in \Phi_N^H}\eta^\star(\beta_h) = 1.
		\end{equation}
		Now let $\eta$ be arbitrary and set $f=f_0=f_1=f_2=f_3= 1_A$, we conclude that the average
		$$\mathbb{E}_{g\in\Phi^H_N}\eta^\star(\beta_h) \int_Y f^\star(y)\cdot S_{hg} f^\star(y) \cdot S_{bg}f^\star(y) \cdot S_{(a+b)h}f^\star(y) d\mu_Y(y)$$ 
		converges to
		$$\int_Y\int_Y\int_{G(Y)_2}\eta^\star(y_1)f^\star(y)f^\star(yy_1^a y_1^{\binom{a}{2}})f^\star(yy_1^by_2^{\binom{b}{2}})f^\star (yy_1^{a+b}y_2^{\binom{a+b}{2}})d\mu_{G(Y)_2}(y_2)d\mu_Y(y_1)d\mu_Y(y).$$
		This holds for every continuous function $\eta$. Since continuous functions are dense in $L^2$, the above holds for every bounded $Z_{<2}(X)$-measurable $\eta$. Let $\delta>0$ and let $B(\mG(Y)_2,\delta)$ denote the union of all balls of radius $\delta$ with center in $\mG(Y)_2$. We consider the indicator function $\eta=\frac{1}{\mu(B(\mG(Y)_2,\delta)}\cdot 1_B(\mG(Y)_2,\delta)$. Since translations are continuous in $L^2$, taking a limit as $\delta\rightarrow 0$ the above is arbitrarily close to
		$$\int_Y \int_{\mG(Y)_2\times \mG(Y)_2} f^\star(y)f^\star(yy_1^a y_1^{\binom{a}{2}})f^\star(yy_1^by_2^{\binom{b}{2}})f^\star (yy_1^{a+b}y_2^{\binom{a+b}{2}}) d\mu_{\mG(Y)_2\times \mG(Y)_2}(y_1,y_2)d\mu_Y(y).$$
		We integrate everything to get this equals to
		$$\int_Y \int_{\mG(Y)_2^3} f^\star(yy')f^\star(yy'y_1^a y_2^{\binom{a}{2}})f^\star(yy'y_1^b y_2^{\binom{b}{2}})f^\star(yy'y_1^{a+b} y_2^{\binom{a+b}{2}})d\mu_{\mG(Y)_2^3}(y',y_1,y_2) d\mu_Y (y).$$
		By Proposition \ref{computation} we can write the above integral as
		$$\int_Y \int_{\mG(Y)_2^3} f^\star(u^{a+b}y)f^\star(t\cdot u^{b-a}y)f^\star(t\cdot v^{b-a}y)f^\star(v^{a+b}y)d\mu_{\mG(Y)_2^3}(t,u,v) d\mu_Y(y).$$ This clearly equals to
		$$\int_Y \int_{\mG(Y)_2} \left(\int_{\mG(Y)_2} f^\star(u^{a+b}y)f^\star(tu^{b-a}y)d\mu_{\mG(Y)_2}(u)\right)^2 d\mu_{\mG(Y)_2}(t) d\mu_Y(y).$$
		We take the square outside and change variables, the above is greater or equal to $$\int_Y \int_{\mG(Y)_2}\left( f^\star(ty)dm_{\mG(Y)_2}(t)\right)^4d\mu_Y(y)=\left(\int_Y f^\star(x)d\mu_Y(y)\right)^4=\mu(A)^4.$$
		We conclude by Lemma \ref{lift} that for every $\varepsilon>0$, for sufficiently large $N$ and a suitable $\eta$ we have,
		\begin{equation} \label{bigger} \mathbb{E}_{g\in\Phi_N}\eta(a_g)\mu(A\cap T_{ag}A\cap T_{bg}A\cap T_{(a+b)g}A)>\mu(A)^4-\varepsilon/2.
		\end{equation} 
		Therefore if $a$ and $b$ are co-prime, equations (\ref{avg1}) and (\ref{bigger}) contradict equation (\ref{fail}) and the claim follows.\\
		Now let $a$ and $b$ be arbitrary non-zero integers and write $a=a'd, b=b'd$ where $a'$ and $b'$ are coprime. Since $aG$ and $bG$ are of finite index in $G$ so is $dG$ and so $X$ has finitely many ergodic components with respect fo $dG$ with the same Kronecker factor. Choose $\eta$ as before (the same $\eta$ for all ergodic components) and let $\mu = \frac{1}{k}\sum_{i=1}^k \mu_i$. Since $a',b'$ are coprime by equation (\ref{bigger}) we have  $$\mathbb{E}_{g\in \Phi_N} \eta(\alpha_g)\mu_i(A\cap T_{a'g}A\cap T_{b'g}A\cap T_{(a'+b')g}A)>\mu_i(A)^4-\varepsilon/2$$ for all $1\leq i \leq k$. Since $\mathbb{E}_{1\leq i \leq k}(\mu_i (A)^4) \geq \mu(A)^4$, we conclude as before that the set
		$$\{g\in dG : \mu(A\cap T_{a'g}\cap T_{b'g}A\cap T_{(a'+b')g}A)>\mu(A)^4-\varepsilon\}$$ is syndetic. Since $dG$ is of finite index in $G$ this is equivalent to the claim in the theorem. 
	\end{proof}
	\appendix
	\section{Abelian extensions and phase polynomials} \label{poly:appendix}
	In this section we summarize previous results related to abelian extensions and phase polynomials.\\
	
	The following proposition were proved by Host and Kra for $\mathbb{Z}$-actions \cite{HK}. The same argument holds for all countable abelian groups (for details see \cite{ABB}).
	\begin{prop}  \label{HKextension}
		Let $k\geq 1$, let $G$ be a countable abelian group and let $X$ be an ergodic $G$-system. Then $Z_{<k+1}(X)$ is an abelian extension of $Z_{<k}(X)$.
	\end{prop}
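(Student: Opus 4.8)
The plan is to reduce the statement to a structural fact about systems of order $<k+1$ and then run the Host--Kra isometric-extension analysis, which transfers verbatim from $\mathbb{Z}$ to a countable abelian $G$ because it only uses F{\o}lner averages. First I would note that $Z_{<k}(Z_{<k+1}(X)) = Z_{<k}(X)$: for $f$ measurable with respect to $Z_{<k+1}(X)$ the seminorm $\|f\|_{U^k}$ is the same whether computed in $X$ or in the factor $Z_{<k+1}(X)$, so Proposition \ref{UCF} identifies the two order-$<k$ factors. Consequently it suffices to prove that an ergodic system $Y$ with $Y = Z_{<k+1}(Y)$ is an abelian extension of $W := Z_{<k}(Y)$. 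The cases $k=1$ (the Kronecker factor is a compact abelian group rotation, i.e.\ an abelian extension of the trivial system) and $k=2$ (Theorem \ref{CL}) are classical; the real content is a uniform treatment of general $k$.

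The second step is to show that the factor map $\pi : Y \to W$ is an \emph{isometric} (relatively compact) extension, i.e.\ that $L^2(Y)$ is a direct sum of finite-rank $L^\infty(W)$-modules. This is the structural heart of the Host--Kra theorem: using the cube measures $\mu^{[k]}$ on $Y^{2^k}$ together with $\mu^{[k+1]}$, one shows the extension has relative discrete spectrum, the point being that any relatively weakly mixing component of $\pi$ over $W$ would be annihilated by the $U^{k+1}$-seminorm and hence, by Proposition \ref{UCF} and the assumption $Y = Z_{<k+1}(Y)$, be trivial. Since the seminorms are independent of the chosen F{\o}lner sequence, the relative Furstenberg--Zimmer dichotomy and the identification of the maximal isometric extension are insensitive to replacing $\mathbb{Z}$ by an arbitrary countable abelian group.

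By the Mackey--Zimmer theorem the ergodic isometric extension $\pi$ has the form $W \times_\rho (M/L)$ for a compact group $M$, a closed subgroup $L$, and a cocycle $\rho : G \times W \to M$. The main obstacle is upgrading this to a genuine \emph{abelian} group extension $W \times_\rho U$ with $U$ compact abelian. For this I would exploit the symmetry of the parallelepiped structure: the measure $\mu^{[k+1]}$ is invariant under the automorphisms of the $(k+1)$-cube, and transporting this invariance to $\rho$ forces each fibre-derivative $\Delta_w \rho$ to be cohomologous over $W$ to a lower-order object, i.e.\ to satisfy a Conze--Lesigne type equation in the sense of Definition \ref{CLcocycle}. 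This rigidity both commutativizes the structure group, so that $M/L$ may be replaced by a compact abelian $U$, and pins $\rho$ down to take values in $U$, yielding the desired presentation $Z_{<k+1}(X) = W \times_\rho U$.

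I expect the abelian-structure step to be the genuine difficulty: the reduction and the isometric-extension step are comparatively formal once the seminorm and cube-measure machinery recalled in the paper is in place, whereas extracting commutativity of the fibre group from the cube symmetries is precisely the Conze--Lesigne phenomenon underlying Theorem \ref{CL}, now carried out at every level $k$.
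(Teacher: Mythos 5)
The paper does not actually prove this proposition: it records only that the result was proved by Host and Kra for $\mathbb{Z}$-actions and that the same argument goes through for countable abelian groups, citing \cite{HK} and \cite{ABB} for the details. Your outline --- reduction to an ergodic system of order $<k+1$ over its own $Z_{<k}$ factor, the isometric-extension step via the cube measures and the vanishing of $U^{k+1}$ on the relatively weakly mixing part, Mackey--Zimmer, and finally abelianization of the structure group through Conze--Lesigne type rigidity --- is a faithful sketch of precisely that Host--Kra argument, so it matches the approach the paper invokes (and is more informative than what the paper itself writes down).
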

	It is natural to ask under which conditions an abelian extension of a system of order $<k$ is of order $<k+1$. To answer this we need the following definitions.
	\begin{defn} [Cubic measure spaces] \cite[Section 3]{HK} Let $G$ be a countable abelian group and $X=(X,\mathcal{B},\mu,G)$ be a $G$-system. For each $k\geq 0$ we define a system $X^{[k]} =(X^{[k]},\mathcal{B}^{[k]},\mu^{[k]},G^{[k]})$ where $X^{[k]}=X^{2^k}$ is the product of $2^k$ copies of $X$,  $\mathcal{B}^{[k]}=\mathcal{B}^{2^k}$ and $G^{[k]}=G^{2^k}$ acting on $X^{[k]}$ in the obvious manner. We define the cubic measures $\mu^{[k]}$ and $\sigma$-algebras $\mathcal{I}_k\subseteq \mathcal{B}^{[k]}$ inductively. $\mathcal{I}_0$ is defined to be the $\sigma$-algebra of invariant sets in $X$, and $\mu^{[0]}:=\mu$. Once $\mu^{[k]}$ and $\mathcal{I}_k$ are defined, we identify $X^{[k+1]}$ with $X^{[k]}\times X^{[k]}$ and define $\mu^{[k+1]}$ by the formula 
		$$\int f_1(x)f_2(y) d\mu^{[k+1]}(x,y) = \int E(f_1|\mathcal{I}_k)(x)E(f_2|\mathcal{I}_k)(x) d\mu^{[k]}(x).$$
		For $f_1,f_2$ functions on $X^{[k]}$ and $E(\cdot|\mathcal{I}_k)$ the conditional expectation, and $\mathcal{I}_{k+1}$ being the $\sigma$-algebra of invariant sets in $X^{[k+1]}$.
	\end{defn}
	This leads to the following generalization of Definition \ref{CLgroup}.
	\begin{defn}[The Host-Kra group for a system of order $<k$.] \label{HKgroup}
		Let $G$ be a countable abelian group and $k\geq 1$. We define $\mG(X)$ to be the group of measure preserving transformations $t:X\rightarrow X$ which satisfies the following property: For every $l>0$, the transformation $t^{[l]}:X^{[l]}\rightarrow X^{[l]}$, $t^{[l]}(x_\omega)_{\omega\in 2^k} = (tx_\omega)_{\omega\in 2^k}$, leaves the measure $\mu^{[l]}$ invariant and acts trivially on the invariant $\sigma$-algebra $I_{l}$. 
	\end{defn}
	Equipped with the topology of convergence in measure $\mG(X)$ is a $(k-1)$-step nilpotent locally compact polish group \cite[Corollary 5.9]{HK}.\\
	
	The cubic measure spaces of Host and Kra also lead to the following definition.
	\begin{defn} [Functions of type $<k$]  \label{type:def} Let $G$ be a countable abelian group, let $X=(X,\mathcal{B},\mu,G)$ be a $G$-system. Let $k\geq 0$ and let $X^{[k]}$ be the cubic system associated with $X$.
		\begin{itemize}
			\item{For each measurable $f:X\rightarrow U$, we define a measurable map $d^{[k]}f:X^{[k]}\rightarrow U$, $$d^{[k]}f((x_w)_{w\in \{-1,1\}^k}):=\prod_{w\in \{-1,1\}^k}f(x_w)^{\text{sgn}(w)}$$ where $\text{sgn}(w)=w_1\cdot w_2\cdot...\cdot w_k$.}
			\item {Similarly, for each measurable $\rho:G\times X\rightarrow U$ we define a measurable map $d^{[k]}\rho:G\times X^{[k]}\rightarrow U$ by
				$$d^{[k]}\rho(g,(x_w)_{w\in \{-1,1\}^k}):=\prod_{w\in \{-1,1\}^k} \rho(g,x_w)^{\text{sgn}(w)}.$$}
			\item {A function $\rho:G\times X\rightarrow U$ is said to be a function of type $<k$ if $d^{[k]}\rho$ is a $(G,X^{[k]},U)$-coboundary.}
		\end{itemize}
	\end{defn}
	We now answer exactly when an abelian extension of a system of order $<k$ is of order $<k+1$.
	\begin{thm} \label{HKtype}
		Let $k,m\geq 1$ and let $G$ be a countable abelian group. Let $(X,G)$ be an ergodic $G$-system of order $<k$ and $\rho:G\times X\rightarrow U$ be a cocycle into some compact abelian group $U$. Then,
		\begin{itemize}
			\item {$X\times_{\rho} U$ is of order $<k+1$ if and only if $\rho$ is of type $<k$.}
			\item{If $\rho$ is of type $<k-1$, then $X\times_{\rho} U$ is of type $<k$.}
		\end{itemize}
	\end{thm}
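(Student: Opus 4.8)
The plan is to characterize order $<k+1$ of $Y:=X\times_\rho U$ through the Gowers--Host--Kra seminorm and reduce everything to the base $X$ via the cube structure. First I would exploit the vertical $U$-action $V_u(x,v)=(x,uv)$, which commutes with the $G$-action and therefore preserves the canonically defined factor $Z_{<k+1}(Y)$; hence $E(\cdot\mid Z_{<k+1}(Y))$ is diagonal for the Fourier decomposition $L^2(Y)=\bigoplus_{\chi\in\hat U}L^2(X)\cdot\chi$. Consequently $Y$ has order $<k+1$ if and only if for every $\chi\in\hat U$ and every $f\in L^\infty(X)$ the vanishing $\|f\otimes\chi\|_{U^{k+1}(Y)}=0$ forces $f=0$. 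This reduces the theorem to a statement about the single ``fiber'' cocycles $\chi\circ\rho$.

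The next step is a seminorm formula. Since $\Delta_{g}(f\otimes\chi)=\Delta_g f\cdot(\chi\circ\rho(g,\cdot))$ is measurable with respect to the factor $X$, the recursive definition of the seminorm together with its compatibility with factors gives
\[
\|f\otimes\chi\|_{U^{k+1}(Y)}^{2^{k+1}}=\lim_{N\to\infty}\mathbb{E}_{g\in\Phi_N}\int_{X^{[k]}}\Delta_g\big(d^{[k]}f\big)\cdot\chi\big(d^{[k]}\rho(g,\cdot)\big)\,d\mu^{[k]}.
\]
Reading the integrand as $\langle S_g(d^{[k]}f\otimes\iota),\,d^{[k]}f\otimes\iota\rangle$ in $L^2\big(X^{[k]}\times_{\chi\circ d^{[k]}\rho}S^1\big)$, with $\iota$ the tautological character and $S_g$ the diagonal $G$-action, the mean ergodic theorem identifies the right-hand side with $\|E(d^{[k]}f\otimes\iota\mid\mathcal I)\|^2$, the squared norm of the projection onto the invariant $\sigma$-algebra $\mathcal I$.

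From here the ``only if'' direction is short. A nonzero invariant function carrying the tautological fiber-character exists precisely when $\chi\circ d^{[k]}\rho$ is an $S^1$-coboundary; taking $f=1$ shows that if $Y$ has order $<k+1$ then $\chi\circ d^{[k]}\rho$ must be a coboundary for every $\chi$, since otherwise the nonzero function $1\otimes\chi$ would have vanishing seminorm. A Pontryagin-duality and measurable-selection argument in the spirit of Proposition \ref{typecircle:prop} then assembles these fiberwise trivializations into a single measurable $F\colon X^{[k]}\to U$ with $d^{[k]}\rho=\Delta F$, i.e. $\rho$ is of type $<k$ (Definition \ref{type:def}).

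For the converse, where the main difficulty lies, suppose $\rho$ is of type $<k$, so $d^{[k]}\rho=\Delta F$ and the formula becomes $\|E((\chi\circ F)\cdot d^{[k]}f\mid\mathcal I)\|^2$; I must show this detects $f$ faithfully. The crux is to upgrade the trivialization to \emph{cube form}: using that $X$ has order $<k$, the Host--Kra theory of cubic systems shows that a type-$<k$ cocycle over an order-$<k$ system is, up to a coboundary, a phase polynomial cocycle of degree $<k$, so one may take $\chi\circ F=d^{[k]}h_\chi$ for some $h_\chi\colon X\to S^1$ (here one uses that $d^{[k]}$ annihilates phase polynomials of degree $<k$). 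Then $(\chi\circ F)\cdot d^{[k]}f=d^{[k]}(h_\chi f)$, and the formula collapses to $\|h_\chi f\|_{U^{k+1}(X)}^{2^{k+1}}$, which vanishes only when $h_\chi f=0$, hence $f=0$, because $X=Z_{<k}(X)\subseteq Z_{<k+1}(X)$ makes $\|\cdot\|_{U^{k+1}(X)}$ faithful (Proposition \ref{UCF}). The second bullet follows by running the identical computation one cube-dimension lower: if $\rho$ is of type $<k-1$ then $\chi\circ d^{[k-1]}\rho$ is a coboundary, and the same faithful-detection argument, still available since $X$ has order $<k$, yields that $X\times_\rho U$ has order $<k$. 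The genuine obstacle throughout is the cube-form/phase-polynomial upgrade, which is exactly where the hypothesis that $X$ has order $<k$ is used.
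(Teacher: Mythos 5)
The paper does not actually reprove this theorem: its ``proof'' is a citation to \cite[Propositions 6.4 and 7.6]{HK} together with the remark that the same argument works for countable abelian groups, so the comparison here is really with the Host--Kra argument. Your framework --- the vertical isotypic decomposition $L^2(Y)=\bigoplus_{\chi\in\hat U}L^2(X)\cdot\chi$, the identity $\|f\otimes\chi\|_{U^{k+1}(Y)}^{2^{k+1}}=\|E(d^{[k]}f\otimes\iota\mid\mathcal I)\|^2$ on $X^{[k]}\times_{\chi\circ d^{[k]}\rho}S^1$, and the reduction to whether $\chi\circ d^{[k]}\rho$ is a coboundary --- is exactly theirs, and your ``only if'' direction is essentially correct in outline. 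Two technical points there still need work: since the diagonal $G$-action on $(X^{[k]},\mu^{[k]})$ is not ergodic, a nonzero invariant function in the $\iota$-component only exhibits $\chi\circ d^{[k]}\rho$ as a coboundary on a positive-measure invariant set, and promoting this to a global coboundary (as Definition \ref{type:def} demands) requires the side transformations of the cube; and assembling the character-wise $S^1$-trivializations into a single $U$-valued $F$ is precisely the issue illustrated by the paper's $C_2$ counterexample following Proposition \ref{typecircle:prop}, so it cannot be waved through by Pontryagin duality alone.

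The genuine gap is in the ``if'' direction, at the step you yourself identify as the crux. You assert that a cocycle of type $<k$ over an ergodic system of order $<k$ is, up to a coboundary, a phase polynomial cocycle of degree $<k$, so that $\chi\circ F$ may be taken in cube form $d^{[k]}h_\chi$. This is false for general countable abelian groups, and already for $G=\mathbb{Z}$: the paper says so explicitly in Appendix \ref{poly:appendix}, just before Theorem \ref{type0} (``This is true for $\mathbb{F}_p^\omega$-systems \ldots but wrong for general groups''), citing \cite{HK2} and \cite[Section 9]{OS}. Concretely, for $k=2$ the Conze--Lesigne cocycle presenting a $2$-step nilsystem such as the Heisenberg nilsystem over its Kronecker factor is of type $<2$ but is not cohomologous to any phase polynomial of degree $<2$; if your claim held, every ergodic system of order $<3$ would be quasi-affine, which is exactly what \cite{HK2} refutes. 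With that lemma gone, the collapse of $\|E((\chi\circ F)\cdot d^{[k]}f\mid\mathcal I)\|$ to $\|h_\chi f\|_{U^{k+1}(X)}$ has no justification, and the second bullet, which you prove by ``the identical computation one cube-dimension lower,'' inherits the same defect. The Host--Kra proof of this implication does not pass through phase polynomials at all; it works directly with the cubic measures $\mu_Y^{[k]}$, $\mu_Y^{[k+1]}$ of the extension and the structural fact that for a system of order $<k$ the top vertex of a $\mu^{[k]}$-random cube is measurable with respect to the remaining vertices. As written, your argument establishes only one implication of the first bullet.
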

	\begin{proof}
		The first claim is proved in \cite[Proposition 6.4]{HK} and the second in \cite[Proposition 7.6]{HK} for $\mathbb{Z}$-actions. The general case follows by the same argument.
	\end{proof}
	In particular this implies that the C.L.\ factor of an ergodic $G$-system is an abelian extension of the Kronecker factor by a cocycle of type $<2$. The following definition is closely related to the Conze-Lesigne equations in Definition \ref{CLcocycle}.
	\begin{defn} [Automorphism] \label{Aut:def} Let $X$ be a $G$-system. A measure-preserving transformation $u:X\rightarrow X$ is called an \textit{automorphism} if the induced action on $L^2(X)$ by $V_u(f)=f\circ u$ commutes with the action of $G$.
	\end{defn}
	The following result is due to Bergelson Tao and Ziegler \cite[Lemma 5.3]{Berg& tao & ziegler}.
	\begin{lem} [Differentiation by an automorphism decreases the type] \label{diff}
		Let $k,m\geq 1$, let $G$ be a countable abelian group, let $X$ be an ergodic $G$-system, and let $\rho:G\times X\rightarrow S^1$ be a cocycle of type $<m$. Then, for every automorphism $t:X\rightarrow X$ which preserves $Z_{<k}(X)$, the cocycle $\Delta_t \rho(g,x)$ is of type $<m-\min(m,k)$.
	\end{lem}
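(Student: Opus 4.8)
The plan is to reduce to the case where $\rho$ is a phase polynomial cocycle and then differentiate, exploiting that $\Delta_t$ commutes with the group derivatives and annihilates $Z_{<k}(X)$-measurable functions. Write $s:=\min(m,k)$, so the target is type $<m-s$. First I would record two soft facts. Since $t$ commutes with every $T_g$ (it is an automorphism, Definition \ref{Aut:def}), a direct computation shows that $\Delta_t$ applied to a coboundary $(g,x)\mapsto\Delta_g F(x)$ equals the coboundary $(g,x)\mapsto\Delta_g(\Delta_t F)(x)$; hence $\Delta_t$ sends coboundaries to coboundaries and, as type is a cohomology invariant, $\Delta_t\rho$ and $\Delta_t\rho'$ have the same type whenever $\rho,\rho'$ are cohomologous. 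By the structure theory of Bergelson--Tao--Ziegler \cite{Berg& tao & ziegler}, a cocycle of type $<m$ is cohomologous to a phase polynomial cocycle $p:G\times X\to S^1$ of degree $<m$ (that is, $x\mapsto p(g,x)$ is a phase polynomial of degree $<m$ in the sense of Definition \ref{phasepoly:def} for every $g$). Thus it suffices to prove the statement for $\rho=p$.

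For the main computation, fix $g\in G$ and work with the phase polynomial $x\mapsto p(g,x)$ of degree $<m$. Because $t$ and all the $T_h$ commute, the operators $\Delta_t$ and $\Delta_{h}$ ($h\in G$) commute. For arbitrary $h_1,\dots,h_{m-s}\in G$ the function $q:=\Delta_{h_1}\cdots\Delta_{h_{m-s}}p(g,\cdot)$ is a phase polynomial of degree $<m-(m-s)=s\le k$, and hence is measurable with respect to $Z_{<s}(X)$, a factor of $Z_{<k}(X)$ (Proposition \ref{orderext}). Since $t$ acts as the identity on $Z_{<k}(X)$, it fixes every $Z_{<k}(X)$-measurable function, so $\Delta_t q=1$. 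Commuting the derivatives back gives
\[
\Delta_{h_1}\cdots\Delta_{h_{m-s}}\big(\Delta_t\, p(g,\cdot)\big)=\Delta_t q=1
\]
for all $h_1,\dots,h_{m-s}\in G$. Hence $\Delta_t p$ is a phase polynomial cocycle of degree $<m-s$, and since a phase polynomial cocycle of degree $<j$ is of type $<j$ (a standard fact of the cube calculus, cf.\ Proposition \ref{orderext}, Theorem \ref{HKtype} and \cite{Berg& tao & ziegler}) it is of type $<m-s=m-\min(m,k)$. Combined with the first paragraph, $\Delta_t\rho$ is of type $<m-\min(m,k)$, as claimed. When $s=m$ (the case $k\ge m$) the same argument with no differentiation shows $\Delta_t p=1$, i.e.\ $\Delta_t\rho$ is a coboundary.

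The step I expect to be the genuine obstacle is the reduction in the first paragraph: representing an arbitrary type-$<m$ cocycle by a phase polynomial cocycle of degree $<m$ up to cohomology. This is the substantive structural input, and it is exactly where the hypotheses on $X$ (finite order, or the ambient setting in which the lemma is applied) enter; everything after it is formal. A route that avoids this reduction is the purely cube-theoretic one: using the difference functions $d^{[\ell]}$ of Definition \ref{type:def}, one has the ``generalised prism'' identity $d^{[m]}\rho\big(g,J(\mathbf x)\big)=d^{[m-s]}(\Delta_t\rho)(g,\mathbf x)$, where $J:X^{[m-s]}\to X^{[m]}$ glues over each base vertex $x_w$ the degenerate $s$-cube all of whose vertices equal $x_w$ except the top one, which is $t x_w$; the identity rests only on $\sum_{\vec\epsilon\in\{-1,1\}^s}\prod_j\epsilon_j=0$ and on $t$ being an automorphism. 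Since $s\le k$ and $t$ is trivial on $Z_{<k}(X)$, the edge $(x_w,tx_w)$ is $Z_{<s}(X)$-degenerate, so $J$ lands in $\supp\mu^{[m]}$; the difficulty there is that $J_*\mu^{[m-s]}$ is $\mu^{[m]}$-null, so transferring the coboundary $d^{[m]}\rho=\Delta_g H$ across the degenerate image of $J$ requires the fibrewise relative-independence structure of the Host--Kra cube measures rather than a naive restriction.
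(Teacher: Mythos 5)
Your main route breaks at its very first step. The reduction you lean on --- ``a cocycle of type $<m$ is cohomologous to a phase polynomial cocycle of degree $<m$'' --- is a theorem of Bergelson--Tao--Ziegler only for $\mathbb{F}_p^\omega$-systems (and only for $p$ large relative to the degree); for a general countable abelian group $G$, which is the setting of the lemma, it is false. The paper states this explicitly in Appendix \ref{poly:appendix}, just before Theorem \ref{type0}: cocycles of type $<k$ need not be cohomologous to phase polynomials of degree $<k$, with counterexamples in \cite{HK2} and \cite[Section 9]{OS}; already for $\mathbb{Z}$-systems the Conze--Lesigne cocycles of type $<2$ coming from Heisenberg nilsystems are not cohomologous to phase polynomials. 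Since Lemma \ref{diff} is invoked in the paper for arbitrary ergodic $G$-systems (e.g.\ in the proof of Proposition \ref{everything}), your argument only establishes a special case. The formal part that follows the reduction --- commuting $\Delta_t$ with the derivatives $\Delta_h$ and using that a phase polynomial of degree $<s$ is $Z_{<s}(X)$-measurable, hence fixed by $t$ --- is correct as far as it goes (note that ``preserves $Z_{<k}(X)$'' must indeed be read as ``acts trivially on $Z_{<k}(X)$'', as in the vertical rotations to which the lemma is applied), but it never sees a general cocycle of type $<m$.

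For comparison: the paper gives no proof at all, but cites \cite[Lemma 5.3]{Berg& tao & ziegler} and remarks that the BTZ argument goes through verbatim for a general automorphism acting trivially on $Z_{<k}(X)$. That argument is the cube-theoretic one you sketch in your last paragraph, but it does not pass through a measure-null ``prism'' map $J$; it iterates a one-step reduction $m\mapsto m-1$. One writes $X^{[m]}=X^{[m-1]}\times X^{[m-1]}$ so that $d^{[m]}\rho(g,x',x'')=d^{[m-1]}\rho(g,x')\cdot\overline{d^{[m-1]}\rho(g,x'')}$, checks that the side transformation $\mathrm{id}\times t^{[m-1]}$ preserves $\mu^{[m]}$ (this is exactly where the hypothesis that $t$ is trivial on $Z_{<k}(X)$ and the relative-independence structure of the cube measures enter), applies the difference by this transformation to the coboundary equation $d^{[m]}\rho=\Delta_g H$ to get that $\overline{d^{[m-1]}(\Delta_t\rho)}(g,x'')$ is a $(G,X^{[m]},S^1)$-coboundary, and then descends this coboundary, which depends only on $x''$, from $X^{[m]}$ to $X^{[m-1]}$ using that $\mu^{[m]}$ is a relatively independent joining over its marginals. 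If you want a self-contained proof, it is this side-transformation invariance and the descent step that have to be carried out; the polynomial reduction cannot be the substitute.
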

	We note that Bergelson Tao and Ziegler prove the lemma above only for automorphisms of specific form, but the same proof shows that the claim holds in this generality.\\
	
	In a similar manner we have the following version for phase polynomials.
	\begin{lem} \label{diffpoly}
		Let $k,m\geq 1$, let $G$ be a countable abelian group, and let $X$ be an ergodic $G$ system. If $f:X\rightarrow S^1$ is a phase polynomial of degree $<m$, then $\Delta_t f(x)$ is of degree $<m-\min(m,k)$.
	\end{lem}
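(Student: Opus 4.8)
The plan is to exploit two facts: that an automorphism commutes with the dynamics, and that a phase polynomial of degree $<d$ already lives on $Z_{<d}(X)$. Throughout I write $\Delta_t f = f\circ t\cdot\bar f$, and recall from the hypothesis (as in Lemma \ref{diff}) that $t$ is an automorphism preserving $Z_{<k}(X)$, i.e. $t$ acts trivially on that factor, so $\pi_k\circ t=\pi_k$ where $\pi_k:X\to Z_{<k}(X)$ is the factor map; in particular $t$ fixes every $Z_{<k}(X)$-measurable function. Since $t$ is an automorphism, $V_t$ commutes with every $T_g$, and a direct computation then shows that $\Delta_t$ commutes with each multiplicative derivative $\Delta_g$ (the $\Delta_g$ already commute among themselves because $G$ is abelian). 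This commutation is the only structural input beyond Proposition \ref{orderext}.

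First I would set $d=\min(m,k)$ and reduce the claim to showing $\Delta_{g_1}\cdots\Delta_{g_{m-d}}(\Delta_t f)=1$ for all $g_1,\dots,g_{m-d}\in G$ (an empty string of derivatives when $m=d$, which asserts $\Delta_t f=1$). Consider $\phi:=\Delta_{g_1}\cdots\Delta_{g_{m-d}}f$. Applying any further $d$ derivatives to $\phi$ amounts to applying $m$ derivatives to $f$, which vanish since $\deg f<m$; hence $\phi$ is a phase polynomial of degree $<d$, and by Proposition \ref{orderext} it is measurable with respect to $Z_{<d}(X)$, a factor of $Z_{<k}(X)$. Because $t$ acts trivially on $Z_{<k}(X)$ we get $\phi\circ t=\phi$, i.e. $\Delta_t\phi=1$. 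Commuting $\Delta_t$ past the $\Delta_{g_i}$ then gives $\Delta_{g_1}\cdots\Delta_{g_{m-d}}(\Delta_t f)=\Delta_t\phi=1$, which is exactly $\deg(\Delta_t f)<m-d=m-\min(m,k)$.

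I do not expect a serious obstacle; the content is entirely in the two inputs above, and the single formula $d=\min(m,k)$ handles both regimes uniformly. In the extremal case $m\le k$ the reduction is to $\Delta_t f=1$ itself, which holds because $f$ is $Z_{<m}(X)$-measurable and $Z_{<m}(X)$ is a factor of $Z_{<k}(X)$; in the case $m>k$ one gets a genuine degree drop by $k$. The only points demanding care are the verification that $\Delta_t$ and $\Delta_g$ truly commute (this is where the automorphism hypothesis $V_tT_g=T_gV_t$ enters, and it is the single-map analogue of the cocycle identity underlying Lemma \ref{diff}) and the clean invocation that a degree-$<d$ phase polynomial is $Z_{<d}(X)$-measurable together with the fact that $t$ fixes $Z_{<k}(X)$-measurable functions.
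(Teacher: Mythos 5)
The paper does not actually prove this lemma --- it only points to the proof of \cite[Lemma 8.8]{Berg& tao & ziegler}, where the statement is extracted from an argument built on the cubic measure spaces $X^{[k]}$ and the identity $d^{[j]}(\Delta_t f)=\Delta_{t^{[j]}}d^{[j]}f$. So your argument is a genuinely different, and more elementary, route. Its two structural inputs are fine: $\Delta_t$ does commute with each $\Delta_g$ because $V_t T_g = T_g V_t$ and $S^1$ is abelian, and your reduction of the claim to $\Delta_t\phi=1$ for $\phi=\Delta_{g_1}\cdots\Delta_{g_{m-d}}f$, which has degree $<d=\min(m,k)$, is exactly right, including the boundary case $m\le k$. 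You also correctly supplied the missing hypothesis on $t$ (the statement in the paper omits it): $t$ is an automorphism acting trivially on $Z_{<k}(X)$, which is what the applications (e.g.\ the proof of Proposition \ref{everything}, where $t=u\in U$ is a vertical rotation) require.

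The one genuine problem is circularity. The step ``$\phi$ has degree $<d$, hence by Proposition \ref{orderext} it is measurable with respect to $Z_{<d}(X)$'' invokes a fact that, in this paper, is itself deduced from Lemma \ref{diffpoly}: the appendix proves the measurability clause of Proposition \ref{everything} (of which Proposition \ref{orderext} is a reindexing) by a downward induction whose inductive step reads ``By Lemma \ref{diffpoly} we have that $\Delta_u P_{l+1}$ is of degree $<m-\min\{m,l\}=0$, hence $\Delta_u P=1$ \dots{} In other words, $P$ is measurable with respect to $Z_{<l}(X)$.'' So as written your proof assumes a consequence of the lemma it is proving. The gap is repairable because the measurability fact has an independent proof: by Lemma \ref{polytype}, a phase polynomial $P$ of degree $<d$ satisfies $d^{[d]}P=1$ $\mu^{[d]}$-a.e., hence $\|P\|_{U^d}=1$; combining $\|P\|_{U^d}^{2^d}\le \|E(P|Z_{<d}(X))\|_{L^1}$ with $|E(P|Z_{<d}(X))|\le 1$ forces $|E(P|Z_{<d}(X))|=1$ a.e., and then $\|P-E(P|Z_{<d}(X))\|_{L^2}^2=\|P\|_{L^2}^2-\|E(P|Z_{<d}(X))\|_{L^2}^2=0$, so $P$ is $Z_{<d}(X)$-measurable. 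If you insert that (or cite the corresponding statement of Bergelson--Tao--Ziegler, which is proved there without reference to the present lemma), your argument closes and is a clean alternative to the cube-space proof.
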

	\begin{proof}
	This lemma is proved in the proof of	\cite[Lemma 8.8]{Berg& tao & ziegler}.
	\end{proof}
	The following characterization of phase polynomials of degree $<k$ is due to Bergelson Tao and Ziegler \cite[Lemma 4.3 (iii)]{Berg& tao & ziegler}.
	\begin{lem} \label{polytype}
		Let $G$ be a countable abelian group and $X$ be an ergodic $G$-system. Then a function $f:X\rightarrow S^1$ is a phase polynomial of degree $<k$ if and only if $d^{[k]}f(x) = 1$ for $\mu^{[k]}$-almost every $x\in X^{[k]}$.
	\end{lem}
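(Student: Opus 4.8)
The plan is to prove both implications simultaneously by induction on $k$, exploiting the recursive structure of the cubic measures. Writing $X^{[k]}=X^{[k-1]}\times X^{[k-1]}$ and splitting each $w\in\{-1,1\}^k$ as $(w',w_k)$, the defining product factors as $d^{[k]}f(x,y)=d^{[k-1]}f(y)\cdot\overline{d^{[k-1]}f(x)}$, where $x,y\in X^{[k-1]}$ are the two opposite $(k-1)$-faces; since $f$ is $S^1$-valued this equals $1$ exactly when $d^{[k-1]}f(x)=d^{[k-1]}f(y)$. On the measure side, the formula defining $\mu^{[k]}$ exhibits it as the relatively independent self-joining of $\mu^{[k-1]}$ over the factor $\mathcal{I}_{k-1}$, the invariant $\sigma$-algebra of the diagonal action $T_g^{[k-1]}=T_g\times\cdots\times T_g$ on $X^{[k-1]}$. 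The base case $k=1$ is immediate: by ergodicity $\mathcal{I}_0$ is trivial, so $\mu^{[1]}=\mu\otimes\mu$, and $d^{[1]}f(x,y)=f(y)\overline{f(x)}=1$ a.e.\ if and only if $f$ is a.e.\ constant, which is exactly $f\in P_{<1}(X,S^1)$. The inductive mechanism rests on two observations recorded once here: a direct computation gives $T_g^{[k-1]}\big(d^{[k-1]}f\big)=d^{[k-1]}(\Delta_g f)\cdot d^{[k-1]}f$, and for any bounded $\phi$ on $X^{[k-1]}$ the joining formula yields $\int \phi(x)\overline{\phi(y)}\,d\mu^{[k]}=\int |E(\phi\mid\mathcal{I}_{k-1})|^2\,d\mu^{[k-1]}$, hence $\int|\phi(x)-\phi(y)|^2\,d\mu^{[k]}=2\int|\phi-E(\phi\mid\mathcal{I}_{k-1})|^2\,d\mu^{[k-1]}$; so $\phi(x)=\phi(y)$ $\mu^{[k]}$-a.e.\ if and only if $\phi$ is $\mathcal{I}_{k-1}$-measurable.

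For the forward implication, suppose $f\in P_{<k}(X,S^1)$. Then for each $g\in G$ the derivative $\Delta_g f$ lies in $P_{<k-1}(X,S^1)$, since $\Delta_{g_1}\cdots\Delta_{g_{k-1}}(\Delta_g f)=\Delta_{g_1}\cdots\Delta_{g_{k-1}}\Delta_g f=1$ by commutativity of the derivatives. The inductive hypothesis (forward direction in degree $k-1$) gives $d^{[k-1]}(\Delta_g f)=1$ $\mu^{[k-1]}$-a.e., so the transformation identity above shows that $d^{[k-1]}f$ is $T_g^{[k-1]}$-invariant for every $g$, i.e.\ $\mathcal{I}_{k-1}$-measurable. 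By the $L^2$ computation this forces $d^{[k-1]}f(x)=d^{[k-1]}f(y)$ $\mu^{[k]}$-a.e., and therefore $d^{[k]}f=1$ $\mu^{[k]}$-a.e.

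For the converse, assume $d^{[k]}f=1$ $\mu^{[k]}$-a.e.; by the face decomposition this says $d^{[k-1]}f(x)=d^{[k-1]}f(y)$ $\mu^{[k]}$-a.e. Applying the $L^2$ identity to $\phi=d^{[k-1]}f$ (which is bounded, hence in $L^2$) forces $d^{[k-1]}f$ to be $\mathcal{I}_{k-1}$-measurable, i.e.\ $T_g^{[k-1]}$-invariant for every $g$. Combined with $T_g^{[k-1]}\big(d^{[k-1]}f\big)=d^{[k-1]}(\Delta_g f)\cdot d^{[k-1]}f$ this gives $d^{[k-1]}(\Delta_g f)=1$ $\mu^{[k-1]}$-a.e.\ for every $g\in G$. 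The inductive hypothesis (converse direction in degree $k-1$) then yields $\Delta_g f\in P_{<k-1}(X,S^1)$ for all $g$, which is precisely $\Delta_{g_1}\cdots\Delta_{g_k}f=1$ for all $g_1,\dots,g_k$, i.e.\ $f\in P_{<k}(X,S^1)$.

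The one point requiring care — and the main thing to verify rather than a genuine obstacle — is the interpretation of $\mathcal{I}_k$ as the invariant factor of the diagonal action $T_g^{[k]}$ together with the fact that $T_g^{[k]}$ preserves $\mu^{[k]}$. These are standard structural features of the Host--Kra cubic construction, and they are exactly what make the identity $T_g^{[k-1]}\big(d^{[k-1]}f\big)=d^{[k-1]}(\Delta_g f)\cdot d^{[k-1]}f$ meaningful $\mu^{[k-1]}$-a.e.\ and the relatively-independent-joining computation valid. Once these are in place the argument is a clean two-directional induction, using neither the Separation Lemma nor any finite-index hypothesis.
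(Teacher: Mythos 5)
Your proof is correct. The paper does not prove this lemma at all --- it quotes it from Bergelson--Tao--Ziegler \cite[Lemma 4.3 (iii)]{Berg& tao & ziegler} --- and your two-directional induction (face decomposition of $d^{[k]}f$, the relatively independent joining identity $\int|\phi(x)-\phi(y)|^2\,d\mu^{[k]}=2\int|\phi-E(\phi\mid\mathcal{I}_{k-1})|^2\,d\mu^{[k-1]}$, and the cocycle identity $T_g^{[k-1]}d^{[k-1]}f=d^{[k-1]}(\Delta_gf)\cdot d^{[k-1]}f$) is essentially the standard argument from that source. Your reading of $\mathcal{I}_{k}$ as the invariant $\sigma$-algebra of the \emph{diagonal} action is indeed the intended one (the paper's phrasing ``invariant sets in $X^{[k]}$'' alongside the coordinatewise $G^{[k]}$-action is loose; taken literally it would collapse $\mu^{[k]}$ to the product measure), and flagging it was the right call.
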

	It is natural to ask whether a cocycle of type $<k$ is cohomologous to a phase polynomial of degree $<k$. This is true for $\mathbb{F}_p^\omega$-systems \cite{Berg& tao & ziegler} (at least if $p>k$), but wrong for general groups (see e.g. \cite{HK2} or \cite[Section 9]{OS}). However in the case $k=1$ we have the following result by Moore and Schmidt \cite{MS} and Furstenberg and Weiss \cite[Lemma 10.3]{F&W}.
	\begin{thm}[Cocycles of type $<1$ are cohomologous to constants] \label{type0}
		Let $G$ be a countable abelian group. Let $X$ be an ergodic $G$-system and $\rho:G\times X\rightarrow S^1$ be a cocycle of type $<1$. Then, there exists a character $c:G\rightarrow S^1$ and a measurable map $F:X\rightarrow S^1$ such that $\rho(g,x)=c(g)\cdot\Delta_g F(x)$, for every $g\in G$ and $\mu$-almost every $x\in X$.
	\end{thm}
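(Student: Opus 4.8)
The plan is to unwind the hypothesis and reduce the conclusion to producing a single twisted eigenfunction. Since $X$ is ergodic, the invariant $\sigma$-algebra $\mathcal{I}_0$ is trivial, so $\mu^{[1]}=\mu\otimes\mu$, and being of type $<1$ means exactly that $d^{[1]}\rho$ is a $(G,X\times X,S^1)$-coboundary. Hence there is a measurable $H:X\times X\to S^1$ with $|H|\equiv 1$ satisfying
\[ \rho(g,x)\,\overline{\rho(g,y)} = H(T_gx,T_gy)\,\overline{H(x,y)} \qquad (g\in G). \]
The goal then becomes to find a character $c:G\to S^1$ and a unimodular $F:X\to S^1$ with $F(T_gx)=c(g)\,\rho(g,x)\,F(x)$; rearranging gives $\rho(g,x)=\overline{c(g)}\,\Delta_g F(x)$, which is the assertion (with $\overline{c}$ as the character).

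The \emph{main obstacle} is that the naive route fails: the $G$-invariant function $H(x,y)H(y,z)\overline{H(x,z)}$ on $X^3$ need not be constant, because the diagonal action on $X\times X$ (let alone $X^3$) is in general not ergodic. To bypass this I would repackage $H$ as an operator. Let $\mathcal{K}:L^2(X)\to L^2(X)$ be the integral operator with kernel $H$, $(\mathcal{K}f)(x)=\int_X H(x,y)f(y)\,d\mu(y)$, and define $U_g f(x)=\overline{\rho(g,x)}\,f(T_gx)$. The cocycle identity for $\rho$ makes $g\mapsto U_g$ a unitary representation of $G$ on $L^2(X)$. A one-line change of variables using that $T_g$ preserves $\mu$, that $|\rho|=1$, and the displayed identity shows that $\mathcal{K}$ intertwines this representation with itself:
\[ U_g\,\mathcal{K} = \mathcal{K}\,U_g \qquad (g\in G). \]
Because $|H|\equiv 1$ the kernel has unit $L^2(X\times X)$-norm, so $\mathcal{K}$ is a nonzero Hilbert--Schmidt, hence compact, operator; this is what lets the product's non-ergodicity be sidestepped entirely.

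Finally I would extract the eigenfunction by spectral theory. The positive compact operator $\mathcal{K}^*\mathcal{K}$ is nonzero and commutes with every $U_g$, so it admits a positive eigenvalue whose eigenspace $V\subseteq L^2(X)$ is finite-dimensional and $U$-invariant. Since $G$ is abelian, the commuting unitaries $U_g|_V$ are simultaneously diagonalizable, yielding a nonzero joint eigenvector $f\in V$ with $U_g f=c(g)f$ for a character $c:G\to S^1$, i.e. $f(T_gx)=c(g)\,\rho(g,x)\,f(x)$. Here ergodicity of $X$ is used for the only time: $|f(T_gx)|=|f(x)|$, so $|f|$ is $G$-invariant and therefore a.e. equal to a positive constant, and $F:=f/|f|$ is the desired unimodular solution, giving $\rho(g,x)=\overline{c(g)}\,\Delta_g F(x)$. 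That $\overline{c}$ is a homomorphism is automatic since both $\rho$ and $\Delta F$ satisfy the cocycle identity. I expect the intertwining identity to be the real content of the argument, with the spectral extraction being routine.
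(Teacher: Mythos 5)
Your proof is correct and complete. Note that the paper does not actually prove Theorem \ref{type0}; it cites Moore--Schmidt \cite{MS} and Furstenberg--Weiss \cite[Lemma 10.3]{F&W}, and your argument --- unwinding type $<1$ over the product system (using ergodicity to get $\mu^{[1]}=\mu\otimes\mu$), intertwining the $\rho$-twisted unitary representation $U_g$ with the Hilbert--Schmidt operator of kernel $H$, and extracting a joint eigenvector of the commuting unitaries on a finite-dimensional eigenspace of the compact positive operator $\mathcal{K}^*\mathcal{K}$ --- is essentially the classical proof found in those references. The one point worth making explicit is the verification that $|f|$ is constant a.e.\ (by ergodicity) so that $F=f/|f|$ is well defined, which you do state; everything else (the intertwining computation, the measurability of $H$, and that any homomorphism from the discrete group $G$ to $S^1$ is a character) checks out.
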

	\begin{prop} \label{everything}
		Let $G$ be a countable abelian group. Let $m,k\geq 1$, and suppose that $X$ is an ergodic $G$-system of order $<k+1$ and $P:X\rightarrow S^1$ a phase polynomial of degree $<m$. Then the following holds. \begin{itemize}
			\item {There exists a compact abelian group $U$ and a cocycle $\rho:G\times X\rightarrow U$ such that $X=Z_{<k}(X)\times_{\rho} U$. Moreover, if $k=2$, then for every $\chi\in\hat U$, $\chi\circ\rho$ is a C.L.\ cocycle with respect to $Z=Z_{<2}(X)$.}
			\item{Let $X=Z_{<k}(X)\times_\rho U$. Then for every $u\in U$, $\Delta_u P$ is a phase polynomial of degree $<\max\{0,m-k\}$. In particular, $P$ is measurable with respect to $Z_{<m}(X)$.}
			\item{If $p:G\times X\rightarrow U$ is a phase polynomial cocycle of degree $<k$, then $X\times_p U$ is a system of order $<k$.}
		\end{itemize}
	\end{prop}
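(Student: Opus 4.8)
The three assertions follow from the results recalled above, and the plan is to deduce each in turn: bullets one and two reduce to direct citations supplemented by a short induction, while bullet three carries the only real content.

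For the first bullet, since $X$ has order $<k+1$ we have $X=Z_{<k+1}(X)$, and Proposition \ref{HKextension} exhibits $Z_{<k+1}(X)$ as an abelian extension of $Z_{<k}(X)$; this yields the compact abelian group $U$ and the cocycle $\rho:G\times Z_{<k}(X)\rightarrow U$ with $X=Z_{<k}(X)\times_\rho U$. For the moreover clause I would specialize to $k=2$, so that $X=Z_{<3}(X)$ is a C.L.\ system, and invoke Theorem \ref{CL}: its conclusion is precisely that for every $\chi\in\hat U$ the cocycle $\chi\circ\rho$ is a C.L.\ cocycle with respect to $Z=Z_{<2}(X)$ (in the sense of Definition \ref{CLcocycle}), and by the footnote accompanying that theorem the statement holds for every countable abelian $G$.

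For the second bullet, fix the decomposition $X=Z_{<k}(X)\times_\rho U$ and, for $u\in U$, consider the fiber translation $V_u(z,v)=(z,uv)$. As $U$ is abelian, $V_u$ commutes with the action $S_g(z,v)=(T_gz,\rho(g,z)v)$, and $V_u$ fixes the base, so it is an automorphism in the sense of Definition \ref{Aut:def} which preserves $Z_{<k}(X)$. Applying Lemma \ref{diffpoly} to $P$ (of degree $<m$) and the automorphism $V_u$, with $Z_{<k}(X)$ playing the role of the distinguished factor, shows that $\Delta_uP$ is a phase polynomial of degree $<m-\min(m,k)=\max\{0,m-k\}$, which is the first claim. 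For the ``in particular'' statement I would induct on $k$. If $m\geq k+1$ then $Z_{<m}(X)=X$ and there is nothing to prove. If $m\leq k$, then $\max\{0,m-k\}=0$ forces $\Delta_uP=1$ for every $u\in U$, so $P$ is $U$-invariant and hence measurable with respect to $Z_{<k}(X)$; since $Z_{<k}(X)$ is an ergodic $G$-system of order $<k$ and $Z_{<m}(Z_{<k}(X))=Z_{<m}(X)$ when $m\leq k$ (by the nesting of the Host--Kra factors), the induction hypothesis applied to $Z_{<k}(X)$ gives that $P$ is measurable with respect to $Z_{<m}(X)$.

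For the third bullet I would argue exactly as for the corresponding assertion in Proposition \ref{orderext}: by the cube characterization of Lemma \ref{polytype} a phase polynomial cocycle $p$ of degree $<k$ satisfies $d^{[k]}p=1_U$, from which one extracts the appropriate Host--Kra type of $p$, and Theorem \ref{HKtype} then converts this type information into the stated order bound for $X\times_pU$. I expect this bullet to be the only real obstacle: the first two reduce to citations plus the nesting of the factors, whereas here one must carry out the non-formal passage from the polynomial degree of the cocycle to its type, tracking $d^{[k]}p$ through the relation between the cube measures $\mu^{[k]}$ and $\mu^{[k-1]}$.
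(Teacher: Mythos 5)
Your proposal is correct and, for the second and third bullets, follows essentially the same route as the paper: the paper also deduces the degree drop from Lemma \ref{diffpoly} applied to the vertical translations $V_u$, and proves the ``in particular'' clause by the same descent (phrased there as a downward induction on $l$ with $m\leq l\leq k+1$, peeling off one abelian extension $Z_{<l+1}(X)=Z_{<l}(X)\times_\rho U$ at a time, rather than your induction on $k$ --- these are the same argument). The one genuine divergence is in the ``moreover'' clause of the first bullet: you cite Theorem \ref{CL} wholesale, whereas the paper derives the C.L.\ equation internally --- $\rho$ is of type $<2$ by Theorem \ref{HKtype}, so $\Delta_s\rho$ is of type $<1$ for every $s\in Z$ by Lemma \ref{diff}, and Theorem \ref{type0} (Moore--Schmidt) then produces the character and the coboundary. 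Since this appendix proposition is evidently meant to record a self-contained justification of that fact for general countable abelian $G$, the paper's derivation is preferable to a citation of the statement being justified. Two further remarks on the third bullet: the step you flag as ``the only real obstacle'' is actually immediate --- Lemma \ref{polytype} gives $d^{[k]}p=1_U$ $\mu^{[k]}$-a.e., and the constant $1_U$ is trivially a $(G,X^{[k]},U)$-coboundary, so $p$ is of type $<k$ by Definition \ref{type:def} with no tracking of $\mu^{[k]}$ versus $\mu^{[k-1]}$ required; and what Theorem \ref{HKtype} then yields (applied with order parameter $k+1$, since $X$ has order $<k+1$) is that $X\times_pU$ has order $<k+1$, not $<k$ as literally printed --- the bound $<k$ cannot hold in general, since $X$ itself is a factor of $X\times_pU$ and is only assumed of order $<k+1$. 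This off-by-one is a defect of the statement (present already in Proposition \ref{orderext}) rather than of your argument, but your claim that the citation delivers ``the stated order bound'' should be corrected to $<k+1$.
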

	\begin{proof}
		The first claim follows by Theorem \ref{HKextension}. If $k=2$, then $\rho$ is of type $<2$. Therefore, by Lemma \ref{diff}, $\Delta_s \rho$ is of type $<1$ for every $s\in Z$ and the  C.L.\ equation follows by Theorem \ref{type0}.\\
		Let $P:X\rightarrow S^1$ be as in the theorem. We prove by downward induction on $l$ that $P$ is measurable with respect to $Z_{<l}(X)$ for every $m\leq l \leq k+1$. The case $l=k+1$ is trivial since $Z_{<k+1}(X)=X$. Fix $m\leq l<k+1$ and assume inductively that $P$ is measurable with respect to $Z_{<l+1}(X)$, namely, there exists a polynomial $P_{l+1}:Z_{<l+1}(X)\rightarrow S^1$ such that $P=P_{l+1}\circ\pi_{l+1}$, where $\pi_{l+1}:X\rightarrow Z_{<l+1}(X)$ is the factor map. Write $Z_{<l+1}(X)=Z_{<l}(X)\times_\rho U$ for some cocycle $\rho$. By Lemma \ref{diffpoly} we have that $\Delta_u P_{l+1}$ is of degree $<m-\min\{m,l\}=0$, hence $\Delta_u P=1$. It follows that $P_{l+1}$ is invariant with respect to translations by $u\in U$. In other words, $P$ is measurable with respect to $Z_{<l}(X)$ and the case $l=m$ gives the desired result. Finally, the last claim is a direct application of Lemma \ref{polytype} and Theorem \ref{HKextension}.
	\end{proof}
	
	\section{Results about topological groups and a computation}
	\subsection{Divisible and injective groups}
	\begin{defn} \label{cocycle}
		Let $Z$ and $U$ be locally compact abelian groups. A function $k:Z\times Z\rightarrow U$ is called a cocycle if for every $r,s,t\in Z$ we have
		\begin{equation}
		\label{2coceq}
		k(rs,t)\cdot k(r,s)=k(r,st)\cdot k(s,t).
		\end{equation}
		Moreover, a cocycle is symmetric if
		\begin{equation} \label{symmetric}
		k(s,t)=k(t,s)
		\end{equation}
		for every $s,t\in Z$.
	\end{defn}
	\begin{prop} \label{split}
		Let $Z$ and $U$ be locally compact abelian groups and let $k:Z\times Z\rightarrow U$ be a symmetric cocycle. If one of the following holds
		\begin{itemize}
			\item {$U$ is a torus. Or,}
			\item {$U,Z$ are discrete and $U$ is divisible.}
		\end{itemize} Then there exists a continuous function $\varphi:Z\rightarrow U$ such that $k(s,t) = \frac{\varphi(st)}{\varphi(s)\varphi(t)}$.
	\end{prop}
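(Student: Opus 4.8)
The plan is to encode the symmetric cocycle $k$ as an abelian group extension and then show this extension splits, in each of the two cases. Concretely, form the set $B = Z \times U$ with the multiplication $(s,u)\cdot(t,v) = (st, k(s,t)uv)$. The cocycle identity (\ref{2coceq}) guarantees associativity and the existence of inverses, so $B$ is a group, and the symmetry condition (\ref{symmetric}) forces $B$ to be abelian. Equipping $B$ with the product topology (legitimate since $k$ is continuous, which is automatic when $Z$ is discrete) gives a locally compact abelian group sitting in a short exact sequence
\[
1 \to U \to B \to Z \to 1,
\]
where $U \hookrightarrow B$ is $u \mapsto (e,u)$ and $B \to Z$ is the first projection. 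A continuous homomorphic section $\sigma: Z \to B$ of this sequence must have the form $\sigma(s) = (s, \varphi(s))$ for some continuous $\varphi: Z \to U$, and the requirement that $\sigma$ be a homomorphism translates, upon comparing $\sigma(s)\sigma(t) = (st, k(s,t)\varphi(s)\varphi(t))$ with $\sigma(st) = (st,\varphi(st))$, exactly into $k(s,t) = \varphi(st)/(\varphi(s)\varphi(t))$. Thus it suffices to split the sequence.

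In the discrete divisible case ($U,Z$ discrete, $U$ divisible) this is immediate from Proposition \ref{divisible}: $B$ is a discrete abelian group containing the divisible subgroup $U$, so $U$ is a direct summand, $B \cong U \oplus (B/U) \cong U \oplus Z$, and the inclusion of the complementary copy of $Z$ is the desired section $\sigma$; continuity is automatic as everything is discrete.

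In the torus case I would argue by Pontryagin duality. Dualizing the short exact sequence yields
\[
1 \to \hat Z \to \hat B \to \hat U \to 1.
\]
Since $U$ is a torus, its dual $\hat U$ is a free (discrete) abelian group. Free abelian groups are projective, so as abstract groups the surjection $\hat B \to \hat U$ admits a homomorphic section (define it on free generators and extend), and this section is automatically continuous because its domain $\hat U$ is discrete. Thus the dual sequence splits, giving $\hat B \cong \hat Z \times \hat U$ as topological groups; by Pontryagin reflexivity, dualizing back splits the original sequence and produces the continuous section $\sigma$, and hence $\varphi$.

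The main obstacle is the torus case, and specifically obtaining a section that is continuous rather than merely set-theoretic or measurable. This is exactly what the passage through Pontryagin duality buys us: freeness of $\hat U$ makes the dual sequence split by an honest homomorphism whose continuity is free of charge (discrete domain), and reflexivity transports this splitting back to a topological splitting of $1 \to U \to B \to Z \to 1$. Equivalently, one may phrase the whole torus case as the statement that a torus is an injective object in the category of locally compact abelian groups, which furnishes a continuous retraction $B \to U$ and hence the splitting directly.
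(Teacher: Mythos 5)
Your proof is correct and follows essentially the same route as the paper: both build the extension group $Z\times U$ with multiplication twisted by $k$, observe that symmetry makes it abelian and that it sits in the short exact sequence $1\to U\to B\to Z\to 1$, and extract $\varphi$ from a homomorphic section. The only difference is that the paper simply asserts "by the assumptions in the claim the short exact sequence splits," whereas you supply the justification (Proposition \ref{divisible} in the discrete divisible case, and Pontryagin duality together with projectivity of the free group $\hat U$ --- equivalently injectivity of the torus in the category of locally compact abelian groups --- in the torus case), which is a faithful filling-in of the step the paper leaves implicit.
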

	\begin{proof}
		Without loss of generality we may assume that $k(1,1)=1_U$. From equation (\ref{2coceq}) we see that $k(1,t)=k(t,1)=1_U$ for all $t\in Z$. The cocycle $k$ induces a multiplication on the set $K=Z\times U$ by
		$(s,u)\cdot (t,v)=(st,k(s,t)uv)$. Equations (\ref{2coceq}) and (\ref{symmetric}) imply that $K$ is an abelian group. Observe, that we have a short exact sequence $$1\rightarrow U\overset{\iota}{\rightarrow} K\overset{p}{\rightarrow} Z\rightarrow 1$$ where $\iota(u)=(1,u)$ and $p(z,u)=z$. By the assumptions in the claim the short exact sequence splits. Therefore, there exists an homomorphism $q:Z\rightarrow K$ with $p(q(z))=z$. Let $\varphi:Z\rightarrow U$ be such that $q(z) = (z,\varphi(z))$. Since $q$ is a homomorphism, the claim follows.
	\end{proof}
	\subsection{Polish spaces and group actions}
	Polish groups and polish spaces (homogeneous spaces in particular) play an important role in this paper.\\ Below we summarize some important results.\\
	
	We start with the definition of a Borel cross section.
	\begin{defn} \label{cross section}
	Let $K$ be a quotient of a topological group $G$ and let $q:G\rightarrow K$ be the quotient map. A Borel cross section for $q$ is a Borel measurable map $s:K\rightarrow G$ satisfying that $q\circ s:K\rightarrow K$ is the identity map.
	\end{defn}
	\begin{thm} [The open mapping Theorem]  \label{openmap} \cite[Chapter 1]{BK}
		Let $\mG$ and $\mathcal{H}$ be Polish groups and let $p:\mG\rightarrow \mathcal{H}$ be a surjective continuous homomorphism. Then $p$ is open and there exists a Borel cross section $s:\mathcal{H}\rightarrow \mG$ such that $p\circ s = Id$.
	\end{thm}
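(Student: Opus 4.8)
The statement is the open mapping theorem for Polish groups together with the existence of a Borel cross section, and I would establish it in two independent pieces: first that $p$ is open, and then that a continuous open surjection between Polish groups admits a Borel right inverse.

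For openness, since $p$ is a homomorphism it is open as soon as it is open at the identity, so it suffices to prove that $p(V)$ is a neighborhood of $e_{\mathcal{H}}$ for every neighborhood $V$ of $e_{\mG}$. Fix such a $V$ and choose a symmetric open set $W\ni e_{\mG}$ with $W\cdot W\subseteq V$. As $\mG$ is separable there is a countable dense set $\{g_n\}$, so $\mG=\bigcup_n g_nW$ and hence $\mathcal{H}=p(\mG)=\bigcup_n p(g_n)\,p(W)$. The set $p(W)$ is the continuous image of a Borel set, hence analytic, hence has the Baire property; if it were meager then every translate $p(g_n)p(W)$ would be meager and $\mathcal{H}$ would be meager, contradicting the Baire category theorem for the Polish space $\mathcal{H}$. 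Thus $p(W)$ is non-meager with the Baire property, and Pettis's theorem gives that $p(W)\cdot p(W)^{-1}$ contains a neighborhood of $e_{\mathcal{H}}$. Since $W$ is symmetric, $p(W)^{-1}=p(W)$, so $p(W)\cdot p(W)^{-1}=p(W)\cdot p(W)=p(W\cdot W)\subseteq p(V)$, whence $p(V)$ is a neighborhood of $e_{\mathcal{H}}$, as desired.

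For the Borel cross section, $p$ is now a continuous open surjection, so I would obtain the section by the standard descriptive-set-theoretic route. The kernel $N=\ker p$ is a closed subgroup and $p$ factors as the quotient map $\mG\to\mG/N$ followed by a topological isomorphism $\mG/N\cong\mathcal{H}$; it therefore suffices to choose a representative from each coset $gN$ in a Borel fashion, that is, to produce a Borel transversal for the coset equivalence relation of a closed subgroup of a Polish group. This is classical: one may apply the Jankov--von Neumann uniformization theorem to the closed graph of $p$ to get a measurable section and then refine it to a Borel section using the openness of $p$ (alternatively one invokes directly the existence of Borel transversals for closed subgroups of Polish groups). Composing with $\mathcal{H}\cong\mG/N$ produces the desired Borel $s:\mathcal{H}\to\mG$ with $p\circ s=\mathrm{Id}$.

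The reduction of openness to Pettis's theorem is soft; the real content sits inside Pettis's lemma, whose proof rests on the Baire category theorem and is where completeness of the Polish topology enters. For the cross section, the genuinely non-trivial input is the selection theorem guaranteeing a Borel (not merely universally measurable) transversal, and this is the step I would justify most carefully, since elsewhere in the paper the measurability of such sections is used when realizing the groups $\mathcal{K}$ and $\mG(X)$ as homogeneous spaces.
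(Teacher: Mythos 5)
The paper offers no proof of this statement at all: it is imported verbatim as background from Becker--Kechris \cite[Chapter 1]{BK}, so there is nothing internal to compare your argument against. Judged on its own terms, your proof of openness is the standard and correct one: reduce to openness at the identity, cover $\mathcal{H}$ by countably many translates of $p(W)$, use that $p(W)$ is analytic and hence has the Baire property, rule out meagerness by Baire category, and apply Pettis's lemma together with the symmetry of $W$ and the identity $p(W)\cdot p(W)=p(W\cdot W)\subseteq p(V)$. This is exactly where Polishness (completeness plus separability) enters, and the argument is complete.

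The cross-section half is correct in substance but slightly misstated in its main route. Jankov--von Neumann applied to the graph of $p$ yields only a $\sigma(\boldsymbol{\Sigma}^1_1)$-measurable (hence universally and Baire measurable) uniformizing function, and there is no generic procedure for ``refining'' such a section to a Borel one; the phrase ``using the openness of $p$'' hides the actual mechanism. The clean argument, which uses openness exactly once and in the right place, is to observe that $h\mapsto p^{-1}(h)$ is a multifunction with nonempty closed values (fibers are closed cosets of $\ker p$) which is lower semicontinuous because $\{h:\ p^{-1}(h)\cap U\neq\emptyset\}=p(U)$ is open for every open $U\subseteq\mG$; the Kuratowski--Ryll-Nardzewski selection theorem then produces a Borel selector $s$ with $p\circ s=\mathrm{Id}$ directly. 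Your fallback --- citing the existence of Borel transversals for closed subgroups of Polish groups --- is a correct classical fact and rescues the step, so I would not call this a gap, but if you write the argument out you should replace the Jankov--von Neumann detour by the selection-theorem argument. You are also right to stress that Borel (rather than merely universally measurable) sections are what the paper actually needs when it takes cross sections of $\mathcal{K}\rightarrow\hat U$ and of $\mG(X)\rightarrow Z$.
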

	This theorem leads to the following results about quotient spaces.
	\begin{thm} \label{locallycomp}
	    Let $\mG$ be a polish group and let $\mH$ be a closed normal subgroup of $\mG$. Then $\mG$ is locally compact (resp. compact) if and only if $\mH$ and $\mG/\mH$ are locally compact (resp. compact).
	\end{thm}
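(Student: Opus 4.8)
The plan is to work throughout with the quotient homomorphism $p:\mG\to\mG/\mH$, which by the open mapping theorem (Theorem \ref{openmap}) is continuous, surjective and \emph{open}, and to exploit metrizability of the Polish group $\mG$ so that compactness may be checked sequentially. For the forward direction (assume $\mG$ locally compact, resp.\ compact) the argument is routine: fixing a compact neighborhood $K$ of the identity in $\mG$ (resp.\ taking $K=\mG$), the set $K\cap\mH$ is closed in $K$, hence compact, and is a neighborhood of the identity in $\mH$, so $\mH$ is locally compact (resp.\ compact); and $p(\operatorname{int}K)$ is open since $p$ is open, so $p(K)$ is a neighborhood of the identity coset which is compact as a continuous image of $K$, giving local compactness (resp.\ compactness) of $\mG/\mH$.

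The substance lies in the reverse direction, and the first thing I would isolate is a \textbf{lifting lemma for open maps}: if $q_n\to q$ in $\mG/\mH$ and $g\in p^{-1}(q)$, then, using that each $p(B(g,1/m))$ is a neighborhood of $q$ (openness of $p$), one selects $g_n\in p^{-1}(q_n)$ inside successively smaller balls $B(g,1/m)$ to obtain $g_n\to g$. Granting this, the compact reverse direction is immediate: given $(g_n)\subseteq\mG$, compactness of $\mG/\mH$ yields $p(g_{n_k})\to q$; lifting to $\tilde g_k\to g_*$ over $q$ and setting $h_k:=\tilde g_k^{-1}g_{n_k}\in\mH$, compactness of $\mH$ gives a subconvergent $h_{k_j}\to h_*$, whence $g_{n_{k_j}}=\tilde g_{k_j}h_{k_j}\to g_*h_*$, so $\mG$ is sequentially compact, hence compact.

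For the locally compact reverse direction I would construct a relatively compact neighborhood of the identity by sandwiching. Using regularity of topological groups together with local compactness of the closed subgroup $\mH$, I would choose a compact symmetric neighborhood $V$ of the identity in $\mH$, then an open symmetric $\Omega^+\ni e$ in $\mG$ with $\overline{\Omega^+}\cap\mH\subseteq V$ (so $\overline{\Omega^+}\cap\mH$ is compact), and a smaller open symmetric $\Omega$ with $\overline{\Omega}\,\overline{\Omega}\subseteq\overline{\Omega^+}$. Since $p$ is open and $\mG/\mH$ is locally compact, I would pick an open $N\ni\bar e$ with $\overline N$ compact and $\overline N\subseteq p(\Omega)$, and set $W:=\Omega\cap p^{-1}(N)$. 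To show $\overline W$ is compact, take $(w_n)\subseteq\overline W$; then $p(w_n)\in\overline N$ subconverges to some $q=p(\omega)$ with $\omega\in\Omega$, and applying the lifting lemma targeted at $\omega$ gives $\tilde g_n\to\omega$ with $p(\tilde g_n)=p(w_n)$ and $\tilde g_n\in\Omega$ eventually. Then $h_n:=\tilde g_n^{-1}w_n\in\mH$ lies in $\overline\Omega\,\overline\Omega\cap\mH\subseteq\overline{\Omega^+}\cap\mH$, which is compact, so $h_n$ subconverges and $w_n=\tilde g_n h_n$ subconverges in the closed set $\overline W$; thus $\overline W$ is sequentially compact, i.e.\ compact, and $\mG$ is locally compact.

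The main obstacle I anticipate is precisely the final step: forcing the $\mH$-valued correction terms $h_n$ into a single compact subset of $\mH$. A compact neighborhood manufactured around the identity coset controls $\overline\Omega\cap\mH$ but not $\overline\Omega\cap g\mH$ on nearby cosets, and it is exactly to circumvent this that the symmetric choice with $\overline\Omega\,\overline\Omega\subseteq\overline{\Omega^+}$ and the targeting of the lift to $\omega\in\Omega$ are needed. The remaining ingredients (regularity, the nested symmetric neighborhoods, and the diagonal passage to subsequences) are standard, and I would not carry out those calculations in detail.
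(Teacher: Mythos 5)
The paper does not actually supply a proof of Theorem \ref{locallycomp}: it is stated in the appendix as a known fact about topological groups (the classical result that an extension of a locally compact, resp.\ compact, group by one of the same kind is again of that kind, cf.\ Hewitt--Ross, Theorem 5.25), so there is no argument of the author's to compare yours against. Your proof is correct and self-contained in the Polish setting. The forward direction is routine as you say; your sequential lifting lemma through the open quotient map $p$ is valid, and the sandwich $\overline{\Omega}\,\overline{\Omega}\cap\mH\subseteq\overline{\Omega^+}\cap\mH\subseteq V$ is exactly the right device to trap the correction terms $h_k=\tilde g_k^{-1}w_{n_k}$ in a single compact subset of $\mH$ --- this is the genuine content of the theorem, and you have identified and resolved it correctly. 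Two points worth making explicit: (i) your reduction of compactness to sequential compactness requires that $\mG/\mH$ be metrizable, which holds because the quotient of a Polish group by a closed subgroup is again Polish (or, more elementarily, Hausdorff and first countable, hence metrizable by Birkhoff--Kakutani); you use this silently when extracting convergent subsequences of $p(w_n)$ in $\overline N$. (ii) The classical proof of the general (non-metrizable) statement runs the same construction with nets or filters in place of sequences and a filter-lifting argument in place of your lifting lemma; your version buys nothing in generality but is entirely adequate here, since every group to which the paper applies this theorem (e.g.\ $\mathcal K$ in Proposition \ref{typecircle:prop}) is Polish by hypothesis.
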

	\begin{thm}  \cite{Effros} \label{quotient}
		If $\mG$ is a locally compact polish group which acts transitively on a compact metric space $X$. Then for any $x\in X$ the stabilizer $\Gamma = \{g\in \mG : gx=x\}$ is a closed subgroup of $\mG$ and $X$ is homeomorphic to $\mG/\Gamma$.
	\end{thm}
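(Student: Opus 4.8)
The plan is to realize $X$ through the orbit map $\pi\colon\mG\to X$, $\pi(g)=gx$, and to show that $\pi$ is an open map; everything else is formal, and the openness is the only real content, coming from a Baire category argument that uses local compactness and second countability of $\mG$. Throughout I take the action $\mG\times X\to X$ to be continuous, as is the case in all the applications in this paper (where $\mG$ acts by left translation on a homogeneous space).

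First I would dispose of the easy points. Since the action is continuous and $X$ is metric (hence Hausdorff), the orbit map $\pi$ is continuous and $\Gamma=\pi^{-1}(\{x\})$ is closed, so $\Gamma$ is a closed subgroup. Transitivity gives that $\pi$ is onto, and $gx=g'x$ precisely when $g^{-1}g'\in\Gamma$, so $\pi$ descends to a continuous bijection $\bar\pi\colon\mG/\Gamma\to X$, with continuity immediate from the definition of the quotient topology. It remains to prove that $\bar\pi$ is a homeomorphism, which amounts to showing that $\pi$ is open.

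The heart of the argument is to show that for every neighbourhood $U$ of the identity $e\in\mG$, the set $Ux$ is a neighbourhood of $x$ in $X$. I would fix a symmetric compact neighbourhood $V$ of $e$ with $V^2\subseteq U$. As $\mG$ is locally compact and second countable it is $\sigma$-compact, so there is a countable family $\{g_n\}$ with $\mG=\bigcup_n g_nV$, and hence $X=\bigcup_n g_nVx$. Each $g_nVx$ is compact, being the continuous image of the compact set $V$ under $g\mapsto g_ngx$, and therefore closed in $X$. Since $X$ is a compact metric space it is a Baire space, so some $g_{n_0}Vx$ has nonempty interior; applying the homeomorphism $y\mapsto g_{n_0}^{-1}y$ of $X$ shows that $Vx$ has nonempty interior. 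Choosing $w\in V$ with $wx\in\operatorname{int}(Vx)$, the point $x=w^{-1}(wx)$ lies in the open set $w^{-1}\operatorname{int}(Vx)$, which is contained in $w^{-1}Vx\subseteq V^{-1}Vx=V^2x\subseteq Ux$ because $V=V^{-1}$. Thus $Ux$ contains an open neighbourhood of $x$.

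By homogeneity this yields openness of $\pi$ everywhere: for $g\in\mG$ and a basic neighbourhood $gU$ of $g$, the above together with left translation by $g$ gives that $\pi(gU)=g(Ux)$ is a neighbourhood of $gx=\pi(g)$. Hence $\pi$ is open, so the continuous bijection $\bar\pi$ is open as well, i.e.\ a homeomorphism, giving $X\cong\mG/\Gamma$. This is the classical Effros--Mackey argument, and the main obstacle is precisely the openness step: local compactness, second countability (so that the $\sigma$-compact covering produces a cover by closed sets), and the Baire property of the compact metric space $X$ are all essential, since without $\sigma$-compactness the covering could fail to produce a closed piece with nonempty interior.
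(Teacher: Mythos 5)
Your proof is correct. Note, however, that the paper does not prove this statement at all: it is quoted as a black-box citation to Effros, so there is no internal proof to compare against. What you have written is the classical Mackey--Gleason open-mapping argument for the locally compact case, and it is sound: the stabilizer is closed as the preimage of a point under the continuous orbit map, the induced map $\bar\pi\colon\mG/\Gamma\to X$ is a continuous bijection, and the only substantive point is openness of $\pi$, which you obtain correctly from $\sigma$-compactness of a locally compact Polish group, the Baire property of the compact metric space $X$, and the symmetric-neighbourhood trick $w^{-1}\operatorname{int}(Vx)\subseteq V^2x\subseteq Ux$. It is worth being aware that Effros's actual theorem is strictly more general: it establishes micro-transitivity for continuous transitive actions of arbitrary Polish groups on Polish spaces, with no local compactness hypothesis, by a different (purely Baire-categorical, non-compactness-based) argument on the group itself. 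For the statement as used in this paper, where $\mG$ is assumed locally compact and $X$ compact metric, your more elementary argument fully suffices, and it has the advantage of being self-contained; the cost is that it would not extend to the general Polish setting of Effros's theorem. Your explicit remark that joint continuity of the action is being assumed is appropriate, since the theorem statement leaves this implicit.
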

	\subsection{A computation}
	We will need the following computation for the Khintchine recurrence.
	\begin{prop} \label{computation}
		Let $a,b\in\mathbb{Z}$ be coprime and $U$ be a compact abelian group. Suppose that $U$ is $a,b,a+b$ and $b-a$ divisible. Then the sets $$A=\{(g,gg_1^ag_2^{\binom{a}{2}},gg_1^bg_2^{\binom{b}{2}},gg_1^{a+b}g_2^{\binom{a+b}{2}}):g,g_1,g_2\in U\}$$
		and 
		$$B = \{(u^{a+b},t\cdot u^{b-a},tv^{b-a},v^{a+b})\in U^4 : u,t,v\in U\}$$
		are equal.
	\end{prop}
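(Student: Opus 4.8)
The plan is to avoid solving for roots directly — which is obstructed by torsion in $U$ — and instead establish the equality through Pontryagin duality. Both $A$ and $B$ are images of continuous homomorphisms $U^3\to U^4$, namely $(g,g_1,g_2)\mapsto(g,gg_1^ag_2^{\binom a2},gg_1^bg_2^{\binom b2},gg_1^{a+b}g_2^{\binom{a+b}2})$ and $(u,t,v)\mapsto(u^{a+b},tu^{b-a},tv^{b-a},v^{a+b})$, so they are closed subgroups of the compact group $U^4$. Two closed subgroups of a compact abelian group coincide iff they have the same annihilator in the dual $\widehat{U^4}=\hat U^{\,4}$, so it suffices to prove $A^\perp=B^\perp$.

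First I would record the dual form of the hypotheses. For a compact abelian group and $m\in\mathbb Z$, $U$ is $m$-divisible iff multiplication by $m$ is injective on $\hat U$ (equivalently $\hat U$ has no $p$-torsion for primes $p\mid m$), since multiplication by $m$ on $U$ is dual to multiplication by $m$ on $\hat U$ and has closed image. Hence multiplication by each of $a,b,a+b,b-a$ is injective on $D:=\hat U$; consequently so is multiplication by $ab$, and — because at least one of $a,b,a+b$ is even — by $2$ as well. These cancellation properties are the only inputs beyond linear algebra.

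Next I would compute the two annihilators explicitly. A character $(\chi_0,\chi_1,\chi_2,\chi_3)\in D^4$ is trivial on $A$ iff, separating the independent variables $g,g_1,g_2$, the three relations
\[
\chi_0+\chi_1+\chi_2+\chi_3=0,\qquad a\chi_1+b\chi_2+(a+b)\chi_3=0,\qquad \tbinom a2\chi_1+\tbinom b2\chi_2+\tbinom{a+b}2\chi_3=0
\]
hold in $D$; the same computation in the variables $u,t,v$ shows $(\chi_i)\in B^\perp$ iff
\[
\chi_1+\chi_2=0,\qquad (a+b)\chi_0+(b-a)\chi_1=0,\qquad (b-a)\chi_2+(a+b)\chi_3=0 .
\]
It then remains to show these two systems have the same solution set in $D^4$. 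Over $\mathbb Q$ both cut out the single line spanned by $(b-a,-(a+b),a+b,-(b-a))$, which guides the elimination. For $A^\perp\subseteq B^\perp$, setting $\alpha=\chi_1+\chi_3$ and $\beta=\chi_2+\chi_3$, the last two $A$-relations give $a\alpha+b\beta=0$ and, after clearing the binomial denominators by a factor $2$, $ab(\chi_1+\chi_2)=0$; cancelling $ab$ yields $\chi_1+\chi_2=0$, whence $\chi_0=-\chi_3$ and $(a+b)\chi_3=(b-a)\chi_1$, which are exactly the $B$-relations. The reverse inclusion runs symmetrically: the degree-two relation is recovered by showing twice its left-hand side equals $(a+b-1)\bigl((a-b)\chi_1+(a+b)\chi_3\bigr)=0$ and cancelling the factor $2$.

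The main obstacle is precisely this passage between the two linear systems over the group $D$ rather than over a field: one cannot invert the coefficient matrix, so every elimination step must be arranged so that the integer to be cancelled has all its prime factors among $\{a,b,a+b,b-a\}$. The two genuinely non-formal points are the cancellation of $ab$ (producing $\chi_1+\chi_2=0$) and of $2$ (producing the degree-two relation), the latter relying on the observation that divisibility by one of $a,b,a+b$ forces $2$-injectivity of $D$. Once $A^\perp=B^\perp$ is established, Pontryagin duality gives $A=B$.
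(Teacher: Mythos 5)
Your proof is correct, and it takes a genuinely different route from the paper. The paper argues directly on elements: for $A\subseteq B$ it exhibits explicit preimages ($u$ with $u^{a+b}=g$, a square root $s$ of $g_2$, then $v=ug_1s^{a+b-1}$ and $t=gg_1^ag_2^{\binom a2}u^{a-b}$), and for $B\subseteq A$ it reduces the two defining equations to the single solvable condition $s^{ab}=t^{-1}(uv)^a$; the divisibility hypotheses enter as surjectivity of the power maps needed to produce these witnesses. You instead observe that $A$ and $B$ are closed subgroups (images of homomorphisms from the compact group $U^3$) and dualize, turning the problem into the equivalence of two integer linear systems over $\hat U$, with divisibility entering as injectivity of multiplication by $a,b,a+b$ (and hence by $ab$ and by $2$). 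I checked your eliminations: $2\cdot(\text{A3})$ combined with $a\alpha+b\beta=0$ does give $ab(\chi_1+\chi_2)=0$, and in the converse direction twice the degree-two relation is indeed $(a+b-1)\bigl((a-b)\chi_1+(a+b)\chi_3\bigr)=0$, so both cancellations are legitimate. Your version is arguably cleaner and isolates exactly which cancellations are needed (notably, $b-a$-divisibility is never used, only $a$, $b$, $a+b$ and the consequent $2$-divisibility --- the paper's computation likewise does not really use it); the paper's version has the minor advantage of producing the explicit change of variables rather than a pure existence statement, though nothing downstream requires that explicitness.
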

	\begin{proof}
		We first prove that $A\subseteq B$. To see this fix any $g,g_1,g_2\in U$. Let $s\in U$ be such that $s^2=g_2$. Choose $u\in U$ such that $u^{a+b}=g$ and set $v=ug_1s^{a+b-1}$ and $t=gg_1^ag_2^{\binom{a}{2}}\cdot u^{a-b}$. Clearly, $v^{a+b}=gg_1^{a+b}g_2^{\binom{a+b}{2}}$ and it left to show that 
		$$gg_1^ag_2^{\binom{a}{2}}\cdot u^{a-b}\cdot v^{b-a} = gg_1^b g_2^{\binom{b}{2}}.$$
		We substitute $v=ug_1s^{a+b-1}$ above and get
		$$ g_1^{b-a}s^{(a+b-1)(b-a)} = g_1^{b-a}g_2^{\binom{b}{2}-\binom{a}{2}}.$$
		Since either $(a+b-1)$ or $b-a$ is even, we get that the equality holds.\\
		
		As for the second inclusion fix any $u,t,v\in U$. Set $g=u^{a+b}$ and for every $g_2\in U$ choose $s=s(g_2)$ such that $s^2=g_2$ and set $g_1=vu^{-1}s^{1-a-b}$. It is left to find $g_2$ such that the following equations hold
		\[
		\begin{cases}
		u^{a+b}\cdot (vu^{-1}s^{1-a-b})^a s^{a^2-a}=t \cdot u^{b-a}\\
		u^{a+b}\cdot (vu^{-1}s^{1-a-b})^b s^{b^2-b} = t\cdot v^{b-a}
		\end{cases}
		\]
		Rearranging the equations we get,
		\[
		s^{ab} = t^{-1}\cdot(uv)^{a}.
		\]
		Since $U$ is $a$ and $b$ divisible, there is a solution for $s$ and we can take $g_2=s^2$.
	\end{proof}

	\address{Einstein Institute of Mathematics\\
		The Hebrew University of Jerusalem\\
		Edmond J. Safra Campus, Jerusalem, 91904, Israel \\ Or.Shalom@mail.huji.ac.il}
\end{document}